\newtheorem{theorem}{Theorem}[section]
\newtheorem{lemma}[theorem]{Lemma}
\newtheorem{prop}[theorem]{Proposition}
\newtheorem{assumption}[theorem]{Assumption}
\newtheorem{coro}[theorem]{Corollary}
\theoremstyle{definition}
\newtheorem{example}[theorem]{Example}
\theoremstyle{remark}
\newtheorem{remark}[theorem]{Remark}
\numberwithin{equation}{section}
\DeclareMathAlphabet{\mathsl}{OT1}{cmss}{m}{sl}
\SetMathAlphabet{\mathsl}{bold}{OT1}{cmss}{bx}{sl}
\newcommand{\al}{\ensuremath{\alpha}}
\newcommand{\be}{\ensuremath{\beta}}
\newcommand{\ga}{\ensuremath{\gamma}}
\renewcommand{\th}{\ensuremath{\theta}}
\newcommand{\ka}{\ensuremath{\kappa}}
\newcommand{\la}{\ensuremath{\lambda}}
\newcommand{\si}{\ensuremath{\sigma}}
\newcommand{\om}{\ensuremath{\omega}}
\newcommand{\ve}{\ensuremath{\varepsilon}}
\newcommand{\Ga}{\ensuremath{\Gamma}}
\newcommand{\cC}{\ensuremath{\mathcal C}}
\newcommand{\cE}{\ensuremath{\mathcal E}}
\newcommand{\cF}{\ensuremath{\mathcal F}}
\newcommand{\cI}{\ensuremath{\mathcal I}}
\newcommand{\cL}{\ensuremath{\mathcal L}}
\newcommand{\cO}{\ensuremath{\mathcal O}}
\newcommand{\cV}{\ensuremath{\mathcal V}}
\newcommand{\bbE}{\ensuremath{\mathbb E}}
\newcommand{\bbN}{\ensuremath{\mathbb N}}
\newcommand{\bbP}{\ensuremath{\mathbb P}}
\newcommand{\bbQ}{\ensuremath{\mathbb Q}}
\newcommand{\bbR}{\ensuremath{\mathbb R}}
\newcommand{\bbZ}{\ensuremath{\mathbb Z}}
\newcommand{\md}{\ensuremath{\mathrm{d}}}
\newcommand{\scpr}[3]{%
  \ensuremath{%
    \big\langle
      #1, #2
    \big\rangle_{\raisebox{-0ex}{$\scriptstyle\ell^{\raisebox{.1ex}{$\scriptscriptstyle 2$}} (#3)$}}
  }
}
\newcommand{\Norm}[2]{%
  \ensuremath{%
    \mathchoice{\big\lVert #1 \big\rVert}
     {\lVert #1 \rVert}
     {\lVert #1 \rVert}
     {\lVert #1 \rVert}_{\raisebox{-.0ex}{$\scriptstyle#2$}}
  }
}
\DeclareMathOperator{\mean}{\mathbb{E}}
\DeclareMathOperator{\Mean}{\mathrm{E}}
\DeclareMathOperator{\prob}{\mathbb{P}} %law random environment
\DeclareMathOperator{\Prob}{\mathrm{P}} %law random walk
\DeclareMathOperator{\supp}{\mathrm{supp}}
\newcommand{\av}[1]{\mathop{\mathrm{av}}(#1)}
\newcommand{\ldef}{\ensuremath{\mathrel{\mathop:}=}}
\newcommand{\rdef}{\ensuremath{=\mathrel{\mathop:}}}
\newcommand{\indicator}{\mathbbm{1}}
\newcommand{\tend}[2]{\displaystyle\mathop{\longrightarrow}_{#1\rightarrow#2}}
\newcommand{\eps}{\varepsilon}
\newcommand{\1}{\mathds{1}}
\begin{document}

\title[FPP with long-range correlations and applications]{First passage percolation with long-range correlations and applications to random Schr\"odinger operators}

%    Remove any unused author tags.

%    author one information
\author{Sebastian Andres}
\address{Technische Universit\"at Braunschweig}
\curraddr{Institut f\"ur Mathematische Stochastik, Universit\"atsplatz 2, 38106 Braunschweig}
\email{sebastian.andres@tu-braunschweig.de}
\thanks{}

\author{Alexis Pr\'evost}
\address{University of Geneva}
\curraddr{24, rue du G\'en\'eral Dufour, 1211 Geneva}
\email{alexis.prevost@unige.ch}
\thanks{}

\subjclass[2000]{60K35,  60K37, 39A12; 82B43, 60J35, 82C41}

\keywords{First passage percolation; shape theorem; random conductance model; Green kernel; long-range correlations}

\date{\today}

\dedicatory{}

\begin{abstract} 
 We consider first passage percolation (FPP) with passage times generated by a general class of models with long-range correlations on $\mathbb{Z}^d$, $d\geq 2$, including discrete Gaussian free fields, Ginzburg-Landau $\nabla \phi$ interface models or  random interlacements as prominent examples. We show that the associated time constant is positive, the FPP distance is comparable to the Euclidean distance, and we obtain a shape theorem. We also present two applications for  random conductance models (RCM) with possibly unbounded and strongly correlated conductances. Namely, we obtain a Gaussian heat kernel upper bound for RCMs with a general class of speed measures, and an exponential decay estimate for the Green's function of RCMs with random killing measures. 
\end{abstract}

\maketitle
\setcounter{tocdepth}{1}
\tableofcontents

\section{Introduction}\label{sec:INTRO}
\label{sec:intro}
\subsection{Positivity of the time constant in first passage percolation}
First passage percolation (FPP) was originally introduced in the 1960s by Hammersley and Welsh as a model of fluid flow in a randomly porous material. Since its origin it was a central topic in probability theory and it is still an area of active research. We refer to \cite{ADH17} for a recent survey. To define the model, let us consider the lattice $\bbZ^d,$ $d\geq2,$ with edge set $E_d,$ and $(t^\om_e)_{e\in E_d }$ be a family of non-negative random weights, also called passage times, which we allow to be possibly infinite, under some family of probability measures $(\bbP^u)_{u\in I}$ indexed by a parameter $u$ in some open interval $I\subset\bbR$. The random pseudo-metric $d^{\omega}(x,y)_{x,y\in{\bbZ^d}}$ associated with first passage percolation (or FPP distance in short) is given by 
\begin{equation}
\label{intro:defT}
    d^{\omega}(x,y)=\inf_{\pi:\,x\stackrel{\pi}\leftrightarrow y}\sum_{e\in\pi}t^\om_e,
\end{equation}
where the infimum is taken over all simple paths $\pi$ of edges connecting $x$ to $y.$ Under some mild conditions on the law of $(t^\om_e)_{e\in E_d}$, see for instance \eqref{eq:boundonmement} and condition~{\bf P1} in Section~\ref{sec:result},  by using the sub-additive ergodic theorem (see e.g.\ \cite{MR254907, Li85}) one can classically prove that, for all $u\in{I}$ and $x\in{\bbZ^d\setminus\{0\}}$, there exists a constant $\mu_u(x)\in{[0,\infty)}$ such that
\begin{equation}
    \label{intro:deftimeconstant}
    \lim_{n\rightarrow\infty}\frac{d^{\omega}(0,nx)}{n}=\mu_u(x),\quad \bbP^u\text{-a.s.\ and in }L^1.
\end{equation}
The constant $\mu_u(x)$ is called the time constant and depends on the choice of the direction $x$ and the law $\bbP^u$. Our aim is to find conditions under which the time constant is strictly positive so that $d^{\omega}(0,nx)$ grows linearly. In the case where the weights  $(t^\om_e)_{e\in E_d}$ are i.i.d.\ a simple criterion \cite[Theorem~6.1]{MR876084} is that
\begin{equation}
\label{intro:mupositiveiid}
    \mu_u(x)>0\text{ if and only if }\bbP^u(t^\om_e=0)< p_c, \qquad \text{for any $x\in{\bbZ^d\setminus\{0\}}$},
\end{equation}
where $p_c$ is the critical parameter for i.i.d.\ Bernoulli bond percolation on $\bbZ^d.$ The time-constant can actually be described more precisely via a variational formula using techniques from stochastic homogenization, see \cite{KrishnanArjun2016VFft} and \cite{CoNa,10.1214/21-AIHP1200} for recent related works. % Our goal is to find a result similar to the i.i.d.\ case \eqref{intro:mupositiveiid} when the variables $t_e,$ $e\in{E_d},$ have long-range correlations. 

For a general ergodic family $(t^\om_e)_{e\in E_d}$ the following lower bound on $d^\om$ can be shown by the same arguments as in the proof of \cite[Theorem~2.4]{ADS19}. Within such a general framework this lower bound also turns out to be optimal  up to an arbitrarily small correction in the exponent, see \cite[Theorem~2.5]{ADS19}.

\begin{prop}[\cite{ADS19}] \label{prop:FPP_lower_gen}
For any $u\in I$, suppose that $(t^\om_e)_{e\in E_d}$ is a family of ergodic random variables under $\prob^u$, taking values in $(0,\infty)\cup\{\infty\}$ such that  $\mean^u\big[(t^\om_e)^{-q}\big]<\infty$, $e\in E_d$, for any $q>d-1$. 
 Then, there exists $c_1 > 0$ such that the following holds. For $\prob^u$-a.e.\ $\om$ and  every $x \in \bbZ^d$, there exists $N_1(\om, x) < \infty$ such that  for any $y \in \bbZ^d$ with $|x-y| \geq N_1(\om, x)$,
  \begin{align*}
   d^{\omega}(x,y)
    \;\geq\;
    c_1 \, |x-y|^{1-\frac{d-1}{q}}.
  \end{align*}
\end{prop}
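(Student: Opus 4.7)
The plan is to combine Hölder's inequality, which exchanges a negative moment of the weights for a lower bound on weighted path sums, with an ergodic averaging argument on Euclidean balls. The two inputs will be deterministic, and the probabilistic content is concentrated in a single application of Wiener's multidimensional ergodic theorem.

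First, for any simple path $\pi$ of length $k=|\pi|$ consisting only of edges with finite weight, I would write $k=\sum_{e\in\pi}(t_e^\omega)^{1/(q+1)}(t_e^\omega)^{-1/(q+1)}$ and apply Hölder's inequality with conjugate exponents $(q+1)/q$ and $q+1$ to obtain
\[
k\;\leq\;\Bigl(\sum_{e\in\pi} t_e^\omega\Bigr)^{q/(q+1)}\Bigl(\sum_{e\in\pi}(t_e^\omega)^{-q}\Bigr)^{1/(q+1)},
\]
equivalent to $\sum_{e\in\pi}t_e^\omega \geq k^{(q+1)/q}\bigl(\sum_{e\in\pi}(t_e^\omega)^{-q}\bigr)^{-1/q}$. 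This reduces the problem to an upper bound on $\sum_{e\in\pi}(t_e^\omega)^{-q}$ that is uniform in the choice of path. The next step is the crude but decisive containment $\pi\subset B_k(x)$: any simple path starting at $x$ with $k$ edges lies inside the Euclidean ball of radius $k$ around $x$, which contains $O(k^d)$ edges, so $\sum_{e\in\pi}(t_e^\omega)^{-q}\leq\sum_{e\subset B_k(x)}(t_e^\omega)^{-q}$.

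Using the ergodicity of $(t_e^\omega)_{e\in E_d}$ and the hypothesis $\mean^u[(t_e^\omega)^{-q}]<\infty$, Wiener's multidimensional ergodic theorem applied to each of the $d$ coordinate edge-orbits gives, $\bbP^u$-a.s.,
\[
\frac{1}{|B_k(0)|}\sum_{e\subset B_k(0)}(t_e^\omega)^{-q}\;\longrightarrow\;d\,\mean^u\bigl[(t_e^\omega)^{-q}\bigr]\qquad\text{as }k\to\infty,
\]
and a countable union over shifts promotes this to every starting vertex $x\in\bbZ^d$. Hence for $\bbP^u$-a.e.\ $\omega$ there is some $N_1(\omega,x)<\infty$ such that $\sum_{e\subset B_k(x)}(t_e^\omega)^{-q}\leq Ck^d$ whenever $k\geq N_1(\omega,x)$, with $C$ depending only on $d$ and $\mean^u[(t_e^\omega)^{-q}]$.

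Inserting this into the Hölder inequality, any path $\pi$ from $x$ to some $y$ with $|y-x|\geq N_1(\omega,x)$ and finite total weight satisfies
\[
\sum_{e\in\pi}t_e^\omega\;\geq\;C^{-1/q}\,|\pi|^{(q+1-d)/q}\;\geq\;C^{-1/q}\,|y-x|^{\,1-(d-1)/q},
\]
using $|\pi|\geq|y-x|$ and positivity of the exponent, which is where the assumption $q>d-1$ enters in an essential way. Passing to the infimum over $\pi$ (paths containing an infinite-weight edge trivially satisfy the bound) yields the claim with $c_1=C^{-1/q}$. The main technical obstacle is the uniform-in-path upper bound on $\sum_{e\in\pi}(t_e^\omega)^{-q}$: since no independence or quantitative mixing hypothesis is available, concentration- or union-bound-type arguments path-by-path are out of reach, and one is forced to content oneself with an averaged statement on the enclosing ball, which is precisely what is responsible for the suboptimal exponent $1-(d-1)/q$ rather than linear growth.
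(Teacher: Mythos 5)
Your proof is correct and follows essentially the same strategy as the argument in \cite{ADS19} that the paper refers to: a reverse H\"older inequality along the path, the crude enclosing-ball containment $\pi\subset B_{|\pi|}(x)$ to replace a path sum by a ball sum, and Wiener's multidimensional ergodic theorem to control the ball average of $(t^\om_e)^{-q}$. One small slip in the narration: to land on the inequality $k\leq\bigl(\sum_{e\in\pi}t^\om_e\bigr)^{q/(q+1)}\bigl(\sum_{e\in\pi}(t^\om_e)^{-q}\bigr)^{1/(q+1)}$ with conjugate exponents $(q+1)/q$ and $q+1$, the factorisation should be $k=\sum_{e\in\pi}(t^\om_e)^{q/(q+1)}(t^\om_e)^{-q/(q+1)}$ rather than $\sum_{e\in\pi}(t^\om_e)^{1/(q+1)}(t^\om_e)^{-1/(q+1)}$; with the exponents as you wrote them H\"older gives $\bigl(\sum t^\om_e\bigr)^{1/(q+1)}\bigl(\sum (t^\om_e)^{-1/q}\bigr)^{q/(q+1)}$ instead. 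The conclusion and all downstream computations are nevertheless correct, and the step $|\pi|\geq|x-y|$ (legitimately using positivity of the exponent, i.e.\ $q>d-1$) correctly transfers the threshold $k\geq N_1(\om,x)$ in the ergodic average to the threshold $|x-y|\geq N_1(\om,x)$ in the statement.
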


Hence, when the variables $(t^\om_e)_{e\in{E_d}}$ are correlated, it is harder to find a criterion similar to \eqref{intro:mupositiveiid} which implies $\mu_u(x)>0.$ In fact, it is possible to find a probability $\bbP^u$ with $\bbP^u(t^\om_e=0)=0$ but $\mu_u(x)=0$ for some $x\in\bbZ^d,$ see \cite[Example~2.1]{MR1379157}. %\notes{The problem with that example is that it is not ergodic as we define it in this paper. Do we have an example of an ergodic model with positive weights for which $\mu_u=0$?}
Under some large deviations inequality, it is still possible to find conditions under which $\mu_u(x)>0,$ see  \cite[ Theorem~4.3]{MR2085613}. However, these conditions do not seem to capture the whole subcritical phase of $\{e\in{E_d}:\,t^\om_e=0\},$ contrary to the i.i.d.\ case \eqref{intro:mupositiveiid}, see the Remark at the end of  \cite[Section~4]{MR2085613}. Moreover, they do not always hold for models with long-range correlations, see for instance the condition in \cite[Theorem~7.2]{MR2085613} which does not hold for the Gaussian free field. 

A result similar to \eqref{intro:mupositiveiid} for correlated fields has also recently been obtained in \cite{DeGa}. It however requires stretched-exponential decay of the correlations, which typically do not hold for the kind of models with long-range correlations we study here. In Theorem~\ref{The:main} below, we are going to prove positivity of the time constant  for a large class of ergodic, unbounded passage times on percolation models with long-range correlations. Our conditions are very similar to the ones introduced in \cite{DRS14}, i.e.\ an ergodicity condition {\bf P1}, some monotonicity condition {\bf P2}, and a sufficiently strong decay of the correlations {\bf P3} and {\bf P3'}, respectively, see Section~\ref{sec:result} below for the precise definitions. Similar conditions on the decay of the correlations have been proved, for instance, for the discrete Gaussian free field in \cite{MR3325312}, for Ginzburg-Landau $\nabla\phi$ interface models  in \cite{2016arXiv161202385R}, and for random interlacements in \cite{MR3420516}. These results however do not provide us directly with the exact strong decay of the correlations required in {\bf P3} or {\bf P3'}, and we adapt them in Section \ref{sec:examples} to show that all these models indeed satisfy our conditions {\bf P1}--{\bf P3} (or {\bf P3'}). In particular, we prove the ergodicity condition {\bf P1} for  $\nabla\phi$-interface models with strictly convex potentials  (see Lemma~\ref{lemma:uniquenesslimitGibbsmeasure}), whereas this condition had to be assumed in \cite{2016arXiv161202385R}. Finally, our results also apply under some upper ratio weak mixing condition satisfied, for instance, by the Ising model or the massive two-dimensional Gaussian free field under certain conditions, see Section~\ref{sec:ratioweakmixing}.

In order to illustrate our results, let us now focus on the special case of the level sets of the Gaussian free field. We denote by $(\phi_x)_{x\in{{\bbZ^d}}}$ the Gaussian free field on $\bbZ^d$ in $d\geq 3$ with unit weights, see \eqref{defGFF} below, and by $h_*$ the critical parameter for the percolation of the associated level sets $\{x\in{\bbZ^d}:\,\phi_x\geq h\},$ $h\in\bbR.$ Moreover, let us extend the definition \eqref{intro:defT} of $d^{\omega}(x,y)$ and \eqref{intro:deftimeconstant} of $\mu_u$ to the case where, instead of weights  $(t^\om_e)_{e\in{E_d}}$ on the edges, we have weights  $(t^\om_x)_{x\in{\bbZ^d}}$ on the vertices by simply considering the minimal length over paths of vertices instead of paths of edges.

\begin{theorem}
\label{the:mainGFF}
Let $d\geq 3$. Assume that $(t^\om_x)_{x\in{\bbZ^d}}$ has the same law under $\bbP^h$ as $(\indicator_{\{\phi_x< h\}})_{x\in{\bbZ^d}}$.  Then, for all $x\in{\bbZ^d\setminus\{0\}}$,
\begin{equation}
\label{app:mu>0}
    \mu_h(x)>0\text{ if }h>h_*\text{ and }\mu_h(x)=0\text{ if }h<h_*.
\end{equation}
Moreover, for all $h>h_*$ and $\delta>4$, there exist positive constants $c_2$ and $c_3$ such that for all $n\in\bbN$ and $x\in{\bbZ^d\setminus\{0\}}$,
\begin{equation}
\label{app:Tfsmallercn2}
    \bbP^h\left(d^{\omega}(0,nx)\leq c_2 n\right) \leq \exp\left(-\frac{c_3n}{\log(n)^{\delta\indicator_{d=3}}}\right).
\end{equation}
\end{theorem}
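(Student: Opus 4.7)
The plan is to derive Theorem~\ref{the:mainGFF} by combining the main positivity/concentration result of the paper (applied to the site-FPP with the Bernoulli passage times $t^\omega_y=\indicator_{\{\phi_y<h\}}$) for the supercritical direction $h>h_*$, with a direct routing argument through the infinite level-set cluster for the subcritical direction $h<h_*$.

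For the vanishing statement $\mu_h(x)=0$ when $h<h_*$, the key observation is that every vertex of the infinite cluster $\cC_\infty^h$ of $\{y\in\bbZ^d:\phi_y\geq h\}$ carries passage time $0$. Let $\tau(y)$ denote the Euclidean distance from $y$ to $\cC_\infty^h$. I would concatenate a shortest $\bbZ^d$-path from $0$ to $\cC_\infty^h$, a path inside $\cC_\infty^h$ of zero FPP length connecting the entry point to a point near $nx$, and a shortest $\bbZ^d$-path from there to $nx$, obtaining
\begin{equation*}
    d^{\omega}(0,nx)\;\leq\;\tau(0)+\tau(nx).
\end{equation*}
By translation invariance $\tau(0)$ and $\tau(nx)$ have the same law under $\bbP^h$; the positive density of $\cC_\infty^h$ together with polynomial one-arm estimates for supercritical GFF level-set percolation (obtainable from the decoupling inequalities of \cite{MR3325312}) give $\mean^h[\tau(0)]<\infty$. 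Taking expectations and combining with the $L^1$-convergence in \eqref{intro:deftimeconstant} forces $\mu_h(x)=0$.

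For the positivity direction $h>h_*$ and the concentration bound \eqref{app:Tfsmallercn2}, I would invoke the general positivity/concentration theorem of the paper after verifying the hypotheses {\bf P1}--{\bf P3} for the passage times $t^\omega_y=\indicator_{\{\phi_y<h\}}$. Moment conditions such as \eqref{eq:boundonmement} are trivial since $t^\omega_y\in\{0,1\}$. Ergodicity {\bf P1} follows from the mixing of the GFF, whose covariance $g(x,y)\asymp|x-y|^{2-d}$ vanishes at infinity. Monotonicity {\bf P2} is immediate from the pointwise monotonicity of $\indicator_{\{\phi_y<h\}}$ in $\phi$. The correlation-decay {\bf P3} (or {\bf P3'}) is the substantive input and follows from the Rodriguez--Sznitman decoupling inequalities of \cite{MR3325312}: for $d\geq 4$ the Green-function decay $|x|^{2-d}$ is fast enough to yield the stronger {\bf P3'}, while for $d=3$ the borderline $|x|^{-1}$ decay only provides {\bf P3}, producing the $\log(n)^{\delta}$ loss in \eqref{app:Tfsmallercn2}. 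These verifications are packaged in Section~\ref{sec:examples}, so the positivity of $\mu_h(x)$ and the tail bound \eqref{app:Tfsmallercn2} then follow by direct application of the quantitative form of the main theorem.

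The principal technical obstacle is the precise tracking of the logarithmic exponent in dimension $d=3$: one must propagate the borderline decoupling through the renormalisation underlying the main theorem without losing more than a power $\delta>4$ of the logarithm in the tail estimate. A secondary point is that the subcritical argument implicitly requires enough integrability of $\tau(0)$ \emph{throughout} the entire supercritical range $h<h_*$, which is not automatic from mere existence of $\cC_\infty^h$ and relies on quantitative one-arm bounds for supercritical level-set percolation of the GFF.
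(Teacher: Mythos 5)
Your argument for the positivity direction $h>h_*$ is missing the key quantitative input that lets Theorem~\ref{The:main} engage at all, namely condition \eqref{intro:BLnotconnectedtoB2L}: for \emph{every} $u$ in an open interval around $0$ the one-arm probability of the level set $\{x:\phi_x\geq h-u\}$ between $Q_L$ and $Q_{2L}^c$ must tend to $0$ as $L\to\infty$. Verifying {\bf P1}--{\bf P3} alone is not sufficient; \eqref{intro:BLnotconnectedtoB2L} is the base case of the renormalizations in Lemmas~\ref{pr2:mainLemma} and~\ref{pr3:mainLemma}, and for the GFF it is exactly the sharpness statement $h_*=h_{**}$ recently proved in \cite{DuGoRoSe}. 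You need to cite this, and you need the full strength of it: because of the sprinkling in {\bf P3}, the renormalization works not at the single level $h$ but at a small interval of nearby levels, so the theorem requires the level sets to be strongly non-percolating for all $h'$ slightly below $h$, which only the equality $h_*=h_{**}$ guarantees for every $h>h_*$. This is precisely what the proof of Corollary~\ref{app:1stcorGFF} invokes, together with the reduction from vertex-FPP to edge-FPP of Remark~\ref{rk:fppvertices}(i).

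Your argument for $h<h_*$ is correct in outline but takes a harder route than the paper. As you note, it relies on $\mean^h[\tau(0)]<\infty$, which needs a quantitative bound on the absence of large holes in the supercritical cluster; this is not an immediate consequence of the decoupling inequalities and is a non-negligible extra input. The paper avoids it entirely by combining the FKG inequality with Proposition~\ref{Prop:condformu=0}: writing $\cC_\infty$ for the unique infinite cluster of $\{y:\phi_y\geq h\}$, FKG gives $\bbP^h\big(0\leftrightarrow nx\text{ in }\{y:\phi_y\geq h\}\big)\geq\bbP^h(0\in\cC_\infty)\,\bbP^h(nx\in\cC_\infty)=\bbP^h(0\in\cC_\infty)^2>0$ uniformly in $n$, and then Proposition~\ref{Prop:condformu=0} yields $\mu_h(x)=0$ with no tail estimate at all. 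One further small inaccuracy: the paper verifies condition {\bf P3} (not {\bf P3'}) for the GFF in all $d\geq 3$ (Proposition~\ref{app:GFFverifiesall}); the logarithmic loss in $d=3$ is not caused by falling back from {\bf P3'} to {\bf P3}, but by the definition of $g_P^{(\delta)}$ in \eqref{eq:defgp} with $a_P=d-2$ and $\eps_P=1$, which makes $d=3$ ($a_P=1$, so $\exp(a_P\log L)=L$) the borderline case where the polylogarithmic factor cannot be absorbed.
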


Theorem \ref{the:mainGFF} can thus be seen as the equivalent of \eqref{intro:mupositiveiid} but for the level sets of the Gaussian free field, except for the unknown and probably complicated case $h=h_*$. Let us now quickly comment on the proof, and we refer to Theorem~\ref{The:main} and Corollary \ref{app:1stcorGFF} for details. The case $h<h_*$ follows from the FKG inequality together with a simple adaptation of  \cite[Proposition~6]{MR3531712}, see Proposition \ref{Prop:condformu=0} below. When $h>h_*,$ it was recently proved in \cite{DuGoRoSe} that
\begin{equation}
\label{eq:equalitycriticalparameter}
    \lim\limits_{L\rightarrow\infty}\bbP\left(Q_L\leftrightarrow Q_{2L}^c\text{ in } \big\{x\in{\bbZ^d}:\,\phi_x\geq h\big\}\right)=0,
\end{equation}
where  $Q(x,L) \ldef\{y\in{\bbZ^d}:|x-y|_{\infty}\leq L\}$ denotes the cube centered at $x$ and with radius $L,$ and we abbreviate $Q_L=Q(0,L).$ The equality \eqref{eq:equalitycriticalparameter} will be the starting point of a renormalization scheme similar to the one in \cite[Section~7]{MR3420516}, see Section~\ref{sec:renormalization}. In particular, we will define a sequence of scales $L_k$ of order $2^k,$ $k\in\bbN,$ so that the FPP distance between $Q_{L_{k+1}}$ and $Q_{2L_{k+1}^c}$ will be larger than the sum of the FPP distance between $Q(x,L_k)$ and $Q(x,2L_k)^c$ and between $Q(y,L_k)$ and $Q(y,2L_k)^c$, for some distant enough vertices $x$ and  $y$. Moreover, it  follows from the decoupling inequalities in \cite{MR3325312} that on $Q(x,2L_k)$ and $Q(y,2L_k)$ the field behaves independently under an event with high probability. Using \eqref{eq:equalitycriticalparameter} for the initialization, one can thus show recursively that the FPP distance between $Q(x,L_k)$ and $Q(x,2L_k)^c$ is of order at least $2^k$ with high probability, and \eqref{app:Tfsmallercn2} follows readily. 

Note that once \eqref{eq:equalitycriticalparameter} holds, the only property of the Gaussian free field that we used are the decoupling inequalities from \cite{MR3325312}. Therefore as explained above, this reasoning can be adapted to any model satisfying appropriate decoupling inequalities, and we refer to Theorem~\ref{The:main} for details.

One can use the results from \cite{MR1074741} to deduce from Theorem~\ref{the:mainGFF} a shape theorem, similar to the i.i.d.\ case in \cite{MR624685}: for all $h>h_*,$ there exists a compact and convex deterministic set $K_h$ with non-empty interior such that, under the probability measure $\bbP^h$ from Theorem~\ref{the:mainGFF},  
\begin{equation*}
    \bbP^h\Big((1-\eps)K_h\subset\frac{B_{d^\omega}(t)}{t}\subset(1+\eps)K_h\text{ for all large }t\Big)=1,
\end{equation*}
where $B_{d^\omega}(t)=\{x\in{\bbZ^d}:d^{\omega}(0,x)\leq t\}$ is the ball of radius $t$ with respect to the FPP distance,  see Corollary~\ref{cor:shape}. Again this can be extended to more general percolation models with long-range correlations.

\subsection{Green kernel bounds for random Schr\"odinger operators}
We now present our second main result which concerns the random conductance models (RCMs) with killing,  discussed in detail in Subsections~\ref{sec:Agmon_result} and \ref{sec:Agmon} below. 
On the graph $(\bbZ^d,E_d),$ $d\geq 2,$ let $(a^\om(e))_{e\in E_d}$ be a family of non-negative ergodic and unbounded conductances and  $(\kappa^\om(x))_{x\in{\bbZ^d}}$ be a positive random killing measure (or potential). Then consider a random walk   $X = (X_t)_{t\geq 0}$ on $\bbZ^d$  with generator $\cL^{\om}$ given by a random Schr\"odinger operator of the form
\begin{align} \label{eq:defL_intro}
\big(\cL^{\om} f)(x)
  \;=\; 
  \sum_{y \sim x} a^\om(x,y) \, \big(f(y) - f(x)\big) - h \, \kappa^\om(x) \, f(x),
\end{align}
where $h\in [0,1]$ is a scalar, $x\sim y$ means $\{x,y\}\in{E_d}$ and $a^{\om}(x,y)$ is short for $a^{\om}(\{x,y\})$. In other words, $X$ is the time-homogeneous continuous-time Markov process  on $\bbZ^d$, which jumps from $x$ to $y$ at rate $a^\om(x,y)$ and is killed when visiting $x$ at rate $h \kappa^\om(x)$. 
Our goal is to establish an exponential decay estimate for the Green's function of RCMs with ergodic unbounded conductances and killing measures with a precise rate of convergence as $h\searrow0$.
In Section~\ref{sec:Agmon} we show a general upper bounds on the Green kernel $g^{\om}(x,y)$ (see \eqref{eq:defgreen} for its definition) associated with a random walk with fixed coefficient fields $a^\om$ and $\kappa^\om$, see Theorem~\ref{thm:Agmon}, which in the special case of random ergodic conductances and killing measures on $(\bbZ^d, E_d)$ reads as follows.

\begin{theorem} \label{thm:Agmon_intro}
Let $d\geq 3$. Suppose that $(a^\om(e))_{e\in E_d}$ and $(\kappa^\om(x))_{x\in \bbZ^d}$ are stationary and ergodic with respect to space shifts on a probability space $(\Omega, \cF,\prob)$,  and suppose there exist $p,q\in (1,\infty]$ with $1/p+1/q<2/d$ such that
\begin{align} \label{eq:Agmon_moment}
C_{\mathrm{int}} \ldef \max\Big\{
\mean\big[a^\om(e)^p \big],
 \mean\big[ a^\om(e)^{-q}],
\mean\big[ \kappa^\om(0)
^p \big],
\mean\big[ \kappa^\om(0)^{-1} \big]
  \Big\}
    < \infty, \quad e\in E_d.
\end{align}
Then, there exist $\gamma=\gamma(d,p,q)\in (1,\infty)$,   $c_{4}=c_{4}(d,p,q,C_{\mathrm{int}})\in (0,\infty)$ and $c_5=c_5(d)\in{(0,1)}$ such that  the following holds. For $\prob$-a.e.\ $\om$ and any $x\in \bbZ^d$, there exist $N_{2}(\om,x)$ such that for all $y\in \bbZ^d$ with $ |x-y|  \geq N_{2}(\om,x)$ and all $h\in{[0,1]},$
\begin{align} \label{eq:Agmon_intro}
g^\om(x,y)  \; \leq \;  c_{4} \,
F_\gamma \big(h \, |x-y|^2 \big) \, |x-y|^{2-d} \!\max_{z\in{B^{\om}(x,n)^c}}\Big(e^{-c_5 \sqrt{h}  d_\kappa^\om(x,z)}\Big),
\end{align}
with $n=|x-y|/4$, $F_\gamma(r)\ldef (1+r)^\ga (1+1/r)^{1/2}$. Here the distance $d_{\kappa}^{\om}$ is given by
\begin{align} \label{eq:def_dKappa}
  d_{\kappa}^{\om}(x,y)
  \ldef
  \inf_{\gamma\in \Gamma_{xy}^{\om}}
  \Bigg\{
    \sum_{i=0}^{l_{\gamma}-1} 
    \bigg( 
      1 \wedge \frac{\kappa^\om(z_i) \wedge \kappa^\om(z_{i+1})}{a^{\om}(z_i,z_{i+1})}
    \bigg)^{\!\!1/2}
  \Bigg\}, \quad x,y\in \bbZ^d,
\end{align}
where $\Ga_{xy}^{\om}$ denotes the set of all nearest-neighbor paths $\ga = (z_0, \ldots, z_{l_\ga})$ in $\bbZ^d$  connecting $x$ and $y$.
\end{theorem}

Note that \eqref{eq:Agmon_intro} provides an exponential decay estimate governed by the $d_{\kappa}^{\om}$-distance defined in \eqref{eq:def_dKappa}, which is adapted to the conductances and the killing measure of the random walk.
For the proof of Theorem~\ref{thm:Agmon_intro}, given in Subsection~\ref{sec:Agmon}, we  follow the strategy established by Agmon in \cite{Ag82} to show exponential decay bounds on eigenfunctions of Schr\"odinger operators in $\mathbb{R}^d$. In this paper we transfer the method into the discrete setting of graph endowed with unbounded weights and killing measures only satisfying an integrability condition.
The main idea is to consider a perturbation $v=u\cdot \varphi$ of a harmonic function $u$, where $\varphi$ is contained in a certain class of test functions. Then, the key step, see Lemma~\ref{lem:agmon} below, is to show that, for any $B'\subset B \subset \bbZ^d$, the $\ell^2$-norm of $v$ on $B'$ weighted by $\kappa$ can be bounded from above by a weighted $\ell^2$-norm of $v$ on $B\setminus B'$.  The exponential decay is then obtained by choosing a suitable test function $\varphi$. The resulting $\ell^2$-bound on $e^{c d^\om_\ka(x,\cdot)} g^\om(x,\cdot)$ is then improved to a pointwise bound by a maximal inequality for the perturbed harmonic function $v$, see Proposition~\ref{prop:max_ineq} below. For the proof of the maximal inequality we use a Moser iteration scheme for discrete operators with degenerate coefficient similar to the one developed in \cite{ADS15, ADS16, ADS19}. Those iteration schemes require to choose linear cutoff functions between balls of various sizes, which is not possible a-priori for balls with respect to the $d_\kappa^\om$-distance. Then, as we need to use Euclidean balls instead, this leads to the additional term $F_\gamma \big(h \, |x-y|^2 \big)$ in \eqref{eq:Agmon_intro}.
 In a sense, Agmon's technique may be regarded as an elliptic version of Davies' perturbation method, commonly used to  obtain Gaussian heat kernel bounds, see e.g.\  \cite{ADS16a,ADS19} and references therein. Recently, Agmon's method has been used in the graph setting to study eigenfunctions of discrete Schr\"odinger operators in \cite{KP21}.

\subsection{Links between FPP and RCM}

In this paper we discuss two applications of our first passage percolation results to the random conductance model, presented in detail in Section~\ref{sec:RCM}. Our main illustration of the link between these two models will be Corollary~\ref{cor:optidecay} below, in which we use our main FPP result in Theorem~\ref{the:mainGFF} to bound the distance \eqref{eq:def_dKappa} governing the exponential decay in our main Green kernel bound \eqref{eq:Agmon_intro}.

As a first immediate application of our first passage percolation results, discussed in Subsection~\ref{sec:RCMhk}, we consider the RCM with a general speed measure $\theta$. For this model, we improve under certain conditions the heat kernel upper bounds obtained in \cite{ADS19}  for random walks on weighted graphs with unbounded conductances satisfying some integrability conditions, see Theorem~\ref{thm:hke} below. Similarly as in Theorem~\ref{thm:Agmon_intro},  the exponential decay in those bounds is governed by the so-called intrinsic  distance $d_{\theta}^{\omega}(x,y)$, see \eqref{eq:def_intMetric}, instead of the Euclidean distance $|x-y|$, but with a polynomial pre-factor in terms of $|x-y|$.
 When the conductances are random, $d_{\theta}^{\omega}$ can be seen as a first passage percolation distance, as defined in \eqref{intro:defT}. Hence, in view of Proposition~\ref{prop:FPP_lower_gen}, the bounds in \cite{ADS19} are not of Gaussian type in a general ergodic framework. 
As indicated by Theorem~\ref{the:mainGFF}, when, for instance, the weights depend on the Gaussian free field, our methods however show that the distance $d_{\theta}^{\omega}$ and the Euclidean distance are comparable. Hence, as an immediate  though important consequence of our results combined with the heat kernel bounds in \cite{ADS19}, we get genuine Gaussian  upper heat kernel estimates, see Theorem~\ref{thm:hkeGauss}, which significantly extend the upper bounds for simple random walks on correlated percolation clusters obtained in \cite{Sa17} under slightly different conditions.

For our second application, notice that the distance function $d_\kappa^\om$ in Theorem~\ref{thm:Agmon_intro} is an FPP distance, that is it is of the form \eqref{intro:defT} with $t_e^{\om}$ given by $(t_e^{\om})^2=1\wedge\frac{\kappa^{\om}(x)\wedge\kappa^{\om}(y)}{a^{\om}(e)}$ for each $e=\{x,y\}\in{E_d}.$ Moreover, under the conditions from Theorem~\ref{thm:Agmon_intro}, the time constant $\mu_{\kappa}^{\om}(x),$ $x\in{\bbZ^d\setminus\{0\}},$ defined similarly as in \eqref{intro:deftimeconstant} but for the previous choice of weights under the probability $\bbP$, exists and is finite. When this time constant is positive,  for instance under the conditions from Theorem~\ref{the:mainGFF}, the metric $d_\kappa^{\om}$ is comparable to the Euclidean metric, and \eqref{eq:Agmon_intro} directly yields the following exponential decay with respect to the Euclidian distance for the Green kernel.

\begin{coro}
\label{cor:optidecay}
Under the conditions from Theorem~\ref{thm:Agmon_intro}, if there exists $\mu>0$ such that $\mu_{\kappa}^{\om}(x)>\mu$ for all $x\in{\bbZ^d\setminus\{0\}}$, then there exist $c_6=c_6(d,p,q,C_{\mathrm{int}})$ and $c_7=c_7(d,\mu)$  and for $\prob$-a.e.\ $\om$ and any $x\in \bbZ^d$, there exist $N_{3}(\om,x)$ such that for all $y\in \bbZ^d$ with $ |x-y|  \geq N_{3}(\om,x)$ and all $h\in{[0,1]},$
\begin{equation}
\label{intro:green_bound}
    g^{\om}(x,y)\leq c_6 |x-y|^{2-d}\exp\Big(-c_{7} \sqrt{h} \, |x-y|\Big).
\end{equation}
In particular, \eqref{intro:green_bound} is satisfied when $a^\om(y,z)=e^{\beta (\phi_y+\phi_z)}$  and $\kappa^{\om}(y)=e^{\beta \phi_y}$, $\beta>0$,  where $(\phi_x)_{x\in \bbZ^d}$  denotes again the Gaussian free field on $\bbZ^d$, $d\geq 3$.
\end{coro}

We refer to the proof of Theorem~\ref{thm:GKdecay_killing} as to how \eqref{intro:green_bound} can be deduced from \eqref{eq:Agmon_intro} when the distance $d_{\kappa}^{\om}$ is comparable to the Euclidian distance, that is when $\mu_{\kappa}^{\om}$ is positive. Moreover, when the weights and killing measure depend on the Gaussian free field as below \eqref{intro:green_bound}, the positivity of the time constant $\mu_{\kappa}^{\om}$ can either be deduced from Theorem~\ref{the:mainGFF} or directly proved, see below Remark~\ref{rk:optidecay} for details.

The bound \eqref{intro:green_bound}  still holds when $a^{\om}$ and $\kappa^{\om}$ are other functions of the Gaussian free field than the one considered in Corollary~\ref{cor:optidecay}, even if the weights $a^{\om}$ are not strictly positive under certain conditions. For instance one can take $a^{\om}(x,y)=\indicator_{\{\phi_x\geq h,\phi_y\geq h\}}$ for $h<h_*,$ see Remark~\ref{rk:finalGFF}-(i). The  choice of the function in Corollary~\ref{cor:optidecay} is however particularly interesting as it has been studied,  for instance, for the two-dimensional pinned Gaussian free field in \cite{MR4050092} due to its link to Liouville Brownian motion. Moreover, similarly as for Theorem~\ref{the:mainGFF}, \eqref{intro:green_bound} can be proved not only for the Gaussian free field, but also when the conductances and killing rates are random and ergodic, and the passage times in \eqref{eq:def_dKappa} satisfy some stochastic monotonicity condition and a  weak decoupling inequality for monotone events. In particular, our method to obtain \eqref{intro:green_bound} applies to all the examples from Section~\ref{sec:examples}, and we refer to Theorem~\ref{thm:GKdecay_killing} for more details.

The main interest in the bound \eqref{intro:green_bound} is the explicit exponential decay in $h$ as $h\searrow0$ of the form
 \begin{align} \label{eq:exp_decay}
    \limsup_{|x| \to \infty} \frac{1}{|x|} \log g^\om(0,x) \leq - c \sqrt{h}.
\end{align}
This scaling of the Lyapunov exponent of order $\sqrt{h}$ as $h$ tends to zero is optimal and refers to diffusive behavior. Precise asymptotics for the Green kernel  of the Laplacian on $\mathbb{R}^d$ with a deterministic periodic potential have been shown in \cite{Schr88} by using large deviation techniques. For simple random walks on $\bbZ^d$ associated with the discrete Laplacian with non-negative i.i.d.\ potentials 
directional exponential decay of the Green's function has been derived in \cite{Ze98}, and exact asymptotics of the Lyapunov exponents have been obtained in \cite{KMZ11}.  In particular, it is shown there that both the quenched and the annealed Lyapunov exponents scale  like $c \sqrt{h}$ as $h$ tends to zero. This has been extended to the case when the potentials are not integrable in \cite{MM13,MM15}. In a sense, our Green kernel bound in Corollary~\ref{cor:optidecay}  partially extends the results in \cite{KMZ11} to random walks under random conductances with correlated potentials. Parabolic equations involving the discrete Laplacian and random potentials have been intensively studied under the banner of the parabolic Anderson model, see \cite{Ko16} for an overview.

Finally, we end this introduction by pointing out that the Green kernel of RCMs with unbounded ergodic conductances and with random killing rates can be used to analyse the correlations in certain supersymmetric spin systems.

\begin{example}[Supersymmetric spin models]
\label{ex:supersymmetric}
The Green kernel $g^\om(x,y)$ of an RCM of the form \eqref{eq:defL_intro} with random killing  appears in a respresentation  for the two-point function of the supersymmetric hyperbolic sigma model, or $\mathbb{H}^{2|2}$-model, see \cite{BH21} for a recent survey. The $\mathbb{H}^{2|2}$-model is a spin model introduced in \cite{Zi91} as a more tractable model for the  Anderson transition. The first mathematical results for $\mathbb{Z}^d$ were obtained in \cite{DSZ10}.
  Its two-point function can be represented as 
  \begin{align} \label{eq:twoPoint}
    G_{\beta,h}(x,y) = \frac 1 \beta \, \mean\big[ {a^\omega(x,y) \,  g^\omega(x,y)}\big].
  \end{align}
 Here $g^\omega(x,y)$ is the Green kernel of an RCM with conductances $a^\omega(x,y)= \beta e^{t(x)+t(y)}$ and killing rates $\kappa(x)= e^{t(x)}$ for a certain random field $(t(x))_{x\in \bbZ^d}$  depending on parameters $\beta >0$ (with the interpretation of inverse temperature)  and $h>0$ (with the interpretation of an external field).   For $d\geq 3$ and $\beta \geq \beta_0$,
  strong bounds on the field $(t(x))$ have been obtained in \cite{DSZ10}.
  The correlation function $G_{\beta,h}$ is also exactly the two-point function of the  vertex-reinforced jump process (VRJP) with initial weights $\beta$ and killing rate $h$ (see  \cite{BHS19}).  
It is an  interesting  yet challenging open problem  to transfer the exponential decay with rate $\sqrt{h}$ as $h\to 0$ in \eqref{eq:exp_decay}, obtained in Theorem~\ref{thm:GKdecay_killing} below for the quenched Green kernel $g^\om(x,y)$ under stronger assumptions, to the two-point function $G_{\beta,h}(x,y)$.  Similar estimates are expected very generally for spin models with continuous symmetry   such as $O(n)$-models
  at low temperature, but the best results for any model give rate $h$ instead of $\sqrt{h}$.   The square root corresponds to diffusive behavior in the interpretation of the $\mathbb{H}^{2|2}$ model   as a toy model for the Anderson model. For the $\mathbb{H}^{2|2}$-model, the improvement from $h$ to $\sqrt{h}$ is of particular
physical relevance because it corresponds to diffusive time evolution in the motivation of the $\mathbb{H}^{2|2}$-model as a model for random band matrices.
\end{example}

The rest of the paper is organized as follows. In Section~\ref{sec:FPP} we define a general set of conditions { \bf P1--P3} (and {\bf P3'}, respectively) under which we then derive the positivity of the time constant together with the comparability of FPP and Euclidean distances and a shape theorem. The applications to the random conductance model are discussed in Section~\ref{sec:RCM}. Finally, in Section~\ref{sec:examples} we give a number of examples of relevant models  which fulfill the required conditions.
Throughout the paper we write $c$ or $c'$ to denote a positive constant which may change on each appearance. Constants denoted $c_i$ or named constants (for instance $C_{\mathrm{fpp}}$) will remain the same. We use the same convention for random constants $N_i$ and $N$, respectively.

\section{First passage percolation with long-range correlations}
\label{sec:FPP}
\subsection{Results}
\label{sec:result}
We start by introducing the setup and notation under which we are going to work. We fix some measurable and partially ordered spaces $(\Omega_E,\cF_E)$ and $(\Omega_V,\cF_V)$.
For each $e\in{E_d}$ let $(\Omega_e,\cF_e)$ be a copy of $(\Omega_E,\cF_E)$, and for each $x\in{\bbZ^d}$ let  $(\Omega_x,\cF_x)$ be a copy of $(\Omega_V,\cF_V)$. Moreover, set $\Omega\ldef \prod_{e\in{E_d}}\Omega_e\times\prod_{x\in{\bbZ^d}}\Omega_x$ and  let $\cF$ be the associated product $\sigma$-algebra. For $\omega\in{\Omega},$ $e\in{E_d}$ and $x\in{\bbZ^d},$ we denote by $\omega_e$ and $\omega_x$ the respective canonical projections on $\Omega_e$ and $\Omega_x.$ Let us further from now on fix some set $\mathcal{N}\subset E_d\cup\bbZ^d$ around the origin, write $\Omega_{\mathcal{N}}=\prod_{\bar{e}\in{\mathcal{N}}}{\Omega_{\bar{e}}}$ and  $\mathcal{N}+x=\{x+\bar{e}:\,\bar{e}\in{\mathcal{N}}\}$ for all $x\in{\bbZ^d}.$ We also fix a measurable function $\mathbf{t}:\Omega_{\mathcal{N}}\times\Omega_{\mathcal{N}}\times\Omega_E\rightarrow[0,\infty)\cup\{\infty\},$ symmetric in the first two coordinates, such that
\begin{equation}
\label{eq:temonotone}
    (\omega_{\mathcal{N}}^1,\omega_{\mathcal{N}}^2,\omega_E)\mapsto \mathbf{t}\big(\omega_{\mathcal{N}}^1,\omega_{\mathcal{N}}^2,\omega_E\big)\text{ is monotone},
\end{equation}
and let
\begin{equation}
\label{eq:defte}
    t^\om_e:=\mathbf{t}\big(\omega_{|\mathcal{N}+x},\omega_{|\mathcal{N}+y},\omega_{e}\big), \qquad  e=\{x,y\}\in{E_d}, \, \omega\in \Omega.
\end{equation}
Here, we endow $[0,\infty)\cup\{\infty\}$ with the Borel $\sigma$-algebra generated by the intervals $[x,\infty)$ and $[x,\infty)\cup\{\infty\},$ $x\in{[0,\infty)}.$ We further equip $\Omega$ with a group of space shifts $\big\{\tau_z : z \in \bbZ^d\big\}$ so that
\begin{align} \label{eq:def:space_shift}
  \omega_{\bar{e}}\circ\tau_z=\omega_{\bar{e}+z}
 \qquad \forall\, \bar{e}\in {E_d\cup\bbZ^d}.
\end{align}

The canonical choice of parametrization will simply be $\Omega_E=[0,\infty)\cup\{\infty\}$ and $\mathbf{t}$ the projection on the third coordinate, so that $t^\om_e$ is simply the projection on $\Omega_e$. Our more general setup is particularly adapted to the purposes of Section~\ref{sec:RCM}, see for instance Assumption~\ref{ass:moment}-(i) therein. Moreover, when $e=\{x,y\},$ allowing $t^\om_e$ to also depend on $\omega_{|\mathcal{N}+x}$ and $\omega_{|\mathcal{N}+y}$ will let us study first passage percolation on the vertices by considering $\mathcal{N}=\{0\}$ and $\mathbf{t}(\omega_{V}^1,\omega_{V}^2,\omega_E)=\frac12(f(\omega_V^1)+f(\omega_V^2))$, for some measurable and monotone function $f:\Omega_V\rightarrow[0,\infty)\cup\{\infty\},$ see Remark \ref{rk:fppvertices}-(i) for details.

%\textcolor{red}{Please introduce space shifts here by updating the following. 
 %We equip $\Omega$ with a group of space shifts $\big\{\tau_z : z \in \bbZ^d\big\}$ %defined as
%
%\begin{align} \label{eq:def:space_shift}
%  (\tau_z \om)(x, y) \; := \; \om(x+z, y+z),
% \qquad \forall\, \{x,y\} \in E_d.
%\end{align}
%}

For some open interval $I\subset\bbR,$ we let $(\bbP^u)_{u\in{I}}$ be a family of probability measures on $(\Omega,\cF)$ and write $\mean^u$ for the expectation  with respect to $\prob^u$, $u\in I.$ For each $u\in{I},$ using Kingman's subadditive ergodic theorem \cite{MR254907, Li85}, the existence of the time constant $\mu_u$ from \eqref{intro:deftimeconstant} is guaranteed if the family $(t_e^\om)_{e\in E_d}$ is ergodic (cf.\ condition \textbf{P1} below) and 
\begin{equation}
    \label{eq:boundonmement}
    \bbE^u[t^\om_e]<\infty\text{ for all }e\in{E_d}.
\end{equation}
Note that \eqref{eq:boundonmement} implies in particular that $t^\om_e$ is finite $\bbP^u$-a.s., but we will also sometimes consider first passage percolation models where $t^\om_e$ can be infinite, and it is then not clear whether the time constant $\mu_u$ exists or not. When the variables $(t^\om_e)_{e\in{E_d}}$ are independent, we refer to \cite{MR2085613} and \cite{MR3531712} for conditions which imply the existence of the time constant. Let us now start with a criterion under which $\mu_u(x)=0,$ following the proof of \cite[Proposition~6]{MR3531712}. For $A,B\subset\bbZ^d$ and $C\subset E_d$ (or sometimes $C\subset\bbZ^d$) we write $\{A\longleftrightarrow B\text{ in }C\}$ for the event that there exists a connected path $\pi\subset C$ starting in $A$ and ending in $B.$ 

\begin{prop}
\label{Prop:condformu=0}
For any $u\in{I}$, assume \eqref{eq:boundonmement} and that $(t_e^\om)_{e\in E_d}$ is ergodic under $\prob^u$. Then, for all $x\in{\bbZ^d\setminus\{0\}}$,
\begin{equation*}
    \lim_{n\rightarrow\infty}\bbP^u\big(0\longleftrightarrow nx\text{ in }\{e\in{E_d}:\,t^\om_e=0\}\big)>0\quad\Longrightarrow\quad\mu_u(x)=0.
\end{equation*}
\end{prop}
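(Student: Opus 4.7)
My plan is a short argument by contradiction: I will suppose $\mu_u(x)>0$ and show this is incompatible with the stated hypothesis. The key elementary observation is that on the event $A_n\ldef\{0\leftrightarrow nx\text{ in }\{e\in E_d:t^\om_e=0\}\}$, any connecting path of zero-weight edges witnesses $d^\om(0,nx)=0$, and therefore
\[
\bbP^u\bigl(d^\om(0,nx)=0\bigr)\;\geq\;\bbP^u(A_n).
\]
By hypothesis the right-hand side admits a strictly positive limit as $n\to\infty$, so $\liminf_{n\to\infty}\bbP^u(d^\om(0,nx)=0)>0$.

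Under \eqref{eq:boundonmement} and the ergodicity of $(t^\om_e)_{e\in E_d}$ under $\bbP^u$, Kingman's subadditive ergodic theorem (as already invoked in the excerpt to justify \eqref{intro:deftimeconstant}) yields the convergence $d^\om(0,nx)/n\to\mu_u(x)$ both $\bbP^u$-almost surely and in $L^1(\bbP^u)$; in particular, convergence in probability holds. Assuming $\mu_u(x)>0$ and choosing the threshold $\mu_u(x)/2>0$, this gives
\[
\bbP^u\Bigl(d^\om(0,nx)<\tfrac{n\mu_u(x)}{2}\Bigr)\;\longrightarrow\;0\qquad\text{as }n\to\infty.
\]
Since $\{d^\om(0,nx)=0\}\subseteq\{d^\om(0,nx)<\tfrac{n\mu_u(x)}{2}\}$, this forces $\bbP^u(d^\om(0,nx)=0)\to 0$, contradicting the lower bound above. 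Hence $\mu_u(x)=0$, as desired.

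I do not foresee a substantive obstacle: the whole proof reduces to the incompatibility of a strictly positive asymptotic mass at $\{d^\om(0,nx)=0\}$ with deterministic linear growth of $d^\om(0,nx)$. The only non-trivial input, the convergence in probability of $d^\om(0,nx)/n$ to $\mu_u(x)$, is a standard consequence of Kingman's theorem already recalled before Proposition~\ref{Prop:condformu=0}. A minor point to verify before applying Kingman is that the required integrability $\bbE^u[d^\om(0,nx)]\leq Cn$ follows from \eqref{eq:boundonmement} by bounding $d^\om(0,nx)$ through the cost of any fixed axis-parallel lattice path from $0$ to $nx$ and using shift invariance of $\bbP^u$.
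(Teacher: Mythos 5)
Your proof is correct and takes essentially the same approach as the paper: both argue by contradiction from $\mu_u(x)>0$, noting that the event $\{0\leftrightarrow nx\text{ in }\{t^\om_e=0\}\}$ forces $d^\om(0,nx)=0$, and then using the a.s.\ (hence in-probability) convergence $d^\om(0,nx)/n\to\mu_u(x)$ from Kingman's theorem to show $\bbP^u(d^\om(0,nx)=0)\to 0$. The paper packages the contradiction via a continuous bounded test function $f$ with $f(0)=1$, $f(\mu_u(x))=0$ and dominated convergence, whereas you invoke convergence in probability directly with threshold $\mu_u(x)/2$; these are cosmetically different but mathematically the same argument.
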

\begin{proof}
Let us assume that $\mu_u(x)>0,$ then there exists a continuous and bounded function $f:[0,\infty)\rightarrow[0,\infty)$ with $f(0)=1$ and $f(\mu_u(x))=0.$ Therefore
\begin{equation}
\label{eq:proofProp1.1}
    \bbE^u\left[f\left(\frac{d^{\omega}(0,nx)}{n}\right)\right]\geq\bbP^u(d^{\omega}(0,nx)=0)=\bbP^u\big(0\longleftrightarrow nx\text{ in }\{e\in{E_d}:\,t^\om_e=0\}\big).
\end{equation}
Moreover it follows from dominated convergence and \eqref{intro:deftimeconstant} that the left-hand side of \eqref{eq:proofProp1.1} converges to 0, and we can conclude.
\end{proof}

Proposition \ref{Prop:condformu=0} states that $\mu_u(x)=0$ when the set $\{e\in{E_d}:\,t^\om_e=0\}$ percolates in a strong enough sense, and thus provides us with a result similar to the converse implication in \eqref{intro:mupositiveiid}. Let us now present our main result, which corresponds to a class of percolation models with long-range correlations for which a result similar to the direct implication in \eqref{intro:mupositiveiid} holds. 

For each $x,y\in{\bbZ^d}$ and $L\in{\bbN}$,  we write $d(x,y)=|x-y|_{\infty}$ and set $Q(x,L)\ldef \{y\in{\bbR^d}:\,d(x,y)\leq L\}$ and $Q(L) \ldef Q(0,L)$. Further, we denote by $\bar{Q}(x,L)$ the union of $Q(x,L)$ and the set of edges with both endpoints in $Q(x,L)$. For each $A,B\subset{E_d\cup\bbZ^d},$ we define $d(A,B)$ as the minimal distance $d(x,y)$ over all $x,y\in{\bbZ^d}$ such that $x$ is either in $A\cap\bbZ^d$ or an endpoint of an edge in $A\cap E_d,$ and $y$ is either in $B\cap\bbZ^d$ or an endpoint of an edge in $B\cap E_d.$  Moreover, for each $u\in{I},$ we define $\hat{\bbP}^u={\bbP^u}\otimes\bbP^u,$ and for each $\bar{e}\in \bbZ^d\cup E_d$ let $\omega_{\bar{e}}^{(i)}$ be the canonical projections of $\omega=(\omega^{(1)},\omega^{(2)})\in{\Omega\times \Omega}$ onto the edge or vertex $\bar{e}$ of the $i$-th coordinate. In other words, $\big(\omega_e^{(1)},\omega_x^{(1)}\big)_{e\in E_d, x\in \bbZ^d}$ and $\big(\omega_e^{(2)},\omega_x^{(2)}\big)_{e\in E_d, x\in \bbZ^d}$ under $\hat{\bbP}^u$ are independent copies of  $(\omega_e,\omega_x)_{e\in E_d, x\in \bbZ^d}$ under ${\bbP}^u.$ Finally for each $F\subset\bbZ^d\cup E_d,$ we say that an event $A\subset\prod_{e\in{F}}\Omega_e$ is increasing if for all $\omega\in{A}$ and $\omega'\in{\prod_{e\in{F}}\Omega_e}$ such that $\omega_e\leq \omega'_e$ for all $e\in{F},$ we have $\omega'\in{A}.$ We now introduce our conditions.

\begin{changemargin}{4mm}
{\bf P1 }{\it(Invariance and ergodicity)}.
    For each $u\in{I},$ $\bbP^u$ is invariant and ergodic with respect to lattice shifts, that is $\mathbb{P}^u\circ \tau_x^{-1}=\mathbb{P}^u$ for all $x\in\mathbb{Z}^d$ and, for each $x\in{\bbZ^d},$ $\mathbb{P}^u(A)\in\{0,1\}$ for any $A\in\mathcal{F}$ such that $\tau_x(A)=A.$
\end{changemargin}

\begin{changemargin}{4mm}
{\bf P2 }{\it(Monotonicity)}.
    For all increasing functions $f:\Omega\rightarrow[0,1]$ and $u,u'\in{I}$ with $u\leq u',$ $\bbE^{u}[f]\leq\bbE^{u'}[f]$.
 \end{changemargin}

\begin{changemargin}{4mm}
{\bf P3} {\it(Decoupling inequality)}. 
After possibly extending the probability space underlying $\bbP^u$, we assume there exist positive constants $R_P,L_P<\infty,$ $\xi_P>1,$ $\eps_P,a_P,C_P>0$ and $\chi_P>0$ such that for all $R\geq R_P,$ $L\geq L_P,$ $x_1,x_2\in{\bbZ^d}$ with $d\big(\bar{Q}(x_1,L^{\xi_P}),\bar{Q}(x_2,L^{\xi_P})\big)\geq RL$, and any $u,\hat{u}\in{I}$ with 
\begin{equation*}
    u\geq\hat{u}+R^{-\chi_P},
\end{equation*}
the following holds.  There exists an event $B$ with 
\begin{equation}
\label{eq:boundonprobaB^c}
    \bbP^{\hat{u}}(B^c)\leq \exp\big(-C_Pf_P(L)\big),
\end{equation} 
with $f_P:[0,\infty)\rightarrow[0,\infty)$ satisfying $f_P(L)\geq \exp\big(a_P\log(L)^{\eps_P}\big),$ such that for all increasing events $A\subset\prod_{e\in{{\bar{Q}(x_1,L^{\xi_P})}}}\Omega_{e}\times \prod_{e\in{\bar{Q}(x_2,L^{\xi_P})}}\Omega_{e},$
\begin{equation}
\label{eq:P3'replacebyindependentinc}
\begin{split}
    &\bbP^{\hat{u}}\big(B,\big((\omega_e)_{e\in{\bar{Q}(x_1,L^{\xi_P})}},(\omega_e)_{e\in{\bar{Q}(x_2,L^{\xi_P})}}\big)\in{A}\big)
    \\&\leq\hat{\bbP}^{{u}}\big(\big((\omega_e^{(1)})_{e\in{\bar{Q}(x_1,L^{\xi_P})}},(\omega_e^{(2)})_{e\in{\bar{Q}(x_2,L^{\xi_P})}}\big)\in{A}\big);
\end{split}
\end{equation}
and there exists an event $B'$ with probability bounded as in \eqref{eq:boundonprobaB^c} but replacing $\hat{u}$ by $u,$ such that for all decreasing events $A\subset\prod_{e\in{{\bar{Q}(x_1,L^{\xi_P})}}}\Omega_e\times \prod_{e\in{\bar{Q}(x_2,L^{\xi_P})}}\Omega_e,$
\begin{equation}
\label{eq:P3'replacebyindependentdec}
\begin{split}
    &\bbP^{{u}}\big(B',\big((\omega_e)_{e\in{\bar{Q}(x_1,L^{\xi_P})}},(\omega_e)_{e\in{\bar{Q}(x_2,L^{\xi_P})}}\big)\in{A}\big)
    \\&\leq\hat{\bbP}^{\hat{u}}\big(\big((\omega_e^{(1)})_{e\in{\bar{Q}(x_1,L^{\xi_P})}},(\omega_e^{(2)})_{e\in{\bar{Q}(x_2,L^{\xi_P})}}\big)\in{A}\big).
\end{split}
\end{equation}
\end{changemargin}

In essence condition {\bf P3} states that one can replace the weights on $\bar{Q}(x_1,L^{\xi_P})$ and $\bar{Q}(x_2,L^{\xi_P})$ by independent weights under an event $B$ which happens with large probability when $x_1$ and $x_2$ are far apart, at the cost of adjusting slightly the level $u$ by a sprinkling parameter.  We allow for a possible extension of the probability space in condition {\bf P3} so that the event $B$ can depend on some other variables than $\omega\in{\Omega},$ see for instance the proof of Proposition \ref{prop:upperratioweakmixing}. Conditions {\bf P1} and {\bf P2} have been initially introduced in \cite{DRS14}, and our condition {\bf P3} is a stronger version of the condition {\bf P3} in  \cite{DRS14}. Indeed, one can combine \eqref{eq:P3'replacebyindependentinc} with \eqref{eq:boundonprobaB^c} and the independence of $\omega^{(1)}$ and $\omega^{(2)}$ to show that for all increasing events $A_1\subset[0,\infty)^{\bar{Q}(x_1,L^{\xi_P})}$ and $A_2\in{[0,\infty)^{\bar{Q}(x_2,L^{\xi_P})}}$, 
\begin{equation*}
\begin{split}
    &\bbP^{\hat{u}}\big((\omega_e)_{e\in{\bar{Q}(x_1,L^{\xi_P})}}\in{A_1},(\omega_e)_{e\in{\bar{Q}(x_2,L^{\xi_P})}}\big)\in{A_2}\big)
    \\&\leq{\bbP}^{{u}}\big((\omega_e)_{e\in{\bar{Q}(x_1,L^{\xi_P})}}\in{A_1}\big) \,\bbP^u\big((\omega_e)_{e\in{\bar{Q}(x_2,L^{\xi_P})}}\in{A_2}\big)+\exp(-C_Pf_P(L)),
\end{split}
\end{equation*}
and similarly for decreasing events. All the examples from \cite{DRS14} of models satisfying condition {\bf P3} in \cite{DRS14} also satisfy our condition {\bf P3}, see Section~\ref{sec:examples}. In \cite{2016arXiv161202385R}, it is proved that the Ginzburg-Landau $\nabla \phi$ interface model also satisfies condition {\bf P3} in \cite{DRS14}. It is however not clear whether this model also satisfies our stronger condition {\bf P3}, and we now introduce another condition on the correlations of $\omega,$ which is satisfied by the interface model, see Section~\ref{sec:ginzburglandau}.

\begin{changemargin}{4mm}
{\bf P3'} {\it(Decoupling inequality)}. 
There exist positive constants $R_P,L_P<\infty,$ $\xi_P>1$ and $C_P,\chi_P>0$ such that for all $R\geq R_P,$ $L\geq L_P,$ $x_1,x_2\in{\bbZ^d}$ and any  $u,\hat{u}\in{I}$ with 
\begin{equation*}
    u\geq\hat{u}+R^{-\chi_P},
\end{equation*}
the following holds. For all sets $S_i\subset \bar{Q}(x_i,L^{\xi_P})$ with $|S_i|\leq L^{\xi_P},$ $i\in{\{1,2\}},$ satisfying $d(S_1,S_2)\geq RL,$ and all increasing functions $f_i:\Omega\rightarrow[0,1]$ supported on $S_i,$ $i\in{\{1,2\}},$
\begin{equation}
\label{eq:P3''replacebyindependentinc}
    \bbE^{\hat{u}}[f_1f_2]\leq\bbE^{u}[f_1] \, \bbE^{{u}}[f_2]+\exp\big(-C_PL^{\xi_P}\big),
\end{equation}
and for all decreasing functions $f_i:\Omega\rightarrow[0,1]$ supported on $\prod_{e\in{S_i}}\Omega_e,$ $i\in{\{1,2\}},$
\begin{equation}
\label{eq:P3''replacebyindependentdec}
    \bbE^{{u}}[f_1f_2]\leq\bbE^{\hat{u}}[f_1] \,\bbE^{\hat{u}}[f_2]+\exp\big(-C_PL^{\xi_P}\big).
\end{equation}
\end{changemargin}
Condition {\bf P3'} is on the one hand weaker than condition {\bf P3} since it only requires to decouple sets $S_i$ of cardinality $L^{\xi_P}$ instead of balls of cardinality $L^{\xi_Pd},$ but stronger on the other hand since it requires at least super-exponential decay of the correlation, after sprinkling. As we now explain, either of the conditions ${\bf P3}$ and ${\bf P3'}$ is in fact enough to get a comparison result of the FPP distance and the Euclidean metric. For any $\delta>1$ and $L>1$ let us introduce
\begin{equation}
    \label{eq:defgp}
    g_P^{(\delta)}(L)=\begin{cases}
    L\wedge\left({\log(L)^{-\delta(1+a_P+\frac{a_P}{\chi_P})}}\exp\big(a_P\log(L)^{\eps_P}\big)\right)&\text{ if condition {\bf P3} holds,}
    \\L&\text{ if condition {\bf P3'} holds.}
    \end{cases}
\end{equation}
If both condition {\bf P3} and {\bf P3'} hold at the same time, we just take $g_P^{(\delta)}(L)=L.$ 
\begin{theorem}
\label{The:main}
Suppose that $(\bbP^u)_{u\in I}$ satisfies {\bf P2} and either {\bf P3} or {\bf P3'}. Further, assume 
\begin{equation}
\label{intro:BLnotconnectedtoB2L}
    \liminf_{L\rightarrow\infty}\sup_{x\in{\bbZ^d}}\bbP^u\Big(Q(x,L)\longleftrightarrow Q(x,2L)^c\text{ in }\{e\in{E_d}:\,t^\om_e=0\}\Big)=0, \quad \text{for all }u\in{I}.
\end{equation}
Then, for all $u\in{I}$ and $\delta>1$, there exist constants $c_8,c_9,C_{\mathrm{fpp}}>0$ such that for all $x\in{\bbZ^d\setminus\{0\}}$,
\begin{equation}
\label{intro:boundonT0nx}
    \bbP^u\Big(d^{\omega}(0,nx)\leq C_{\mathrm{fpp}}n\Big) \leq c_8 \exp\Big(-c_9 \, g_P^{(\delta)}(n)\Big), \qquad \forall n\in \bbN.
\end{equation}
In particular, if in addition $(\bbP^u)_{u\in I}$ satisfies {\bf P1}, then, for each $u\in{I}$ such that \eqref{eq:boundonmement} holds, we have that $\mu_u(x)\geq C_{\mathrm{fpp}}>0$ for all $x\in\bbZ^d\setminus\{0\}.$
\end{theorem}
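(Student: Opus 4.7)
The plan is to run a multi-scale renormalization on dyadic scales, closely following the scheme of \cite[Section~7]{MR3420516} sketched above for the Gaussian free field. Fix $u \in I$, an integer $\ell$ large, scales $L_{k+1} = \ell L_k$, sprinkling parameters $u_k \nearrow u$ with $u_{k+1}-u_k \geq R_k^{-\chi_P}$ (chosen so that the total sprinkling $\sum_k R_k^{-\chi_P}$ is finite and fits inside $I$), and a bounded sequence of thresholds $c_k \searrow C_{\mathrm{fpp}}>0$ satisfying $c_k \geq c_{k+1}\ell/2$. Define the bad events
\[E_k(x) \ldef \bigl\{ d^{\om}\bigl(Q(x,L_k), Q(x,2L_k)^c\bigr) \leq c_k L_k \bigr\},\]
which are monotone in $\omega$ by \eqref{eq:temonotone} and --- after a preliminary localization step --- may be treated as supported on the coordinates inside $\bar{Q}(x,L_k^{\xi_P})$, so that \eqref{eq:P3'replacebyindependentdec} or \eqref{eq:P3''replacebyindependentdec} becomes applicable.

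The core is a deterministic cascade inclusion
\[E_{k+1}(x) \subseteq \bigcup_{y_1, y_2}\bigl(E_k(y_1) \cap E_k(y_2)\bigr),\]
where $y_1,y_2 \in L_k\bbZ^d \cap Q(x, 2L_{k+1})$ are required to satisfy $d\bigl(\bar{Q}(y_1,L_k^{\xi_P}),\bar{Q}(y_2,L_k^{\xi_P})\bigr) \geq R L_k$ for a large constant $R$ matching conditions \textbf{P3} or \textbf{P3'}. Indeed, any path realizing $E_{k+1}(x)$ traverses two well-separated ``slabs'' of the annulus $Q(x,2L_{k+1})\setminus Q(x,L_{k+1})$, each of weight at most $c_{k+1} L_{k+1} \leq 2 c_k L_k$, and each such slab-crossing decomposes into two $E_k$-events at appropriately chosen centers by our choice of $c_k$. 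A union bound over the $O(L_{k+1}^{2d})$ candidate pairs, followed by the decoupling inequality with sprinkling $u_{k+1} \to u_k$, produces the recursion
\[p_{k+1}(u_{k+1}) \leq C L_{k+1}^{2d}\, p_k(u_k)^2 + \exp\!\bigl(-C_P f_P(L_k)\bigr),\]
where $p_k(v) \ldef \sup_{x} \bbP^v(E_k(x))$; the error term becomes super-exponential under \textbf{P3'}. Provided the base case $p_0(u_0) \leq q_0$ is established with $q_0$ strictly below the fixed point of the recursion, a standard induction gives $p_k(u_k) \leq \exp(-c\, g_P^{(\delta)}(L_k))$, the logarithmic correction in $g_P^{(\delta)}$ arising from optimizing the growth rate $R_k$ against the tail $f_P$.

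To extract \eqref{intro:boundonT0nx}, for $n$ large one selects $k$ with $L_k$ of order $n$ and observes that $\{d^\om(0,nx) \leq c_k L_k\} \subseteq E_k(0)$ as soon as $2L_k \leq n|x|_\infty$, since any realizing path must exit $Q(0, 2L_k)$; condition \textbf{P2} then transfers the bound from $u_k$ to $u$. Positivity $\mu_u(x) \geq C_{\mathrm{fpp}}$ under \textbf{P1} and \eqref{eq:boundonmement} follows from \eqref{intro:deftimeconstant} via Borel-Cantelli, since the right-hand side of \eqref{intro:boundonT0nx} is summable in $n$. The main obstacle I expect is the base case: \eqref{intro:BLnotconnectedtoB2L} only controls purely zero-weight crossings, whereas we need control on crossings of total weight $\leq c_0 L_0$. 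When $c_0$ is small and the weights are not bounded away from zero, bridging this gap --- and justifying the localization of $E_k(x)$ used above --- requires combining \eqref{intro:BLnotconnectedtoB2L} applied at an auxiliary intermediate scale (which forces zero-weight clusters near $Q(x,L_0)$ to be small) with a counting argument ruling out macroscopic connections assembled from few ``expensive'' ($t^\om_e \geq \ep$) edges bridging these small clusters, together with a sprinkling step that absorbs the gap between ``zero-weight'' and ``small-weight'' percolation statements.
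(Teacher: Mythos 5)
Your proposal has the right architecture — multi-scale renormalization with a sprinkled decoupling step and a base case from \eqref{intro:BLnotconnectedtoB2L} — but the inductive quantity you track makes the cascade fail. If a crossing of the $L_{k+1}$-annulus has total weight $\leq c_{k+1}L_{k+1}$, the renormalization inequality only gives you two well-separated sub-crossings whose weights \emph{sum} to at most $c_{k+1}L_{k+1}$: there is no guarantee that \emph{both} are $\leq c_k L_k$. For the inclusion $E_{k+1}(x)\subseteq\bigcup(E_k(y_1)\cap E_k(y_2))$ to be deterministic you would need $c_{k+1}L_{k+1}\leq c_kL_k$, so $c_kL_k$ is nonincreasing and hence bounded by $c_0L_0$; the resulting statement is $\bbP(d^\om(0,nx)\leq c_0L_0)$ small, a \emph{constant} rather than a \emph{linear} lower bound, and the positivity of $\mu_u$ does not follow. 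Your relaxation $c_k\geq c_{k+1}\ell/2$ does not rescue this — it only ensures one of the two sub-crossings is below threshold, so the cascade degenerates to a union, not an intersection. The paper sidesteps this obstruction entirely by changing the inductive quantity: under \textbf{P3'} it runs the induction on the \emph{exponential moment} $\bbE^{u-\eps_k}\big[\exp(-\zeta d^\om_k(x))\big]$ (Lemma~\ref{pr3:mainLemma}), so the subadditivity of $d^\om_k$ turns multiplicative after exponentiation and the decay automatically doubles at each scale to give $\exp(-2^k)$; under \textbf{P3} it tracks $\tilde\bbP^{u-\eps_k}\big(\sum_{n=1}^p d^{\om,(n)}_k(x)\leq c_{14}pa_k\big)$ for \emph{all} $p\geq 1$ simultaneously (Lemma~\ref{pr2:mainLemma}), so that the threshold $c_{14}pa_{k+1}$ at scale $k+1$ spreads over $2|A|\geq p\,a_{k+1}/a_k$ copies at scale $k$ with room to spare, and the sequence $a_k\approx 2^k$ absorbs the logarithmic slack $1/(k+6)^\delta$ without forcing the threshold to collapse. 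A Chernov inversion at the end recovers the linear lower bound with $C_{\mathrm{fpp}}>0$.

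Two further points. First, under \textbf{P3'} the decoupling only applies to functions supported on sets of \emph{linear} cardinality $\leq L^{\xi_P}$, not on full $L_k^{\xi_P d}$-volume boxes; your events $E_k(x)$ live on volume-sized sets, so even after fixing the cascade you could not apply \eqref{eq:P3''replacebyindependentdec} directly. The paper handles this with the binary-tree embedding (Lemma~\ref{pr3:lemmaembedding}), decomposing the $k$-scale event all the way down to a union of $2^k$ base-scale crossings, whose union of supports has the required cardinality. Second, your worry about the base case is legitimate but your remedy (auxiliary intermediate scales, counting expensive edges, an extra sprinkling step) is much heavier than needed: the paper just uses $\bbE^{u-\eps_0}[\exp(-\zeta d^\om_0(x))]\to\bbP^{u-\eps_0}(d^\om_0(x)=0)$ as $\zeta\to\infty$, so \eqref{intro:BLnotconnectedtoB2L} gives the base case after choosing $L_0$, then $\zeta$, then $c_{14}$ small enough, with no separate treatment of small-but-nonzero weights required.
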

We prove Theorem~\ref{The:main} in the remainder of this section, namely in Subsection~\ref{sec:proofunderP3}  under condition {\bf P3} and in Subsection~\ref{sec:proofunderP3'} under condition {\bf P3'}. In fact, our proof of Theorem~\ref{The:main} under condition {\bf P3} still works if $f_P(L)$ only increases as $\log(L)^{\delta_P}$ for some $\delta_P>1,$ and we refer to Remark~\ref{rk:weakerP3} for a weaker condition {\bf P3''} under which one can still prove Theorem~\ref{The:main}. 

When considering first passage percolation, Theorem~\ref{The:main} is more general than \cite[Theorem~2.5]{DeGa}. Indeed, the quasi-independence hypothesis therein is not satisfied for any of the examples we consider in Sections~\ref{sec:gff}, \ref{sec:ginzburglandau} and \ref{sec:inter}, as the correlations of all these models are polynomial, and not stretched-exponential as required in \cite[Assumption~2.4.6]{DeGa}. This improvement is mainly due to the additional sprinkling parameter in conditions {\bf P3} and {\bf P3'}, which boosts the polynomial correlations to superpolynomial decoupling inequalities, up to sprinkling.  Moreover, our condition \eqref{intro:BLnotconnectedtoB2L} is also more general than the decay of instant one-arms condition from \cite[Assumption~2.4.5]{DeGa}, as we do not need any explicit rate on the decay of the probability in \eqref{intro:BLnotconnectedtoB2L}. Finally we only consider first passage percolation distance on $\bbZ^d$ for simplicity, but we believe that our results could easily be extended to general pseudo-metric on $\mathbb{R}^d$ similarly as in \cite[Theorem~2.5]{DeGa}. 

Note that Theorem \ref{The:main} only gives us the inequality $d^{\omega}(0,nx)\geq C_{\text{fpp}}n$ with high probability, but it does not tell whether $d^{\omega}(0,nx)\leq cn$ with high probability for some constant $c<\infty.$ In fact, this is often not the case, for instance when $\{e\in{E_d}:\,t^\om_e<\infty\}$ does not percolate. Under the additional assumption that the time constant $\mu_u$ exists and is finite, for instance under conditions \eqref{eq:boundonmement} and {\bf P1}, it is however clear that $d^{\omega}(0,nx)\leq cn.$ In the special case when $t^\om_e$ can only either be $1$ or infinity, the distance $d^{\omega}$ reduces to the chemical distance on the subgraph induced by $\{e\in{E_d}:\,t^\om_e<\infty\},$ and it is then proved in \cite{DRS14} that $d^{\omega}(0,nx)\leq cn$ with high probability under conditions {\bf P1}, {\bf P2} and {\bf P3} together with additional conditions {\bf S1} and {\bf S2} which essentially correspond to the existence of a locally unique infinite cluster in $\{e\in{E_d}:\,t^\om_e<\infty\}.$ This will be useful in Section~\ref{sec:RCM} below.

\begin{remark}
\label{rk:fppvertices}
(i) One can directly deduce from Theorem~\ref{The:main} similar results for first passage percolation on the vertices of $\bbZ^d.$ More precisely, suppose that, for each $x\in{\bbZ^d},$ $t^\om_x=f(\omega_x)$ for some measurable, monotone function $f:\Omega_V\rightarrow[0,\infty)\cup\{\infty\},$ and denote by $d^{\omega}_V(x,y)$ the infimum of $\sum_{z\in{\pi}}t^\om_z,$ where the infimum is taken over all simple paths $\pi$ of vertices connecting $x$ to $y,$ similarly as in \eqref{intro:defT}. Then if $(\bbP^u)_{u\in I}$ satisfies {\bf P2}, either {\bf P3} or {\bf P3'}, and \eqref{intro:BLnotconnectedtoB2L} (replacing $\{e\in{E_d}:\,t^\om_e=0\}$ by $\{x\in{\bbZ^d}:\,t^\om_x=0\}$), then \eqref{intro:boundonT0nx} also holds for $d^{\omega}_V$ instead of $d^{\omega}.$ Indeed, one can simply take $\mathcal{N}=\{0\}$ and $\mathbf{t}(\omega_V^1,\omega_V^2,\omega_E)=\frac12(f(\omega_V^1)+f(\omega_V^2)),$ which satisfies \eqref{eq:temonotone} and implies $d^{\omega}(x,y)\leq d^{\omega}_V(x,y)\leq 2d^{\omega}(x,y)$ for all $x,y\in{\bbZ^d},$ and apply Theorem~\ref{The:main} for this choice of $\mathbf{t}.$

(ii) If {\bf P3}  (resp.\ {\bf P3'}) actually holds without the monotonicity assumption on the event $A$ (resp.\ functions $f_i$) then we can  remove the monotonicity assumption  \eqref{eq:temonotone} on $\mathbf{t}$ in order for Theorem~\ref{The:main} to hold. Indeed, one can then simply consider the trivial partial order $x\leq y$ if and only if $x=y$ for all $x,y\in{\Omega_V},$ and similarly on ${\Omega_E},$ for which there are only monotone functions, and apply Theorem~\ref{The:main} for this choice of partial order. We refer to Section \ref{sec:ratioweakmixing} for examples of models which satisfy condition {\bf P3} without the monotonicity assumption on $A.$

(iii) It might be possible to prove that the time constant $\mu_u$ is positive under the same assumptions as in Theorem~\ref{The:main}, but replacing the condition {\bf P3} by its weaker version from \cite{MR3390739}, or even by the condition {\bf D} from \cite[Section~6]{AlvesCaio2019Dias}. More precisely, one could define a notion of a ``good'' box as a box of size length $2L_0$ such that the first passage percolation distance between  this box and the twice bigger concentric box is larger than some constant, which occurs with good probability for $L_0$ large enough by \eqref{intro:BLnotconnectedtoB2L}. Then considering a renormalization scheme similar to the one introduced in \cite[Section~2]{Sa17}, one can show that for any $N\in{\bbN}$ and with high probability, any path from $Q(0,L_0N)$ to $Q(0,2L_0N)$ will cross $cN$ good boxes along the ``perforated lattice'' illustrated in \cite[Figure~2]{Sa17} (for some constant $c$), and thus the first passage percolation distance between these two boxes is at least proportional to $N.$ However this renormalization scheme from \cite{Sa17} would only give superpolynomial decay of the probability in \eqref{intro:boundonT0nx} (similar to the one in \cite[Equation~(8)]{Sa17}). Our bound \eqref{intro:boundonT0nx} is much sharper as it can be for instance exponential when $f_P$ is superlinear, and is in fact often optimal for our examples, see for instance Remark~\ref{rk:optidecay}. Moreover having exponential decay in \eqref{intro:boundonT0nx} can be useful in future applications, for instance if one wants to prove annealed versions of the heat kernel or Green's function bounds from Theorems~\ref{thm:hkeGauss} and \ref{thm:GKdecay_killing}.
\end{remark}

Next we explain how the results in \cite{MR1074741} can be used to deduce a shape theorem from Theorem~\ref{The:main}, see \cite{MR624685} for the i.i.d.\ case. Let $L(d,1)$ denote the space of functions on $\Omega$ with finite Lorentz norm.  In \cite{MR1074741} it has been shown  that if there exists $u\in{I}$ such that $t^\om_e\in{L(d,1)}$ for all $e\in{E_d}$ and if $\bbP^u$ is invariant and ergodic with respect to the lattice shifts \eqref{eq:def:space_shift}, then there exists a continuous and non-negative function $m_u$ on $\{x\in{\bbR^d}:|x|_1=1\}$ such that
\begin{align}
\label{intro:T0xconvergesuniformly}
    \lim_{\substack{|x|_1\rightarrow\infty\\x\in{\bbZ^d}}}\frac{d^{\omega}(0,x)}{|x|_1}-m_u\Big(\frac{x}{|x|_1}\Big)=0, \qquad \text{$\bbP^u$-a.s.}
\end{align}
By \eqref{intro:deftimeconstant} we have $\mu_u(x)=|x|_1m_u(x/|x|_1)$ for all $x\in\bbZ^d,$ and so $m_u$ is strictly positive under the conditions of Theorem \ref{The:main}. Taking $K_u=\{x\in{\bbR^d}:\,|x|_1m_u(x/|x|_1)\leq1\}$ and defining $B_{d^\omega}(t)=\{x\in{\bbZ^d}:d^{\omega}(0,x)\leq t\}$ the following result is a classical consequence of \eqref{intro:T0xconvergesuniformly}, see e.g.\ the proof of  \cite[Proposition~1]{MR3531712} for details.

\begin{coro}
\label{cor:shape}
Suppose that $(\bbP^u)_{u\in I}$ satisfies  {\bf P1}, {\bf P2}, either {\bf P3} or {\bf P3'}, and \eqref{intro:BLnotconnectedtoB2L}. Then, for any $u\in{I}$ such that $t^\om_e\in{L(d,1)}$ for all $e\in{E_d}$ under $\bbP^u,$ there exists a compact and convex deterministic set $K_u\subset\bbR^d$ with non-empty interior such that
\begin{equation*}
    \bbP^u\Big((1-\eps)K_u\subset\frac{B_{d^\omega}(t)}{t}\subset(1+\eps)K_u \quad\text{for all large }t\Big)=1, \qquad \text{ for all $\eps>0$}.
\end{equation*}
\end{coro}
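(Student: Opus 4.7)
The plan is to combine the uniform convergence of Boivin \cite{MR1074741} (which yields \eqref{intro:T0xconvergesuniformly}) with the positivity of $\mu_u$ from Theorem~\ref{The:main}, together with standard subadditivity arguments, in order to realize $\mu_u$ as a genuine norm on $\bbR^d$ before deducing the asymptotic shape. First I would verify that Boivin's hypotheses are met: condition~{\bf P1} provides invariance and ergodicity of $\bbP^u$ under lattice shifts \eqref{eq:def:space_shift}, and $t^\om_e \in L(d,1)$ is assumed. This yields a continuous function $m_u$ on $\{x \in \bbR^d : |x|_1 = 1\}$ satisfying \eqref{intro:T0xconvergesuniformly}. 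Extend $m_u$ by positive homogeneity to $\tilde m_u(y) := |y|_1 \, m_u(y/|y|_1)$ for $y \neq 0$ and $\tilde m_u(0) := 0$; by \eqref{intro:deftimeconstant}, $\tilde m_u = \mu_u$ on $\bbZ^d$.

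Next, I would upgrade $\tilde m_u$ to a seminorm on $\bbR^d$. Subadditivity of $d^\om$ together with the shift-invariance from~{\bf P1} yields $\mu_u(x+y) \leq \mu_u(x) + \mu_u(y)$ for $x,y \in \bbZ^d$, and $\mu_u(x) = \mu_u(-x)$ is immediate from symmetry of $d^\om$. Combined with positive homogeneity, this says $\mu_u$ is the restriction to $\bbZ^d$ of a seminorm; density of $\bbQ^d$ and continuity of $\tilde m_u$ then promote this to: $\tilde m_u$ is a continuous seminorm on $\bbR^d$, hence globally Lipschitz with respect to $|\cdot|_1$ with constant $L := \max_{|z|_1=1} \tilde m_u(z)$. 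The upper bound $\tilde m_u(y) \leq C|y|_1$ moreover follows from subadditivity and $\bbE^u[t^\om_e] < \infty$.

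The key input from Theorem~\ref{The:main} is the lower bound $\tilde m_u(x) = \mu_u(x) \geq C_{\mathrm{fpp}} > 0$ for every $x \in \bbZ^d \setminus \{0\}$. Suppose the kernel $\{\tilde m_u = 0\}$ were non-trivial, so that it contains some ray $\bbR_+ v$. By Dirichlet's simultaneous Diophantine approximation, there is a sequence $y_n \in \bbZ^d \setminus \{0\}$ with $d(y_n, \bbR_+ v) \to 0$. The Lipschitz bound above then gives $\tilde m_u(y_n) \leq L \cdot d(y_n, \bbR_+ v) \to 0$, contradicting the lower bound $\tilde m_u(y_n) \geq C_{\mathrm{fpp}}$. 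Hence $\tilde m_u$ is a genuine norm on $\bbR^d$, and $m_u \geq c > 0$ uniformly on the unit sphere. Setting $K_u := \{y \in \bbR^d : \tilde m_u(y) \leq 1\}$ then produces a compact, convex set with non-empty interior, as required.

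Finally, the asymptotic shape statement follows in a standard way from \eqref{intro:T0xconvergesuniformly}, along the lines of \cite[Proposition~1]{MR3531712}: given $\eps > 0$, I would pick $R$ such that on a set of full $\bbP^u$-measure one has $|d^\om(0,x)/|x|_1 - m_u(x/|x|_1)| < c\eps/2$ whenever $|x|_1 \geq R$. For the inclusion $B_{d^\om}(t)/t \subset (1+\eps) K_u$, any lattice $x$ with $d^\om(0,x) \leq t$ and $|x|_1 \geq R$ satisfies $\tilde m_u(x) \leq t + (c\eps/2)|x|_1$, so using $\tilde m_u(x) \geq c|x|_1$ one rearranges to $\tilde m_u(x/t) \leq 1 + \eps$; the reverse inclusion $(1-\eps) K_u \subset B_{d^\om}(t)/t$ is symmetric, using the upper bound on $\tilde m_u$ together with the other side of the uniform estimate. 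The principal obstacle is the trivial-kernel step: Theorem~\ref{The:main} delivers a lower bound $C_{\mathrm{fpp}}$ on $\mu_u(x)$ that is uniform in the direction but does \emph{not} scale with $|x|_1$, so pointwise positivity on $\bbZ^d$ needs to be transferred to all (including irrational) directions on the sphere, and it is exactly the Diophantine approximation argument combined with the Lipschitz property of a seminorm that carries out this transfer.
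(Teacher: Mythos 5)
Your proof is correct and follows the same route as the paper: Boivin's uniform convergence result \cite{MR1074741} (justified by {\bf P1} and the $L(d,1)$ integrability), positivity of $m_u$ via Theorem~\ref{The:main}, and the standard deduction of the shape statement once $K_u$ is defined. The one step the paper leaves implicit --- upgrading $\mu_u(x)\geq C_{\mathrm{fpp}}$ on lattice directions to strict positivity of $m_u$ on the \emph{whole} unit sphere, which you handle via Dirichlet simultaneous approximation combined with the $\ell^1$-Lipschitz property of the homogeneous extension $\tilde m_u$ --- is precisely the content the paper delegates to the proof of \cite[Proposition~1]{MR3531712}, and you have correctly identified it as the only genuinely nontrivial point.
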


\begin{remark}
We can use the stronger convergence result \eqref{intro:T0xconvergesuniformly} to obtain, under a suitable moment condition, a partial converse of the statement  that \eqref{intro:BLnotconnectedtoB2L} implies $\mu_u(x)>0$ for all $x\in{\bbZ^d}$ and $u\in{I}$, formulated in  Theorem~\ref{The:main}. More precisely, for any fixed $u\in{I}$, suppose that $\bbP^u$ is invariant and ergodic with respect to lattice shifts,  $t^\om_e\in{L(d,1)}$ for all $e\in{E_d}$ under $\bbP^u.$ There exists a constant $c=c(u)\in{(1,\infty)}$ such that if
\begin{equation}
\label{eq:BLconnectedtoB2L}
    \liminf_{L\rightarrow\infty}\sup_{x\in{\bbZ^d}}\bbP^u\big(Q(x,L)\longleftrightarrow Q(x,cL)^c\text{ in }\{e\in{E_d}:\,t^\om_e=0\}\big)>0,
\end{equation}
then there exists $x\in{\bbZ^d}$ such that $\mu_u(x)=0.$ Note that, contrary to \eqref{intro:BLnotconnectedtoB2L}, we do not require that \eqref{eq:BLconnectedtoB2L} holds for all $u$ in an interval, and so one could ignore  the fixed parameter $u$ here. Let us now explain the proof. Write $d^{\omega}(A,B)=\inf_{x\in{A},y\in{B}}d^{\omega}(x,y)$ for all $A,B\subset\bbZ^d$. Then it  follows from \eqref{intro:T0xconvergesuniformly} that
\begin{equation*}
    \frac{d^{\omega}(0,Q(L)^c)}{L}=\inf_{x\in{Q(L)^c}}\frac{d^{\omega}(0,x)}{|x|_1}\times\frac{|x|_1}{L}\tend{L}{\infty}\inf_{y:|y|_1=1}\frac{m_u(y)}{|y|_{\infty}},
\end{equation*}
and a similar statement holds  for $\sup_{x\in{\partial Q(L)}}d^{\om}(0,x)/L$. Further,
\begin{equation*}
\begin{split}
d^{\om}(0,Q(cL)^c)&=\inf_{x\in{\partial Q(L)}}\Big(d^{\om}(x,Q(cL)^c)+d^{\om}(0,x)\Big)
\\&\leq d^{\om}(Q(L),Q(cL)^c)+\sup_{x\in{\partial Q(L)}}d^{\om}(0,x)
\end{split}
\end{equation*}
since $d^{\om}(\partial Q(L),Q(cL)^c)=d^{\om}(Q(L),Q(cL)^c).$
Therefore, if $\mu_u(x)>0$ for all $x\in{\bbZ^d},$  one can take the constant $c$ large enough so that
\begin{equation*}
\begin{split}
    \liminf_{L\rightarrow\infty}\frac{d^{\omega}(Q(L),Q(cL)^c)}{L}&\geq  \liminf_{L\rightarrow\infty}\frac{d^{\omega}(0,Q(cL)^c)}{L}-\frac{\sup_{x\in{\partial Q(L)}}d^{\omega}(0,x)}{L}
    \\&=c\inf_{y:|y|_1=1}\frac{m_u(y)}{|y|_{\infty}}-\sup_{y:|y|_1=1}\frac{m_u(y)}{|y|_{\infty}}\geq \inf_{y:|y|_1=1}\frac{m_u(y)}{|y|_{\infty}}.
\end{split}
\end{equation*}
Taking a continuous and bounded function $f:[0,\infty)\rightarrow[0,\infty)$ with $f(0)=1$ and $f(t)=0$ for all $t\geq \inf_{y:|y|_1=1}\frac{m_u(y)}{|y|_{\infty}},$ and we can conclude that \eqref{eq:BLconnectedtoB2L} does not hold similarly as in \eqref{eq:proofProp1.1}.
\end{remark}

\subsection{Renormalization}
\label{sec:renormalization}
In order to prove our main result, Theorem \ref{The:main}, we first introduce a renormalization scheme similar to the one in  \cite[Section~7]{MR3420516}. For some $\delta>1,$ $\rho>0,$ $K\in\bbN_0,$ $L_0>0$ we define recursively
\begin{equation}
\label{eq:defLk}
    L_{k+1}\ldef 
    2L_k\Big(1+\frac{\rho_k}{(k+6)^\delta}\Big),
\end{equation}
where
\begin{equation}
\label{eq:defrhok}
\rho_k=
\begin{cases}
    \rho&\text{ if } k\in{\{0,\dots,K-1\}},
    \\1&\text{ if } k\geq K.
    \end{cases}
\end{equation}
Note that there exists a constant $c_{10} <\infty$ only depending on $\delta,$ $\rho$ and $K$ such that
\begin{equation}
\label{ren:boundonL_k}
    2^kL_0\leq L_k\leq c_{10} 2^k L_0, \quad \forall k\in\bbN.
\end{equation}
 For all $x\in \bbZ^d$ and $n\in \bbN$ we also define
\begin{equation*}
    C_x^k\ldef x+\big[0,L_k\big)^d\cap\bbZ^d, \qquad D_x^k\ldef x+\big[-L_k,2L_k\big)^d\cap\bbZ^d.
\end{equation*}
Proceeding similarly as in \cite{MR3420516}, one can show that for all $k\in\bbN_0$ there exist two sequences $(x_i^k)_{i\in{\{1,\dots,(2+\rho_k)^d\}}}$ and $(y_i^k)_{i\in{\{1,\dots,2d(6+\rho_k)^{d-1}\}}}$ such that for every paths $\pi$ of edges between $C_0^{k+1}$ and $(D_0^{k+1})^c,$ there exist $i\in{\{1,\dots,(2+\rho_k)^d\}}$ and $j\in{\{1,\dots,2d(6+\rho_k)^{d-1}\}}$ such that
\begin{itemize}
    \item $\pi$ connects $C_{x_i^k}^k$ to $(D_{x_i^k}^k)^c,$ 
    \item $\pi$ connects $C_{y_j^k}^k$ to $(D_{y_j^k}^k)^c,$ 
    \item and
    \begin{equation}
    \label{ren:Dxiyj}
        d\big(D_{x_i^k}^k,D_{y_j^k}^k\big)\geq\frac{L_k\rho_k}{(k+6)^\delta}.
    \end{equation}
\end{itemize}
Let us now define the length of the shortest path between $C^k_x$ and $D^k_x$ by
\begin{equation}
\label{ren:defTK}
    d^{\omega}_k(x)=\inf_{\pi:\,C^k_x\stackrel{\pi}\leftrightarrow (D^k_x)^c}\sum_{e\in\pi}t^\om_e,
\end{equation}
where the infimum is taken over all simple paths $\pi$ of edges connecting $C^k_x$ to $(D^k_x)^c.$ Note that this infimum is necessarily reached for a path included in $D^k_x\setminus C^k_x,$ except at its end points. Moreover, for all $k\in\bbN_0$ and $x\in{\bbZ^d},$ there exist $i\in{\{1,\dots,(2+\rho_k)^d\}}$ and $j\in{\{1,\dots,2d(6+\rho_k)^{d-1}\}}$ such that \eqref{ren:Dxiyj} holds and 
\begin{equation}
    \label{ren:TkvsTk-1}
    d^{\omega}_{k+1}(x)\geq d^{\omega}_k(x+x_i^k)+d^{\omega}_k(x+y_j^{k}),
\end{equation}
see Figure~\ref{F:DR} for details.

\begin{figure}[ht!]
  \centering 
  \includegraphics[scale=0.7]{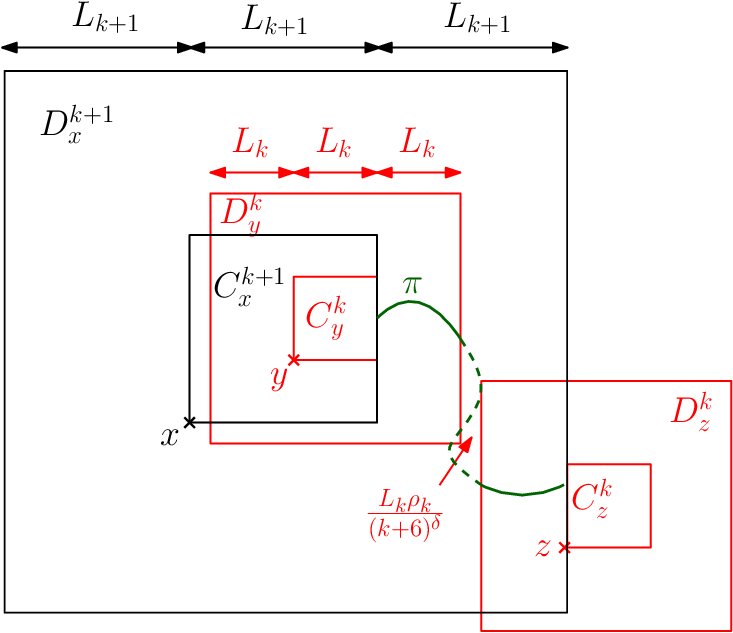}
  \caption{Illustration of the renormalization scheme. For each path $\pi$ (in green) which connects $C_x^{k+1}$ to $(D_x^{k+1})^c$ there exist $i\in{\{1,\dots,(2+\rho_k)^d\}}$ and $j\in{\{1,\dots,2d(6+\rho_k)^{d-1}\}}$ such that $\pi$ connects both $C_y^k$ to $(D_y^k)^c,$ for $y=x+x_i^k,$ and $C_z^k$ to $(D_z^k)^c,$ for $z=x+y_j^k.$    
 \label{F:DR}}
\end{figure}

\subsection{Proof of Theorem \ref{The:main} under condition {\bf P3}}
\label{sec:proofunderP3}
Throughout this subsection, we assume that conditions {\bf P2}, {\bf P3} and \eqref{intro:BLnotconnectedtoB2L} hold. The general strategy to prove Theorem~\ref{The:main} will be to bound $d_k^{\om}$ recursively on $k$ using \eqref{ren:TkvsTk-1} combined with the decoupling inequality {\bf P3}, and \eqref{intro:BLnotconnectedtoB2L} will let us initiate the recursion. At each step of the iteration, one needs to change the parameter $u$  in the probability $\bbP^u$ we consider in {\bf P3} by a sprinkling parameter, and in order to not change the parameter $u$ too drastically at the end of the iteration we will consider a converging sequence of sprinkling parameters $\eps_{k}-\eps_{k+1},$ see \eqref{pr2:defepskak}. This leads to bounding $d_{k+1}^{\om}$ by two independent copies of $d_k^{\om},$ up to sprinkling. In order to complete the iteration at all levels, we will thus actually need bounds on the sum of $p$ independent copies of $d_k^{\om}$ for each $p\in{\bbN}$ (or each $p$ which is a power of $2$), see Lemma~\ref{pr2:mainLemma}. The final bound we obtain will either be dominated by the error \eqref{eq:boundonprobaB^c} in the decoupling inequality if $f_P$ is small, or will simply be exponential as in the case of independent weights if $f_P$ is large, which leads to \eqref{intro:boundonT0nx} with $g_p^{(\delta)}$ from \eqref{eq:defgp}. 

For any $\delta>1$ and $\eps>0$ set
\begin{equation}
\label{pr2:defepskak}
    \eps_k\ldef \sum_{p=k}^{\infty}\frac{\eps}{(p+6)^\delta}, \quad 
    a_k\ldef 2^k\prod_{i=0}^{k-1}\Big(1-\frac{1}{(i+6)^{\delta}}\Big), \quad k\in\bbN_0.
\end{equation}
We will use $\eps_k-\eps_{k+1}$ as a sprinkling parameter at each step of the iteration, and $a_k$ as a bound on $d_k^{\om}$ (up to constants), which is of order $L_k$ in view of \eqref{ren:boundonL_k}. The reason we do not simply take $a_k=2^k,$ which could intuitively be enough in view of \eqref{ren:TkvsTk-1}, is that in the proof of Lemma~\ref{pr2:mainLemma} below we will only be able to use {\bf P3} to decouple a good proportion of $p$ independent copies of $d_k^{\om}(x+x_i^k)$ and $d_k^{\om}(x+y_j^k),$ and not all of them at once, see \eqref{pr2:usedecoupling} and \eqref{pr2:boundonCij}.

Let us denote by $\omega_e^{(i)}$ (resp.\ $\omega_x^{(i)}$) the canonical projection on the edge  $e$ (resp.\ vertex $x$) of the $i$-th coordinate of an element in $(\Omega^{\bbN},\cF^{\otimes\bbN},\tilde{\bbP}^u),$ where $\tilde{\bbP}^u=(\bbP^u)^{\otimes\bbN}$. Let $t_e^{\om, (i)}:=\mathbf{t}\big(\omega_{|\mathcal{N}+x}^{(i)},\omega_{|\mathcal{N}+y}^{(i)},\omega_e^{(i)}\big)$ for all $e=\{x,y\}\in{E_d}$ and $i\in{\bbN},$ so that, under $\tilde{\bbP}^u,$ $\big(t_e^{\om, (i)}\big)_{e\in{E_d}},$ $i\in{\bbN},$ are independent copies of $(t^\om_e)_{e\in{E_d}}$ under $\bbP^u.$ Let the random variables $d^{\omega,(i)}_k(\cdot)$ be defined as in \eqref{ren:defTK} but with $t^\om_e$ replaced by $t_e^{\om,(i)},$ which are i.i.d.\ in $i\in{\bbN}$.  For $\delta>1$  one can easily check that, setting
\begin{equation}
\label{eq:defhp}
    h_P^{(\delta)}(L)\ldef \frac{1}{\log(L)^{\delta}} \, f_P\Big(\frac{L}{\log(L)^{\delta(1+1/\chi_P)}}\Big),
\end{equation}
the function $g_P^{(\delta)}$ defined in \eqref{eq:defgp} satisfies
\begin{equation}
\label{intro:gputsmallerugpt}
    g_P^{(\delta)}(uL)\leq u \, g_P^{(\delta)}(L) \quad \text{ for all }u\geq1\text{ and }L\text{ large enough}
\end{equation}
and 
\begin{equation}
\label{intro:gpsmallthanhp}
g_P^{(\delta)}(L)\leq h_P^{(\delta)}(L) \quad \text{ for all }L\text{ large enough}.
\end{equation} 
Recall that the definition of $d_k^{\omega}$ in \eqref{ren:defTK} depends on the choice of the scale $L_k,$ and thus on $\rho,$ $K,$ $\delta$ and $L_0$ in view of \eqref{eq:defLk}. We are now going to use \eqref{intro:gputsmallerugpt} and \eqref{intro:gpsmallthanhp} together with conditions {\bf P2}, {\bf P3} and the  renormalization scheme introduced in Subsection~ \ref{sec:renormalization} to prove inductively the following. 

\begin{lemma}
\label{pr2:mainLemma}
Fix $\rho=1$ and $K=0.$ For all $\delta>1$ and $u\in{I}$, there exist $\eps$, $L_0=L_0(\eps)$,  and constants $c_{11}=c_{11}(\eps,L_0)$, $c_{12}>0$, all also depending on $\delta$ and $u$, such that $u-\eps_0\in{I}$ and for all $k\in\bbN_0$, 
\begin{equation}
    \label{pr2:induction}
    \tilde{\bbP}^{u-\eps_k}\bigg(\!\sum_{n=1}^pd^{\omega,(n)}_k(x)\leq c_{11} pa_k \!\bigg)
    \leq\bigg(\!\frac{1}{(4d\cdot21^d)^2}\exp\Big(\!-c_{12} g_P^{(\delta)}(a_k)\Big)\!\bigg)^{\! p}, \forall\,p\in\bbN,x\in{\bbZ^d}.
\end{equation}
\end{lemma}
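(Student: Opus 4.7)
The plan is to prove \eqref{pr2:induction} by induction on $k$, simultaneously for all $p\in\bbN$ and $x\in\bbZ^d$, writing $M\ldef4d\cdot21^d$ so that the target bound at scale $k$ is $q_k^p$ with $q_k=M^{-2}\exp(-c_{15}g_P^{(\delta)}(a_k))$. For the base case $k=0$, I invoke \eqref{intro:BLnotconnectedtoB2L} at level $u-\eps_0\in I$: for $L_0$ large enough, the probability of a zero-weight crossing from $Q(x,L_0)$ to $Q(x,2L_0)^c$ is arbitrarily small uniformly in $x$. A monotone-convergence argument in a threshold $\eta\downarrow 0$ upgrades this to control of $\bbP^{u-\eps_0}(d_0^\omega(x)\le 2c_{14})$ for $c_{14}$ small, since any path of total weight $\le 2c_{14}$ must consist mostly of edges with $t^\omega_e\le\eta$. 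The $p$-fold bound then follows from the reverse-Markov inclusion $\{\sum_{n=1}^p d_0^{\omega,(n)}(x)\le c_{14}p\}\subseteq\{\#\{n:d_0^{\omega,(n)}(x)\le 2c_{14}\}\ge p/2\}$, independence of the copies, and a union over subsets of size $\ge p/2$.

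For the inductive step, the renormalization inequality \eqref{ren:TkvsTk-1} furnishes, for each $\omega^{(n)}$, a random pair $(I_n,J_n)$ with $d_{k+1}^{\omega,(n)}(x)\ge d_k^{\omega,(n)}(x+x_{I_n}^k)+d_k^{\omega,(n)}(x+y_{J_n}^k)$. Union-bounding over the at most $(2d\cdot21^d)^p$ deterministic tuples $(i_n,j_n)_n$, it suffices to bound the probability of $\sum_n Y_n\le c_{14}pa_{k+1}$, where $Y_n\ldef d_k^{\omega,(n)}(x+x_{i_n}^k)+d_k^{\omega,(n)}(x+y_{j_n}^k)$; by \eqref{ren:Dxiyj} the two sub-boxes comprising $Y_n$ lie at $\ell^\infty$-distance at least $L_k/(k+6)^\delta$. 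I then pick the decoupling scale $L\ldef L_k\eps^{1/\chi_P}/(k+6)^{\delta(1+1/\chi_P)}$ and sprinkling radius $R\ldef(k+6)^{\delta/\chi_P}/\eps^{1/\chi_P}$, so that $R^{-\chi_P}\le\eps_k-\eps_{k+1}$ and the containment/separation hypotheses of \textbf{P3} hold for $L_k$ large. Applied per copy to the decreasing functional $e^{-\lambda Y_n}$ of $\omega^{(n)}$, followed by \textbf{P2} to step from $\hat u=u-\eps_{k+1}-R^{-\chi_P}$ back up to $u-\eps_k$, \textbf{P3} yields $\bbE^{u-\eps_{k+1}}[e^{-\lambda Y_n}]\le\bbE^{u-\eps_k}[e^{-\lambda d_k^\omega(z)}]^2+\exp(-C_Pf_P(L))$, uniformly in $z$.

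I carry along the induction the Chernoff dual of \eqref{pr2:induction} — the existence of some $\lambda_k>0$ with $\bbE^{u-\eps_k}[e^{-\lambda_kd_k^\omega(z)}]\le q_ke^{-\lambda_kc_{14}a_k}$ uniformly in $z$ — which implies \eqref{pr2:induction} via Chernoff and is what the induction naturally produces. Chernoff at $\lambda=\lambda_k$ with independence of $(\omega^{(n)})_n$ bounds the single-tuple probability by $e^{\lambda_kc_{14}pa_{k+1}}\bigl(q_k^2e^{-2\lambda_kc_{14}a_k}+\exp(-C_Pf_P(L))\bigr)^p$. Splitting via $(A+B)^p\le 2^p(A^p+B^p)$: using $a_{k+1}=2a_k(1-1/(k+6)^\delta)$, the leading contribution is $2^pq_k^{2p}e^{-2p\lambda_kc_{14}a_k/(k+6)^\delta}\le 2^pq_k^{2p}$, which after the combinatorial prefactor $(M/2)^p$ is $\le q_{k+1}^p/2$ by \eqref{intro:gputsmallerugpt}; the error term $(M\exp(\lambda_kc_{14}a_{k+1}-C_Pf_P(L)))^p$ is $\le q_{k+1}^p/2$ once $C_Pf_P(L)$ dominates $\lambda_kc_{14}a_{k+1}+c_{15}g_P^{(\delta)}(a_{k+1})+O(1)$, which \eqref{intro:gpsmallthanhp} ensures for my choice of $L$. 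The hardest part is precisely this joint calibration: $L$ must be small enough that $R^{-\chi_P}$ fits in $\eps/(k+6)^\delta$, yet large enough that $f_P(L)$ outgrows $g_P^{(\delta)}(a_{k+1})$ by a logarithmic factor — the tension that the tailored definitions \eqref{eq:defgp}--\eqref{eq:defhp} and the comparison \eqref{intro:gpsmallthanhp} are built to resolve, with a secondary subtlety being the propagation of the Chernoff-dual form of the hypothesis uniformly in the base point so as not to lose factors depending on $p$.
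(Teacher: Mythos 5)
There is a genuine gap in the inductive step. Your proposed inductive quantity is the Chernoff dual, an exponential moment bound $\sup_z\bbE^{u-\eps_k}[e^{-\lambda_k d_k^\omega(z)}]\le q_k e^{-\lambda_k c_{14}a_k}$, and your step is: apply decoupling inside the Laplace transform, then Chernoff at $\lambda_k$. This produces only the probability bound \eqref{pr2:induction} at level $k+1$; it does not reproduce the exponential moment bound at level $k+1$, so the induction is not closed. The probability bound alone gives only the crude estimate $\bbE[e^{-\lambda d_{k+1}}]\le q_{k+1}+e^{-\lambda c_{14}a_{k+1}}$, which is always $\ge q_{k+1}$ and hence strictly weaker than the target $q_{k+1}e^{-\lambda_{k+1}c_{14}a_{k+1}}$. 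If instead you try to close the induction at the Laplace-transform level (i.e.\ bound $F_{k+1}(\lambda)\ldef\sup_x\bbE^{u-\eps_{k+1}}[e^{-\lambda d_{k+1}(x)}]$ directly via $F_{k+1}(\lambda)\le (2d\cdot21^d)\bigl[F_k(\lambda)^2+e^{-C_Pf_P(L)}\bigr]$), you hit an irresolvable calibration tension: the error term forces $\lambda_{k}c_{14}a_{k+1}\lesssim C_Pf_P(L)$, but $a_{k+1}\asymp 2^{k+1}$ while {\bf P3} only guarantees $f_P(L)\ge\exp(a_P(\log L)^{\eps_P})$, which is sub-linear in $L\asymp 2^k/\mathrm{polylog}$ whenever $\eps_P<1$ (and can even be sub-polynomial), so $\lambda_k$ must tend to $0$; yet a bound on $F_k(\lambda_k)$ gives no control on $F_k(\lambda_{k+1})$ for $\lambda_{k+1}<\lambda_k$ (the Laplace transform is decreasing in $\lambda$, so the implication goes the wrong way), and $F_k(\lambda)\to 1$ as $\lambda\to 0$. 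This is not something \eqref{intro:gpsmallthanhp} can fix; it only compares $g_P$ with $h_P$, not $f_P(L)$ with $a_{k+1}$.

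This is precisely why the paper handles {\bf P3} and {\bf P3'} by different arguments. Your Laplace-transform/Chernoff induction is essentially the paper's proof under {\bf P3'} (Lemma~\ref{pr3:mainLemma}, where a fixed parameter $\zeta$ suffices because there $f_P(L)=L^{\xi_P}$ with $\xi_P>1$ beats $2^{k+1}$). Under {\bf P3} the paper never runs a Chernoff at the inductive step at all: it conditions on the set $A$ of copies $n$ for which the decoupling event $B_{i,j}^{(n)}$ occurs, restricts to the high-probability event $C_{i,j}=\{|A|\ge p(1-1/(k+6)^\delta)\}$, applies the decoupling exactly on $n\in A$ (no Chernoff loss), and exploits the arithmetic identity $2|A|a_k\ge p\,a_{k+1}$ — built in by the recursion $a_{k+1}=2a_k(1-1/(k+6)^\delta)$ — so that the probability-bound induction hypothesis applies directly to $2|A|$ independent copies at level $u-\eps_k$; the few bad copies $n\notin A$ are absorbed by the Chernoff bound \eqref{pr2:boundonCij} on $C_{i,j}^c$, which only needs $f_P(L)$ to grow faster than $\mathrm{polylog}$. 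That combinatorial slack mechanism is the missing ingredient. (A minor point in the other direction: your union bound over tuples $(i_n,j_n)_{n\le p}$ is arguably a more careful reading of \eqref{ren:TkvsTk-1} than the paper's single $\sum_{i,j}$ in \eqref{pr2:Tk+1vsTk}, since the optimal $(i,j)$ is $\omega^{(n)}$-dependent; but this only affects the combinatorial constant and can be absorbed by adjusting $M$.)
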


Before we prove Lemma \ref{pr2:mainLemma}, let us first explain how it implies Theorem~\ref{The:main} under condition {\bf P3}.

\begin{proof}[Proof of Theorem \ref{The:main} under condition {\bf P3}]
Fix $u\in{I},$ $\delta>1,$ and take $L_0,$ $\eps$, $\rho$ and $K$ as in Lemma \ref{pr2:mainLemma}. For all $x\in{\bbZ^d\setminus\{0\}}$ and $n\geq L_0$ there exists $k\in\bbN_0$ such that $nx\in{D^{k+1}_0\setminus D^{k}_0},$ and then
\begin{equation*}
    d^{\omega}(0,nx)\geq d^{\omega}_k(0).
\end{equation*}
Moreover by \eqref{ren:boundonL_k} and \eqref{pr2:defepskak} there exist constants $c,c'>0$ depending on $x,$ $u,$ $L_0$ and $\delta,$ such that $c'a_k\leq n\leq ca_k,$ and so \eqref{intro:boundonT0nx} follows readily from \eqref{intro:gputsmallerugpt} and \eqref{pr2:induction} with $p=1$ and condition {\bf P2}. Assume now that the time constant $\mu_u(x)$ from \eqref{intro:deftimeconstant} exists, for instance under the conditions \eqref{eq:boundonmement} and {\bf P1}. Using \eqref{eq:defgp} and \eqref{intro:boundonT0nx}, by the Borel-Cantelli lemma $d^{\omega}(0,nx)\geq C_{\text{fpp}}n$ for all $n$ large enough  $\bbP^u$-a.s, and so $\mu_u(x)>0.$ 
\end{proof}

\begin{proof}[Proof of Lemma \ref{pr2:mainLemma}]
We fix $\rho=1,$ $K=0,$ $\delta>1$ (see \eqref{eq:defLk}) and $u\in{I},$ on which all the constants in the rest of this proof depend. We shall prove \eqref{pr2:induction} by induction. For $k=0,$ assuming that $\eps$ is small enough so that $u-\eps_0\in{I},$ we have by the Chernoff bound for all $p\in\bbN,$ $x\in{\bbZ^d}$ and $\zeta>0$,
\begin{equation}
\label{pr2:Chernov}
     \tilde{\bbP}^{u-\eps_0}\Big(\sum_{n=1}^p d^{\omega,(n)}_0(x)\leq c_{11} pa_0\Big)\leq \exp(c_{11} \zeta a_0 p) \, \bbE^{u-\eps_0}\big[\exp(-\zeta d^{\omega}_0(x))\big]^p.
\end{equation}
Moreover by dominated convergence,
\begin{align*}
   &  \bbE^{u-\eps_0}\big[\exp(-\zeta d^{\omega}_0(x))\big]
   \,\tend{\zeta}{\infty} \, 
  \bbP^{u-\eps_0}\big(d^{\omega}_0(x)=0\big) \\
    & \mspace{36mu}
   \leq\sup_{y\in{\bbZ^d}}\bbP^{u-\eps_0}\Big(Q(y,\lceil L_0/2\rceil)\longleftrightarrow Q(y,2\lceil L_0/2\rceil)^c\text{ in }\big\{e\in{E_d}:\,t^\om_e=0\big\}\Big).
\end{align*}
By \eqref{intro:BLnotconnectedtoB2L} and \eqref{pr2:Chernov}, we can thus fix $L_0=L_0(\eps),$ $\zeta=\zeta(\eps,L_0)$ and $c_{11}=c_{11}(\eps,L_0,\zeta)$ such that  \eqref{pr2:induction} holds for $k=0$ (for a constant $c_{12}$ to be determined, independently of $\eps,$ $L_0,$ $\zeta$ or $c_{11}$). Let us now assume that \eqref{pr2:induction} holds for any $k\in\bbN_0,$ and let us fix some $p\in\bbN.$ Then by \eqref{ren:TkvsTk-1} we have
\begin{equation}
\label{pr2:Tk+1vsTk}
\begin{split}
    &\tilde{\bbP}^{u-\eps_{k+1}}\bigg(\sum_{n=1}^pd^{\omega,(n)}_{k+1}(x)\leq c_{11} pa_{k+1}\bigg)
    \\&\leq \sum_{i=1}^{3^d}\sum_{j=1}^{2d\cdot7^d} \tilde{\bbP}^{u-\eps_{k+1}}\bigg(\sum_{n=1}^pd^{\omega,(n)}_{k}(x+x_i^k)+d^{\omega,(n)}_{k}(x+y_j^k)\leq c_{11} pa_{k+1}\bigg).
\end{split}
\end{equation}
Let us take
\begin{equation}
\label{pr2:defRL}
    R=\left(\frac{(k+6)^{\delta}}{\eps}\right)^{\frac1{\chi_P}} \quad \text{and} \quad L=\frac{\eps^{1/\chi_P}2^{k-1}L_0}{(k+6)^{\delta(1+1/\chi_P)}}.
\end{equation}
After choosing $\eps$ small enough and $L_0=L_0(\eps)$ large enough, independently of $k,$ one can use \eqref{ren:boundonL_k} and \eqref{pr2:defepskak} to check that
\begin{equation*}
    u-\eps_{k+1}\geq u-\eps_k+R^{-\chi_P},\quad R\geq R_P,\quad L\geq L_P, \quad L^{\xi_P}\geq4L_k.
\end{equation*}
Using \eqref{ren:boundonL_k}, \eqref{ren:Dxiyj} and {\bf P3} we have by independence that for each $x\in{\bbZ^d},$ $i\in{\{1,\dots,3^d\}}$ and $j\in{\{1,\dots,2d\cdot7^d\}}$ there exists a family of events $(B^{(n)}_{i,j})_{n\in{\{1,\dots,p\}}}$ such that
\begin{equation}
\label{pr2:boundonPBi}
\tilde{\bbP}^{u-\eps_{k+1}}\big( \big(B^{(n)}_{i,j}\big)^c\big)\leq\exp(-C_Pf_P(L)) \quad \text{ for all }n\in{\{1,\dots,p\}},
\end{equation}
and for all sets $A\subseteq\{1,\dots,p\}$,
\begin{equation}
\label{pr2:usedecoupling}
\begin{split}
    &\tilde{\bbP}^{u-\eps_{k+1}}\bigg(\bigcap_{n\in{A}}B^{(n)}_{i,j},\sum_{n\in{A}}d^{\omega,(n)}_k(x+x_i^k)+d^{\omega,(n)}_k(x+y_j^k)\leq c_{11}pa_{k+1}\bigg)
    \\& \mspace{36mu} \leq\sup_{y\in{\bbZ^d}}\tilde{\bbP}^{u-\eps_{k}}\bigg(\sum_{n\in{A\cup(p+A)}}d^{\omega,(n)}_k(y)\leq c_{11}pa_{k+1}\bigg).
\end{split}
\end{equation}
Next, introducing the event
\begin{equation*}
    C_{i,j}\ldef \bigg\{\sum_{n=1}^p\indicator_{B_{i,j}^{(n)}}\geq p\Big(1-\frac{1}{(k+6)^\delta}\Big)\bigg\},
\end{equation*}
we shall first show that, for all $x\in{\bbZ^d}$,
\begin{equation}
\label{pr2:1sttoprove}
\begin{split}
    & (2d\cdot21^d) \, \tilde{\bbP}^{u-\eps_{k+1}}\Big(C_{i,j},\sum_{n=1}^pd^{\omega,(n)}_k(x+x_i^k)+d^{\omega,(n)}_k(x+y_j^k)\leq c_{11} pa_{k+1}\Big)
    \\& \mspace{36mu}
    \leq \frac12 \bigg(\frac{1}{(4d\cdot21^d)^2} \, \exp\Big(-c_{12}g_P^{(\delta)}(a_{k+1})\Big)\bigg)^p.
\end{split}
\end{equation}
Fix some set $A\subseteq\{1,\dots,p\}$ with $|A|\geq p\big(1-\frac{1}{(k+6)^\delta}\big).$ It follows from \eqref{pr2:usedecoupling} that
\begin{equation}
\label{pr2:stbound}
\begin{split}
     &\tilde{\bbP}^{u-\eps_{k+1}}\Big(\big\{n\leq p:B_{i,j}^{(n)}\text{ occurs}\big\} \!= \!A,\, \sum_{n=1}^p d^{\omega,(n)}_k(x+x_i^k)+d^{\omega,(n)}_k(x+y_j^k)\leq c_{11} pa_{k+1}\Big) 
     \\&\leq\tilde{\bbP}^{u-\eps_{k+1}}\Big(\bigcap_{n\in{A^c}}(B_{i,j}^{(n)})^c\cap\bigcap_{n\in{A}}B_{i,j}^{(n)},\sum_{n\in{A}} d^{\omega,(n)}_k(x+x_i^k)+d^{\omega,(n)}_k(x+y_j^k)\leq c_{11} pa_{k+1}\Big)
     \\
     &\! \leq \tilde{\bbP}^{u-\eps_{k+1}}\Big(\bigcap_{n\in{A^c}}(B_{i,j}^{(n)})^c\Big) \sup_{y\in{\bbZ^d}}\tilde{\bbP}^{u-\eps_{k}}\Big(\!\!\sum_{n\in{A\cup(p+A)}}d^{\omega,(n)}_k(y)\leq c_{11} pa_{k+1}\Big).
\end{split}
\end{equation}
By \eqref{pr2:defepskak} we have $2|A|a_k\geq pa_{k+1}$. Hence, by the induction hypothesis we obtain for all $y\in{\bbZ^d}$,
\begin{equation}
\label{pr2:ndbound}
\begin{split}
   & \tilde{\bbP}^{u-\eps_{k}}\bigg(\sum_{n\in{A\cup(p+A)}}d^{\omega,(n)}_k(y)\leq c_{11}pa_{k+1}\bigg)\leq \tilde{\bbP}^{u-\eps_{k}}\bigg(\sum_{n=1}^{2|A|}d^{\omega,(n)}_k(y)\leq 2c_{11}|A|a_k\bigg)
    \\&
    %\mspace{5mu} 
    \leq\bigg(\frac{1}{(4d\cdot21^d)^2}\exp\!\Big(-c_{12} g_P^{(\delta)}(a_k)\Big)\bigg)^{\! 2|A|} 
   \! \leq \frac{1}{(4d\cdot21^d)^{4|A|}}\exp\bigg(\!-\frac{c_{12}p a_{k+1} g_P^{(\delta)}(a_k)}{a_k}\bigg).
\end{split}
\end{equation}
Since $|A|\geq 4p/5,$ note that by \eqref{pr2:defRL} and \eqref{pr2:boundonPBi}, after choosing $L_0=L_0(\eps)$ large enough independently of $k$ and $p$, we have
\begin{equation*}
    \frac{2d\cdot21^d}{(4d\cdot21^d)^{4|A|}}
\leq  \frac{1}{2(4d\cdot21^d)^{(2+1/5)p}}
   %&\leq\frac{1}{2(4d\cdot21^d)^{3p-\frac{4p}{(k+6)^{\delta}}}}\times\frac{1}{\left(1-\exp\left(-f_P(L)\left(1-\frac{1}{(k+6)^\delta}\right)\right)\right)^p}\\
\end{equation*}
and by \eqref{pr2:defRL} and \eqref{pr2:boundonPBi}, after choosing $L_0=L_0(\eps)$ large enough independently of $k$ and $p$,
\begin{align*}
\sum_{A\subset\{1,\dots,p\}}\tilde{\bbP}^{u-\eps_{k+1}}\Big(\bigcap_{n\in{A^c}}(B_{i,j}^{(n)})^c\Big)
&=\sum_{A\subset\{1,\dots,p\}}\frac{\tilde{\bbP}^{u-\eps_{k+1}}\Big(\big\{n\leq p:B_{i,j}^{(n)}\text{ occurs}\big\}
     \!= \! A\Big)}{\tilde{\bbP}^{u-\eps_{k+1}}\Big(\bigcap_{n\in{A}}B_{i,j}^{(n)}\Big)} 
     \\&\leq\frac{1}{\left(1-\exp\left(-C_Pf_P(L)\right)\right)^p}\leq (4d\cdot21^d)^{p/5}.
\end{align*}
Hence, the inequality \eqref{pr2:1sttoprove} follows from \eqref{intro:gputsmallerugpt}, \eqref{pr2:stbound} and \eqref{pr2:ndbound} by summing over all possible sets $A\subset\{1,\dots,p\}$ with $|A|\geq p\big(1-\frac{1}{(k+6)^\delta}\big).$ Finally, we need to show that the events $C_{i,j}$ happen with sufficiently high probability, which will follow from the following Chernoff bound
\begin{equation*}
\begin{split}
  &  \tilde{\bbP}^{u-\eps_{k+1}}\big((C_{i,j})^c\big) \\
  & \mspace{36mu} \leq \tilde{\bbE}^{u-\eps_{k+1}}\Big[\exp\Big(-C_P f_P(L)\indicator_{B_{i,j}^{(1)}}\Big)\Big]^p \, \exp\bigg(p C_P f_P(L)\Big(1-\frac{1}{(k+6)^{\delta}}\Big)\bigg)
    \\& \mspace{36mu} \leq2^p\exp\left(-\frac{pC_Pf_P(L)}{(k+6)^{\delta}}\right),
\end{split}
\end{equation*}
where we used \eqref{pr2:boundonPBi}.
%\begin{lemma}
%\label{pr2:lemmabern}
%Let $(X_i)_{i\in\bbN}$ be a family of independent Bernoulli random variables, each with parameter small than $e^{-q}$ for some $q>0.$ For all $s>0$ there exists a constant $c<\infty$ such that for all $t>0$ with $tq\geq c$ and $p\in\bbN$
%\begin{equation*}
%    \bbP\left(\sum_{i=1}^pX_i\leq pt\right)\leq \frac12s^p\exp(- tpq/4).
%\end{equation*}
%\end{lemma}
%\notes{Probably we can find some version of this Lemma in an appropriate reference, and skip the proof. Otherwise we don’t really need to state it as a Lemma, and just prove it directly with the right values of $q,$ $s$ and $t$ since its proof is so short.}
Using \eqref{pr2:defRL} and \eqref{eq:defhp}  we deduce that there exist a constant $c_{12}>0$ and $L_0=L_0(\eps),$ independent of $k,$ such that
\begin{align} \label{pr2:boundonCij}
    \tilde{\bbP}^{u-\eps_{k+1}}\big((C_{i,j})^c\big)&\leq\frac1{(4d\cdot21^d)^{2p+1}}\exp\bigg(-\frac{pC_P}{2(k+6)^\delta} \, f_P\Big(\frac{\eps^{1/\chi_P}2^{k-1}L_0}{(k+6)^{\delta(1+1/\chi_P)}}\Big)\bigg) \nonumber
    \\&\leq\frac1{(4d\cdot21^d)^{2p+1}}\exp\Big(-c_{12}ph_P^{(\delta)}(a_{k+1})\Big),
\end{align}
where we used $2^{k+1}\geq a_{k+1}\geq c 2^k$ in the last inequality. By combining \eqref{intro:gpsmallthanhp}, \eqref{pr2:Tk+1vsTk}, \eqref{pr2:1sttoprove} and \eqref{pr2:boundonCij} we obtain \eqref{pr2:induction} for $k+1.$
%\notes{Should we mention at the end of the proof in which order one chooses the constants to make it more clear? I tried to write explicitly the dependency of the constants on one another to help (so choose first $\eps,$ then $L_0,c,\zeta,$ then $c_0$).}%{ first fix $\eps$ so that $u-\eps_0>a$ and $R\geq R_P,$ then fix $c$ and $c’$ so that \eqref{pr1:1stbound} holds for all $L_0$ large enough, then $L_0$ so that $L\geq L_P,$ $L^{\xi_P}\geq 3L_k,$ \eqref{pr1:1stbound}  holds and the probability in \eqref{intro:BLnotconnectedtoB2L} for $L_0$ is smaller than $(8d\cdot21^d)^{-1}\exp(-c(1\wedge g_P^{(\delta)}(1))),$ and finally $\xi$ so that the case $k=0$ holds for this choice of $L_0,c.$ Maybe we should make it more explicit in the text if necessary.}
\end{proof}
%\begin{proof}[Proof of Lemma \ref{pr2:lemmabern}]
%A Chernov bound gives us that
%\begin{equation*}
%    \bbP\left(\sum_{i=1}^pX_i\leq pt\right)\leq \left(\exp\left(-qt/2\right)\bbE[e^{-q X_1/2}]\right)^p \leq\big(2\exp(-qt/2)\big)^p
%\end{equation*}
%Fixing the constant $c$ so that $2\exp(-c/4)\leq s/2,$ one can easily conclude.
%\end{proof}

\begin{remark}
\label{rk:weakerP3}
As the attentive reader will have noticed, we never actually used the assumption $f_P(L)\geq \exp\big(a_P\log(L)^{\eps_P}\big)$ from condition {\bf P3}, but only the existence of a function $g_P^{(\delta)}$ satisfying \eqref{intro:gputsmallerugpt} and \eqref{intro:gpsmallthanhp}, and in \eqref{pr2:boundonCij} the fact that $f_P(L)$ was increasing to infinity faster than $\log(L)^{c}$ for some $c>1$ (upon choosing $\delta$ small enough).  We also never used \eqref{eq:P3'replacebyindependentdec} when the weight $\mathbf{t}$ was chosen decreasing in \eqref{eq:temonotone}, or \eqref{eq:P3'replacebyindependentinc} when it was chosen increasing. Moreover, we only had to consider events $A$ of the type $\{f_1+f_2\leq s\}$ for monotone $f_1,f_2:\Omega\rightarrow[0,\infty]$ and $s>0$. The conditions \eqref{intro:gputsmallerugpt} and \eqref{intro:gpsmallthanhp} are satisfied, for instance, when 
\begin{equation}
\label{eq:defgP2}
    g_P^{(\delta)}(L)=h_P^{(\delta)}(L_P)\exp\left(\int_{L_P}^L\frac{1}{t}\wedge\big(\log(h_P^{(\delta)})\big)'(t) \, \mathrm{d}t\right)
\end{equation}
with $h_P^{(\delta)}$  as in \eqref{eq:defhp}. Note that, if $f_P(L)\geq \log(L)^c$ for some $c>1,$ then $g_P^{(\delta)}(L)\rightarrow\infty$ as $L\rightarrow\infty$ when $\delta>1$ is small enough. Therefore, one can still obtain Theorem~\ref{The:main} under the following weaker assumption.
\begin{changemargin}{4mm}
{\bf P3''} {\it(Weaker decoupling inequality)}. 
After possibly extending the probability space underlying $\bbP^u$, assume that there exist constants $R_P,L_P\in(0,\infty)$, $\delta_P,\xi_P>1,$ $C_P>0$ and $\chi_P>0$ such that for all $R\geq R_P,$ $L\geq L_P,$ $x_1,x_2\in{\bbZ^d}$ with $d\big(\bar{Q}(x_1,L^{\xi_P}),\bar{Q}(x_2,L^{\xi_P})\big)\geq RL$, and any $u,\hat{u}\in{I}$ with $u\geq\hat{u}+R^{-\chi_P},$ the following holds.  There exists an event $B$ satisfying \eqref{eq:boundonprobaB^c} with $f_P:[0,\infty)\rightarrow[0,\infty)$ satisfying $f_P(L)\geq \log(L)^{\delta_P}$, such that for all increasing functions $f_i:\Omega\rightarrow[0,\infty]$ supported on $\bar{Q}(x_i,L^{\xi_P}),$ $i\in{\{1,2\}}$ and all $s>0$,  \eqref{eq:P3'replacebyindependentinc} holds for $A=\{f_1+f_2\leq s\}$.
\end{changemargin}
Then, one can replace condition {\bf P3} in  Theorem \ref{The:main} by the weaker condition {\bf P3''}, and then, if $\mathbf{t}$ is actually chosen decreasing in \eqref{eq:temonotone}, Theorem \ref{The:main} still holds for $g_P^{(\delta)}$ as in \eqref{eq:defgP2}. One could also assume that $\mathbf{t}$ is increasing in \eqref{eq:temonotone} when replacing the condition \eqref{eq:P3'replacebyindependentinc} by \eqref{eq:P3'replacebyindependentdec} in {\bf P3''}. We chose to focus on the condition {\bf P3} instead of the weaker condition {\bf P3''} in this article, since it allows for a simpler definition of $g_P^{(\delta)}$ (cf.\ \eqref{eq:defgp}), and it makes our condition {\bf P3} stronger than condition {\bf P3} in \cite{DRS14}, which will be useful in Section~\ref{sec:RCM}. We refer to Proposition~\ref{prop:upperratioweakmixing} for an example where the weaker condition {\bf P3''} is useful.
\end{remark}

\subsection{Proof of Theorem \ref{The:main} under condition {\bf P3'}}
\label{sec:proofunderP3'}
Throughout this subsection, we assume that $(\bbP^u)_{u\in I}$ satisfies conditions {\bf P2}, {\bf P3'} and \eqref{intro:BLnotconnectedtoB2L}.
%For all $\eps>0$ let
%\begin{equation*}
%    \eps_k=\sum_{p=k}^{\infty}\frac{\eps}{(p+6)^\delta}\text{ for all }k\in\bbN_0.
%\end{equation*}
Recall the renormalization scheme introduced in Subsection~ \ref{sec:renormalization}, which depends on  $\rho$, $K$, $\delta$ and $L_0,$  as well as on $(\eps_k)_{k\geq0}$ in \eqref{pr2:defepskak}, which in turn depends on  $\eps$.

 Under condition {\bf P3'}, one can decouple two sets at distance $L$ from each other only when their cardinality is smaller than $L^{\xi_P}$ for some $\xi_P>1.$ In Section~\ref{sec:proofunderP3} we used condition {\bf P3} to decouple the sets $D_{x_i^k}^k$ and $D_{y_j^k}^k$ and then concluded using \eqref{ren:TkvsTk-1}, but the cardinality of these sets is $L_k^d,$ which is too large to apply condition {\bf P3'} (unless $\xi_P>d$). In order to be able to use condition {\bf P3'}, instead of comparing the scales $k+1$ and $k$ as in \eqref{ren:TkvsTk-1}, we will directly compare the scale $k$ with the scale $0,$ see \eqref{eq:TkvsTildek}. Lemma~\ref{pr3:lemmaembedding} below highlights this procedure via the notion of proper embeddings, and is tailored so that at scale $k+1$ one typically only has to decouple two ``tubes'' of length $L_k$ and width $L_0,$ see \eqref{eq:defS1S2}, which essentially follow the path $\pi$ minimizing $d_{k+1}^{\omega}.$ The cardinality of these tubes is $L_kL_0^{d-1},$ which is much smaller than $L_k^{\xi_P}$ for large $k,$ and thus {\bf P3'} can be applied, see \eqref{pr3:usedecoupling}. One can tailor the parameters $\rho$ and $K$ from \eqref{eq:defrhok}  so that {\bf P3'} can also be used for small $k,$ see \eqref{eq:defRL} and \eqref{eq:conditionsaresatisfied} for details. This strategy yields the following result.
%\begin{equation*}
%    \tilde{d}^{\omega}_k(x)=\inf_{i_0,\dots,i_{k-1}\in{\{1,\dots,(2+\rho)^d\}}}\inf_{j_0,\dots,j_{k-1}\in{\{1,\dots,2d(6+\rho)^{d-1}\}}}\sum_{\sigma\in{\{0,1\}^k}}d^{\omega}_0\left(x+\sum_{n=0}^{k-1}\sigma_nx_{i_n}^n+(1-\sigma_n)y_{i_n}^n\right)
%\end{equation*} 

\begin{lemma}
\label{pr3:mainLemma}
For all $\delta>1$ and $u\in{I},$ there exist positive $\eps,$ $L_0=L_0(\eps,\rho),$ $\zeta=\zeta(\eps,L_0),$ $\rho=\rho(\eps,L_0)$ and $K=K(\eps,L_0),$ all depending on $\delta$ and $u$, such that $u-\eps_0\in{I}$ and for all $k\in\bbN_0$,
\begin{equation}
    \label{pr3:induction}
    \bbE^{u-\eps_k}\big[\exp(-\zeta d^{\omega}_k(x))\big]\leq c_{\rho,d}^{2^K-1}\exp\big(\! -2^k\big) \quad \text{ for all }x\in{\bbZ^d},
\end{equation}
where $c_{\rho,d}=2d(2+\rho)^d(6+\rho)^{d-1}.$
\end{lemma}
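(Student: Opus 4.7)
The plan is to mirror the inductive structure of Lemma~\ref{pr2:mainLemma}, exploiting the cleaner decoupling of \eqref{eq:P3''replacebyindependentdec} (no exceptional event $B$) together with its super-exponential decay $\exp(-C_PL^{\xi_P})$, so as to work directly with the exponential moment
\begin{equation*}
A_k \ldef \sup_{x \in \bbZ^d}\bbE^{u-\eps_k}\bigl[\exp(-\zeta d^{\omega}_k(x))\bigr].
\end{equation*}
The scheme will be significantly simpler than under {\bf P3}: no concentration-of-measure step as in \eqref{pr2:1sttoprove}--\eqref{pr2:boundonCij} is required, and I can instead iterate a quadratic recursion of the form $A_{k+1} \leq c_{\rho_k,d}\bigl(A_k^2 + \exp(-C_PL^{\xi_P})\bigr)$. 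This recursion will close at a fixed-point value $A_k \leq (2c_{\rho,d})^{-1}\exp(-2^k)$, strictly stronger than the claim; the factor $c_{\rho,d}^{2^K-1}$ in the statement merely serves as a convenient loose upper bound (valid as soon as $K \geq 0$ since $c_{\rho,d} \geq 1$).

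For the base case $k=0$, first I would fix $\eps>0$ small enough that $u-\eps_0 \in I$ and that the sprinkling identity
\begin{equation*}
\eps_k-\eps_{k+1} \,=\, \frac{\eps}{(k+6)^\delta} \,=\, R_k^{-\chi_P}, \qquad R_k \ldef \biggl(\frac{(k+6)^\delta}{\eps}\biggr)^{1/\chi_P},
\end{equation*}
will hold for every $k$. Then, exactly as in the proof of Lemma~\ref{pr2:mainLemma}, dominated convergence gives $\bbE^{u-\eps_0}[\exp(-\zeta d^{\omega}_0(x))] \to \bbP^{u-\eps_0}(d^{\omega}_0(x)=0)$ as $\zeta\to\infty$, and applying \eqref{intro:BLnotconnectedtoB2L} at scale $L_0$ lets me choose $L_0$ (as a function of $\eps$ and $\rho$) and then $\zeta$ so that $A_0 \leq (2c_{\rho,d})^{-1}\exp(-1)$.

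For the inductive step, the renormalization inequality \eqref{ren:TkvsTk-1} together with a union bound gives
\begin{equation*}
\exp(-\zeta d^{\omega}_{k+1}(x)) \,\leq\, \sum_{i,j}\exp(-\zeta d^{\omega}_k(x+x_i^k))\,\exp(-\zeta d^{\omega}_k(x+y_j^k)),
\end{equation*}
a sum of $c_{\rho_k,d}$ products. Each factor is a monotone (decreasing, up to orientation in \eqref{eq:temonotone}) function of $\omega$ supported on the edges within $D^k_{\cdot}$, and by \eqref{ren:Dxiyj} the two supports sit at distance at least $L_k\rho_k/(k+6)^\delta$. I would apply \eqref{eq:P3''replacebyindependentdec} with $R=R_k$ and $\hat{u}=u-\eps_k$, taking the spatial scale $L \simeq L_k\rho_k\eps^{1/\chi_P}/(k+6)^{\delta(1+1/\chi_P)}$ in the spirit of \eqref{pr2:defRL}. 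Combined with the inductive hypothesis and the observation that $c_{\rho_k,d}\exp(-C_PL^{\xi_P})$ is dominated by $e^{-2^{k+1}}$ for $L_0$ large (since $L\gtrsim 2^k L_0$ and $\xi_P>1$), the recursion closes at $A_{k+1} \leq (2c_{\rho,d})^{-1}\exp(-2^{k+1})$, using $c_{\rho_k,d} \leq c_{\rho,d}$ which holds once $\rho \geq 1$ (note $\rho_k = \rho$ for $k<K$ and $\rho_k=1$ for $k\geq K$).

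The main obstacle will be to verify the two geometric constraints of {\bf P3'} simultaneously: the support cardinality $|D^k_{\cdot}| \asymp L_k^d$ must satisfy $|D^k_{\cdot}|\leq L^{\xi_P}$, and the spacing $L_k\rho_k/(k+6)^\delta$ must exceed $R_k L$. Combining these yields the single constraint $L_k^{1-d/\xi_P} \gtrsim (k+6)^{\delta(1+1/\chi_P)}/(\rho_k\eps^{1/\chi_P})$. This is precisely where the free parameters $\rho$ (taken large) and $K$ (delimiting the initial range where $\rho_k=\rho$) enter the picture: at early stages $L_k$ is too small for $L_k^{1-d/\xi_P}$ to absorb the polynomial corrections in $k$, and inflating the spacing by $\rho$ provides the necessary margin; once $k\geq K$, the scale $L_k$ is itself large enough to sustain $\rho_k = 1$. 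Without the super-exponential decay $\exp(-C_PL^{\xi_P})$ in {\bf P3'}, the error term would not be negligible compared to $e^{-2^{k+1}}$, and this simpler inductive scheme would not close.
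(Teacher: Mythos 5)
Your reduction to the recursion $A_{k+1}\leq c_{\rho_k,d}\bigl(A_k^2+\exp(-C_PL^{\xi_P})\bigr)$ fails at exactly the point you flag as ``the main obstacle,'' and it cannot be repaired within your scheme. The factor $\exp(-\zeta d^\omega_k(x+x_i^k))$ is a function of the weights on $D^k_{x+x_i^k}$, a box of cardinality $\asymp L_k^d\asymp 2^{kd}$. To invoke \textbf{P3'} you therefore need $L_k^d\leq L^{\xi_P}$ together with $RL\leq d(S_1,S_2)\asymp L_k\rho_k/(k+6)^\delta$, and combining these gives (as you write)
\begin{align*}
L_k^{\,1-d/\xi_P}\ \gtrsim\ \frac{(k+6)^{\delta(1+1/\chi_P)}}{\rho_k\,\eps^{1/\chi_P}}.
\end{align*}
Condition \textbf{P3'} only assumes $\xi_P>1$; in particular $\xi_P<d$ is allowed (this is the generic regime, as illustrated by the Ginzburg--Landau model where $\xi_P$ is taken close to $1$). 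When $\xi_P<d$ the exponent $1-d/\xi_P$ is strictly negative, so $L_k^{\,1-d/\xi_P}\to 0$ as $k\to\infty$, while the right-hand side grows polynomially. Hence the constraint is violated for all large $k$ regardless of how large you take $L_0$, $\rho$, or $K$; your sentence ``once $k\geq K$, the scale $L_k$ is itself large enough to sustain $\rho_k=1$'' has the monotonicity backwards for $\xi_P<d$.

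This is precisely why the paper does \emph{not} run the direct quadratic recursion on $A_k$. Instead it introduces proper embeddings of the binary tree $T_k$ (Lemma~\ref{pr3:lemmaembedding}) and works with the product $\prod_{\sigma\in T^{(k)}}\exp(-\zeta d^\omega_0(\tau(\sigma)))$ over \emph{level-$0$} boxes. Each factor $d^\omega_0(\tau(\sigma))$ is supported on $\overline{D}^0_{\tau(\sigma)}$, a box of \emph{fixed} size $\asymp L_0^d$, so the relevant supports $S_i=\bigcup_{\sigma\in T^{(k)}}\overline{D}^0_{\tau_i(\sigma)}$ have cardinality $\lesssim L_0^d\, 2^k$, not $\asymp 2^{kd}$. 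The cardinality constraint then becomes $2^k\lesssim L^{\xi_P}\asymp L_k^{\xi_P}/(k+6)^c\asymp 2^{k\xi_P}L_0^{\xi_P}/(k+6)^c$, which holds for all large $k$ precisely because $\xi_P>1$; the parameters $\rho$ and $K$ are only needed to handle the finitely many small $k$. Your plan is missing this linear-in-$2^k$ control of the support size, and without it the inductive step cannot close. The base case and the use of \eqref{pr2:defRL}-style choices of $R,L$ are otherwise aligned with the paper, so the missing ingredient is specifically the tree-embedding device of Lemma~\ref{pr3:lemmaembedding} and the bound \eqref{eq:TkvsTildek}.
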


Note that the bound $\exp(-2^k)$ in \eqref{pr3:induction} is the same as the bound one would obtain for independent weights, which is due to the fact that we require superexponential decay in the decoupling inequalities \eqref{eq:P3''replacebyindependentinc} and \eqref{eq:P3''replacebyindependentdec} (contrary to condition {\bf P3}). This is actually essential to deduce Theorem~\ref{The:main} under condition {\bf P3'} from Lemma~\ref{pr3:mainLemma} as shown in the following proof.

\begin{proof}[Proof of Theorem \ref{The:main} under condition {\bf P3'}]
Fix some $\delta>1,$ $u\in{I}$ and $L_0,$ $\zeta,$ $\eps,$ $\rho$ and $K$ as in Lemma \ref{pr3:mainLemma}. Using a Chernoff bound, condition {\bf P2} and \eqref{pr3:induction}, we get
\begin{equation}
\label{pr3:Chernov}
\begin{split}
    \bbP^u\big(d^{\omega}_k(0)\leq \zeta^{-1}2^{k-1}\big)&\leq\exp\big(2^{k-1}\big) \, \bbE^{u-\eps_k}\big[\exp(-\zeta d^{\omega}_k(0))\big]
    \leq c_{\rho,d}^{2^K-1}\exp\big(\! -2^{k-1}\big).
    \end{split}
\end{equation}
The rest of the proof is similar to the proof of Theorem~\ref{The:main} under condition {\bf P3} in Subsection~\ref{sec:proofunderP3}.
%Note that for all $x\in{\bbN_0^d\setminus\{0\}}$ and $n\geq L_0$ there exists $k\in\bbN_0$ such that $nx\in{D^{k+1}_0\setminus D^{k}_0},$ and then
%\begin{equation*}
%    d^{\omega}(0,nx)\geq d^{\omega}_k(0).
%\end{equation*}
%Moreover by \eqref{ren:boundonL_k} there exists a constant $c<\infty$ depending only on $x$ and $\delta>1$ such that $n\leq c2^{k},$ and so \eqref{intro:boundonT0nx} follows readily from \eqref{pr3:Chernov}. Borel-Cantelli Lemma implies that there exists a sequence $(b_n)_{n\in\bbN}$ such that $d^{\omega}(0,b_nx)\geq cb_n$ for all $n\in\bbN$  $\bbP^u$-a.s, and so the constant $\mu_u(x)$ from \eqref{intro:deftimeconstant} is positive.
\end{proof}

Let us now turn to the proof of Lemma~\ref{pr3:mainLemma}. As explained above, we will need to refine the renormalization scheme in Subsection~\ref{sec:renormalization} to take into account all the levels below level $k$ at once. We define the binary tree $T_k$ of depth $k$ by $T_k=\bigcup_{n=0}^kT^{(n)},$ where $T^{(0)}=\{\varnothing\}$ and $T^{(n)}=\{0,1\}^n$ for all $n\geq1.$ A map $\tau:T_k\rightarrow\bbZ^d$ will be called a proper embedding with base $x\in{\bbZ^d}$ if $\tau(\varnothing)=x$ and for all $n\in\{0,\dots,k-1\}$ and $\sigma\in{T^{(n)}}$ there exist $i\in{\{1,\dots,(2+\rho_{k-n-1})^d\}}$ and $j\in{\{1,\dots,2d(6+\rho_{k-n-1})^{d-1}\}}$ such that
\begin{equation*}
    \tau(\sigma1)=\tau(\sigma)+x_i^{k-n-1}\text{ and }\tau(\sigma0)=\tau(\sigma)+y_j^{k-n-1}
\end{equation*}
and 
\begin{equation}
\label{eq:defproper}
    D_{\tau(\sigma)}^{k-n}\cap D_{\tau(\sigma\sigma')}^0\neq\varnothing\text{ for all }n\in{\{0,\dots,k-1\}},\sigma\in{T^{(n)}}\text{ and }\sigma'\in{T^{(k-n)}},
\end{equation}
where $\sigma\sigma'\in{T^{(k)}}$ denotes the concatenation of $\sigma$ and $\sigma'.$ The following lemma shows that connections at level $k$ imply connections along the leafs of a proper embedding.

\begin{lemma}
\label{pr3:lemmaembedding}
For all $k\in\bbN_0,$ $x\in{\bbZ^d}$ and paths $\pi$ of edges between $C_x^k$ and $(D_x^k)^c,$ there exists a proper embedding $\tau:T_k\rightarrow\bbZ^d$ with base $x$ such that $\pi$ connects $C_{\tau(\sigma)}^0$ to $(D_{\tau(\sigma)}^0)^c$ for all $\sigma\in{T^{(k)}}.$
\end{lemma}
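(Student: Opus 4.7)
My plan is to prove the lemma by induction on the depth $k$, mirroring the recursive structure already established in Subsection \ref{sec:renormalization}.

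The base case $k=0$ is immediate: $T_0=\{\varnothing\}$ and we set $\tau(\varnothing)=x$. The condition in the definition of proper embedding ranging over $n\in\{0,\ldots,k-1\}$ is vacuous, condition \eqref{eq:defproper} holds trivially for the same reason, and the crossing condition at the unique leaf $\varnothing$ is precisely the hypothesis that $\pi$ connects $C^0_x$ to $(D^0_x)^c$.

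For the inductive step, assume the claim at depth $k$ and consider a path $\pi$ from $C_x^{k+1}$ to $(D_x^{k+1})^c$. Translating the renormalization statement in Subsection \ref{sec:renormalization} by $x$, there exist indices $i\in\{1,\ldots,(2+\rho_k)^d\}$ and $j\in\{1,\ldots,2d(6+\rho_k)^{d-1}\}$, together with sub-paths $\pi_1\subseteq\pi$ and $\pi_0\subseteq\pi$, such that $\pi_1$ connects $C_{x+x_i^k}^k$ to $(D_{x+x_i^k}^k)^c$ and $\pi_0$ connects $C_{x+y_j^k}^k$ to $(D_{x+y_j^k}^k)^c$. Applying the induction hypothesis to $\pi_1$ with base $x+x_i^k$ and to $\pi_0$ with base $x+y_j^k$ produces proper embeddings $\tau_1,\tau_0\colon T_k\to\bbZ^d$ of depth $k$ whose leaves satisfy the scale-$0$ crossing property. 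I then glue them into $\tau\colon T_{k+1}\to\bbZ^d$ by setting $\tau(\varnothing)=x$, $\tau(1\sigma'')=\tau_1(\sigma'')$, and $\tau(0\sigma'')=\tau_0(\sigma'')$ for $\sigma''\in T_k$.

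Verification splits into three items. The shift relation at $n=0$ reads $\tau(1)-\tau(\varnothing)=x_i^k$ and $\tau(0)-\tau(\varnothing)=y_j^k$, which matches the required scale $k=(k+1)-0-1$. For $n\ge 1$, any $\sigma\in T^{(n)}$ has the form $b\sigma''$ with $b\in\{0,1\}$ and $\sigma''\in T^{(n-1)}$, so the shift condition for $\tau$ at level $n$ coincides with the shift condition for $\tau_b$ at level $n-1$, which uses scale $k-(n-1)-1=(k+1)-n-1$, as needed. Condition \eqref{eq:defproper} for $n\ge 1$ similarly reduces to the corresponding condition for $\tau_b$. The crossing condition at each leaf $\sigma=b\sigma''\in T^{(k+1)}$ is inherited from the induction hypothesis applied to $\tau_b$, since $\pi_b\subseteq\pi$. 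Finally, the crossing property at the leaves guarantees that $D^0_{\tau(\sigma')}$ meets $D^{k+1}_x$ for every $\sigma'\in T^{(k+1)}$ (which is the $n=0$ instance of \eqref{eq:defproper}); indeed the induction hypothesis gives $D^k_{x+x_i^k}\cap D^0_{\tau_b(\sigma'')}\neq\varnothing$ (or the analogous statement with $y_j^k$), and the renormalization scheme of Subsection \ref{sec:renormalization} is constructed so that $D^k_{x+x_i^k},D^k_{x+y_j^k}\subseteq D^{k+1}_x$, yielding the desired intersection.

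The only non-routine point is the last geometric containment $D^k_{x+x_i^k},D^k_{x+y_j^k}\subseteq D^{k+1}_x$, which is where one uses the explicit construction of the covering vectors $(x_i^k)$ and $(y_j^k)$ sketched in Subsection \ref{sec:renormalization} (following \cite{MR3420516}). Once this is verified, the rest is pure bookkeeping with the binary tree indexing.
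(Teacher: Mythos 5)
Your overall strategy is the same as the paper's: build $\tau$ by gluing the inductively produced embeddings $\tau_1$ (with base $x+x_i^k$) and $\tau_0$ (with base $x+y_j^k$), and reduce the shift relations and condition~\eqref{eq:defproper} at levels $n\ge 1$ to the corresponding properties of $\tau_b$ at level $n-1$. The verification of those reductions and of the crossing property at the leaves is correct and agrees with the paper.

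The one place where you genuinely diverge --- and where there is a gap --- is the verification of \eqref{eq:defproper} at $n=0$, i.e.\ that $D^{k+1}_{\tau(\varnothing)}\cap D^0_{\tau(\sigma')}\ne\varnothing$ for every leaf $\sigma'\in T^{(k+1)}$. You deduce this from the $n=0$ instance of \eqref{eq:defproper} for $\tau_b$ together with the claimed containment $D^k_{x+x_i^k},\,D^k_{x+y_j^k}\subseteq D^{k+1}_x$. That containment is neither asserted in Subsection~\ref{sec:renormalization} nor automatic from what is stated there. The $y_j^k$-boxes cover a boundary layer of $D^{k+1}_0$, and for $y_j^k$ near a face (and especially near a corner) of $D^{k+1}_0$ the enlarged box $D^k_{x+y_j^k}=x+y_j^k+[-L_k,2L_k)^d$ protrudes outside $D^{k+1}_x$, so the inclusion you need can fail. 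You flag this yourself as the ``only non-routine point,'' but leave it unproved, and as written the argument rests on it. The paper sidesteps this entirely by a small but essential modification at the very start of the inductive step: it replaces $\pi$ by the path \emph{stopped upon first hitting} $(D^{k+1}_x)^c$. This truncated path is still a path between $C^{k+1}_x$ and $(D^{k+1}_x)^c$, hence the renormalization and the induction apply to it verbatim; but now \emph{every edge of $\pi$ has an endpoint in $D^{k+1}_x$}, so the fact that $\pi$ connects $C^0_{\tau(\sigma')}$ to $(D^0_{\tau(\sigma')})^c$ directly produces a vertex of $\pi$ lying in $D^{k+1}_x\cap D^0_{\tau(\sigma')}$. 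This truncation is the missing idea in your write-up: it makes the $n=0$ case immediate without any geometric input about the covering vectors, and it is strictly more robust than the containment you invoke.
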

\begin{proof}
We proceed recursively on $k.$ The claim is trivial for $k=0,$ and let us now assume that it holds for any $k\in\bbN_0.$ Let $\pi$ be a path of edges between $C_x^{k+1}$ and $(D_x^{k+1})^c,$ stopped on hitting $(D_x^{k+1})^c,$ then by the discussion above \eqref{ren:Dxiyj} $\pi$ connects $C^k_{x+x_{i}^k}$ to $(D^k_{x+x_{i}^k})^c$ for some $i\in{\{1,\dots,(2+\rho_k)^d\}}$ and so there exists a proper embedding $\tau_1:T_k\rightarrow\bbZ^d$ with base $x+x_{i}^k$ such that $\pi$ connects $C^0_{\tau_1(\sigma)}$ to $(D^0_{\tau_1(\sigma)})^c$ for all $\sigma\in{T^{(k)}}.$ Similarly, there exist $j\in{\{1,\dots,2d(6+\rho_k)^{d-1}\}}$ and a proper embedding $\tau_0:T_k\rightarrow\bbZ^d$ with base $x+y_{j}^k$ such that $\pi$ connects $C^0_{\tau_0(\sigma)}$ to $(D^0_{\tau_0(\sigma)})^c$ for all $\sigma\in{T^{(k)}}.$ We then define $\tau:T_{k+1}\rightarrow\bbZ^d$ by $\tau(\varnothing)=x,$ $\tau(1\sigma)=\tau_1(\sigma)$ and $\tau(0\sigma)=\tau_0(\sigma)$ for all $n\in{\{0,\dots,k\}}$ and $\sigma\in{T^{(n)}}.$

It is clear that $\pi$ connects $C_{\tau(\sigma)}^0$ to $(D_{\tau(\sigma)}^0)^c$ for all $\sigma\in{T^{(k+1)}},$ and we only need to show that $\tau$ is a proper embedding, that is it satisfies \eqref{eq:defproper}. For all $i\in{\{0,1\}},$ $n\in\{0,\dots,k-1\},$ $\sigma\in{T^{(n)}}$ and $\sigma'\in{T^{(k-n)}}$ we have
\begin{equation*}
    D_{\tau(i\sigma )}^{k+1-(n+1)}\cap D_{\tau(i\sigma\sigma')}^0=D_{\tau_i(\sigma)}^{k-n}\cap D_{\tau_i(\sigma\sigma')}^0\neq\varnothing,
\end{equation*}
since $\tau_i$ is a proper embedding. Moreover, for all $\sigma'\in{T^{(k+1)}}$ we have $D_{\tau(\varnothing)}^{k+1}\cap D_{\tau(\sigma')}^0\neq\varnothing$ since $\pi$ connects $C_{\tau(\sigma')}^0$ to $(D_{\tau(\sigma')}^0)^c$ and all the edges of $\pi$ contain a vertex in $D_x^{k+1}$. Hence,  $\tau$ satisfies \eqref{eq:defproper}.
\end{proof}

In view of Lemma \ref{pr3:lemmaembedding} we have
\begin{equation}
\label{eq:TkvsTildek}
    d^{\omega}_k(x)\geq \inf_{\tau:T_k\rightarrow\bbZ^d}\sum_{\sigma\in{T^{(k)}}}d^{\omega}_0(\tau(\sigma)), \qquad \forall k\in\bbN_0, \, x\in{\bbZ^d},
\end{equation}
where the infimum is taken over all proper embeddings $\tau$ with base $x$.

\begin{proof}[Proof of Lemma \ref{pr3:mainLemma}]
Fix $\delta>1$ and $u\in{I},$ on which all the constants in the rest of this proof depend. We shall prove by induction that for all $k\in\bbN_0,$ $x\in{\bbZ^d}$ and every proper embedding $\tau:T_k\rightarrow\bbZ^d$ with base $x$, 
\begin{equation}
\label{pr3:recursiontree}
    \bbE^{u-\eps_{k}}\bigg[\prod_{\sigma\in{T^{(k)}}}\exp\big(-\zeta d^{\omega}_0(\tau(\sigma))\big)\bigg]
    \leq
    \frac{1}{2c_{1,d}^{2^{k}}}\exp\big(\! -2^{k}\big).
\end{equation}
Once \eqref{pr3:recursiontree} is proved, \eqref{pr3:induction} will follow from \eqref{eq:TkvsTildek} and the fact that by \eqref{eq:defrhok} there are at most $c_{\rho,d}^{2^K-1}c_{1,d}^{2^k-2^K}$ proper embeddings $\tau:T_k\rightarrow\bbZ^d$ with base $x.$ For $k=0,$ assuming that $\eps$ is small enough so that $u-\eps_0\in{I},$ we have for all $x\in{\bbZ^d}$,
\begin{align*}
     \bbE^{u-\eps_0}&\big[\exp(-\zeta d^{\omega}_0(x))\big]
    \tend{\zeta}{\infty}\bbP^{u-\eps_0}\big(d^{\omega}_0(x)=0\big)
    \\&\leq\sup_{y\in{\bbZ^d}}\bbP^{u-\eps_0}\Big(Q(y,\lceil L_0/2\rceil)\longleftrightarrow Q(y,2\lceil L_0/2\rceil)^c\text{ in }\big\{e\in{E_d}:\,t^\om_e=0\big\}\Big).
\end{align*}
By \eqref{intro:BLnotconnectedtoB2L}, we can thus fix $L_0=L_0(\eps)$ and $\zeta=\zeta(\eps,L_0)$ so that \eqref{pr3:recursiontree} holds for $k=0$ for all $x\in{\bbZ^d}$ and every proper embedding $\tau:T_0\rightarrow\bbZ^d$ with base $x.$  Let us now assume that \eqref{pr3:recursiontree} holds for all $x\in{\bbZ^d}$ and every proper embedding $\tau:T_k\rightarrow\bbZ^d$ with base $x$ for any $k\in\bbN_0$.  Fix any $x\in{\bbZ^d}$ and any proper embedding $\tau:T_{k+1}\rightarrow\bbZ^d$ with base $x$.  For each $i\in{\{0,1\}}$, let $\tau_i(\sigma)=\tau(i\sigma)$ for all $\sigma\in{T_k}$. Then one can easily check that $\tau_i$ is a proper embedding with base $\tau(i)$. Set
\begin{equation}
\label{eq:defS1S2}
    S_1\ldef \bigcup_{\sigma\in{T^{(k)}}}\overline{D}_{\tau_0(\sigma)}^0\quad\text{ and }\quad S_2 \ldef \bigcup_{\sigma\in{T^{(k)}}}\overline{D}_{\tau_1(\sigma)}^0,
\end{equation}
where $\overline{D}_x^0$ denotes the union of $\mathcal{N}+y$ for $y\in{D_x^0}$ or $y$ in the outer boundary of $D_x^0$ (i.e.\ all vertices in $(D_x^0)^c$ that are nearest neighbors of $D_x^0$), and of all edges with at least one endpoint in $D_x^0.$

By \eqref{eq:defproper}, applied at level $k+1$ with $n=1,$ there exist $i\in{\{1,\dots,(2+\rho_k)^d\}}$ and $j\in{\{1,\dots,2d(6+\rho_k)^{d-1}\}}$ such that $D_{\tau_1(\sigma)}^0\cap D_{x+x_i^k}^k\neq\varnothing$ and $D_{\tau_0(\sigma)}^0\cap D_{x+y_j^k}^k\neq\varnothing$ for all $\sigma\in{T^{(k)}}$. Thus, by \eqref{eq:defS1S2} and \eqref{ren:Dxiyj}, upon choosing $L_0$ large enough,
\begin{align}
\label{eq:sizeSi}
 &   S_1\subset \bar{Q}(x+x_{i}^k,6L_k),\quad  S_2\subset \bar{Q}(x+y_{j}^k,6L_k),\quad  |S_i|\leq 2d(4L_0)^{d}2^k, \,i\in{\{1,2\}}, \nonumber \\
 & \mspace{36mu}  d(S_1,S_2)\geq \frac{L_k\rho_k}{(k+6)^\delta}-8L_0\geq \frac{L_k\rho_k}{2(k+6)^\delta},
\end{align}
where the last inequality holds by \eqref{eq:defrhok} and \eqref{ren:boundonL_k} after choosing $\rho\geq16$ and $K\geq8$ large enough. Take now
\begin{equation}
\label{eq:defRL}
    R=\bigg(\frac{(k+6)^{\delta}}{\eps}\bigg)^{\! \frac1{\chi_P}}\text{ and }L=\frac{\rho_k\eps^{1/\chi_P}L_k}{2(k+6)^{\delta(1+1/\chi_P)}}.
\end{equation}
After choosing $\eps$ small enough, and $L_0=L_0(\eps),$ $K=K(\eps,L_0)\geq8$ and $\rho=\rho(\eps,L_0)\geq16,$ large enough, independently of $k$, one can easily check that by \eqref{eq:defrhok} and \eqref{ren:boundonL_k}
\begin{equation}
\label{eq:conditionsaresatisfied}
    u-\eps_{k+1}\geq u-\eps_k+R^{-\chi_P},
    \quad
     R\geq R_P, \quad
     L\geq L_P, \quad L^{\xi_P}\geq6L_k\vee2d(4L_0)^d2^k.
\end{equation}
Noting that $d_0^{\omega}(\tau_i(\sigma))$ is supported on $S_{i+1}$, $i\in\{0,1\}$, in view of \eqref{eq:sizeSi}, \eqref{eq:defRL} and \eqref{eq:conditionsaresatisfied} we can now use {\bf P3'} and \eqref{pr3:recursiontree} to obtain
\begin{align}
\label{pr3:usedecoupling}
&    \bbE^{u-\eps_{k+1}}\bigg[\prod_{\sigma\in{T^{(k+1)}}}\exp\big(\! -\zeta d^{\omega}_0(\tau(\sigma))\big)\bigg]
    =
    \bbE^{u-\eps_{k+1}}\bigg[\prod_{i\in{\{0,1\}}}\prod_{\sigma\in{T^{(k)}}}
    \exp\big(\!-\zeta d^{\omega}_0(\tau_i(\sigma))\big)\bigg]  \nonumber \\
    &\mspace{36mu}
    \leq\Big(\frac{1}{2c_{1,d}^{2^{k}}}\exp\big(-2^{k}\big)\Big)^{\!2}+\exp\big(-C_PL^{\xi_P}\big).
\end{align}
Finally, in view of \eqref{eq:defrhok}, \eqref{ren:boundonL_k} and \eqref{eq:defRL}, one can choose $L_0=L_0(\eps)$ large enough, independently of $k\in\bbN_0,$ $\rho\geq1$ and $K\in\bbN,$ so that $C_PL^{\xi_P}\geq 2^{k+1}(1+\log(c_{1,d}))+\log(4)$. Then \eqref{pr3:recursiontree} for $k+1$  follows from \eqref{pr3:usedecoupling}.
%\notes{Should we mention at the end of the proof in which order one chooses the constants to make it more clear? I tried to write explicitly the dependency of the constants on one another to help (so choose first $\eps,$ then $L_0$ then $\zeta,\rho$).}
%\notes{Let me mention in which order one chooses the constants: first fix $\eps$ so that $u-\eps_0>a$ and $R\geq R_P,$ then $L_0$ so that $L\geq L_P,$ $L^{\xi_P}\geq (2+3d)L_k,$ for all $k,$ then $\rho$ so that $C_PL^{\xi_P}\geq ((3L_0)^d2^k)\vee (2^{k+1}(1+\log(c_{\rho,d}))+\log(2)),$ for all $k,$ and finally $\xi$ so that the case $k=0$ holds for this choice of $L_0,\eps.$ Maybe we should make it more explicit in the text if necessary.}
\end{proof}

%\notes{If we want, we could try to extend our results in the following directions:
%\begin{itemize}
%    \item Consider other graphs than $\bbZ^d.$ Typically, we could consider the same kind of graphs as in \cite{DrePreRod2}, where it is known that the conditions {\bf P3} does hold for both the Gaussian free field and random interlacements. It will however probably make the paper harder to read I guess..
%    \item It would be interesting to remove the condition that the functions $f_i$ are decreasing in Corollary \ref{app:2ndcorGFF}, since it seems natural to conjecture that $\mu_u^{X}>0$ when the functions $f_i$ are strictly positive (this is obviously the case if they are uniformly bounded away from $0$). However, I do not know how to prove decoupling inequalities as in {\bf P3} when the $f_i$ are not decreasing, and we would probably need to find a completely different proof. Note that we could assume that the $f_i$ are increasing too without any problem.
%\end{itemize}
%}

\section{Applications to the random conductance model}
\label{sec:RCM}

In this section we present two applications of the previous results to the random conductance model (RCM), which has been the subject of extensive research for more than a decade, see the surveys \cite{Bi11, Ku14} and references therein. Recall the setup introduced in the beginning of Section \ref{sec:FPP} (and in particular that $\mathcal{N}\subset E_d\cup\bbZ^d$ is a fixed set around the origin), and let $(\Omega, \cF)$ still denote the measurable space introduced there.   On $(\bbZ^d, E_d)$,  $d\geq 2$, we consider a family of random non-negative conductances constructed as follows. Let $\mathbf{a}:\Omega_{\mathcal{N}}\times\Omega_{\mathcal{N}}\times\Omega_E\rightarrow[0,\infty)$ be a measurable function, symmetric in the first two coordinates, such that 
\begin{equation}
\label{eq:amonotone}
    (\omega_{\mathcal{N}}^1,\omega_{\mathcal{N}}^2,\omega_E)\mapsto \indicator_{\{\mathbf{a}(\omega_{\mathcal{N}}^1,\omega_{\mathcal{N}}^2,\omega_E)>0\}}\text{ is monotone},
\end{equation} 
and let
\begin{align*}
  a^\om(x,y):= {\bf a}(\omega_{|\mathcal{N}+x},\omega_{|\mathcal{N}+y},\omega_{e}),\
   \forall\, e=\{x,y\} \in E_d,\quad
  a^\om(x,y) \ldef 0,\  \forall \{x,y\} \not\in E_d,
\end{align*}
  and define two measures on $\mathbb{Z}^d$,
\begin{align*}
\mu^\omega(x):=\sum_{y\sim x}a^\omega(x,y),\qquad
\nu^\omega(x):=\sum_{y\sim x}\frac{1}{a^\omega(x,y)} \indicator_{\{a^\om(x,y)>0\}}.
\end{align*}
Here and below we use the convention that $0/0 = 0$. We call an edge $e \in E_d$ \emph{open} if $a^\om(e) > 0$ and denote by $\cO(\om)$ the set of open edges, where $a^\om(e)=a^\om(x,y)$ if $e=\{x,y\}.$ We write $x \sim y$ if $\{x,y\} \in \cO(\om)$. 
 We still denote by $\big\{\tau_z : z \in \bbZ^d\big\}$ the group of space shifts as defined in \eqref{eq:def:space_shift}.
Throughout this section, we will only work with configurations $(a^\om(e))_{e\in E_d}$ for which there exists a unique infinite cluster $\cC_{\infty}(\om)$ of open edges and the origin is contained in  $\cC_{\infty}(\om)$.
We denote by $\rho^{\om}$ the graph distance on $(\cC_{\infty}(\om), \cO(\om))$, also known as chemical distance, i.e.\ for any $x, y \in \cC_{\infty}(\om)$, $\rho^{\om}(x,y)$ is the minimal length of a path between $x$ and $y$ that consists only of edges in $\cO(\om)$. Notice that the chemical distance is at least as large as the graph distance on $\bbZ^d$. For $x \in \cC_{\infty}(\om)$ and $r \geq 0$, let $B^{\om}(x,r) \ldef {\{ y \in \cC_{\infty}(\om) : \rho^{\om}(x, y) \leq \lfloor r \rfloor\}}$ be the closed ball with center $x$ and radius $r$ with respect to $\rho^{\om}$, and we write $B_{\mathrm{E}}(x,r)$ for the closed ball with respect to the  $\|\cdot\|_1$-norm which coincides with the graph distance on $\bbZ^d$. For a weight function $\varphi\!: \bbZ^d \to [0, \infty)$, $p \in [1, \infty)$ and any non-empty, finite $A \subset \bbZ^d$, we define space-averaged weighted $\ell^{p}$-norms on functions $f\!: A \to \bbR$ by 
\begin{align*}
  \Norm{f}{p, A, \varphi}
  \;\ldef\;
  \bigg(
    \frac{1}{|A|}\; \sum_{x \in A}\, |f(x)|^p\, \varphi(x)
  \bigg)^{\!\!1/p}
  \qquad \text{and} \qquad
  \Norm{f}{\infty, A} \;\ldef\; \max_{x \in A} |f(x)|,
\end{align*}
where $|A|$ denotes the cardinality of the set $A$. 
If $\varphi \equiv 1$, we simply write $\|f\|_{p, A} \ldef \|f\|_{p, A, \varphi}$.

Large parts of Section \ref{sec:RCMhk} below will be based on the results in \cite{ADS19}. Therefore, the underlying graph $(\cC_{\infty}(\om), \cO(\om))$ needs to satisfy \cite[Assumption~2.1]{ADS19} which in the present setting reads as follows. 
\begin{assumption} \label{ass:graph}
 There exist  $ C_{\mathrm{reg}}, C_{\mathrm{S}_1}\in [1,\infty)$ such that for all $x \in \cC_{\infty}(\om)$ the following hold.
  \begin{itemize}
  \item[(i)] Volume regularity of order $d$ for large balls. There exists $N_4(\om, x) < \infty$ such that for all $n \geq N_4(\om,x)$,
    \begin{align}\label{eq:reg:large_balls}
      C_{\mathrm{reg}}^{-1}\, n^d \;\leq\; |B^\om(x, n)| \;\leq\; C_{\mathrm{reg}}\, n^d.
    \end{align}

  \item[(ii)] Sobolev inequality.   There exist $d'\geq d$ and  $N_5(\om, x) < \infty$ such that for all $n \geq N_5(\om,x)$,
    \begin{align} \label{eq:sobolev}
      \Norm{u}{d'/(d'-1),B^\om(x,n)}
      \;\leq\;
      C_{\mathrm{S_1}}\, \frac{n}{|B^\om(x, n)|}\, \sum_{\substack{x,y \in  \cC_{\infty}(\om) \\ \{x,y\}\in  \cO(\om)}} |u(x)-u(y)|,
    \end{align}
    for every function $u\!: \cC_{\infty}(\om) \to \bbR$ with $\supp u \subset B^\om(x, n)$.  
  \end{itemize}  
\end{assumption}
\begin{remark}
The Euclidean lattice $(\bbZ^d, E_d)$ satisfies Assumption~\ref{ass:graph} with $d=d'$ and $N_4=N_5=1$. The Sobolev inequality in (ii) clearly follows from the classical Sobolev inequality $(S^1_d)$ which in turn is equivalent to the classical isoperimetric inequality. The weaker form in (ii) can be deduced on general graphs from a weaker isoperimetric inequality on large scales in conjunction with the volume regularity in (i), see \cite[Proposition~3.5]{DNS18} and the proof of Theorem~\ref{thm:hkeGauss} below for more details. Those conditions hold on a class of random graphs including correlated percolation clusters satisfying  {\bf P1--P3} and conditions {\bf S1--S2} introduced below.
\end{remark}

\subsection{Gaussian upper heat kernel bounds for RCMs with general speed measure} \label{sec:RCMhk}
We now introduce a possibly random speed measure $\theta^\om \!: \bbZ^d \to (0, \infty)$. More precisely we let $\boldsymbol{\theta}:\Omega_{\mathcal{N}}\rightarrow(0,\infty)$ be a measurable function  and $\theta^\om(x):=\boldsymbol{\th}(\omega_{|\mathcal{N}+x})$ for each $x\in \bbZ^d.$
We consider a continuous time Markov chain $X=(X_t)_{t\geq 0}$ on $\cC_\infty(\om)$ with generator $\cL_{\theta}^{\om}$ acting on bounded functions $f\!:  \cC_\infty(\om) \to \bbR$ as
\begin{align} \label{def:L}
  \big(\cL_{\th}^{\om} f)(x)
  \;=\; 
  \frac 1 {\th^\om(x)} \, \sum_{y \sim x} a^\om(x,y) \, \big(f(y) - f(x)\big).
\end{align}
We call this Markov chain the \textit{random conductance model (RCM)} with \textit{speed measure} $\theta^\omega$. As a key feature, the random walk is \emph{reversible} with respect to the speed measure $\th^\om$, and regardless of the particular choice of $\th^\om$ the jump probabilities of $X$ are given by $p^{\om}(x,y) \ldef a^\om(x,y) / \mu^{\om}(x)$, $x,y \in \cC_\infty$, and the various random walks corresponding to different speed measures are time-changes of each other.  If the random walk $X$ is currently at $x$, it will next move to $y$ with probability $a^\omega(x,y)/\mu^\omega(x)$, after waiting an exponential time with mean $\th^\omega(x)/\mu^\omega(x)$ at the vertex $x$. Perhaps the most natural choice for the speed measure is $\th^\omega\equiv\mu^\omega$ (which can be obtained via a function $\boldsymbol{\theta}$ on $\Omega_{\mathcal{N}}$ as above \eqref{def:L} upon increasing $\mathcal{N}$), for which we obtain the constant speed random walk (CSRW) that spends i.i.d.\ $\text{Exp}(1)$-distributed waiting times at all vertices it visits. Another well-studied process, the variable speed random walk (VSRW), is obtained by setting $\th^\omega\equiv 1$, so called because, in contrast to the CSRW, the waiting time at a vertex $x$ does depend on the location; it is an $\text{Exp}(\mu^\omega(x))$-distributed random variable.

For any choice of $\th^\om$, we denote by $P_x^\omega$ the law of the process $X$ started at $x\in\cC_\infty(\om)$.
Let $p_\th^{\om}(t,x,y)$ be the transition densities of $X$ with respect to the reversible measure (or the \emph{heat kernel} associated with $\cL^{\om}_\th$), i.e.\
\begin{align*}
p_\th^\omega(t,x,y) \; := \; \frac{P_x^\omega\left(X_t=y\right)}{\th^\om(y)}, \qquad t\geq 0, \, x,y\in\cC_\infty(\om).
\end{align*}

In this subsection our focus will be on Gaussian heat kernel estimates, see  e.g.\ \cite{De99, Ba04, BD10, Fo11, ADS16a, ADS19, Sa17, AH21} and references therein for previous results. We recall that, due to  a trapping phenomenon,  Gaussian bounds do not hold in general: for example, under i.i.d.\ conductances with fat tails at zero, the heat kernel decay may be sub-diffusive, see e.g.\ \cite{BBHK08, BKM15}. Recently,  local limit theorems for the heat kernel of RCMs on random graphs or with a general speed measures have been obtained in \cite{ACS21, AT21}, and a quantitative local limit theorem with an optimal rate of convergence for random  walks on supercritical i.i.d.\ percolation clusters has been shown in \cite{DG21}.

Here our aim is to use our results in Section~\ref{sec:FPP} in order to improve the already established heat kernel upper bounds in \cite[Theorem~3.2]{ADS19} which we recall next.  First, we note that the decay of $p^\om_\th(t,x,y)$ is naturally  governed by a distance function  $d_{\th}^{\om}$ of FPP-type defined by
\begin{align} \label{eq:def_intMetric}
  d_{\th}^{\om}(x,y)
  \;\ldef\;
  \inf_{\gamma\in \Gamma_{xy}^{\om}}
  \Bigg\{
    \sum_{i=0}^{l_{\gamma}-1} 
    \bigg( 
      1 \wedge \frac{\th^\om(z_i) \wedge \th^\om(z_{i+1})}{a^{\om}(z_i,z_{i+1})}
    \bigg)^{\!\!1/2}
  \Bigg\},
  \qquad x,y \in \cC_\infty(\om),
\end{align}
where $\Ga_{xy}^{\om}$ is the set of all nearest-neighbor paths $\ga = (z_0, \ldots, z_{l_\ga})$ in $\cC_\infty(\om)$  connecting $x$ and $y$ (cf.\ e.g.\ \cite{Da93, BD10, Fo11, ADS19}).  Note that $d_{\th}^{\om}$ is a metric which is adapted to the transition rates and the speed measure of the random walk. Further, for the CSRW, i.e.\ $\th^{\om} = \mu^{\om}$, the metric $d_{\th}^{\om}$ coincides with the usual graph distance $\rho^\om$.  In general, $d_{\th}^{\om}$ can be identified with the intrinsic metric generated by the Dirichlet form associated with $\cL_{\th}^{\om}$ and $X$, see e.g.\  \cite[Proposition~2.3]{ADS19}.  Further, notice that $d_\theta^\om(x,y) \leq \rho^\om(x,y)$ for all $x,y \in \cC_\infty$. In fact, the distance $d_\theta^\om$ can become much smaller than the graph distance, see \cite[Lemma~1.12]{ADS16a} for an example on $\bbZ^d$.

\begin{theorem}[\cite{ADS19}] \label{thm:hke}
  Suppose Assumption~\ref{ass:graph} holds and there exist $p,q,r\in(1,\infty]$ satisfying
\begin{align} \label{eq:cond_pqr'}
\frac{1}{r}+\frac{1}{p}\frac{r-1}{r}+\frac{1}{q}<\frac{2}{d'},
\end{align}
  such that, for every $x\in \cC_\infty(\om)$, there exists $N_6(\om,x)$ such that
\begin{align} \label{eq:cond_int}
\sup_{n\geq N_6(\om,x)} \! \max\Big\{
  \Big\|  \frac{\mu^{\om}}{\th^\om}\Big\|_{p, B^\om(x,n), \th},
    \Norm{ \nu^{\om}}{q, B^\om(x, n)},
    \Norm{ \th^\om}{r, B^\om(x,n)},
    \Big\|\frac1{\th^\om}\Big\|_{q, B^\om(x,n)} \Big\}
    \leq
C_{\mathrm{int}}
\end{align}
for some $C_{\mathrm{int}} <\infty$. Then, there exist $c_i = c_i(d, p, q, C_{\mathrm{int}})>0$ and  $\gamma = \gamma(d, p, q, C_{\mathrm{int}})>0$  such that for any given $t$ and $x\in \cC_\infty(\om)$ with $\sqrt{t} \geq \max\{ N_i(\om, x), \,  i=4,5,6\}$ and all $y \in \cC_\infty(\om)$ the following hold.
  \begin{enumerate}
  \item [(i)] If $d_{\th}^{\om}(x, y)\leq c_{13} t$ then
    \begin{align*}
      p^{\om}_{\th}(t, x, y)
      \;\leq\;
      c_{14}\, t^{-d/2}\,
      \bigg( 1 + \frac{\rho^\om(x,y)}{\sqrt{t}} \bigg)^{\!\!\ga}\, 
      \exp\!\bigg( \!-\! c_{15}\, \frac{d_{\th}^{\om}(x,y)^2}{t}\bigg).
    \end{align*}
  \item [(ii)] If $d_{\th}^{\om}(x,y) \geq c_{17} t$ then
    \begin{align*}
      p^{\om}_{\th}(t, x, y)
      \;\leq\;
      c_{14}\, t^{-d/2}\,  
      \bigg( 1 + \frac{\rho^\om(x,y)}{\sqrt{t}} \bigg)^{\!\!\ga}\,
      \exp\!\bigg( 
        \!-\! c_{16}\, d_{\th}^{\om}(x,y) 
        \bigg(1 \vee \log \frac{d_{\th}^{\om}(x,y)}{t}\bigg)
      \bigg).
    \end{align*}
  \end{enumerate}
\end{theorem}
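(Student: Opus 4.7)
The plan is to follow the classical Davies perturbation method in a discrete setting, built on top of a Moser iteration adapted to the degenerate operator $\cL^\om_\th$. The starting point should be a pointwise on-diagonal bound $p^\om_\th(t,x,x) \leq C t^{-d/2}$, and off-diagonal Gaussian bounds should emerge by conjugating the semigroup with a weight that is Lipschitz with respect to the intrinsic metric $d^\om_\th$.

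First I would carry out a Moser iteration for non-negative subsolutions $u$ of $(\partial_s - \cL^\om_\th)u \leq 0$ on space-time cylinders of the form $[s_2-n^2, s_2] \times B^\om(x,n)$. The Sobolev inequality \eqref{eq:sobolev} provides the core functional inequality that drives the iteration. Because the coefficients $\mu^\om, \nu^\om, \th^\om, 1/\th^\om$ are unbounded, each iteration step produces error terms that have to be absorbed by H\"older's inequality; the integrability hypothesis \eqref{eq:cond_int} together with the exponent relation \eqref{eq:cond_pqr'} is precisely what is needed to keep the sequence of $L^p$-to-$L^\infty$ gains summable. The output is a maximal inequality
\begin{equation*}
\|u\|_{\infty,[s_2-n^2/4,s_2]\times B^\om(x,n/2)} \;\leq\; C\, \Norm{u}{L^2([s_2-n^2,s_2]\times B^\om(x,n),\th)}\,|B^\om(x,n)|^{-1/2},
\end{equation*}
valid once $n$ exceeds the random scales $N_4(\om,x), N_5(\om,x), N_6(\om,x)$. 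Applying this inequality to $u(s,y)=p^\om_\th(s,y,x)$ on $[t/2,t]\times B^\om(x,\sqrt t)$, using the semigroup identity $\sum_y p^\om_\th(s,x,y)^2\th^\om(y)=p^\om_\th(2s,x,x)$, and invoking the volume regularity \eqref{eq:reg:large_balls} then yields the on-diagonal bound $p^\om_\th(t,x,x) \leq C t^{-d/2}$.

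Second, I would introduce the off-diagonal decay via Davies' technique. For a bounded $\psi:\cC_\infty(\om)\to\bbR$ that is $1$-Lipschitz with respect to $d^\om_\th$, the conjugated semigroup $e^{-\la\psi}P^\om_\th(t)e^{\la\psi}$ satisfies $\|e^{-\la\psi}P^\om_\th(t)e^{\la\psi}\|_{2\to 2}\leq e^{c\la^2 t}$: this is because the definition \eqref{eq:def_intMetric} is exactly arranged so that $a^\om(x,y)(e^{\la\psi(x)}-e^{\la\psi(y)})^2 \leq c\la^2 e^{2\la\psi(x)}(\th^\om(x)\wedge\th^\om(y))$. Combining the $L^2\to L^\infty$ bound (obtained from the on-diagonal estimate together with duality) with this perturbed $L^2\to L^2$ contractivity, choosing $\psi(y)=d^\om_\th(x,y)$, and optimising $\la=d^\om_\th(x,y)/(ct)$ yields the Gaussian exponent in regime (i); taking $\la$ large produces the sub-exponential $d^\om_\th(x,y)(1\vee\log(d^\om_\th(x,y)/t))$ correction in regime (ii) through the standard dichotomy familiar from Davies, Delmotte and Barlow. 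The polynomial prefactor $(1+\rho^\om(x,y)/\sqrt t)^\gamma$ appears because the maximal inequality sits naturally at the point $y$ inside the ball $B^\om(y,\sqrt t)$, and reconciling it with information near $x$ requires either a chaining through $\sim \rho^\om(x,y)/\sqrt t$ balls (each chaining step contributing a multiplicative constant) or, equivalently, a covering of the relevant region by $\sqrt t$-balls whose count is polynomial in $\rho^\om(x,y)/\sqrt t$ by volume regularity.

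The hard part will be closing the Moser iteration under the sharp integrability condition \eqref{eq:cond_pqr'}, since the operator simultaneously has degenerate and unbounded coefficients, and four different $L^p$-norms of the coefficients compete at each iteration stage; standard proofs in the i.i.d.\ or uniformly elliptic setting do not apply, and one has to balance the exponents carefully so that the sequence of reverse-H\"older improvements telescopes. A secondary difficulty is the genuine discrepancy between the metric $d^\om_\th$ (which controls the exponential Gaussian factor) and the chemical distance $\rho^\om$ (which controls volume and hence enters the prefactor); this mismatch is intrinsic and is what forces the polynomial correction to appear separately in $\rho^\om$ rather than in $d^\om_\th$.
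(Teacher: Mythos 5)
This theorem is recalled from \cite{ADS19} (it is Theorem~3.2 there) without being reproved in the present paper, so there is no in-paper argument to compare against. Your sketch correctly identifies the parabolic Moser iteration plus Davies perturbation strategy of that reference—an $L^2\to L^\infty$ maximal inequality driven by the Sobolev inequality \eqref{eq:sobolev} with H\"older absorption of the unbounded coefficients under \eqref{eq:cond_pqr'}, an on-diagonal bound from the semigroup identity, and then off-diagonal decay by conjugating with weights Lipschitz in $d^\om_\th$—so at the level of strategy you are aligned with the cited proof. Two small imprecisions: the inequality $a^\om(x,y)\bigl(e^{\la\psi(x)}-e^{\la\psi(y)}\bigr)^2\leq c\la^2 e^{2\la\psi(x)}\bigl(\th^\om(x)\wedge\th^\om(y)\bigr)$ is valid only for $\la\lesssim 1$; the actual estimate is of $\cosh$-type (compare Lemma~\ref{lem:GammaPhi} in this paper), and it is precisely the superquadratic growth of $\cosh$ for large $\la$ that produces regime~(ii). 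And the chaining heuristic you give for the polynomial prefactor cannot be the mechanism—a fixed multiplicative loss per step over $\sim\rho^\om(x,y)/\sqrt t$ steps yields an \emph{exponential}, not polynomial, factor; in \cite{ADS19} the prefactor comes from the power $\gamma$ to which the Moser-iteration constant $m^\om(n)$ (which tracks the coefficient $L^p$-norms and the ball scale) is raised in the maximal inequality, much as in Proposition~\ref{prop:max_ineq} here.
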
 

For later use, we also recall the following bound in $d\geq 3$ on the
 Green kernel $g^\omega(x,y)$ defined by
\begin{equation*}
g^\omega(x,y)\ldef \int_{0}^{\infty} p^\omega_\theta(t,x,y)\	 \textrm{d}t, \qquad x,y\in \cC_\infty(\om).
\end{equation*}

\begin{prop} \label{prop:green}
Let $d\geq 3$. Suppose that $(\cC_\infty(\om),\cO(\om))$ satisfies Assumption~\ref{ass:graph} and there exist $p,q\in (1,\infty]$ with $1/p+1/q<2/d'$ such that, for every $x\in \cC_\infty(\om)$,  there exists $N_7(\om,x)$ such that
\begin{align*} 
\sup_{n\geq N_7(\om,x)} \! \max\big\{
 \Norm{\mu^{\om}}{p, B^\om(x,n)},
    \Norm{ \nu^{\om}}{q, B^\om(x, n)}
  \big\}
    \leq
C_{\mathrm{int}}
\end{align*}
for some $C_{\mathrm{int}} <\infty$.  Then, for any $x\in \cC_\infty(\om)$, there exist $c_{18}\in (0,\infty)$ and $N_7(\om,x)$ such that for all $y\in \cC_\infty(\om)$ with $\rho^\om(x,y)\geq N_7(\om,x)$, 
\begin{align}  \label{eq:GU}
g^\omega(x,y)\leq c_{18} \,  \rho^\om(x,y)^{2-d}.
\end{align}
\end{prop}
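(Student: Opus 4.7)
The plan is to apply Theorem~\ref{thm:hke} to the variable speed random walk (VSRW) by taking $\theta^\omega \equiv 1$, and then integrate the resulting heat kernel upper bound over time. With $\theta \equiv 1$, condition \eqref{eq:cond_pqr'} becomes $1/p + 1/q < 2/d'$ as $r\to\infty$, exactly matching the hypothesis of the proposition; moreover $\|\mu^\omega/\theta^\omega\|_{p,\cdot,\theta}$ reduces to $\|\mu^\omega\|_{p,\cdot}$ while $\|\theta^\omega\|_{r,\cdot}$ and $\|1/\theta^\omega\|_{q,\cdot}$ are trivial, so the integrability hypothesis of the proposition suffices to apply Theorem~\ref{thm:hke}.

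Writing $g^\omega(x,y) = \int_0^\infty p^\omega_1(t,x,y)\,dt$, I would split the time integral at $t_0 := \rho^\omega(x,y)^2$. For $t \geq t_0$, Theorem~\ref{thm:hke}(i) applies since $d_\theta^\omega(x,y)\leq \rho^\omega(x,y)\leq\sqrt{t}$, the polynomial factor $(1+\rho^\omega(x,y)/\sqrt{t})^\gamma$ is bounded, and one gets $p^\omega_1(t,x,y)\leq c\,t^{-d/2}$; integration from $t_0$ to $\infty$ then yields exactly $c\,\rho^\omega(x,y)^{2-d}$ for $d\geq 3$. For the short-time part $t<t_0$, the prefactor $(1+\rho^\omega/\sqrt{t})^\gamma\asymp(\rho^\omega/\sqrt{t})^\gamma$ grows, but is compensated by the Gaussian decay $\exp(-c\,d_\theta^\omega(x,y)^2/t)$. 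A change of variable $u=\rho^\omega(x,y)^2/t$ shows that this short-time integral is also $O(\rho^\omega(x,y)^{2-d})$, provided one has the key comparison
\begin{equation*}
d_\theta^\omega(x,y) \;\geq\; c\,\rho^\omega(x,y) \quad \text{for $y$ sufficiently far from $x$;}
\end{equation*}
case (ii) of Theorem~\ref{thm:hke} handles the very small $t$ regime via its super-exponential decay under the same comparison.

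The main obstacle is therefore establishing this comparison between $d_\theta^\omega$ and $\rho^\omega$, which is not immediate since $(1\wedge 1/a^\omega(e))^{1/2}$ can be arbitrarily small when $a^\omega(e)$ is large. I would view $d_\theta^\omega$ itself as an FPP-type distance on $\cC_\infty(\omega)$ with ergodic bounded weights in $[0,1]$; because these weights are strictly positive almost surely, and the integrability of $\mu^\omega$ controls the frequency of edges with very large conductance, a Kingman subadditive argument yields positivity of the associated time constant, so that $d_\theta^\omega(0,nx)\geq c\,n$ almost surely along any fixed direction. Combined with the standard comparability $\rho^\omega(x,y)\asymp|x-y|$ for $y\in\cC_\infty(\omega)$ far from $x$, this produces the required lower bound on $d_\theta^\omega$ and completes the proof.
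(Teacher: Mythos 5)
Your approach fails at the step where you need the comparison $d_\theta^\omega(x,y) \geq c\,\rho^\omega(x,y)$ for the VSRW metric ($\theta^\omega\equiv 1$). This comparison cannot be established from the hypotheses of Proposition~\ref{prop:green}: the proposition is a deterministic statement about a single fixed configuration $\om$ satisfying Assumption~\ref{ass:graph} and the stated integrability bounds, with no probability measure, no shift-invariance, and no ergodicity in play. The Kingman subadditive argument you invoke is therefore unavailable, and indeed the comparison $d_\theta^\omega \gtrsim \rho^\omega$ is precisely what the paper later works hard to prove as a consequence of Theorem~\ref{The:main} under the much stronger stochastic assumptions \textbf{P1--P3}, \textbf{S1--S2} (it is \eqref{eq:comp_metrics}). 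Without it, the short-time part of your integral $\int_0^{t_0} p^\om_1(t,x,y)\,\mathrm{d}t$ is uncontrolled, since the polynomial prefactor $(1+\rho^\om(x,y)/\sqrt{t})^\gamma$ blows up as $t\to 0$ and the Gaussian factor $\exp(-c\,d_\theta^\omega(x,y)^2/t)$ is useless when $d_\theta^\omega(x,y)$ is small relative to $\rho^\om(x,y)$.

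The paper avoids this issue entirely by a cleaner observation: the Green kernel is independent of the choice of speed measure, so one is free to take the \emph{constant} speed random walk, $\theta^\om=\mu^\om$. For the CSRW the weights in \eqref{eq:def_intMetric} all equal $1$ (because $\mu^\om(z_i)\wedge\mu^\om(z_{i+1})\geq a^\om(z_i,z_{i+1})$), so $d_\theta^\om=\rho^\om$ \emph{exactly}, and Theorem~\ref{thm:hke} already gives Gaussian bounds in $\rho^\om$ with no comparison lemma needed; one then integrates in time as in \cite[Theorem~1.6-(i)]{AH21}. Your long-time computation and the verification of the integrability condition \eqref{eq:cond_pqr'} are fine, but the choice of VSRW makes the short-time regime unmanageable; switching to the CSRW is the fix.
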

\begin{proof}
Notice that the Green kernel does not depend on the speed measure $\th^\om$. Hence, it suffices to consider the special case of the CSRW, for which we have $d_\th^\om = \rho^\om$ by the definition \eqref{eq:def_intMetric} so that the bounds in Theorem~\ref{thm:hke} turn immediately into Gaussian estimates with respect to $ \rho^\om$. The result follows now, for instance,  by the same arguments as in the proof of \cite[Theorem~1.6-(i)]{AH21}.
\end{proof}

Note that one could replace $\rho^{\omega}(x,y)$ by $|x-y|$ in Proposition \ref{prop:green} since $\rho^{\omega}(x,y)\geq |x-y|,$ but the formulation with $\rho^{\omega}(x,y)$ is in general stronger.  We refer to \cite[Theorem 1.2]{ABDH13} for precise estimates
and asymptotics in the case of general non-negative i.i.d.\ conductances, and to \cite{ADS20} for recent results on the Green kernel in dimension $d=2$.

\medskip

 Henceforth, we consider random conductances that are distributed according to a family of probability measures $(\prob^u)_{u\in I}$ on $(\Omega, \cF)$ for some open interval $I$ similarly as in Section \ref{sec:FPP}. Note that our setup allows for any choice of distribution for $\theta^{\om}$ and $a^{\om},$ for instance by taking $\mathcal{N}=\{0\},$ $\Omega_E=[0,\infty),$ $\Omega_V=(0,\infty),$ $\mathbf{a}(\omega_V^1,\omega_V^2,\omega_E)=\omega_E,$ $\boldsymbol{\theta}(\omega_V)=\omega_V,$ and choosing $\bbP^u$ appropriately. Our seemingly more complicated setup is useful when the weights and speed measure depend on a common environment $\omega,$ for instance the Gaussian free field as in Corollary~\ref{cor:optidecay}. Similar remarks could be made about the setup with killing measure in Section \ref{sec:Agmon_result}.
 
  Next we recall the conditions {\bf S1} and {\bf S2} from \cite{DRS14, Sa17}. For $r\in  [0, \infty]$, we denote by $\cC_r(\om)$ the set of vertices  which are in connected components of $( \bbZ^d, \cO(\om))$ of $\ell^1$-diameter at least $r$. In particular, $\cC_\infty(\om)$ is the subset of vertices which are in infinite connected components.

\begin{changemargin}{4mm}
{\bf S1 }{\it(Local uniqueness)}. There exists a function $f_S:I\times \bbZ_+ \rightarrow \bbR$ such that for
each $u \in I$, there exist $\Delta_S=\Delta_S(u)>0$ and $R_S=R_S(u)<\infty$ such that $f_S(u,R)\geq \log(R)^{1+\Delta_S}$ for all $R\geq R_S$, and for all $u\in I$ and $R\geq 1$,
\begin{align*}
\prob^u\big[ \cC_R \cap [-R,R]^d \not= \varnothing \big] \geq 1- e^{-f_S(u,R)},
\end{align*}
and
\begin{align*}
&\prob^u\Big[ \forall x,y \in \cC_{R/10}\cap [-R,R]^d: \text{$x$ is connected to $y$ by an open path in $[-2R,2R]^d$}  \Big] \\
&\mspace{36mu} \geq 1- e^{-f_S(u,R)}.
\end{align*}
\end{changemargin}

\begin{changemargin}{4mm}
{\bf S2 }{\it(Continuity)}. The function $u\mapsto \prob^u[0\in \cC_\infty]$ is positive and continuous on $I$.
\end{changemargin}

Note that in \cite{DRS14, Sa17} the conditions {\bf S1}--{\bf S2}, as well as the conditions {\bf P1}-{\bf P3}, are stated for site percolation models rather than bond percolation as here. The proof of their results can however be adapted to our setting by simple notational changes. In particular,  if the family $(\prob^u)_{u \in  I}$ satisfies {\bf S1--S2}, then, for every $u\in I$, there exists $\prob^u$-a.s.\ a unique infinite cluster $\cC_\infty$, cf.\ \cite[Remark~1.9-(2)]{Sa17} and $\prob^u[0\in \cC_\infty]>0$.  Write $\prob^u_0[ \,\cdot\, ] \ldef \prob^u[ \cdot \,|\, 0 \in \cC_{\infty} ]$ and $\mean^u_0$ for the corresponding expectation.

 \begin{assumption}
\label{ass:moment}
(i) The mapping $\mathbf{t}: \Omega_{\mathcal{N}}\times\Omega_{\mathcal{N}}\times\Omega_E\rightarrow [0,\infty)\cup\{\infty\}:$
\begin{equation*}
    (\omega_{\mathcal{N}}^1,\omega_{\mathcal{N}}^2,\omega_E) \mapsto \begin{cases}\Big( 
      1 \wedge \frac{\boldsymbol{\th}(\omega_{\mathcal{N}}^1) \wedge \boldsymbol{\th}(\omega_{\mathcal{N}}^2)}{\mathbf{a}(\omega_{\mathcal{N}}^1,\omega_{\mathcal{N}}^2,\omega_E)}
    \Big)^{\frac12}&\text{if }\mathbf{a}(\omega_{\mathcal{N}}^1,\omega_{\mathcal{N}}^2,\omega_E)\neq0
    \\\infty&\text{otherwise.}
    \end{cases}
\end{equation*}
is monotone.

(ii) There exist $p,q,r\in(1,\infty]$ satisfying
\begin{align} \label{eq:cond_pqr}
\frac{1}{r}+\frac{1}{p}\frac{r-1}{r}+\frac{1}{q}<\frac{2}{d}
\end{align}
such that, for every $u\in I$,
\begin{align*}
\max\Bigg\{ \mathbb{E}^u_0 \!\left[\Big( \frac{\mu^\om(0)}{\theta^\om(0)}\Big)^{p} \, \theta^\om(0) \right], \, \mathbb{E}^u_0\left[\nu^\om(0)^{q}\right], \,
\mathbb{E}^u_0\left[\theta^\om(0)^{r}\right], \, \mathbb{E}^u_0\! \left[\theta^\om(0)^{-q} \right] \Bigg\} \; < \; \infty.
\end{align*}
\end{assumption}
\begin{remark} 
In the case of the CSRW or VSRW the moment condition in Assumption~\ref{ass:moment}-(ii) can be written as $\mathbb{E}^u_0[a^\om(e)^p]<\infty$ and $\mathbb{E}^u_0[a^\om(e)^{-q}]<\infty$ for $p,q\in (1,\infty]$ satisfying $1/p+1/q<2/d$. Indeed, for the CSRW, $\theta^\om\equiv\mu^\om$, choose  $p=\infty$ and relabel $r$ by $p$; for the VSRW, $\theta^\om\equiv1$, choose $r=\infty$. For the CSRW, this condition is known to be optimal for a local limit theorem and two-sided near-diagonal Gaussian estimates to hold, see \cite[Section~5]{ADS16}.
\end{remark}

Note that the distance $d_{\theta}^{\omega}$ from \eqref{eq:def_intMetric} corresponds to the first passage percolation metric considered in Section \ref{sec:FPP} for $\mathbf{t}$ as in Assumption~\ref{ass:moment}-(i). We will now exploit Theorem~\ref{The:main} in order to improve  the upper heat kernel bounds in Theorem~\ref{thm:hke} into genuine Gaussian  bounds. More precisely, under conditions {\bf P1--P3} and {\bf S1--S2} and Assumption~\ref{ass:moment}-(i)  (note that \eqref{intro:BLnotconnectedtoB2L} is trivially satisfied since the weight $\mathbf{t}$ from Assumption~\ref{ass:moment}-(i) is strictly positive) Theorem~\ref{The:main} and its analogue for the chemical distance established in \cite{DRS14} ensure  that both, the FPP-distance $d_\theta^\om$ and the chemical distance $\rho^\om$ are comparable to the Euclidean distance.  In particular, there exist constants $c_{19},c_{20}>0$ such that for $\prob_0$-a.e.\ $\om$ and $x\in{\cC_\infty(\om)},$ there exists $N_8(\om,x)$ such that for all $y\in \cC_\infty(\om)$ with $|x-y|\geq N_8(\om,x)$,
  \begin{align} \label{eq:comp_metrics}
 c_{19}  \, \rho^\om(x,y) \; \leq \; |x-y| \; \leq \; c_{20} \,  d_\th^\om(x,y).
 \end{align}

\begin{theorem} \label{thm:hkeGauss}
Suppose that the family of measures $(\prob^u)_{u\in{I}}$ satisfies assumptions {\bf P1--P3},  {\bf S1--S2} and suppose that Assumption~\ref{ass:moment} holds.  Then, for any $u\in I$ there exist positive constants $c_i=c_i(u)$  such that for $\prob^u$-a.e.\ all $\om$ and $x\in \cC_\infty(\om),$ there exists $N_9(\omega,x)$ such that for all $t \geq N_9(\omega,x)$ and $y \in \cC_\infty(\om)$ with $|x-y|\geq N_8(\om, x)$ the following hold.
  \begin{enumerate}
  \item [(i)] If $|x-y|\leq c_{21} t$ then
    \begin{align*}
      p^{\om}_{\th}(t, x, y)
      \;\leq\;
      c_{22}\, t^{-d/2}\,
      \exp\!\bigg( \!-\! c_{23}\, \frac{|x-y|^2}{t}\bigg).
    \end{align*}
  \item [(ii)] If $|x-y| \geq c_{25} t$ then
    \begin{align*}
      p^{\om}_{\th}(t, x, y)
      \;\leq\;
      c_{22}\, t^{-d/2}\, 
      \exp\!\bigg( 
        \!-\! c_{24}\, |x-y| \,
        \bigg(1 \vee \log \frac{|x-y|}{t}\bigg)
      \bigg).
    \end{align*}
  \end{enumerate}
\end{theorem}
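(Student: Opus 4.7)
The plan is to read off the result as a corollary of Theorem~\ref{thm:hke}, replacing the metrics $d_\th^\om$ and $\rho^\om$ appearing there with the Euclidean distance $|x-y|$ by means of the two-sided comparison~\eqref{eq:comp_metrics}. Once the comparison is installed, the Gaussian shape follows by essentially rewriting the bounds of Theorem~\ref{thm:hke}, with the polynomial prefactor $(1+\rho^\om(x,y)/\sqrt{t})^\gamma$ absorbed into the exponential at the cost of slightly shrinking the constants $c_{26},c_{27}$.

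The first step is to verify the hypotheses of Theorem~\ref{thm:hke} $\prob^u$-a.s. The volume regularity and Sobolev inequality in Assumption~\ref{ass:graph} are known to follow from the local uniqueness condition {\bf S1} together with {\bf S2} for percolation models with long-range correlations (compare with the setup in \cite{DRS14, Sa17}), so I would quote that here. For the integrability condition~\eqref{eq:cond_int}, the conditioning event $\{0\in\cC_\infty\}$ has positive probability by {\bf S2}, and the ergodic theorem applied under $\prob^u_0$ to the random variables $(\mu^\om/\th^\om)^p\th^\om$, $(\nu^\om)^q$, $(\th^\om)^r$ and $(\th^\om)^{-q}$ (all of which are $\mean^u_0$-integrable by Assumption~\ref{ass:moment}-(ii) and satisfy the exponent relation \eqref{eq:cond_pqr}, hence \eqref{eq:cond_pqr'} with $d'=d$) yields the uniform bound along balls of the form $B^\om(x,n)$ for all sufficiently large $n$, via a standard volume comparison that uses \eqref{eq:reg:large_balls}.

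The second step is to install \eqref{eq:comp_metrics}. For the upper bound $\rho^\om(x,y)\le c_{22}^{-1}|x-y|$, I would invoke the chemical distance result of \cite{DRS14} obtained under {\bf P1}--{\bf P3}, {\bf S1}--{\bf S2}. For the lower bound $|x-y|\le c_{23}d_\th^\om(x,y)$ I would apply Theorem~\ref{The:main} to the FPP model with edge weights
\[
 t^\om_e \;=\; \Big(1\wedge \tfrac{\th^\om(x)\wedge\th^\om(y)}{a^\om(x,y)}\Big)^{1/2}, \qquad e=\{x,y\},
\]
which is exactly the weight coming from $\mathbf{t}$ in Assumption~\ref{ass:moment}-(i); the monotonicity required in \eqref{eq:temonotone} is built into Assumption~\ref{ass:moment}-(i), and the hypothesis \eqref{intro:BLnotconnectedtoB2L} is trivial since $t^\om_e>0$ on the unique infinite cluster. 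The resulting time constant then produces positivity of $\liminf_{|y-x|\to\infty} d_\th^\om(x,y)/|x-y|$, and the inequality~\eqref{eq:comp_metrics} holds $\prob^u$-a.s.\ for all $y\in\cC_\infty(\om)$ with $|x-y|\ge N_8(\om,x)$.

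The final step is to substitute. Set $c_{24}$ so that $|x-y|\le c_{24}t$ forces $d_\th^\om(x,y)\le c_{16}t$ (so that case~(i) of Theorem~\ref{thm:hke} applies), which is possible since $d_\th^\om(x,y)\le\rho^\om(x,y)\le c_{22}^{-1}|x-y|$. Case~(i) of Theorem~\ref{thm:hke} then gives
\[
 p^\om_\th(t,x,y)\;\le\; c_{17}t^{-d/2}\Big(1+\tfrac{|x-y|}{c_{22}\sqrt{t}}\Big)^{\!\gamma}\exp\!\Big(-c_{18}\tfrac{|x-y|^2}{c_{23}^2t}\Big),
\]
and the polynomial prefactor is absorbed using $(1+s)^\gamma\le C_\eta e^{\eta s^2}$ for any $\eta>0$, at the price of a slightly smaller constant $c_{26}$. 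For case~(ii), choose $c_{28}=c_{23}c_{20}$ so that $|x-y|\ge c_{28}t$ implies $d_\th^\om(x,y)\ge c_{20}t$; then applying case~(ii) of Theorem~\ref{thm:hke} and using $d_\th^\om(x,y)\ge |x-y|/c_{23}$ together with $\log(d_\th^\om(x,y)/t)\ge \log(|x-y|/(c_{23}t))$ yields the claimed exponential factor $\exp(-c_{27}|x-y|(1\vee\log(|x-y|/t)))$, again after absorbing the polynomial prefactor into it. I expect the only mildly delicate point to be checking that the polynomial-into-exponential absorption in case~(ii) still produces the $1\vee\log$ factor with the right constants; this is handled by separating the subregimes $|x-y|/t\in[c_{28},e]$ and $|x-y|/t>e$.
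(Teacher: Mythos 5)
Your proposal follows the paper's proof exactly: verify Assumption~\ref{ass:graph} from {\bf S1}--{\bf S2} (via the very-regular-ball machinery in \cite{Sa17}), verify \eqref{eq:cond_int} by the spatial ergodic theorem under $\prob^u_0$, use the chemical-distance bound from \cite{DRS14} and Theorem~\ref{The:main} for $d^\om_\th$ to get \eqref{eq:comp_metrics}, substitute into Theorem~\ref{thm:hke}, absorb the polynomial prefactor, and finally pass from $\prob^u_0$ to $\prob^u$ by ergodicity. The one imprecision worth flagging is the parenthetical claim that the Sobolev inequality holds with $d'=d$: in this correlated setting Assumption~\ref{ass:graph}(ii) is only established for $d'=(d-\vartheta)/(1-\vartheta)>d$ with $\vartheta\in(0,1)$ arbitrary, and one must then observe that because \eqref{eq:cond_pqr} is a strict inequality, choosing $\vartheta$ small enough makes \eqref{eq:cond_pqr'} hold for the actual $d'$.
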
  
\begin{proof}
We shall first verify the assumptions of Theorem~\ref{thm:hke}. First recall that our condition {\bf P3} is stronger than condition {\bf P3} in \cite{DRS14, Sa17}. In particular, for any $\vartheta \in (0,1)$, \cite[Proposition~4.3]{Sa17}  guarantees the $\prob^u_0$-a.s.\ existence of a large $\vartheta$-very regular ball (see e.g.\ \cite[Assumption~1.3 and Example~1.12]{DNS18} for details), which immediately implies the volume regularity in \eqref{eq:reg:large_balls}.  Furthermore, this also implies under $\prob^u_0$ an isoperimetric inequality on large sets (see \cite[Lemma~2.10]{DNS18}), which in conjunction with the volume regularity implies the Sobolev inequality in \eqref{eq:sobolev} with  $d' \ldef (d-\vartheta)/(1-\vartheta)$, see \cite[Proposition~3.5]{DNS18}. Since $\vartheta \in (0,1)$ is arbitrary, $d'$ can be chosen arbitrarily close to $d$, so that condition \eqref{eq:cond_pqr'} becomes condition  \eqref{eq:cond_pqr}.

Next, using the volume regularity \eqref{eq:reg:large_balls} and the stationarity of $\theta^\om$, we get for sufficiently large $n$,
\begin{align*}
 \Big\|  \frac{\mu^{\om}}{\th^\om}\Big\|_{p, B^\om(x,n), \th}^p
&  \; \leq \; 
 \frac 1 {|B^\om(x,n)|} \sum_{y\in B_{\mathrm{E}}(x,n)} \Big( \frac{\mu^\om(y)}{\theta^\om(y)}\Big)^{p} \, \theta^\om(y) \, \indicator_{\{y\in \cC_\infty(\om)\}} \\
 &  \; \leq \; 
 c n^{-d}  \sum_{y\in B_{\mathrm{E}}(x,n)} \Big( \frac{\mu^{\tau_y \om}(x)}{\theta^{\tau_y \om}(x)}\Big)^{p} \, \theta^{\tau_y \om}(x) \, \indicator_{\{x \in \cC_\infty(\tau_y \om)\}}.
\end{align*}
Now, the spatial ergodic theorem and the shift-invariance of $\prob^u$ gives that, for $\prob^u$-a.e.\ $\om$ and all $x\in \bbZ^d$,  there exists $N(\om,x)$ such that for all $n\geq N(\om,x)$,
\begin{align*}
& n^{-d}  \sum_{y\in B_{\mathrm{E}}(x,n)} \Big( \frac{\mu^{\tau_y \om}(x)}{\theta^{\tau_y \om}(x)}\Big)^{p} \, \theta^{\tau_y \om}(x) \, \indicator_{\{x \in \cC_\infty(\tau_y \om)\}} 
 \leq \; 2 \,   \mathbb{E}^u \!\left[\Big( \frac{\mu^\om(x)}{\theta^\om(x)}\Big)^{p} \, \theta^\om(x) \indicator_{\{x \in \cC_\infty\}} \right] \\
 & 
 \mspace{36mu} \; = \; c \, \mathbb{E}^u_0 \!\left[\Big( \frac{\mu^\om(0)}{\theta^\om(0)}\Big)^{p} \, \theta^\om(0) \right] \; < \; \infty.
\end{align*}
Hence for $\prob^u$-a.e.\ $\om$ and each $x\in \cC_\infty(\om)$ we obtain $\Norm{\mu^{\om}/\th^\om}{p, B^\om(x,n), \th} \; \leq \; c$ for all $n\geq N(\om,x)$. 
Using the same argument we get similar estimates on $ \Norm{ \nu^{\om}}{q, B^\om(x, n)}$,    $ \Norm{ \th^\om}{r, B^\om(x,n)}$ and 
 $ \Norm{(\th^\om)^{-1}}{q, B^\om(x,n)}$. 
In particular, for $\prob^u_0$-a.e.\ $\om$ and each $x\in \cC_\infty(\om)$, there exists $N_6(\om,x)$ such that \eqref{eq:cond_int} holds. Thus, the assumptions of Theorem~\ref{thm:hke} are satisfied. 
Finally, choosing $N_9(\omega,x)$ large enough and noting that the constants $c_i$ in Theorem~\ref{thm:hke} do not depend on $\om,$ by using \eqref{eq:comp_metrics} and the inequality $d_{\theta}^{\om}(x,y)\leq\rho^{\om}(x,y),$  the estimates in Theorem~\ref{thm:hke} can be turned into the desired Gaussian upper bounds under $\prob^u_0$ since  the additional term $(1+ \rho^\om(x,y)/\sqrt{t})^{\ga}$ can  be absorbed by the exponential term into a constant. By ergodicity there exists $\prob^u$-a.s.\ $x\in{\mathcal{C}_{\infty}},$ and by translation invariance we conclude that the same Gaussian upper bounds also hold under $\prob^u.$
\end{proof}
 
In \cite{Sa17} two-sided Gaussian estimates have been shown for the simple random walk on correlated percolation clusters satisfying the original conditions  {\bf P1--P3},  {\bf S1--S2} from \cite{DRS14}. Theorem~\ref{thm:hkeGauss} extends the upper bound in \cite{Sa17} to a more general class of RCMs under slightly modified assumptions.

\subsection{Exponential Green kernel decay for RCMs with random killing rates} \label{sec:Agmon_result}
In this subsection we consider a VSRW with a random potential, i.e.\ a continuous time Markov chain $X = (X_t)_{t\geq 0}$, taking values in  $\cC_\infty(\om)\cup\{\partial\}$ where $\partial$ is an isolated point called the cemetery state,  with generator $\cL^{\om}$ acting on bounded functions $f\!: \cC_\infty(\om) \to \bbR$ as
\begin{align} \label{eq:defL}
\big(\cL^{\om} f)(x)
  \;=\; 
  \sum_{y \sim x} a^\om(x,y) \, \big(f(y) - f(x)\big) - h \, \kappa^\om(x) \, f(x).
\end{align}
Here $h\in [0,1]$ is a scalar and the potentials are given by a killing measure $\kappa^\om \!: \bbZ^d \to (0, \infty)$ describing  the random killing rates of the random walk. 
 More precisely we let $\boldsymbol{\kappa}:\Omega_{\mathcal{N}}\rightarrow(0,\infty)$ be a measurable function  and $\kappa^\om(x):=\boldsymbol{\kappa}(\omega_{|\mathcal{N}+x})$ for each $x\in{\bbZ^d}.$ When visiting a vertex $x$, the random walk jumps to a neighbor $y$ at rate $a^\om(x,y)$, and it is killed, i.e.\ it is sent to the cemetery state $\partial$, at rate $h k^\om(x)$. We denote by $\Prob_x^\om$ the law of the process starting at the vertex $x \in \bbZ^d$ and by $\Mean_x^\om$ the corresponding expectation. We denote by $\zeta\ldef \inf\{ t\geq 0: X_t=\partial\}$ the killing time of $X$.
For $x, y \in \cC_\infty(\om)$ and $t \geq 0$ let $p^{\om}(t,x,y)$ be the associated heat kernel given by
$ p^{\om}(t,x,y)  \;\ldef\;  \Prob_x^{\om}\big[X_t = y; \, t<\zeta\big]$. The Green's function of $X$ is defined by
\begin{align}
\label{eq:defgreen}
  g^{\om}(x,y) 
  \ldef
  \Mean_x^{\om}\!\Big[ 
    \int_0^{\zeta} {\indicator_{\{X_t = y\}}}\, \md t 
  \Big] 
  =
  \int_0^\infty 
   \Prob_x^{\om}\!\big[X_t = y;\, t < \zeta \big] \,  \md t.
\end{align}
Recall that, for every $y \in \cC_{\infty}(\om)$, the function $x \mapsto g^{\om}(x, y)$ is a fundamental solution of $\cL^\om u = -\indicator_{\{y\}}$.
We define the distance function  $d_{\kappa}^{\om}$ by
\begin{align} \label{eq:def_chemdist}
  d_{\kappa}^{\om}(x,y)
  \ldef
  \inf_{\gamma\in \Gamma_{xy}^{\om}}
  \Bigg\{
    \sum_{i=0}^{l_{\gamma}-1} 
    \bigg( 
      1 \wedge \frac{\kappa^\om(z_i) \wedge \kappa^\om(z_{i+1})}{a^{\om}(z_i,z_{i+1})}
    \bigg)^{\!\!1/2}
  \Bigg\}, \quad x,y\in \cC_\infty(\om),
\end{align}
where $\Ga_{xy}^{\om}$ is again the set of all nearest-neighbor paths $\ga = (z_0, \ldots, z_{l_\ga})$ in $\cC_\infty(\om)$  connecting $x$ and $y$. Notice that $d_\kappa^\om(x,y) \leq \rho^\om(x,y)$ for all $x,y \in \cC_\infty(\om)$. 

We will apply our results on the positivity of the time constant on the distance $d^\om_\kappa$ to obtain an exponential decay estimate on $g^\om(x,y)$. The main step will be to establish the following deterministic result, that we prove in Section \ref{sec:Agmon}.

\begin{theorem} \label{thm:Agmon}
Let $d\geq 3$ and $\omega\in{\Omega}.$ Suppose that $(\cC_\infty(\om),\cO(\om))$ satisfies Assumption~\ref{ass:graph} and suppose there exist $p,q\in (1,\infty]$ with $1/p+1/q<2/d'$ such that, for every $x\in \cC_\infty(\om)$,  there exists $N_{10}(\om,x)$ such that
\begin{align} \label{eq:int_condAgmon} 
\sup_{n\geq N_{10}(\om,x)} \! \max\Big\{
 \Norm{\mu^{\om}}{p, B^\om(x,n)},
    \Norm{ \nu^{\om}}{q, B^\om(x, n)},
    \Norm{ \kappa^\om}{p,B^\om(x,n)},
     \Norm{1/\kappa^\om}{1, B^\om(x,n)}
  \Big\}
    \leq
C_{\mathrm{int}}
\end{align}
for some $C_{\mathrm{int}} <\infty$.
Then, there exist $\lambda=\lambda(d) \in (0,1),$ $\gamma=\gamma(d',p,q)\in (1,\infty)$,  and  $c_{26}=c_{26}(d',p,q,C_{\mathrm{int}})\in (0,\infty)$ such that  the following holds. For any $x\in \cC_\infty(\om)$, there exist $N_{11}(\om,x)$ such that for all $y\in \cC_\infty(\om)$ with $ \rho^\om(x,y)  \geq N_{11}(\om,x)$ and all $h\in{[0,1]},$
\begin{align}  \label{eq:apiori1}
g^\om(x,y)  \; \leq \;  c_{26} \,
F_\gamma \big(h \, \rho^\om(x,y)^2 \big) \, \rho^\om(x,y)^{2-d} \!\max_{z\in{B^{\om}(x,n)^c}}\Big(e^{-\lambda \sqrt{h}  d_\kappa^\om(x,z)}\Big),
\end{align}
where $n=\rho^{\om}(x,y)/4$ and $F_\gamma(r)\ldef (1+r)^\ga (1+1/r)^{1/2}$.
\end{theorem}
First we note that Theorem~\ref{thm:Agmon_intro} is a direct consequence of Theorem~\ref{thm:Agmon}.

\begin{proof}[Proof of Theorem~\ref{thm:Agmon_intro}] Since $\mean[a^\om(e)^{-q}]<\infty$ we have $a^\om(e)>0$ almost surely for any $e\in E_d$.  Therefore $(\cC_\infty(\om),\cO(\om)=(\bbZ^d, E_d)$ and Assumption~\ref{ass:graph} is satisfied with $d=d'$ and $\rho^\om(x,y)=|x-y|$.
For $\prob$-a.e.\ $\om$, the integrability condition \eqref{eq:int_condAgmon} follows from \eqref{eq:Agmon_moment} by the ergodic theorem.
\end{proof}

%\begin{assumption} \label{ass:metric}
% There exists $C_{\ka} \in (0,\infty)$ such that the following holds. For $\prob$-a.e.\ $\om$ and  every $x \in \bbZ^d$, there exists $N_5(\om, x) < \infty$  such that  for any $y \in \bbZ^d$ with $|x-y| \geq N_5(\om, x)$,  
%\begin{align*}
%d_\kappa^\om (x,y) \; \geq \;C_{\ka} \, |x-y|, \qquad \forall y \in \bbZ^d.
%\end{align*}
%\end{assumption}

Similarly as in Theorem \ref{thm:hkeGauss}, we need in  addition the following monotonicity and moment assumption to state the main result of this subsection.

 \begin{assumption}
\label{ass:moment_killing}
(i) The mapping $\mathbf{t}: \Omega_{\mathcal{N}}\times\Omega_{\mathcal{N}}\times\Omega_E\rightarrow [0,\infty)\cup\{\infty\}:$
\begin{equation*}
    (\omega_{\mathcal{N}}^1,\omega_{\mathcal{N}}^2,\omega_E) \mapsto \begin{cases}\Big( 
      1 \wedge \frac{\boldsymbol{\kappa}(\omega_{\mathcal{N}}^1) \wedge \boldsymbol{\kappa}(\omega_{\mathcal{N}}^2)}{\mathbf{a}(\omega_{\mathcal{N}}^1,\omega_{\mathcal{N}}^2,\omega_E)}
    \Big)^{\frac12}&\text{if }\mathbf{a}(\omega_{\mathcal{N}}^1,\omega_{\mathcal{N}}^2,\omega_E)\neq0
    \\\infty&\text{otherwise.}
    \end{cases}
\end{equation*}
is monotone.

(ii) There exist $p,q\in(1,\infty]$ satisfying $1/p+1/q<2/d$ such that, for every $u\in I$,
\begin{align*}
\max\Big\{ \mathbb{E}^u_0\left[ \mu^\om(0)^{p}  \right], \mathbb{E}^u_0\left[\nu^\om(0)^{q}\right], \mathbb{E}^u_0 \left[\ka^\om(0)\right],  \mathbb{E}^u_0\left[\ka^\om(0)^{-1}\right] \Big\} <\infty.
\end{align*}
\end{assumption}

\begin{theorem} \label{thm:GKdecay_killing}
Let $d\geq 3$ and suppose that the family of measures $(\prob^u)_{u \in I}$ satisfies assumptions {\bf P1--P3},  {\bf S1--S2} and Assumption~\ref{ass:moment_killing}.  Then, for any $u\in I$, there exist $c_{27}=c_{27}(u), c_{28}=c_{28}(u) \in (0,\infty)$ such that for $\prob^u$-a.e.\ all $\om$ and $x\in \cC_\infty(\om)$ there exists $N_{12}(\om,x)$ such that for all $y\in \cC_\infty(\om)$ with $|x-y|> N_{12}(\om,x)$ and all $h\in{[0,1]}$,
\begin{align}
\label{eq:boundongom}
g^\om(x,y)  \; \leq \; & c_{27} \, |x-y|^{2-d} \, \exp\big(-c_{28} \sqrt{h} |x-y|\big).
\end{align}
\end{theorem}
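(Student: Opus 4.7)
The plan is to apply the deterministic bound of Proposition~\ref{prop:Agmon} pointwise in $\omega$, and then use the results of Section~\ref{sec:FPP} to convert the FPP metric $d_\kappa^\om$ appearing on the right-hand side of \eqref{eq:apiori1} into the Euclidean distance. The hypotheses of Propositions~\ref{prop:Agmon} and \ref{prop:green} hold $\prob^u$-a.s.: Assumption~\ref{ass:graph} follows from {\bf S1--S2} and the decoupling condition {\bf P3} via \cite{Sa17, DNS18} (as in the proof of Theorem~\ref{thm:hkeGauss}), while the integrability bounds \eqref{eq:int_condAgmon} follow from Assumption~\ref{ass:moment_killing}(ii) by applying the spatial ergodic theorem to the shift-invariant measure $\prob^u$ and the translation covariance of $\mu^\om, \nu^\om, \kappa^\om$.

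Next I would establish two a.s.\ metric comparisons. The chemical-Euclidean bound from \cite{DRS14, Sa17} yields $\rho^\om(x,y) \leq |x-y|/c_{22}$ for $|x-y|$ large. Applying Theorem~\ref{The:main} to the weights $\mathbf{t}$ of Assumption~\ref{ass:moment_killing}(i) -- condition \eqref{intro:BLnotconnectedtoB2L} is trivial since $t^\om_e>0$ on every open edge -- combined with Borel--Cantelli along a countable dense set of directions, gives $d_\kappa^\om(x,y) \geq |x-y|/c_{23}$ a.s.\ for $|x-y|$ large, analogous to \eqref{eq:comp_metrics}.

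The conclusion then follows by splitting on the scale of $\sqrt{h}|x-y|$. When $\sqrt{h}|x-y| \leq 1$, the monotonicity $g_h^\om \leq g_0^\om$ together with Proposition~\ref{prop:green} (applied to the killing-free VSRW) gives $g^\om(x,y) \leq c\rho^\om(x,y)^{2-d}\leq c|x-y|^{2-d}$, and the now-bounded factor $e^{-c_{31}\sqrt{h}|x-y|}\geq e^{-c_{31}}$ is absorbed into the constant. When $\sqrt{h}|x-y|>1$, I apply Proposition~\ref{prop:Agmon} with $n=\rho^\om(x,y)/4$: the factor $F_\gamma(h\rho^\om(x,y)^2)$ is of polynomial order $(\sqrt{h}|x-y|)^{2\gamma}$ (since $h\rho^2\geq 1$ and $\rho\leq |x-y|/c_{22}$), which is absorbed into the exponential by slightly reducing $c_{31}$, and $\rho^\om(x,y)^{2-d}\leq |x-y|^{2-d}$ since $d\geq 3$. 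For the maximum, the comparison $\rho^\om\leq |x-y|/c_{22}$ forces $B^\om(x,n)^c \cap \cC_\infty \subset \{z: |x-z|> c_{22}n\}$ for $n$ beyond a random scale; by the previous step $d_\kappa^\om(x,z)\geq c''n\geq c'''|x-y|$ uniformly in such $z$, so $\max_z e^{-\lambda\sqrt{h}d_\kappa^\om(x,z)}\leq e^{-c\sqrt{h}|x-y|}$, yielding \eqref{eq:boundongom}.

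The main obstacle will be making the lower bound $d_\kappa^\om(x,z)\geq c|x-z|$ uniform over all $z\in B^\om(x,n)^c\cap \cC_\infty$ for a single random scale $N_{12}(\om,x)$ depending only on $x$, rather than a different scale per direction. This should be handled by discretizing over rational directions, exploiting the stretched-exponential rate in \eqref{intro:boundonT0nx} in order to apply Borel--Cantelli, and using the fact that only finitely many $z\in \cC_\infty$ lie within Euclidean distance $N_8(\om,x)$ of $x$.
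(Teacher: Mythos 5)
Your proposal matches the paper's proof in all essential respects: both apply Proposition~\ref{prop:Agmon} deterministically, verify Assumption~\ref{ass:graph} and \eqref{eq:int_condAgmon} via the ergodic theorem and the very-regular-ball results of \cite{Sa17, DNS18}, establish the metric comparisons \eqref{eq:comp_metrics2} from Theorem~\ref{The:main} and \cite{DRS14}, bound the maximum term using $d^\om_\kappa \gtrsim \rho^\om$ on the boundary annulus, and split on whether $\sqrt{h}\,|x-y|$ is above or below $1$, using Proposition~\ref{prop:green} together with $g_h^\om\leq g_0^\om$ in the small regime and absorbing $F_\gamma$ into the exponential in the large regime. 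The only point where you go beyond the paper is in explicitly flagging and sketching the Borel--Cantelli argument needed to make the lower bound $d_\kappa^\om(x,z)\geq c|x-z|$ uniform over all $z$ outside a single random scale $N_{12}(\om,x)$ (and noting the role of the finitely many near points); the paper derives the same uniform statement \eqref{eq:comp_metrics2} by appeal to ``Theorem~\ref{The:main} and the main results in \cite{DRS14}'' without spelling out this step, so your added detail is a clarification rather than a deviation.
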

\begin{proof}
We aim to apply Theorem~\ref{thm:Agmon}. First, note that 
Assumption~\ref{ass:graph} follows from {\bf P1--P3},  {\bf S1--S2} under $\bbP_0^u$ as explained in the proof of Theorem~\ref{thm:hkeGauss} above, and, by the same argument used to derive \eqref{eq:cond_int} above, based on an application of the ergodic theorem, {\bf P1} and Assumption~\ref{ass:moment_killing} ensure that \eqref{eq:int_condAgmon} holds.
Moreover, similarly to the derivation of \eqref{eq:comp_metrics} above, by Theorem~\ref{The:main} and the main results in \cite{DRS14} we have that for $\prob^u_0$-a.e.\ $\om$ and any $x\in \cC_\infty(\om)$ there exist $c_i \in (0,\infty)$ and  $N_{13}(\om,x)$ such that for all  $y\in \cC_\infty(\om)$ with $|x-y| \geq N_{13}(\om,x)$,
\begin{align} \label{eq:comp_metrics2}
 d^\om_\ka(x,y)   \geq c_{29} \rho^{\om}(x,y), \qquad
 \rho^\om(x,y)  \leq c_{30} |x-y|.
\end{align}    
 We now apply Theorem~\ref{thm:Agmon}. Since $n=\rho(x,y)/4$, the last term in the right-hand side of \eqref{eq:apiori1} can be estimated as
\begin{align*}
\max_{z \in  B^\om(x,n)^c} \Big( e^{-\lambda \sqrt{h} d^\om_{\kappa}(x,z) } \Big)
\leq 
e^{-\lambda \sqrt{h} c_{29} n} 
\leq 
e^{-c \sqrt{h}\rho^\om(x,y)}.
\end{align*}
Recalling that $\rho^\om(x,y)\geq |x-y|$, we combine this with \eqref{eq:comp_metrics2} to obtain that $\bbP^u_0$-a.s, if $|x-y|\geq N_{12}(\om,x):= N_{11}(\om,x)\vee N_{13}(\om,x)$,
\begin{align*}
g^\om(x,y) \;  \; \leq \;  c \,  F_\gamma\big( h |x-y|^2\big) \, \big|x-y\big|^{2-d} \, e^{-c \sqrt{h} |x-y|}. 
\end{align*}
By choosing a smaller  constant in the exponential, the factor $F_\gamma\big(h |x-y|^2\big)$ can be absorbed into a constant if $\sqrt{h}|x-y|\geq 1.$ Moreover, if $\sqrt{h}|x-y|\leq1,$ \eqref{eq:boundongom} follows $\bbP_0^u$-a.s.\ from \eqref{eq:GU} similarly as before. Therefore \eqref{eq:boundongom} holds $\bbP_0^u$-a.s.\ for all $h\in{[0,1]}$ and $x,y\in{\mathcal{C}_{\infty}(\omega)}$ with $|x-y|\geq N_{12}(\om,x),$ and we can conclude by translation invariance. 
\end{proof}

\begin{remark} \label{rem:P3cond}
(i) In the proofs of Theorems~\ref{thm:hkeGauss} and \ref{thm:GKdecay_killing} above, assumptions {\bf P1--P3},  {\bf S1--S2} are only needed to ensure the validity of Assumption~\ref{ass:graph} and the comparability of  the Euclidean distance with the chemical distance $\rho^\om$ and the FPP-distances $d^\om_\th$ and $d^\om_\kappa$, respectively. In particular, in both theorems condition {\bf P3} may be replaced by the conjunction of its weaker version in \cite{DRS14, Sa17} and condition {\bf P3'}, or condition {\bf P3''}, see Remark \ref{rk:weakerP3}. 

(ii) Similarly as in Section~\ref{sec:RCMhk} we can  also introduce a speed measure $\theta^\om$ and consider a discrete Schr\"odinger operator of the form
\begin{align*}
\big(\cL^{\om}_{\theta,\kappa} f)(x)
  =\; 
  \frac{1}{\th^\om(x)} \bigg(\sum_{y \sim x} a^\om(x,y) \, \big(f(y) - f(x)\big) - h \, \kappa^\om(x) \, f(x)\bigg).
\end{align*}
The results immediately extend to this setting, and the associated Agmon-type FPP distance is still given by \eqref{eq:def_chemdist}. This is due to the fact that the Green's function of the operator does not depend on the speed measure $\theta^\om,$ hence it is sufficient to consider the case $\theta^{\om}=1$ in the context of Theorem~\ref{thm:GKdecay_killing}.
 
 (iii) It follows from Remark~\ref{rk:fppvertices}-(ii) that condition \eqref{eq:amonotone} and Assumption~\ref{ass:moment_killing}-(i) in Theorem~\ref{thm:GKdecay_killing} (or Assumption~\ref{ass:moment}-(i) in Theorem~\ref{thm:hkeGauss}) can be removed whenever condition {\bf P3} still holds without the monotonicity assumption on the event $A$.
\end{remark}

 \subsection{Proof of Theorem~\ref{thm:Agmon}} \label{sec:Agmon}
We will show Theorem~\ref{thm:Agmon} by purely analytic and deterministic arguments. Therefore, throughout the remainder of this section, we fix $\om\in \Omega$ such that the assumptions of Proposition~\ref{thm:Agmon} hold, and,  to simplify notation, we set $(V,E)=(\cC_\infty(\om), \cO(\om))$. Note that the proof of Theorem~ \ref{thm:Agmon} would actually also work if $(V,E)$ was any fixed graph with bounded degree satisfying Assumption~\ref{ass:graph}, and not necessarily a subset of $\bbZ^d.$

\subsubsection{Notation}
For a given set $A\subset V$, we define the internal boundary of $A$ by
\begin{align*}
\partial_{\mathrm{int}} A \; \ldef \; \big\{ x\in A \, | \, \exists\, y \in A^c \text{ such that } \{x,y\}\in E  \big\},
\end{align*}
and the outer boundary of $A$ by
\begin{align*}
\partial_{\mathrm{out}} A \; \ldef \; \big\{ x\in A^c \, | \, \exists\, y \in A \text{ such that } \{x,y\}\in E  \big\}.
\end{align*}
For any edges $e\in E$ we denote by $e^-,e^+\in V$ the unique vertices such that $e=\{e^-,e^+\}$ and $e^+-e^-\in\{e_1,\ldots,e_d\},$ where $e_1,\ldots,e_d$ denotes the canonical basis of $\bbZ^d.$ For $f:V\to\bbR$ and $e\in E$ we define the \textit{discrete derivative} 
\begin{align*}
  \nabla f: E \rightarrow \bbR, \qquad 
  \nabla f(e)\; \ldef \; f(e^+)-f(e^-),
\end{align*}
and note that for $f,g:V \to\bbR$, the discrete product rule takes the form 
\begin{align}\label{eq:rule:prod}
  \nabla(f g)
  \;=\;
  \av{f} \nabla g + \av{g} \nabla f,
\end{align}
where $\av{f}(e) \ldef \frac{1}{2} (f(e^+) + f(e^-))$. Further, for any $f:V\rightarrow [0,\infty)$, note that 
\begin{equation}
\label{eq:Holderforav}
	\av{f^{\al_1}} \av{f^{\al_2}} \leq \av{f^{\al_1+\al_2}}
\end{equation} 
for any $\al_1, \al_2 \geq 0$. We define the \textit{discrete divergence} of a function $F:E\to\bbR$ by
\begin{align*}
\nabla^*F(x):=\sum_{\substack{e\in E\\ e^+=x}}F(e)-\sum_{\substack{e\in E\\ e^-=x}}F(e)=\sum_{i=1}^dF(\{x-e_i,x\})-F(\{x,x+e_i\}). 
\end{align*}
Since for all $f\in \ell^2(V)$ and $F \in \ell^2(E)$ we have
\begin{align} \label{eq:adjoint}
\langle \nabla f, F\rangle_{\ell^2(E)} = \langle f, \nabla^*F \rangle_{\ell^2(V)},
\end{align}
$\nabla^*$ can be seen as the adjoint of $\nabla$.  
Note that the generator $\cL^\om$ defined in \eqref{eq:defL} above is a finite-difference operator in divergence form as it can be rewritten as
  \begin{align} \label{eq:L_divform}
    \big(\cL^{\om} f)(x)
    \;=\;
   - \nabla^*(a^\omega\nabla f)(x) - h \,\kappa^\om(x) \, f(x).
  \end{align}
 On the  Hilbert space $\ell^2(V)$ the \emph{Dirichlet form} associated with $\cL^{\om}$ is given by
\begin{align} \label{eq:def:dform}
  \cE^{\om}(f,g)
 & \;\ldef\;
  \scpr{f}{-\cL^{\om} g}{V}
  \;=\;
  \scpr{\nabla f}{a^\om \nabla g}{E} + h \, \scpr{f}{\kappa^\om g}{V} \nonumber \\
 & \;=\;
  \scpr{1}{\md \Ga^{\om}(f,g)}{E} + h \, \scpr{f}{\kappa^\om g}{V},
\end{align}
where $\md \Ga^{\om}(f,g) \ldef a^\om \nabla f \nabla g$. Further, set $\cE^{\om}(f):=\cE^{\om}(f,f)$ and,  for any $\eta: V \rightarrow \bbR$,
\begin{align*}
 %\Gamma^\om (f,g)(x) &\; \ldef \; \sum_{y\sim x} \, \md \Ga^{\om}(f,g)(\{x,y\}),
% \intertext{and, for any $\eta: V \rightarrow \bbR$,}
\Gamma^\om_\eta (f,g)(x) & \; \ldef \; \sum_{y\sim x} \av{\eta^2}(\{x,y\}) \, \md \Ga^{\om}(f,g)(\{x,y\}), \qquad \Gamma^{\omega}(f,g)\ldef \Gamma^{\omega}_{1}(f,g).
\end{align*}
 Note  that for all $f,g,v,\eta:V\rightarrow\bbR$,
\begin{equation}
\label{eq:fromdGammatoGamma}
\scpr{\av{v}\av{\eta^2}}{\md\Ga^{\om}(f,g)}{E}=\frac12\scpr{v}{\Ga^{\om}_{\eta}(f,g)}{V}.
\end{equation}

\subsubsection{Maximal inequality}

The starting point to prove Theorem~\ref{thm:Agmon} is to show the following maximal inequality for (super-)harmonic functions. Recall the constant $d'$ from Assumption~\ref{ass:graph}-(ii).

\begin{prop} \label{prop:max_ineq}
 For any $x_0 \in V$, $n \geq 2( N_4(x_0) \vee N_5(x_0))$ fixed, write $B(\si) \equiv B^\om(x_0, \sigma n)$, $\sigma\in[1/2,1]$. Let $u \geq 0$ be such that $\cL^\om\, u \geq 0$ on $B^\om(x_0, n)$ and set $v \ldef \varphi \cdot u$ for any  $\varphi: V \rightarrow (0,\infty)$.  Then, under Assumption \ref{ass:graph},  for all $\al \in (0, \infty)$  and any $p,q \in (1, \infty]$ with $1/p+1/q<2/d'$,   there exist $\gamma =\gamma(d',p,q, \alpha) \in (1, \infty)$ and $c_{31}=c_{31}(d,p,q,\alpha) \in (0, \infty)$ such that
for all  $1/2 \leq \si' < \si \leq 1$, 
  \begin{align} \label{eq:max_ineq}
    \max_{x \in B(\si')} v(x)
    \;\leq\;   
    \bigg(
      c_{31} \,
      \frac{m^\om(n)}
           {(\si - \si')^{2}}
    \bigg)^{\!\!\gamma}\; 
    \Norm{v}{\al, B(\si)},
  \end{align}
 where $  m^\om(n)  \ldef \Norm{1 \vee \mu^\om}{p,B^\om(x_0,n)}\, \Norm{1 \vee \nu^\om}{q,B^\om(x_0,n)} \, \Big(1+ \Norm{\Gamma^\om(\varphi,\varphi^{-1})}{p,B^\om(x_0,n)} \, n^2 \Big)$. 
\end{prop}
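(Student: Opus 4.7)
The plan is to prove \eqref{eq:max_ineq} by a Moser iteration scheme for the perturbed function $v = \varphi u$, adapting the strategy of \cite{ADS15, ADS19} along the lines of the Davies--Agmon perturbation trick so that the weight $\varphi$ enters the estimates only through the quantity $\Gamma^\om(\varphi, \varphi^{-1})$.

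First, I would establish a Caccioppoli-type energy inequality. Fix $\beta \geq (\alpha/2)\vee 1$ and a cutoff $\eta\colon V \to [0,1]$ with $\eta \equiv 1$ on $B(\si')$, $\supp\eta \subset B(\si)$, and $|\nabla \eta| \leq ((\si-\si')n)^{-1}$. The function $\psi = \eta^2 \varphi^{2\beta} u^{2\beta-1} = \eta^2 \varphi \, v^{2\beta-1}$ is nonnegative, so testing $\cL^\om u \geq 0$ against $\psi$ gives
\begin{align*}
\scpr{\nabla \psi}{a^\om \nabla u}{E} + h\scpr{\psi}{\kappa^\om u}{V} \leq 0,
\end{align*}
and the killing contribution can be dropped. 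Writing $u = \varphi^{-1} v$ and expanding $\nabla u$ and $\nabla \psi$ by repeated use of \eqref{eq:rule:prod}, collecting the terms in which both derivatives fall on $v$ (giving $\md\Ga^\om(v^\beta, v^\beta)$ up to discrete chain rule factors), using \eqref{eq:Holderforav} to combine powers of $\av{\varphi^{\pm 1}}$, and absorbing cross terms via Young's inequality, I expect to obtain a Caccioppoli-type bound of the form
\begin{align*}
\scpr{\av{\eta^2}}{\md\Ga^\om(v^\beta, v^\beta)}{E}
\,\leq\,
C\beta^2 \Big(\scpr{a^\om \av{v^{2\beta}}}{(\nabla \eta)^2}{E} + \scpr{\eta^2 v^{2\beta}}{\Gamma^\om(\varphi,\varphi^{-1})}{V}\Big).
\end{align*}
This is the only step in which $\varphi$ enters, and the $\Gamma^\om(\varphi, \varphi^{-1})$-term encodes its oscillation.

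Next, I would apply the Sobolev inequality \eqref{eq:sobolev} to $f = (\eta v^\beta)^2$: using $\nabla f = 2 \av{\eta v^\beta} \nabla(\eta v^\beta)$ together with the Cauchy--Schwarz inequality, \eqref{eq:sobolev} becomes a Nash--Sobolev bound of the form $\|\eta v^\beta\|_{2d'/(d'-1), B(\si)}^2 \leq C n |B^\om(x_0,n)|^{-1} \|\eta v^\beta\|_{2, B(\si)} \|\nabla(\eta v^\beta)\|_{2, E}$. Combining this with the Caccioppoli estimate, bounding $\scpr{a^\om\av{v^{2\beta}}}{(\nabla \eta)^2}{E}$ via H\"older with $\|\mu^\om\|_{p,B(\si)}$, bounding the Dirichlet form of $\eta v^\beta$ from below via $1 \leq a^\om \cdot (a^\om)^{-1}$ and H\"older with $\|\nu^\om\|_{q,B(\si)}$, and using the subcritical condition $1/p+1/q < 2/d'$ to produce a genuine exponent gain, yields a one-step self-improvement
\begin{align*}
\|v\|_{2\beta\kappa, B(\si')} \,\leq\, \bigg(\frac{c\,\beta^2\,m^\om(n)}{(\si-\si')^2}\bigg)^{\!1/(2\beta)} \|v\|_{2\beta, B(\si)}
\end{align*}
for some fixed $\kappa = \kappa(d',p,q) > 1$. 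Iterating this with radii $\si_k = \si' + (\si-\si')2^{-k}$ and exponents $\beta_k = (\alpha/2)\kappa^k$, taking the product of the constants (which telescopes into $(c' m^\om(n)/(\si-\si')^2)^\gamma$ for a $\gamma$ determined by $\sum_k k\kappa^{-k}$), and letting $k\to\infty$ so that the left-hand side converges to $\max_{B(\si')} v$, gives \eqref{eq:max_ineq}.

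The principal obstacle is the Caccioppoli step. In the unperturbed case $\varphi\equiv 1$ it is the standard Moser--Caccioppoli estimate, but the expansion of $\nabla(\eta^2 \varphi v^{2\beta-1})$ for general $\varphi$ produces several cross terms mixing $\nabla v$ with $\nabla \varphi^{\pm 1}$, and each such term must be carefully dealt with so as to either absorb back into the principal quadratic form $\scpr{\av{\eta^2}}{\md\Ga^\om(v^\beta, v^\beta)}{E}$ with a prefactor small enough to be hidden, or to be dominated pointwise by $v^{2\beta} \Gamma^\om(\varphi,\varphi^{-1})$ via \eqref{eq:Holderforav} and \eqref{eq:fromdGammatoGamma}. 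Once this algebraic bookkeeping is carried out cleanly, the Sobolev step and the Moser iteration proceed essentially as in \cite[Section~3]{ADS19}.
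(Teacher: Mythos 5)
Your proposal is structurally the same as the paper's argument: a Caccioppoli-type inequality in which the perturbation $\varphi$ enters only through $\Gamma^\om(\varphi,\varphi^{-1})$ (the paper's Lemma~\ref{lem:EHI:DF_ualpha}), combined with a Sobolev inequality to yield a one-step gain in integrability, and then a Moser iteration over shrinking balls (the paper's Proposition~\ref{prop:mos_it}). The Caccioppoli and Sobolev steps as you outline them are correct in spirit.

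However, there is a genuine gap in the range of $\al$ that your iteration actually covers. You correctly impose $\beta \geq (\al/2)\vee 1$ for the Caccioppoli step (the discrete inequalities such as \eqref{eq:est:v} and \eqref{eq:boundGammav2a-1v} only hold for exponents at least one), but you then propose to iterate with exponents $\beta_k = (\al/2)\kappa^k$, whose first term $\beta_0 = \al/2$ is strictly less than $1$ whenever $\al < 2$. In that regime, the first iteration step is not justified, and if instead you start the iteration at the smallest admissible exponent, the argument yields $\max_{B(\si')} v \lesssim \Norm{v}{\al_0, B(\si)}$ for some $\al_0 \geq 2p_* > 2$ rather than $\Norm{v}{\al, B(\si)}$. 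This is exactly what the paper proves in Proposition~\ref{prop:mos_it}: the Moser iteration is carried out only for $\be \in [2p_*, \infty]$. The passage from $2p_*$ down to arbitrary $\al \in (0,\infty)$ requires an additional bootstrap step, namely interpolating
\begin{equation*}
\Norm{v}{2p_*, B(\si)} \;\leq\; \Big(\max_{B(\si)} v\Big)^{1-\al/(2p_*)}\, \Norm{v}{\al, B(\si)}^{\al/(2p_*)}
\end{equation*}
and peeling off the factor $\max_{B(\si)} v$ by a Young-type iteration over a further sequence of radii. The paper delegates precisely this step to \cite[Corollary~3.9]{ADS15}, and it is responsible for the inflation $\gamma = \big(1\vee \tfrac{2p_*}{\al}\big)\gamma'$ in the final exponent; your outline has no analogue of it. A minor further point: on a discrete lattice the balls $B(\si_{k+1})$ and $B(\si_k)$ coincide once $\tau_k n < 1$, so the iteration cannot rely on the cutoff gradient bound $|\nabla\eta| \leq (\tau_k n)^{-1}$ in those steps; the paper treats this case separately (see \eqref{eq:EHI:iter:0}), and your sketch should do the same.
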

The rest of this subsection is devoted to the proof of Proposition~\ref{prop:max_ineq}. Similar maximal inequalities have been shown in \cite{ADS16} and \cite{ADS19} in order to obtain  Harnack inequalities and the heat kernel estimates restated in Theorem~\ref{thm:hke}, respectively. Here we will follow the arguments in \cite{ADS16,ADS19} rather closely with some adjustments being required. The main argument in Proposition~\ref{prop:mos_it} below will be based on a Moser iteration scheme. As a first step we show the following Caccioppoli-type estimate. Its proof is an adaptation of the arguments in \cite[Lemma~3.7]{ADS19} which provides an energy bound for perturbed space-time harmonic functions. Since this estimate plays also a crucial role for the implementation of Agmon's method below and since we need to keep track of an additional term containing the killing measure, we give a full proof here.

\begin{lemma} \label{lem:EHI:DF_ualpha}
 Consider a connected, finite subset $B \subset V$ and a function $\eta$ on $V$ with
  \begin{align*}
    \supp \eta \;\subset\; B, \qquad
    0 \;\leq\; \eta \;\leq\; 1 \qquad \text{and} \qquad
    \eta \equiv 0 \quad \text{on} \quad \partial_{\mathrm{int}}B.
  \end{align*}
  Further, let $u \geq 0$ be such that $\cL^\om\, u \geq 0$ on $B$ and set $v \ldef \varphi \cdot u$ for any $\varphi: V \rightarrow (0,\infty)$. There exists an absolute constant $c_{32}<\infty$ such that for all $\al \geq 1$, 
  \begin{equation*}
  \cE^{\om}(\eta v^{\al})\leq c_{32} \alpha^2\scpr{v^{2\alpha}}{\Gamma^{\om}(\eta,\eta)-\Gamma_{\eta}^{\om}(\varphi,\varphi^{-1})}{V}.
  \end{equation*}
%  \begin{align}
%    \frac{\cE^{\om}(\eta v^{\al})}{|B|}
%    \;\leq\;
%    c_1 \, \al^2 \, \Big( \Norm{\nabla \eta}{\ell^\infty(E)}^2 \, \Norm{\mu^\om}{p,B}  \, + \, \Norm{\Gamma^\om(\varphi,\varphi^{-1})}{p,B}\Big) \, \Norm{v^{2\al}}{p_*,B},
%  \end{align}
%   where $p_*=p/(p-1)$  is the H\"older conjugate of $p$.
%  
%  
%There exist $c_2,c_3 \in (1,\infty)$ such that
%\begin{align*}
%\scpr{v^2}{\eta^2  h \kappa^\om \,+ \, c_2 \, \Gamma^\om_\eta(\varphi,\varphi^{-1})}{V} \; \leq \; 
%c_3 \, \scpr{v^2}{\Gamma^\om(\eta,\eta)}{V}.
%\end{align*}  
\end{lemma}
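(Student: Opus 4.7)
The natural approach is to test the super-harmonicity of $u$ against a carefully chosen non-negative test function and reassemble the resulting inequality into a bound on $\cE^\om(\eta v^\alpha)$, in the spirit of a Moser-type Caccioppoli estimate. I plan to take
\begin{equation*}
\psi \;:=\; \eta^2\,\varphi\, v^{2\alpha-1},
\end{equation*}
which is supported in $B$ and vanishes on $\partial_{\mathrm{int}} B$ since $\eta$ does, and which is designed so that $v = \varphi u$ yields $u\psi = \eta^2 v^{2\alpha}$. Because $\psi \geq 0$ and $\cL^\om u \geq 0$ on $B$, the Dirichlet form \eqref{eq:def:dform} gives
\begin{equation*}
\scpr{\nabla \psi}{a^\om\nabla u}{E} + h\,\scpr{\psi}{\kappa^\om u}{V} \;=\; \cE^\om(\psi,u) \;=\; \scpr{\psi}{-\cL^\om u}{V} \;\leq\; 0,
\end{equation*}
and since $u\psi = \eta^2 v^{2\alpha}$ all that remains is to unpack the first inner product and show that it controls, up to error of the allowed type, the Dirichlet form $\cE^\om(\eta v^\alpha)$.

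I would then expand $\nabla u = \av{\varphi^{-1}}\nabla v + \av{v}\nabla \varphi^{-1}$ (using $u = v/\varphi$) and
\begin{equation*}
\nabla\psi \;=\; \av{\eta^2}\av{\varphi}\,\nabla v^{2\alpha-1} + \av{\eta^2}\av{v^{2\alpha-1}}\nabla\varphi + \av{\varphi v^{2\alpha-1}}\nabla\eta^2
\end{equation*}
via the product rule \eqref{eq:rule:prod}, producing six cross terms $T_1,\ldots,T_6$. The principal one,
\begin{equation*}
T_1 \;=\; \langle a\,\av{\eta^2}\,\av{\varphi}\av{\varphi^{-1}},\, \nabla v\,\nabla v^{2\alpha-1}\rangle_{\ell^2(E)},
\end{equation*}
is bounded below by $\tfrac{2\alpha-1}{\alpha^2}\,\langle a\av{\eta^2},(\nabla v^\alpha)^2\rangle$ upon using $\av{\varphi}\av{\varphi^{-1}}\geq 1$ (AM--GM per edge) and the discrete chain-rule inequality $(b-a)(b^{2\alpha-1}-a^{2\alpha-1})\geq\tfrac{2\alpha-1}{\alpha^2}(b^\alpha-a^\alpha)^2$ for $\alpha\geq 1$. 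The pure $\varphi$--term
\begin{equation*}
T_5 \;=\; \langle a\,\av{\eta^2}\,\av{v}\av{v^{2\alpha-1}},\, \nabla\varphi\,\nabla\varphi^{-1}\rangle_{\ell^2(E)}
\end{equation*}
has a definite sign since $\nabla\varphi\,\nabla\varphi^{-1}\leq 0$ on every edge; combining $\av{v}\av{v^{2\alpha-1}}\leq\av{v^{2\alpha}}$ from \eqref{eq:Holderforav} with \eqref{eq:fromdGammatoGamma} yields $-T_5\leq \tfrac12\langle v^{2\alpha},-\Gamma^\om_\eta(\varphi,\varphi^{-1})\rangle$. The four mixed terms $T_2,T_3,T_4,T_6$ pair gradients of $v$ or $v^{2\alpha-1}$ with $\nabla\varphi$, $\nabla\varphi^{-1}$ or $\nabla\eta^2 = 2\av{\eta}\nabla\eta$, and are each handled with Cauchy--Schwarz plus Young's inequality with a small parameter $\epsilon$: the $(\nabla v^\alpha)^2$--parts (reached via the standard discrete estimate $(\nabla v^{2\alpha-1})^2\leq c\alpha^2\av{v^{2\alpha-2}}(\nabla v^\alpha)^2$ together with \eqref{eq:Holderforav}) are absorbed into $T_1$, while the $(\nabla\eta)^2$-- and $(\nabla\varphi)^2/(\varphi^+\varphi^-)$--parts, again via \eqref{eq:Holderforav} and \eqref{eq:fromdGammatoGamma}, contribute the allowed quantities $\langle v^{2\alpha},\Gamma^\om(\eta,\eta)\rangle$ and $\langle v^{2\alpha},-\Gamma^\om_\eta(\varphi,\varphi^{-1})\rangle$.

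Collecting terms with $\epsilon$ chosen small enough, I expect to obtain
\begin{equation*}
c\,\alpha^{-2}\,\langle a\,\av{\eta^2},(\nabla v^\alpha)^2\rangle + h\langle \eta^2 v^{2\alpha},\kappa^\om\rangle \;\leq\; C\,\langle v^{2\alpha},\,\Gamma^\om(\eta,\eta)-\Gamma^\om_\eta(\varphi,\varphi^{-1})\rangle
\end{equation*}
with absolute constants $c,C$. Writing $\nabla(\eta v^\alpha) = \av{\eta}\nabla v^\alpha + \av{v^\alpha}\nabla\eta$, using $(\av{\eta})^2\leq\av{\eta^2}$, $(\av{v^\alpha})^2\leq\av{v^{2\alpha}}$ and \eqref{eq:fromdGammatoGamma}, I get
\begin{equation*}
\cE^\om(\eta v^\alpha) \;\leq\; 2\langle a\av{\eta^2},(\nabla v^\alpha)^2\rangle + \langle v^{2\alpha},\Gamma^\om(\eta,\eta)\rangle + h\langle \eta^2 v^{2\alpha},\kappa^\om\rangle,
\end{equation*}
which, together with $\Gamma^\om(\eta,\eta)\leq \Gamma^\om(\eta,\eta)-\Gamma^\om_\eta(\varphi,\varphi^{-1})$ (the subtracted term being non-positive), gives the claim with a constant of order $\alpha^2$. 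The main obstacle I anticipate is the combinatorial bookkeeping of the cross terms $T_2$--$T_6$: one must verify that \emph{every} error produced by Young's inequality lines up with one of the two allowed quantities $\langle v^{2\alpha},\Gamma^\om(\eta,\eta)\rangle$ and $\langle v^{2\alpha},-\Gamma^\om_\eta(\varphi,\varphi^{-1})\rangle$, that no stray $(\nabla\varphi)^2$-- or $(\nabla u)^2$--term with an unwanted weight survives, and that the choices of Young exponents produce constants of the claimed order $\alpha^2$. The crucial structural point is that $T_5$ comes with exactly the sign needed to supply the $-\Gamma^\om_\eta(\varphi,\varphi^{-1})$ contribution, rather than spoiling the bound.
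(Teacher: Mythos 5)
Your overall strategy is exactly the paper's: test the super-harmonicity of $u$ against $\psi=\eta^2\varphi v^{2\alpha-1}$, expand with the product rule, keep the $\nabla\varphi\,\nabla\varphi^{-1}$ term because of its definite sign, and absorb the mixed terms by Young's inequality into the principal term and the two allowed error quantities. The split into six cross terms rather than the paper's grouping $T_1,T_2,T_3$ is only cosmetic.

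There is, however, one concrete bookkeeping error in the way you propose to handle the principal term, and it is precisely the kind of ``stray weight'' you warn about at the end. You discard the factor $\av{\varphi}\av{\varphi^{-1}}$ from $T_1$ immediately, via $\av{\varphi}\av{\varphi^{-1}}\geq 1$, before absorbing the mixed terms. This cannot work. Each mixed term that pairs a $v$-gradient with $\nabla\varphi^{\pm 1}$ is, after the identity $\av{\varphi}\,|\nabla\varphi^{-1}|=\sqrt{\av{\varphi}\av{\varphi^{-1}}}\sqrt{-\nabla\varphi\nabla\varphi^{-1}}$ (the paper's \eqref{eq:av+phi:sqrt}), of the form $\av{\eta^2}\,|\av{v^\alpha}\nabla v^\alpha|\sqrt{\av{\varphi}\av{\varphi^{-1}}}\sqrt{-\nabla\varphi\nabla\varphi^{-1}}$; whichever way you split the $\sqrt{\av{\varphi}\av{\varphi^{-1}}}$ factor between the two sides of Young's inequality, it lands either on the $(\nabla v^\alpha)^2$ side (and is then unabsorbable, since you already threw it away from $T_1$ and $\av{\varphi}\av{\varphi^{-1}}$ can be arbitrarily large) or on the $(-\nabla\varphi\nabla\varphi^{-1})$ side (and then the resulting error is $\av{v^{2\alpha}}\av{\varphi}\av{\varphi^{-1}}(-\nabla\varphi\nabla\varphi^{-1})$, which is \emph{not} of the form $\av{v^{2\alpha}}(-\Gamma^\om_\eta(\varphi,\varphi^{-1}))$). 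The fix is exactly what the paper does in \eqref{eq:enTerm1}--\eqref{eq:enTerm2}: retain the $\av{\varphi}\av{\varphi^{-1}}$ weight on the $(\nabla v^\alpha)^2$ term through all the absorptions (so the Young choice $a=\nabla v^\alpha\sqrt{\av{\varphi}\av{\varphi^{-1}}}$, $b=\av{v^\alpha}\sqrt{-\nabla\varphi\nabla\varphi^{-1}}$ closes cleanly), and only invoke $\av{\varphi}\av{\varphi^{-1}}\geq 1$ at the very last step when passing from $\langle \av{\eta^2}\av{\varphi}\av{\varphi^{-1}},\md\Gamma^\om(v^\alpha,v^\alpha)\rangle$ down to $\cE^\om(\eta v^\alpha)$.

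A second, smaller point: your estimate $(\nabla v^{2\alpha-1})^2\leq c\alpha^2\av{v^{2\alpha-2}}(\nabla v^\alpha)^2$ together with the H\"older bound $(\av v)^2\av{v^{2\alpha-2}}\leq\av{v^{2\alpha}}$ gives $|\av v\,\nabla v^{2\alpha-1}|\leq c\sqrt{\av{v^{2\alpha}}}\,|\nabla v^\alpha|$, which does feed correctly into the Young split described above. But the cleaner route, and the one the paper takes, is the pair of discrete identities $\av v\,\nabla v^{2\alpha-1}+\av{v^{2\alpha-1}}\nabla v=2\av{v^\alpha}\nabla v^\alpha$ and $\big|\av v\,\nabla v^{2\alpha-1}-\av{v^{2\alpha-1}}\nabla v\big|\leq \tfrac{2(\alpha-1)}{\alpha}\,|\av{v^\alpha}\nabla v^\alpha|$ (\cite[Lemma~B.1]{ADS16a}), which immediately give $|\av v\,\nabla v^{2\alpha-1}|\,\vee\,|\av{v^{2\alpha-1}}\nabla v|\leq 2\,|\av{v^\alpha}\nabla v^\alpha|$ with no extra H\"older step needed.
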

\begin{proof}
  Since $\cL^\om u \geq 0$ and $u=\varphi^{-1}v$, \eqref{eq:adjoint}, \eqref{eq:L_divform} and an application of the  product rule \eqref{eq:rule:prod} yield
  \begin{align}\label{eq:DF:est0}
    0 
    \;\geq\; &
    \scpr{\eta^2 \varphi v^{2\al-1}}{-\cL^\om (\varphi^{-1}v)}{V} \nonumber  \\
    \;=\; &
    \scpr{\nabla (\eta^2 \varphi v^{2\al-1})}{a^\om\, \nabla (\varphi^{-1} v)}{E}
    +  h \,  \scpr{\eta^2 v^{2\al}}{\kappa^\om}{V} \nonumber \\
   \;=\; &
    \scpr{\av{\eta^2}}{\md \Ga^{\om}(\varphi v^{2\al-1}\!, \varphi^{-1} v)}{E}
    \,+\,
    \scpr{\av{\varphi v^{2\al-1}}}{\md \Ga^{\om}(\eta^2\!, \varphi^{-1} v)}{E} \nonumber  \\
   & \, + \,  h \, \scpr{\eta^2 v^{2\al}}{\kappa^\om}{V} \nonumber \\
     \;\rdef\; &
    T_1 \,+\, T_2 +T_3.
  \end{align}
   Let us first bound the term $T_1$. Note that  $ -\av{\varphi^{-1}} (\nabla \varphi) = \av{\varphi} (\nabla \varphi^{-1})$ and $(\nabla \varphi)(\nabla \varphi^{-1}) \leq 0.$ Combining these observations with   \eqref{eq:Holderforav} and the product rule \eqref{eq:rule:prod}, we obtain the following lower bound
  \begin{align}\label{eq:firstboundT1}
&    \md \Ga^{\om}(\varphi v^{2\al-1}\!, \varphi^{-1} v)
    \;\geq\;
    \av{\varphi} \av{\varphi^{-1}}\, 
    \md \Ga^{\om}(v^{2\al-1}\!, v)
    \,+\,
    \av{v^{2\al}}\, \md \Ga^{\om}(\varphi, \varphi^{-1})
 \nonumber  \\
    &\mspace{36mu}+\,
    \av{\varphi}
    \Big(
      \av{v}\, \md \Ga^{\om}(v^{2\al-1}, \varphi^{-1}) 
      - 
      \av{v^{2\al-1}}\, \md \Ga^{\om}(v, \varphi^{-1})
    \Big).
  \end{align}
Further, by \cite[Lemma~A.1-(ii)]{ADS16},  we have for all $a,b \geq 0$,
      \begin{align*}
        \big(a^{\al} - b^{\al}\big)^2
        \;\leq\;\frac{\al^2}{2\al-1}\, \big(a - b\big)\,
        \big(a^{2\al-1} - b^{2\al-1}\big),
      \end{align*}
 so that
  \begin{align} \label{eq:boundGammav2a-1v}
    \md \Ga^{\om}(v^{2\al-1}, v)
    &\;\geq \;
    \frac{2\al-1}{\al^{2}}\, \md\Ga^{\om}(v^{\al}\!, v^{\al}).
  \end{align}
Moreover, by \cite[Lemma~B.1-(ii)]{ADS16a},  for $a, b \geq 0$, 
      \begin{align*}
        \big| a^{2 \al - 1} b \,-\, a b^{2 \al -1} \big|
        \;\leq\;
      \frac{\al-1}{\al} 
        \big| a^{2 \al} - b^{2 \al} \big|,
      \end{align*}
  which implies that
  \begin{align}\label{eq:est:v}
    &\big|
      \av{v}(e) \nabla v^{2\al-1}(e) - \av{v^{2\al-1}}(e) \nabla v(e)
    \big|
    \nonumber\\
    &\mspace{36mu}=\;
    \big|v^{2\al -1}(e^{+}) v(e^{-}) - v^{2\al-1}(e^{-}) v(e^{+}) \big|
    \;\leq\;
    \frac{2(\al-1)}{\al}\, \big| \av{v^{\al}}(e)\, \nabla v^{\al}(e) \big|
  \end{align}
  for all $e \in E$. By using the estimates \eqref{eq:boundGammav2a-1v} and \eqref{eq:est:v} in \eqref{eq:firstboundT1} we get
  \begin{align}\label{eq:secondboundT1}
  &    \md \Ga^{\om}(\varphi v^{2\al-1}\!, \varphi^{-1} v)
    \;\geq\;
    \frac{2\al-1}{\al^2}\,    \av{\varphi} \av{\varphi^{-1}}\, \md\Ga^{\om}(v^{\al}\!, v^{\al}) 
    \,+\,
    \av{v^{2\al}}\, \md \Ga^{\om}(\varphi, \varphi^{-1})
 \nonumber  \\
    &\mspace{36mu}-\,
    \frac{2(\al-1)}{\al} \av{\varphi} \av{v^\alpha} \big| \md \Ga^{\om}(v^\alpha, \varphi^{-1}) \big|
  \end{align}
  Notice that
  \begin{align}    \label{eq:av+phi:sqrt}
  \begin{split}
    \av{\varphi} \big|\nabla \varphi^{-1}\big| 
    \;&=\; 
    \sqrt{ \av{\varphi} \av{\varphi^{-1}} } 
    \cdot \sqrt{-(\nabla \varphi)(\nabla \varphi^{-1}) }.
    %\\&\leq \frac\ve2\av{\varphi}\av{\varphi^{-1}}-\frac1{2\ve}(\nabla \varphi)(\nabla\varphi^{-1}).
  \end{split}
  \end{align}
Hence, we use  Young's inequality in the form 
\begin{align}
\label{eq:youngs}
|a b| \leq \frac{1}{2}(\ve a^2 + b^2/\ve).
\end{align}
with $\ve = 1/(2\al),$  $a=\nabla v^{\alpha}\sqrt{\av{\varphi}\av{\varphi^{-1}}}$ and $b=\av{v^{\alpha}} \sqrt{-(\nabla \varphi)(\nabla \varphi^{-1}) }$, which  together with another application of \eqref{eq:Holderforav} yields
\begin{align}
\av{\varphi} \av{v^\alpha} \big| \md \Ga^{\om}(v^\alpha, \varphi^{-1}) \big| \; \leq \;
\frac 1 {4\alpha}  \av{\varphi} \av{\varphi^{-1}}   \md \Ga^{\om}(v^\alpha) 
-
\al \av{v^{2\al}}  \md \Ga^{\om}(\varphi, \varphi^{-1}).
\end{align}
Combining this with \eqref{eq:secondboundT1} gives
  \begin{align}  \label{eq:enTerm1}
    T_1
    \;\geq\; &
    \frac{3\al-1}{2\al^2} 
    \scpr{\av{\eta^2} \av{\varphi} \av{\varphi^{-1}}}
    {\md \Ga^{\om}(v^{\al}\!, v^{\al})}{E} \nonumber \\
   & \,+\, 
    (2\al-1)\, \scpr{\av{\eta^2} \av{v^{2\al}}}{\md \Ga^{\om}(\varphi, \varphi^{-1}}{E}. 
  \end{align}

 Next we bound the term $T_2$.  Observe that since $\av{\varphi v^{2\alpha-1}}\leq2\av{\varphi}\av{v^{2\alpha-1}}$ it follows from the product rule \eqref{eq:rule:prod} that
  \begin{align} \label{eq:firstboundT2}
    &\av{\varphi v^{2\al-1}}\,
    \md \Ga^{\om}(\varphi^{-1} v, \eta^2) \nonumber
    %\\
    %&\mspace{36mu}=\;
    %2 \av{\varphi v^{2\al-1}} \av{\eta}
    %\Big( 
    %  \av{\varphi^{-1}} \, \md \Ga^{\om}(v, \eta)
    %  \,+\,
    %  \av{v}\, \md \Ga^{\om}(\varphi^{-1}, \eta)
    %\Big)
    \\
    &\mspace{36mu}\geq\;
    -4 \av{\eta} \av{\varphi} \av{v^{2\al-1}}\,
    \Big( 
      \av{\varphi^{-1}} \, \big|\md \Ga^{\om}(v, \eta)\big|
      \,+\,
      \av{v}\, \big|\md \Ga^{\om}(\varphi^{-1}, \eta)\big|
    \Big).
  \end{align}
 Using again \eqref{eq:Holderforav}, \eqref{eq:av+phi:sqrt} and \eqref{eq:youngs} with $\ve=4,$ $a=\nabla \eta\sqrt{\av{\varphi}\av{\varphi^{-1}}}$ and $b=\av{\eta} \sqrt{-(\nabla \varphi)(\nabla \varphi^{-1}) }$  we get on the one hand
  \begin{align} \label{eq:est_T2_1}
    4 \av{\eta} \av{\varphi} \,
    \big|\md \Ga^{\om}(\varphi^{-1}, \eta)\big|
    \leq8\,\av{\varphi} \av{\varphi^{-1}} \,
    \md \Ga^{\om}(\eta, \eta)
    \,-\,
    \frac{1}{2} \av{\eta^2} \, 
    \md \Ga^{\om}(\varphi, \varphi^{-1}).
  \end{align}
  On the other hand,
  \begin{align*}
  &  \big| \av{v^{2\al-1}}(e) (\nabla v)(e) \big| \\
  & \mspace{36mu}   \leq\;
    \Big|
      \av{v^{\al}}(e) (\nabla v^{\al})(e)
     \Big| \,+\,
      \frac{1}{2}\, \Big|
      \big(v^{2\al -1}(e^{+}) v(e^{-}) - v^{2\al-1}(e^{-}) v(e^{+}) \big)
    \Big|
    \\
    & \mspace{36mu} \overset{\!\!\!\eqref{eq:est:v}\!\!\!}{\leq} \;
    \frac{2\al-1}{\al}\,
    \big| \av{v^{\al}}(e)\, \nabla v^{\al}(e) \big|.
  \end{align*}
  Thus, using again \eqref{eq:Holderforav} and \eqref{eq:youngs} with  $\ve = 1/(4\al)$, $a=\av{\eta}\nabla v^{\alpha}$ and $b=\av{v^{\alpha}}\nabla\eta$ we get
  \begin{align} \label{eq:est_T2_2}
    4 \av{\eta}  \av{v^{2\al-1}}\, 
    \big|\md &\Ga^{\om}(v, \eta)\big|
    \leq 4\, \frac{2\al-1}{\al}\, 
     \av{\eta} \av{v^{\al}}\,
    \big| \md \Ga^{\om}(v^{\al}, \eta) \big| 
 \nonumber   \\
    &\leq
      \frac{2\al-1}{2\al^2}\, \av{\eta^2}\, 
      \md \Ga^{\om}(v^{\al}\!, v^{\al})
      \,+\,
      8(2\al-1)\, \av{v^{2\al}}\,
      \md \Ga^{\om}(\eta, \eta).
  \end{align}
  Hence, by using the estimates \eqref{eq:est_T2_1} and \eqref{eq:est_T2_2} in \eqref{eq:firstboundT2} and applying again \eqref{eq:Holderforav} we get
  \begin{align*}
    &\av{\varphi v^{2\al-1}}\,
    \md \Ga^{\om}(\varphi^{-1} v, \eta^2) 
   \geq \;
      -\frac{2\al-1}{2\al^2} 
\av{\eta^2} \av{\varphi} \av{\varphi^{-1}}
    \md \Ga^{\om}(v^{\al}\!, v^{\al}) \\
   & \mspace{36mu} 
   -16 \al   \av{\varphi} \av{\varphi^{-1}}\av{v^{2\alpha}}  \, \md \Ga^{\om}(\eta, \eta)
   +   \frac{1}{2} \av{\eta^2} \,  \av{v^{2\al}} \, 
    \md \Ga^{\om}(\varphi, \varphi^{-1}).
  \end{align*} 
Since
  \begin{align*}
    \av{\varphi^{-1}} \av{\varphi}
    \;=\;
    1 -
    \dfrac{1}{4}\, (\nabla \varphi) (\nabla \varphi^{-1})
  \end{align*}
  and $|\nabla \eta|^2 \leq 2 \av{\eta^2}$ we obtain the lower bound
  \begin{align}  \label{eq:enTerm2}
    T_2
    \;\geq\; &
    -\frac{2\al-1}{2\al^2} 
    \scpr{\av{\eta^2} \av{\varphi} \av{\varphi^{-1}}}
    {\md \Ga^{\om}(v^{\al}\!, v^{\al})}{E} \nonumber \\
  &  -\,
    16 \al\, \scpr{\av{v^{2\al}}}{ \md \Ga^{\om}(\eta, \eta)}{E} \nonumber \\
  & + \Big(8\al + \frac 1 2 \Big) \scpr{\av{\eta^2} \av{v^{2\al}}} 
    {\md \Ga^{\om}(\varphi, \varphi^{-1})}{E}.
  \end{align} 
Therefore, by combining \eqref{eq:DF:est0} with \eqref{eq:enTerm1} and \eqref{eq:enTerm2} we get
\begin{align*}
0   \geq  &  \;
\frac{1}{2\al} 
    \scpr{\av{\eta^2} \av{\varphi} \av{\varphi^{-1}}}
    {\md \Ga^{\om}(v^{\al}\!, v^{\al})}{E}-16 \al  \scpr{\av{v^{2\al}}}{ \md \Ga^{\om}(\eta, \eta)}{E}
    \nonumber\\[.5ex]
    &+ \Big(10\al - \frac 1 2 \Big) \scpr{\av{\eta^2} \av{v^{2\al}}} 
    {\md \Ga^{\om}(\varphi, \varphi^{-1})}{E}+h \, \scpr{\eta^2 v^{2\al}}{\kappa^\om}{V}
\nonumber\\[.5ex]
   \geq & \;
\frac{1}{4\al}   
\cE^\om(\eta v^\al) \,-\,
    \Big(16 \al + \frac{1}{2\alpha} \Big)   \, \scpr{\av{v^{2\al}}}{ \md \Ga^{\om}(\eta, \eta)}{E}\,  \nonumber\\[.5ex]
    &\
   + \Big(10\al - \frac 1 2 \Big) \scpr{\av{\eta^2} \av{v^{2\al}}} 
    {\md \Ga^{\om}(\varphi, \varphi^{-1})}{E},
\end{align*}
where we used in the last step that $\av{\varphi} \av{\varphi^{-1}} \geq 1$ and that by \eqref{eq:rule:prod} and \eqref{eq:Holderforav}
\begin{align*}
    \av{\eta^2}\, \md \Ga^{\om}(v^{\al}\!, v^{\al})
    \;\geq\; \frac  1 2
    \md \Ga^{\om}(\eta v^{\al}\!, \eta v^{\al})
    \,-\,
    \av{v^{2\al}}\, \md \Ga^{\om}(\eta, \eta).
  \end{align*}  
Using \eqref{eq:fromdGammatoGamma}, we can conclude.
\end{proof}

The maximal inequality for $v=u\cdot \varphi$ in Proposition~\ref{prop:max_ineq} will be obtained via a Moser iteration which is carried out in the next proposition. The argument is similar to the one used in \cite[Proposition~3.2]{ADS16} in order to show maximal inequality for harmonic functions and to obtain an elliptic Harnack inequality. Here some extra care is needed due to the presence of the perturbation $\varphi$. Besides the Cacciopoli-type estimate in Lemma~\ref{lem:EHI:DF_ualpha} the second main ingredient is the following weighted Sobolev inequality.  
 For any $x_0 \in V$ and $n \geq N_5(x_0)$,  Assumption~\ref{ass:graph}-(ii) ensures \cite[Equation~(23)]{ADS15} to hold for functions supported in $B^{\om}(x_0,n)$, but with $C_{S_1}$ replaced by $C_{S_1} n |B^{\om}(x_0,n)|^{\frac{d'-1}{d'}}/|B^{\om}(x_0,n)|$.  Therefore, adapting the proof of \cite[Equation~(28)]{ADS15}, one can show that  for any $f:V\rightarrow\bbR$ with $\supp f\subset B^{\om}(x_0,n)$, 
\begin{equation} \label{eq:weightedSobolev}
\begin{split}
\Norm{f^2}{\varrho,B^{\om}(x_0,n)}&\leq \Bigg(\frac{C_{S_1}n \big|B^{\om}(x_0,n)\big|^{\frac{d'-1}{d'}}}{|B^{\om}(x_0,n)|}\Bigg)^{\!2}   \big|B^{\om}(x_0,n)\big|^{2/d'} \Norm{\nu^{\om}}{q,B^{\om}(x_0,n)}\frac{\cE^{\om}(f)}{|B^{\om}(x_0,n)|}
\\&\leq C_{S_1}^2n^2\Norm{\nu^{\om}}{q,B^{\om}(x_0,n)}\frac{\cE^{\om}(f)}{|B^{\om}(x_0,n)|},
\end{split}
\end{equation}
where $\varrho\ldef   d'/(d'-2+d'/q)$. Combining this with Lemma~\ref{lem:EHI:DF_ualpha} then yields an upper bound of   $\Norm{v}{2 \al^{k+1} p_*, B}$ in terms of  $ \Norm{v}{2 \al^k p_*, B'}$ with a ball $B'$ slightly bigger than $B$, $p_*\ldef p/(p-1)$ and $k\in \bbN_0$, for some $\alpha=\alpha(d',p,q)$ which is strictly bigger than one thanks to our condition on $p$ and $q$, see \eqref{eq:EHI:iter:1} below. Iterating this inequality then gives the desired maximal inequality.

\begin{prop} \label{prop:mos_it}
  For any $x_0 \in V$, $n \geq 2( N_4(x_0) \vee N_5(x_0))$ fixed, write $B (\si) \equiv B^\om(x_0, \sigma n)$, $\sigma\in[1/2,1]$.
   Let $u \geq 0$ be such that $\cL^\om\, u \geq 0$ on $B(1)$ and set $v \ldef \varphi \cdot u$ for any  $\varphi: V \rightarrow (0,\infty)$.  Then, under Assumption \ref{ass:graph}, for any $p,q \in (1, \infty]$ with $1/p+1/q<2/d'$,
  there exist $\gamma' =\gamma'(d',p,q) \in (1, \infty)$ and $c_{33} = c_{33}(d',p,q)$ such that for all $\be \in [2 p_*,\infty]$ and for all $1/2 \leq \si' < \si \leq 1$,
  \begin{align}\label{eq:EHI:mos_it}
    \Norm{v}{\be,B(\si')}
    \;\leq\;
    c_{33}\,
    \bigg( 
      \frac{m^\om(n)}
           { (\si - \si')^{2}}
    \bigg)^{\!\!\gamma'}\; \Norm{v}{2 p_*,B(\sigma)},
  \end{align}
  where $p_* = p / (p-1)$ and $ m^\om(n)$ as in Proposition~\ref{prop:max_ineq}.

\end{prop}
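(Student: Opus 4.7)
The proof is a Moser iteration. The Caccioppoli estimate of Lemma~\ref{lem:EHI:DF_ualpha} controls the Dirichlet form $\cE^\om(\eta v^\alpha)$ of a cut-off power of $v$, while the Sobolev inequality of Assumption~\ref{ass:graph}-(ii) upgrades such energy bounds into $\ell^p$-gains. Combined, they yield a single-step improvement from an $\ell^{2\alpha p_*}$-norm of $v$ on a ball $B(\sigma)$ to an $\ell^{2\alpha p_* \kappa}$-norm on a slightly smaller ball $B(\sigma')$, for a fixed ratio $\kappa>1$. Iterating over a geometric sequence of radii $\sigma_k$ and exponents $\alpha_k\ldef \kappa^k$ then produces \eqref{eq:EHI:mos_it}.

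For the single step, fix $1/2\leq\sigma'<\sigma\leq 1$ and choose a cut-off $\eta:V\to[0,1]$ with $\eta\equiv 1$ on $B(\sigma')$, $\supp\eta\subset B(\sigma)$ and $|\nabla\eta|\leq c/((\sigma-\sigma')n)$. Set $f\ldef \eta v^\alpha$, $\rho_*\ldef d'/(d'-1)$, $q^*\ldef q/(q-1)$, and pick the \emph{crucial} parameter $\theta\ldef 1+p_*/q^*$. Apply the Sobolev inequality to $f^\theta$, bound $|\nabla f^\theta|\leq 2\theta\av{f^{\theta-1}}|\nabla f|$, and apply Cauchy--Schwarz weighted by $a^\om$, using $1/a^\om\leq 2\av{\nu^\om}$ and $\av{g}^2\leq\av{g^2}$; an edge-to-vertex bound (via bounded degree of $\bbZ^d$ and $\av{g}\av{h}\leq\sqrt{\av{g^2}\av{h^2}}$) followed by H\"older at exponent $q$ gives
\begin{align*}
\|f\|_{\theta\rho_*,B(\sigma)}^\theta \;\leq\; \frac{c\,\theta\, n}{|B(\sigma)|^{1/2}}\,\|\nu^\om\|_{q,B(\sigma)}^{1/2}\,\|f\|_{2(\theta-1)q^*,B(\sigma)}^{\theta-1}\,\sqrt{\cE^\om(f)}.
\end{align*}
Meanwhile, Lemma~\ref{lem:EHI:DF_ualpha}, combined with $|\Ga^\om_\eta(\varphi,\varphi^{-1})|\leq|\Ga^\om(\varphi,\varphi^{-1})|$, the bound on $|\nabla\eta|$ and H\"older at exponent $p$ (together with $m^\om(n)\gtrsim \|\mu^\om\|_{p,B(\sigma)}+n^2\|\Ga^\om(\varphi,\varphi^{-1})\|_{p,B(\sigma)}$), gives
\begin{align*}
\cE^\om(\eta v^\alpha)\;\leq\; \frac{c\,|B(\sigma)|\,\alpha^2\,m^\om(n)}{((\sigma-\sigma')n)^2}\,\|v\|_{2\alpha p_*,B(\sigma)}^{2\alpha}.
\end{align*}
Substituting, using $\eta\leq 1$ to get $\|f\|_{r,B(\sigma)}\leq\|v\|_{\alpha r,B(\sigma)}^\alpha$, exploiting the identity $2(\theta-1)q^*=2p_*$ forced by the definition of $\theta$ (the \emph{key algebraic step}, which collapses the two $v$-norms on the right into one), and using the volume regularity \eqref{eq:reg:large_balls} to relate $\|f\|_{\theta\rho_*,B(\sigma)}$ to $\|v\|_{\alpha\theta\rho_*,B(\sigma')}$, one obtains the clean one-step inequality
\begin{align*}
\|v\|_{\alpha\theta\rho_*,B(\sigma')}^{\alpha\theta} \;\leq\; \frac{c\,\theta\,\alpha\,m^\om(n)}{(\sigma-\sigma')^2}\,\|v\|_{2\alpha p_*,B(\sigma)}^{\alpha\theta},
\end{align*}
where we used $(\sigma-\sigma')^{-1}\leq(\sigma-\sigma')^{-2}$ to match the form in the statement.

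Setting $\kappa\ldef\theta\rho_*/(2p_*)$, a direct computation yields $\kappa-1 = \tfrac{\rho_*}{2}\bigl(\tfrac{2}{d'}-\tfrac{1}{p}-\tfrac{1}{q}\bigr)$, so $\kappa>1$ precisely when $1/p+1/q<2/d'$, i.e.\ exactly under the sharp hypothesis. Iterating the one-step bound with $\alpha_k\ldef \kappa^k$ and $\sigma_k\ldef\sigma'+2^{-k}(\sigma-\sigma')$, taking $(\alpha_k\theta)$-th roots and telescoping, one finds
\begin{align*}
\|v\|_{2p_*\kappa^K,B(\sigma_K)} \;\leq\; \prod_{k=0}^{K-1}\Bigl(\frac{c\,\theta\,\alpha_k\,m^\om(n)\,4^k}{(\sigma-\sigma')^2}\Bigr)^{\!1/(\alpha_k\theta)}\,\|v\|_{2p_*,B(\sigma)}.
\end{align*}
Because $\sum_k\alpha_k^{-1}<\infty$ and $\sum_k(k+\log\alpha_k)/\alpha_k<\infty$, the product converges and is bounded by $c_{36}\,(m^\om(n)/(\sigma-\sigma')^2)^{\gamma'}$ for some $\gamma'=\gamma'(d',p,q)\in(1,\infty)$, which yields \eqref{eq:EHI:mos_it} for $\beta=\infty$ in the limit $K\to\infty$, and for any finite $\beta\in[2p_*,\infty]$ by Jensen's inequality on the averaged norms.

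The main technical obstacle is engineering the iteration ratio $\kappa$ so that $\kappa>1$ coincides exactly with the sharp hypothesis $1/p+1/q<2/d'$: this forces the non-obvious choice $\theta=1+p_*/q^*$, which simultaneously calibrates the two H\"older applications (at exponents $p$ and $q$) and causes the two $v$-norms on the right of the one-step estimate to collapse into one. Apart from this, the argument follows a standard Moser template, with the only minor additional point being the edge-to-vertex reduction of the sum $\sum_e\av{f^{2(\theta-1)}}\av{\nu^\om}$, handled by bounded degree at the cost of enlarging the ball by a bounded amount, absorbed into $\|\nu^\om\|_{q,B(\sigma)}$ up to a constant.
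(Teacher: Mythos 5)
Your proposal follows the same overall strategy as the paper — Caccioppoli estimate (Lemma~\ref{lem:EHI:DF_ualpha}) plus Sobolev inequality driving a Moser iteration — but with a genuinely different calibration of the auxiliary exponent. The paper applies the $\ell^1$-Sobolev to $|f|^s$ with $s$ chosen so that the norm of $f$ appearing on the right matches the one produced on the left; this yields the self-absorbing inequality $\Norm{f^2}{\varrho,B}\lesssim n^2\Norm{\nu^\om}{q,B}\cE^\om(f)/|B|$ with $\varrho=d'/(d'-2+d'/q)$, and then plugging in Caccioppoli gives the one-step gain $\alpha=\varrho/p_*$. You instead pick $\theta=1+p_*/q^*$ so that the Sobolev-side norm $\Norm{f}{2(\theta-1)q^*}$ matches the Caccioppoli-side $\Norm{v}{2\alpha p_*}$, collapsing both into one factor, and obtain the gain $\kappa=\theta\rho_*/(2p_*)$ with $\rho_*=d'/(d'-1)$. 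Both choices exceed $1$ exactly when $1/p+1/q<2/d'$, so either calibration works, but they are not the same: for example with $p=q=\infty$ the paper has $\alpha=d'/(d'-2)$ while you get $\kappa=d'/(d'-1)$, and in general $\alpha>\kappa$ (one can check $\alpha-\kappa$ is proportional to $(2/d'-1/p-1/q)(1-1/q)$). Your route trades a strictly smaller per-step gain for skipping the intermediate derivation of the $\varrho$-Sobolev inequality, which is a legitimate simplification.

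There is, however, a genuine gap: you never address the discreteness of the lattice. With $\sigma_k=\sigma'+2^{-k}(\sigma-\sigma')$, the $k$-th step of your iteration requires a cut-off $\eta_k$ that is identically $1$ on $B(\sigma_{k+1})$, supported in $B(\sigma_k)$, vanishes on $\partial_{\mathrm{int}}B(\sigma_k)$, and satisfies $\Norm{\nabla\eta_k}{\ell^\infty(E)}\lesssim 1/((\sigma_k-\sigma_{k+1})n)$. Once $(\sigma_k-\sigma_{k+1})n<1$ — which happens for all $k$ large enough, since the increments are geometric while $n$ is fixed — the balls $B(\sigma_k)$ and $B(\sigma_{k+1})$ coincide, and no such cut-off exists. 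So the iteration cannot be carried out to $K\to\infty$ as you write it. The paper treats the case $\tau_k n<1$ separately with a trivial interpolation $\Norm{v^{2\alpha_k}}{\alpha p_*,B(\sigma_k)}\leq|B(\sigma_k)|^{1/(\alpha_*p_*)}\Norm{v^{2\alpha_k}}{p_*,B(\sigma_k)}$ and checks (via $d/(2\alpha_*p_*)\leq 1$ together with $n<1/\tau_k$) that the resulting per-step factor remains compatible with the product. You would need the analogous estimate for your $\kappa$; the required inequality $d(1-1/\kappa)\leq 4p_*$ does hold under $1/p+1/q<2/d'$ and $d'\geq d$, so your iteration can be completed, but this is a non-trivial point in the discrete setting that should not be omitted.
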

\begin{proof}
  For fixed $1/2 \leq \si' < \si \leq 1$, let $\{B(\si_k)\}_k$ be a sequence of balls with radii $\si_k n$ centred at $x_0$, where  
  \begin{align*}
    \si_k \;=\; \si' + \tau_{k-1}
    \qquad \text{and} \qquad
    \tau_k \;=\; 2^{-k-1} (\si - \si'),
    \quad k \in{\bbN_0}.
  \end{align*}
  Note that $\si_k = \si_{k+1} + \tau_k,$ $\si_0 = \si$ and $\si_k\rightarrow\sigma'$ as $k\rightarrow\infty$.  Further, for any $k\in \bbN_0$ set 
  $\al_k \ldef \al^k$ where $\al\ldef \varrho/p_*$  and $\varrho\ldef   d'/(d'-2+d'/q)$ as in \eqref{eq:weightedSobolev} above.  Since $1/p + 1/q < 2/d'$ we have $\varrho > p_*$ and therefore $\al_k > 1$ for every $k$.  Due to the discreteness of the underlying space $\bbZ^d$, we distinguish two different cases.

  Let us first consider  the case $\tau_k n < 1$, that is $B(\si_{k+1}  ) = B(\si_k )$.  Set $\al_*\ldef \al/(\al-1)$. Then, note that
  \begin{align*}
  &  \Norm{v^{2 \al_k}}{\al p_*, B(\si_{k+1} )}
     \; = \; 
     \Norm{v^{2 \al_k} v^{2 \al_k(\al-1)}}{p_*, B(\si_{k+1} )}^{1/\al} \\
     & \mspace{36mu}
    \;\leq\;
    \Norm{v^{2 \al_k}}{p_*, B(\si_k )}^{1/\al}
    \Big( \max_{x \in B(\si_k )} v(x)^{2 \al_k} \Big)^{\!\!1/\al_*}   
    \;\leq\;
    |B(\si_k )|^{1/(\al_* p_*)} \Norm{v^{2 \al_k}}{p_*, B(\si_k )},
  \end{align*}
 where we used the trivial estimate $\max_{x \in B(\si_k )} v(x)^{2 \al_k} \leq \big(\sum_{x \in B(\si_k )} v(x)^{2 \al_k p_*} \big)^{1/p_*}$ in the last step. Since   $d/(2 \al_* p_*) \leq 1$ and $n < 1 / \tau_k$, we get by Assumption~\ref{ass:graph}-(i),
 \begin{align*}
   |B(\si_k )|^{1/(\al_* p_*)} \; \leq \;
     |B^\om(x_0,n )|^{1/(\al_* p_*)} \; \leq \; c \tau_k^{-1},
 \end{align*}
  and therefore
  \begin{align}\label{eq:EHI:iter:0}
    \Norm{v}{2 \al_{k+1} p_*,B(\si_{k+1})}
    \;\leq\;
    \bigg(
      c\, \frac{2^{2k}}{(\si - \si')^{2}}\,
    \bigg)^{\!\!1/(2\al_k)}\,
    \Norm{v}{2 \al_k p_*,B(\si_k)}.
  \end{align} 

  Consider now the case $\tau_k n \geq 1$.  Let $\eta_k$ be a cut-off function with $\supp \eta_k \subset B(\si_{k} )$ such  that $\eta_k \equiv 1$ on $B(\si_{k+1})$, $\eta_k \equiv 0$ on $\partial_{\mathrm{int}} B(\si_{k})$ and with linear decay on $B(\si_{k} ) \setminus B(\si_{k+1})$ so that $\Norm{\nabla \eta}{\ell^\infty(E)} \leq 1/(\tau_{k} n)$.  Recall that by volume regularity, see Assumption \ref{ass:graph}-(i),
 $|B(\si)| / |B(\si')| \leq C^2_{\mathrm{reg}} 2^d$.
Then, by taking  $f=\eta_kv^{\alpha_k}$ in \eqref{eq:weightedSobolev} we get
  \begin{align*}
    &\Norm{ v^{2\al_k}}{\varrho, B(\si_{k+1}) }
    \; \leq \;
   c  \, \Norm{(\eta_k\, v^{\al_k})^2}{\varrho, B(\si_{k} )}
    \; \leq\;
cn^2 \Norm{\nu^{\om}}{q,B^{\om}(x_0,n)}\,   
      \frac{\cE^{\om}(\eta_k v^{\al_k})}{|B(\si_{k} )|}.
  \end{align*}
  On the other hand, using Lemma~\ref{lem:EHI:DF_ualpha}, the fact that $\supp(\eta_k)\subseteq B(\sigma_k)$ with $\av{\eta_k^2}\leq1$ and $\Gamma^{\om}(\eta,\eta)\leq \mu^{\om}/(\tau_kn)^2$, and  H\"older's inequality we find
  \begin{align*}
    \frac{\cE^{\om}(\eta_k v^{\al_k})}{|B(\si_k)|}
   & \; \leq \;
  c \, \alpha_k^2  \, \Big( \Norm{v^{2 \al_k} \Gamma^\om(\eta_k,\eta_k)}{1,B(\si_{k})} + \Norm{v^{2 \al_k} \Gamma^\om(\varphi,\varphi^{-1})}{1,B(\si_{k})} \Big)
 \\    
  & \; \leq \;  
    c \, \alpha_k^2 \, \bigg(\frac{1}{(\tau_kn)^2} +\Norm{\Gamma^\om(\varphi,\varphi^{-1})}{p,B(\si_k )} \bigg) 
    \Norm{1 \vee \mu^{\om}}{p,B(\si_k)}
    \Norm{v^{2 \al_k}}{p_*,B(\si_{k})}.
  \end{align*}
Hence, using $\alpha_{k+1} p_* = \alpha_k \varrho$ we  combine the last two estimates to obtain that
  \begin{align}\label{eq:EHI:iter:1}
    \Norm{v}{2 \al_{k+1} p_*, B(\si_{k+1})}
    &\;\leq\;
    \bigg(
      c\;
      \frac{2^{2k}\, \al_k^2}{(\si -\si')^{2}}\;
      m^\om(n)
    \bigg)^{\!\!1/(2\al_k)}\;
    \Norm{v}{2 \al_k p_*,B(\si_{k})}.
  \end{align}
 By iterating the inequalities \eqref{eq:EHI:iter:0} and \eqref{eq:EHI:iter:1}, respectively, and using the fact that $\sum_{k=0}^{\infty}k/\al_k < \infty$ and $B(\sigma')\subseteq B(\sigma_K)$ there exists $c=c(d',p,q)< \infty$ such that, for any $K \in \bbN$,
  \begin{align*}
    \Norm{v}{2 \al_K p_*, B(\si')}
    &\;\leq\;
    c\, \prod_{k=0}^{K-1}
    \bigg(
      \frac{m^\om(n)}
      {(\si -\si')^{2}}
    \bigg)^{\!\!1/(2\al_k)}\;
    \Norm{v}{2 p_*,B(\si)}.
  \end{align*}
  Setting $\gamma' \ldef \frac{1}{2} \sum_{k=0}^\infty (1/\al_k) < \infty$  we get
  \begin{align*}
    \max_{x \in B(\si')} v(x)
    &\;=\;
    \lim_{K \to \infty} \Norm{v}{2 \al_K p_*, B(\si')}
  \;\leq\;
    c\,
    \bigg(
      \frac{m^\om(n)}
      {(\si -\si')^{2}}
    \bigg)^{\!\!\gamma}\;
    \Norm{v}{2 p_*, B(\si)}.
  \end{align*}
  For any $\be \in [2p_*, \infty)$ the claim is immediate since $\|v\|_{\be, B(\si')} \leq \max_{x \in B(\si' )} v(x)$. 
\end{proof}

\begin{proof}[Proof of Proposition~\ref{prop:max_ineq}]
 The maximal inequality \eqref{eq:max_ineq}, with  $\gamma=\big(1\vee \frac{2p_*}{\alpha}\big) \gamma'>1$, follows now from Proposition~\ref{prop:mos_it} as in \cite[Corollary~3.9]{ADS15}, cf.\ also \cite[Corollary~3.4]{ADS16}.
\end{proof}

\subsubsection{Exponential decay via Agmon's method}

We will  now use Lemma~\ref{lem:EHI:DF_ualpha} to bound averages of $v^2$ in $B'$ by averages of $v^2$ in $B\setminus B'$ for $B'\subset B.$ Recall the constant $c_{32}$ from Lemma~\ref{lem:EHI:DF_ualpha}. 

\begin{lemma} \label{lem:agmon}
 Let  $B \subsetneq V$ be connected and $B'\subset B$. Let $\eta:V\rightarrow [0,1]$ such that $ \supp \eta \subset B$,    
    $\eta \equiv 1$ on $B'\cup \partial_{\mathrm{out}} B'$ and 
    $\eta \equiv 0$ on $\partial_{\mathrm{int}} B$.
  Further, let $u \geq 0$ be such that $\cL^\om\, u \geq 0$ on $B$ and let $\varphi > 0$ be such that 
 \begin{align} \label{eq:cond_phi}
 \big| \Gamma^\om (\varphi, \varphi^{-1}) \big| \; \leq \; \frac{h}{2c_{32}} \, \kappa^\om \quad \text{on $V$,}
\end{align}  
and set $v \ldef \varphi \cdot u$. Then there exists an absolute constant $c_{34}<\infty$ such that
\begin{align*}
 \sum_{x \in B'} v^2(x) \, \kappa^\om(x) \; \leq \; c_{34} \, \left(1 + h^{-1} \Norm{\nabla \eta}{\ell^\infty(E)}^2 \right)  \sum_{x\in B \setminus B'} v^2(x)  \, \big(  \mu^\om(x) \vee \kappa^\om(x) \big).
\end{align*}
\end{lemma}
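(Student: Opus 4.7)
The plan is to apply Lemma~\ref{lem:EHI:DF_ualpha} with $\alpha=1$, since that estimate bounds the Dirichlet form $\cE^\om(\eta v)$, whose killing part already equals $h\sum_x \eta^2 v^2 \kappa^\om$. Since $\eta\equiv 1$ on $B'$, the definition \eqref{eq:def:dform} of $\cE^\om$ gives immediately
\begin{equation*}
h\sum_{x\in B'} v^2(x)\kappa^\om(x) \;\leq\; h\sum_{x\in V}\eta^2(x) v^2(x)\kappa^\om(x) \;\leq\; \cE^\om(\eta v),
\end{equation*}
so that, by Lemma~\ref{lem:EHI:DF_ualpha}, the problem reduces to controlling $c_{35}\sum_x v^2\big(\Gamma^\om(\eta,\eta)-\Gamma^\om_\eta(\varphi,\varphi^{-1})\big)$.

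The first term is easy: pointwise $\Gamma^\om(\eta,\eta)(x)\leq \|\nabla\eta\|_{\ell^\infty(E)}^2\,\mu^\om(x)$. The second term is where the condition \eqref{eq:cond_phi} enters. Since $\nabla\varphi\cdot\nabla\varphi^{-1}\leq 0$, we have $-\Gamma^\om_\eta(\varphi,\varphi^{-1})\geq 0$, and because $\av{\eta^2}\leq 1$ pointwise,
\begin{equation*}
-\Gamma^\om_\eta(\varphi,\varphi^{-1})(x) \;\leq\; |\Gamma^\om(\varphi,\varphi^{-1})(x)| \;\leq\; \frac{h}{2c_{35}}\,\kappa^\om(x).
\end{equation*}

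The one delicate step is the support analysis, which uses the two-layer structure of the cutoff: since $\eta\equiv 1$ on $B'\cup\partial_{\mathrm{out}}B'$, for every edge $\{x,y\}$ with $x\in B'$ both endpoints satisfy $\eta=1$, so $\nabla\eta(\{x,y\})=0$ and $\Gamma^\om(\eta,\eta)(x)=0$ for $x\in B'$; and since $\eta\equiv 0$ both on $\partial_{\mathrm{int}}B$ and on $V\setminus B$, for every $x\notin B$ and every neighbour $y$ of $x$ we have $\eta(x)=\eta(y)=0$ (any neighbour in $B$ must lie in $\partial_{\mathrm{int}}B$), hence $\Gamma^\om(\eta,\eta)(x)=0=\Gamma^\om_\eta(\varphi,\varphi^{-1})(x)$ for $x\in V\setminus B$. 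The same argument shows $B'\cap\partial_{\mathrm{int}}B=\emptyset$, so $\Gamma^\om(\eta,\eta)$ is supported in $B\setminus B'$ and $\Gamma^\om_\eta(\varphi,\varphi^{-1})$ is supported in $B=B'\sqcup(B\setminus B')$.

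Combining these pieces,
\begin{equation*}
h\!\sum_{x\in B'}\! v^2\kappa^\om \leq c_{35}\|\nabla\eta\|_{\ell^\infty(E)}^2\!\!\sum_{x\in B\setminus B'}\! v^2\mu^\om + \frac{h}{2}\!\sum_{x\in B'}\! v^2\kappa^\om + \frac{h}{2}\!\sum_{x\in B\setminus B'}\! v^2\kappa^\om,
\end{equation*}
and absorbing the middle term on the left, then dividing by $h/2$, gives the claim with $c_{37}=2c_{35}\vee 1$. The only substantive work is the support/boundary bookkeeping above; everything else is a direct rearrangement of Lemma~\ref{lem:EHI:DF_ualpha} combined with hypothesis \eqref{eq:cond_phi}.
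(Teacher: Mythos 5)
Your proof is correct and follows essentially the same route as the paper: both apply Lemma~\ref{lem:EHI:DF_ualpha} with $\alpha=1$, lower-bound $\cE^\om(\eta v)$ by the killing part $h\sum \eta^2 v^2\kappa^\om$, bound the $\Gamma^\om(\eta,\eta)$ contribution via its support in $B\setminus B'$ and $\|\nabla\eta\|_{\ell^\infty}$, and use \eqref{eq:cond_phi} together with $\av{\eta^2}\leq 1$ and $\md\Gamma^\om(\varphi,\varphi^{-1})\leq 0$ to absorb $-\Gamma^\om_\eta(\varphi,\varphi^{-1})$ into a multiple of $h\kappa^\om\indicator_B$. The only cosmetic difference is that the paper performs the absorption through the intermediate identity $h\langle v^2,(\eta^2-\tfrac12)\kappa^\om\rangle_B$ and then splits $B=B'\sqcup(B\setminus B')$, whereas you absorb the $B'$-piece directly; your explicit bookkeeping of where $\Gamma^\om(\eta,\eta)$ and $\Gamma^\om_\eta(\varphi,\varphi^{-1})$ are supported is a welcome clarification.
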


\begin{proof}
Note that $\Gamma^\om_\eta(\varphi,\varphi^{-1}) \geq \Gamma^\om(\varphi,\varphi^{-1})\indicator_{B}$ as $\av{\eta^2} \leq 1,$ $\supp\eta\subset B\setminus \partial_{\mathrm{int}} B$ and $\md \Gamma^\om (\varphi, \varphi^{-1})\leq 0$. Hence, by Lemma~\ref{lem:EHI:DF_ualpha} with $\al=1$, 
\begin{equation*}
h\scpr{v^2}{\kappa^{\om}\eta^2}{B} \leq \cE^{\om}(\eta v) \leq c_{32} \scpr{v^{2}}{\Gamma^{\om}(\eta,\eta)}{V}-c_{32}\scpr{v^2}{\Gamma^{\om}(\varphi,\varphi^{-1})}{B}.
\end{equation*}
By rearranging and using \eqref{eq:cond_phi} we obtain that
\begin{align*}
  c_{32}\scpr{v^2}{\Gamma^\om(\eta,\eta)}{V} & 
   %\geq \
%\scpr{v^2}{\eta^2 \, h \, \kappa^\om \,+ \, c_2 \, \Gamma^\om_\eta(\varphi,%\varphi^{-1})}{V} \\
%& 
%\geq \scpr{v^2}{\eta^2 \, h \, \kappa^\om \,+ \, c_2 \, \Gamma^\om(\varphi,%\varphi^{-1})}{V}  \\
%&
\geq h\scpr{v^2}{ (\eta^2 -\tfrac 1 2)  \, \kappa^\om}{B}  \\
& 
= \frac h 2  \scpr{v^2}{ \kappa^\om}{B'}
+  
h\scpr{v^2}{( \eta^2 - \tfrac 1 2) \, \kappa^\om}{B\setminus B'}
\\
& 
\geq \;
\frac h 2  \scpr{v^2}{ \kappa^\om}{B'}
- \frac{h}2
\scpr{v^2}{\kappa^\om}{B\setminus B'}.
\end{align*}
Since $\Gamma^\om(\eta,\eta)(x) \leq  \Norm{\nabla \eta}{\ell^\infty(E)}^2 \, \mu^\om(x)\indicator_{B\setminus B'}$,  $x\in V$,  rearranging gives the claim.
\end{proof}

In the proof of Theorem~\ref{thm:GKdecay_killing} below we will apply Lemma~\ref{lem:agmon} for a perturbation function $\varphi$ of the form $\varphi(y)=\exp(-\lambda \sqrt{h} d_\kappa^\om(x,y))$ with $d^\om_\kappa$ as defined in \eqref{eq:def_chemdist}. In the next lemma we show that for such $\varphi$ the condition \eqref{eq:cond_phi} is satisfied, provided $\lambda >0$ is sufficiently small.

\begin{lemma} \label{lem:GammaPhi}
Let $\varphi=e^\psi$ with $\psi(y) \ldef - \lambda \sqrt{h} \, d^\om_\kappa (x,y)$ for $\lambda>0$. For every $\delta >0$  there exists $\lambda_0=\lambda_0(\delta,d)>0$ such that for all $\lambda \in (0, \lambda_0)$,
\begin{align*}
\big| \Gamma^\om (\varphi, \varphi^{-1}) \big| \; \leq \;  \delta \, h\, \kappa^\om \quad \text{on $V$.}
\end{align*}
\end{lemma}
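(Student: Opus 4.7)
The plan is to compute $\Gamma^\om(\varphi,\varphi^{-1})$ explicitly in terms of the exponent $\psi$, use the triangle inequality for $d_\kappa^\om$ together with the single-edge path bound built into the definition \eqref{eq:def_chemdist}, and conclude by a Taylor expansion of $\sinh$.

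\smallskip

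\textbf{Step 1: algebraic identity.} For any edge $\{x,y\}\in E$, set $t \ldef \psi(y)-\psi(x)$. Since $\varphi=e^\psi$,
\begin{equation*}
  (\varphi(y)-\varphi(x))(\varphi^{-1}(y)-\varphi^{-1}(x))
  = (e^{t}-1)(e^{-t}-1) = -\big(e^{t/2}-e^{-t/2}\big)^{2}
  = -4\sinh^{2}(t/2).
\end{equation*}
Consequently, recalling that $\Gamma^\om(f,g)(x)=\sum_{y\sim x}a^\om(x,y)\nabla f(\{x,y\})\nabla g(\{x,y\})$,
\begin{equation*}
  \big|\Gamma^\om(\varphi,\varphi^{-1})(x)\big|
  = 4\sum_{y\sim x} a^\om(x,y)\,\sinh^{2}\!\Big(\tfrac{\psi(y)-\psi(x)}{2}\Big).
\end{equation*}

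\smallskip

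\textbf{Step 2: bounding $|\psi(y)-\psi(x)|$.} By the (reverse) triangle inequality for the pseudo-metric $d^\om_\kappa$,
\begin{equation*}
  |\psi(y)-\psi(x)| \;\leq\; \lambda\sqrt{h}\,d^\om_\kappa(x,y).
\end{equation*}
For $y\sim x$, evaluating the infimum in \eqref{eq:def_chemdist} on the trivial one-edge path yields
\begin{equation*}
  d^\om_\kappa(x,y) \;\leq\; \Big(1\wedge\tfrac{\kappa^\om(x)\wedge\kappa^\om(y)}{a^\om(x,y)}\Big)^{\!1/2} \;\leq\; 1,
\end{equation*}
so that $|\psi(y)-\psi(x)|\leq \lambda\sqrt{h}\leq \lambda$, using $h\leq 1$.

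\smallskip

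\textbf{Step 3: Taylor estimate and summation.} The elementary inequality $\sinh^{2}(s)\leq s^{2}e^{2|s|}$ applied to $s=t/2$ with $|t|\leq \lambda\leq 1$ (take $\lambda_0\leq 1$) gives a universal constant $C_1$ with
\begin{equation*}
  \sinh^{2}\!\Big(\tfrac{\psi(y)-\psi(x)}{2}\Big) \;\leq\; C_1\,(\psi(y)-\psi(x))^{2}
  \;\leq\; C_1\lambda^{2}h\,\Big(1\wedge\tfrac{\kappa^\om(x)\wedge\kappa^\om(y)}{a^\om(x,y)}\Big).
\end{equation*}
Substituting into Step 1 and using $a^\om(x,y)\cdot\min\!\big(1,\frac{\kappa^\om(x)\wedge\kappa^\om(y)}{a^\om(x,y)}\big) \leq \kappa^\om(x)$, together with the fact that $x$ has at most $2d$ neighbours in $\bbZ^d$, yields
\begin{equation*}
  \big|\Gamma^\om(\varphi,\varphi^{-1})(x)\big|
  \;\leq\; 4C_1\lambda^{2}h \sum_{y\sim x}\kappa^\om(x)
  \;\leq\; 8d\,C_1\,\lambda^{2}\,h\,\kappa^\om(x).
\end{equation*}

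\smallskip

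\textbf{Step 4: conclusion.} Choosing $\lambda_0=\lambda_0(\delta,d)\ldef \min\{1,\sqrt{\delta/(8dC_1)}\}$ yields the claimed inequality for every $\lambda\in(0,\lambda_0)$.

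There is no serious obstacle: the only ingredients needed are the $\sinh$-identity from Step~1, the path-cost bound for $d^\om_\kappa$ on a single edge, and elementary analysis. The point of the definition \eqref{eq:def_chemdist} of $d^\om_\kappa$ is precisely that the single-edge cost matches the factor required to control $a^\om(x,y)\,|\nabla\psi|^{2}$ by $h\,\kappa^\om$.
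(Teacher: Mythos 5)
Your proof is correct and follows essentially the same path as the paper's: an exact $\sinh^{2}/\cosh$ identity for the product $\nabla\varphi\cdot\nabla\varphi^{-1}$ (note $2(\cosh t - 1)=4\sinh^{2}(t/2)$), the single-edge cost bound on $|\nabla\psi|$ coming directly from the definition \eqref{eq:def_chemdist} of $d^\om_\kappa$, and the observation that $a^\om(x,y)\big(1\wedge\tfrac{\kappa^\om(x)\wedge\kappa^\om(y)}{a^\om(x,y)}\big)\leq\kappa^\om(x)$ together with the degree bound $2d$. The only cosmetic difference is that you use a direct Taylor-type estimate $\sinh^{2}(s)\leq C_1 s^{2}$ for $|s|$ bounded, whereas the paper invokes the inequality $b(\cosh z -1)\leq\cosh(\sqrt{b}\,z)-1$ for $b\geq 1$ (using $h\leq 1$) to collapse all edge-dependent factors into the single constant $\cosh(\lambda)-1$; both choices are elementary and yield the same qualitative bound.
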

\begin{proof}
Since the case $h=0$ is trivial, we consider $h\in (0,1]$ only. For any $y\in V$,
\begin{align*}
\big| \Gamma^\om (\varphi, \varphi^{-1}) (y) \big| & \; \leq \;
\sum_{z\sim y} a^\om(y,z) \, \Big| \big( e^{\psi(z)} - e^{\psi(y)} \big) \, \big( e^{-\psi(z)} - e^{-\psi(y)} \big) \Big| \\
& \; = \; 
2 \sum_{z\sim y} a^\om(y,z) \, \Big( \cosh\big( |\nabla \psi(\{y,z\}|\big) -1 \Big),
\end{align*}
and for any $e\in E$,
 \begin{align*}
    \big|\nabla \psi(e)\big|
    \;\leq\;
    \la\, \sqrt{h}\, \big|d_{\kappa}^{\om}(x,e^+) - d_{\kappa}^{\om}(x,e^-)\big|
    \overset{\eqref{eq:def_chemdist}}{\;\leq\;} \lambda \, \sqrt{h}
    \bigg(
      1 \wedge \frac{\kappa^\om(e^+) \wedge \kappa^\om(e^-)}{a^\om(e)}
    \bigg)^{\!\!1/2}.
  \end{align*}
  Recall that $h\in (0,1]$. Using that $b \big(\cosh(z)-1\big)\leq \cosh(\sqrt{b} z)-1$ for all $z\in \bbR$  and any $b \geq 1$, we obtain that
\begin{align*}
\big| \Gamma^\om (\varphi, \varphi^{-1}) (y) \big| & \; \leq \;
2h\, \kappa^\om(y) \sum_{z\sim y} \frac 1 h \Big( 1 \vee \frac{a^\om(y,z)}{\kappa^\om (y) \wedge \kappa^\om (z)} \Big) \, \Big(\cosh\big( |\nabla \psi(\{y,z\}|\big) -1 \Big) \\
& \; \leq \; 4d \,\big( \cosh(\lambda)- 1 \big) \, h \, \kappa^\om(y).
\end{align*}
Since $4d \big(\cosh(\lambda)- 1 \big) \leq \delta$ for $\lambda$ sufficiently small, the result follows.
\end{proof}

\begin{proof}[Proof of Theorem~\ref{thm:Agmon}]
%Choose $\varepsilon_0\in (0,1)$ such that we can find $\vartheta \in (1, \big(1- C_{\mathrm{reg}}^{-2/d} \big)^{-1} \wedge 2)$ with $\vartheta-1 \in [\varepsilon/2, \varepsilon)$ for all $\varepsilon \in (0,\varepsilon_0)$. Further, for any such $\varepsilon$ fixed, let $\si'\in [1/2,1)$ be such that $\vartheta-\si' <\varepsilon$. 
Let us fix $x,y\in \cC_\infty(\om)$ such that
$ \rho^\om(x,y)  \geq N_{11}(\om,x)\ldef  8(N_4(\om,x) \vee N_5(\om,x))\vee2( N_7(\om,x) \vee N_{10}(\om,x))$ and set 
 $n\ldef  \rho^\om(x,y)/4$. In particular, $n\geq 2(N_4(\om,x) \vee N_5(\om,x))$. Take $B'=B^{\om}(x,n)$ and $B=B^{\om}(x,2n)$, which satisfy $|B|\leq c|B'|$ by Assumption \ref{ass:graph}-(i). Recall that the function $u(z)= g^\om(y,z)$ is harmonic on $V \setminus \{y\}$, and thus also on $B$.
 Finally, let $\varphi(z)= \exp(-\lambda \sqrt{h} d_\kappa^\om(x,z))$ for some $\lambda \in (0,1)$ only depending on $c_{32}$ such that \eqref{eq:cond_phi} holds (see Lemma~\ref{lem:GammaPhi}). 
Then, setting again $v\ldef u\cdot \varphi$, by the maximal inequality in Proposition~\ref{prop:max_ineq}, H\"older's inequality and Lemma~\ref{lem:agmon} we obtain that
%\notes{Here I am using the following maximal inequality: for all $n\geq N_1(x)\vee N_2(x),$
%\begin{equation*}
%\max_{z\in B^{\om}(x, n/2)}  v(z)\leq cm^{\om}(n)^{\gamma}\Norm{v}{1, B^{\om}(x,n)}
%\end{equation*}
%Adapt the constants below if we prove a slightly different maximal inequalities for balls. Actually one could replace $n/2$ by $1$ in the maximum above if we want and it would be enough.}
\begin{align*}
& \max_{z\in B^{\om}(x, n/2)}  v(z)   \; \leq \; 
c \, m^\om(n)^{\gamma}\; 
    \Norm{v}{1, B'}
    \; \leq \; c \, m^\om(n)^{\gamma}\; \Norm{1/\kappa^\om}{1, B'}^{1/2} \, \Norm{v \sqrt{\kappa^\om}}{2, B'}    \\
 \leq \; &
     c \, m^\om(n)^{\gamma}\; \Norm{1/\kappa^\om}{1, B}^{1/2} 
     \left( \frac{ 1+ h^{-1} \, \Norm{\nabla \eta}{\ell^\infty(E)}^2 }{|B'|} \, \sum_{z\in B\setminus B'} v^2(z)  \, \big(  \mu^\om(z) \vee \kappa^\om(z) \big) \right)^{\!1/2} \\
     \leq   \; &
      c \, m^\om(n)^{\gamma}\; \Norm{1/\kappa^\om}{1, B}^{1/2}  \, \Norm{\mu^\om\vee \kappa^\om}{1, B}^{1/2} \,  \Big(1+ h^{-1} \, \Norm{\nabla \eta}{\ell^\infty(E)}^2 \Big)^{\! 1/2} \, \max_{z\in B\setminus B'} v(z).
\end{align*}
Take $\eta$ a linear cutoff function between $B'$ and $B$ so that  $\Norm{\nabla \eta}{\ell^\infty(E)} \leq c/n$. Recall \eqref{eq:int_condAgmon} and that by Lemma~\ref{lem:GammaPhi}, $\Norm{\Gamma^\om(\varphi,\varphi^{-1})}{p,B^\om(x,3n)} \leq c h \Norm{\kappa^\om}{p,B^\om(x,3n)} \leq c h$.
Hence, using the definition of $m^{\om}(n)$ in Proposition \ref{prop:max_ineq}, \eqref{eq:int_condAgmon} and the equality $\varphi(x)=1,$ there exists $c=c(d',p,q,C_{\mathrm{int}})$ such that
\begin{align} \label{eq:u_pointwise}
u(x) \leq    c \,  \big(1+hn^2\big)^\gamma \Big(1+\frac 1 {h n^2}\Big)^{\!1/2} \Big(\max_{z\in B\setminus B'}  u(z) \Big) \,    \Big( \max_{z \in (B')^c} \varphi(z)\Big). 
\end{align} 
 Moreover, recall that the Green's function with killing is trivially bounded from above by the Green's function without killing. Hence, as $\rho^\om(y,z)\geq  2n \geq c \rho^{\om}(x,y)$ for any $z \in  B\setminus B'$, we may apply Proposition~\ref{prop:green}, which implies that 
\begin{align*}
u(z) \; \leq \;  \frac{c}{\rho^\om(y,z)^{d-2}}    \; \leq \; \frac{c}{\rho^\om(x,y)^{d-2}}, \qquad \forall z  \in  B\setminus B'.
\end{align*}
Combining this with \eqref{eq:u_pointwise} yields the claim.
\end{proof}

\section{Examples}
\label{sec:examples}
In this section, we present a few models which satisfy the main conditions required in Theorems~\ref{The:main}, \ref{thm:hkeGauss} and \ref{thm:GKdecay_killing}, that is the conditions {\bf P1}, {\bf P2} and {\bf P3}, or {\bf P3'} or {\bf P3''}, introduced in Section~\ref{sec:FPP}, as well as the conditions {\bf S1} and {\bf S2} introduced in Section~\ref{sec:RCMhk}. Our main examples are the Gaussian free field, the Ginzburg-Landau $\nabla \phi$ interface model and random interlacements, for which conditions {\bf P1}, {\bf P2}, {\bf S1} and {\bf S2} were proved in \cite{MR3390739,2016arXiv161202385R}. However since our condition {\bf P3} is stronger than the one from \cite{MR3390739}, one needs to verify that it is still satisfied for these models. For the Gaussian free field and random interlacements, {\bf P3} will follow from an easy adaptation of the techniques from \cite{MR3325312,MR3420516}, see Propositions~\ref{app:GFFverifiesall} and~\ref{prop:condforRIverified}. For the interface model however the decoupling inequalities from \cite{2016arXiv161202385R} will not be sufficient for our purposes, and we will prove condition {\bf P3'} in Proposition~\ref{app:GLverifiesall} in dimension $d\geq4$ using an approach slightly different from \cite{2016arXiv161202385R}. Let us stress that in \cite{2016arXiv161202385R} the existence of an ergodic infinite-volume Gibbs measure was assumed, but we actually prove that such a measure always exists in Lemma~\ref{lemma:uniquenesslimitGibbsmeasure}. Finally, our last class of examples are models satisfying a certain weak mixing property \eqref{eq:ratioweakmixing} that  implies condition {\bf P3''} without any sprinkling, see Proposition~\ref{prop:upperratioweakmixing}. It contains, for instance, the two-dimensional massive Gaussian free field or the Ising model with an external field.

\subsection{Discrete Gaussian free field}
\label{sec:gff}
\label{sub:GFF}
Consider the graph $G=(\bbZ^d, E_d, \bar{a},\kappa),$ $d\geq3$, equipped with symmetric weights   $\bar{a}:E_d\rightarrow (0,\infty)$  and a killing measure $\bar{\kappa}:\bbZ^d\rightarrow [0,\infty)$, possibly equal to zero. Assume that the Green's function $\bar{g}(x,y)$, $x,y\in \bbZ^d$, associated with the random walk on $\bbZ^d$ with generator given by \eqref{eq:defL_intro} when $a^{\omega}=\bar{a}$, $h=1$, and $\kappa^{\omega}=\bar{\kappa}$, satisfies
\begin{equation}
\label{eq:condongreen}
    \bar{g}(x,y)\leq \, C_G \, |x-y|^{2-d}, \qquad\forall\, x\neq y\in{\bbZ^d},
\end{equation}
for some constant $C_G<\infty$. For instance one can consider constant weights with zero killing measure. Let  $(\phi_x)_{x\in{\bbZ^d}}$ be the Gaussian free field on $G$, i.e.\   the centred Gaussian field, under a probability measure $\bbP^G$, with covariance function
\begin{equation}
\label{defGFF}
    \bbE^G[\phi_x\phi_y]=\bar{g}(x,y), \qquad x,y\in\bbZ^d.
\end{equation}

\begin{prop}
\label{app:GFFverifiesall}
Under \eqref{eq:condongreen}, assume that $(\omega_x)_{x\in{\bbZ^d}}$ has the same law under $\bbP^u$ as $(\phi_x+u)_{x\in{\bbZ^d}}$ under $\bbP^G$ for all $u\in\bbR,$ and that $\omega_e=0$ for all $e\in{E_d}.$ Then for all intervals $I\subset\bbR,$ $\xi_P>1,$ $\chi_P\in{(0,\frac{d-2}2)},$ $\eps_P=1$ and $a_P=d-2,$ there exist constants $C_P,R_P,L_P<\infty$, only depending on $C_G,$ such that $(\bbP^u)_{u\in{I}}$ satisfies {\bf P2} and {\bf P3}.
\end{prop}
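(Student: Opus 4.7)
Verifying \textbf{P2} is immediate from the structure of $\bbP^u$: the canonical coordinates $(\omega_x)_{x\in\bbZ^d}$ are distributed as $(\phi_x+u)_{x\in\bbZ^d}$ under $\bbP^u$, so the shift by $u$ acts pointwise, and for any increasing $f$ and $u\leq u'$ one has $f(\phi+u)\leq f(\phi+u')$ pointwise under $\bbP^G$, yielding $\bbE^u[f]\leq \bbE^{u'}[f]$ after expectation. The rest of the argument is devoted to \textbf{P3}. Set $B_i=\bar Q(x_i,L^{\xi_P})$ and enlarge to $B_i^+=\bar Q(x_i,L^{\xi_P}+RL/2)$, which are disjoint by virtue of $d(B_1,B_2)\geq RL$; write $B^+=B_1^+\cup B_2^+$.

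The plan is to exploit the Gaussian domain-Markov property of the GFF, which decomposes $\phi=h+\psi$ on $B^+$, where $h(x)=\bbE^G[\phi_x\mid\phi|_{(B^+)^c}]$ is the harmonic extension of the exterior data and $\psi$ is a zero-boundary GFF on $B^+$ independent of $\phi|_{(B^+)^c}$. Since $B^+$ has two connected components, $\psi$ splits as $\psi=\psi^{(1)}+\psi^{(2)}$ into \emph{independent} zero-boundary GFFs on $B_1^+$ and $B_2^+$. I would then extend the probability space to carry two further independent GFFs $\phi^{(1)},\phi^{(2)}$, coupled to $\phi$ by matching zero-boundary parts: decomposing $\phi^{(i)}|_{B_i^+}=h^{(i)}+\tilde\psi^{(i)}$ via the Markov property on $B_i^+$, set $\tilde\psi^{(i)}=\psi^{(i)}$ and draw the exterior boundary pieces of $\phi^{(i)}$ independently of everything already constructed; the independence of $\psi^{(1)}$ and $\psi^{(2)}$ then ensures that $\phi^{(1)}$ and $\phi^{(2)}$ are mutually independent while each has the correct GFF marginal. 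Defining
\begin{equation*}
B\ldef \bigcap_{i=1,2}\Big\{\sup_{x\in B_i}\big|h(x)-h^{(i)}(x)\big|\leq R^{-\chi_P}/2\Big\},
\end{equation*}
the identity $\phi-\phi^{(i)}=h-h^{(i)}$ on $B_i$ forces $|\phi-\phi^{(i)}|\leq R^{-\chi_P}/2$ on $B_i$ on the event $B$; together with $u-\hat u\geq R^{-\chi_P}$ this gives the two-sided comparison $\phi+\hat u\leq\phi^{(i)}+u$ and $\phi+u\geq\phi^{(i)}+\hat u$ on $B_i$, which upon combining with the monotonicity of $A$ delivers both \eqref{eq:P3'replacebyindependentinc} and \eqref{eq:P3'replacebyindependentdec}.

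It remains to establish the tail bound $\bbP^u(B^c)\leq\exp(-C_Pf_P(L))$ with $f_P(L)=L^{d-2}$, matching $a_P=d-2$ and $\eps_P=1$. The variables $h(x)$ and $h^{(i)}(x)$ for $x\in B_i$ are centred Gaussians whose variances equal $\bar g(x,x)-g_{B^+}(x,x)$ and $\bar g(x,x)-g_{B_i^+}(x,x)$, respectively; a last-exit decomposition together with the pointwise Green-function bound \eqref{eq:condongreen} bounds each variance by $C\,\mathrm{dist}(x,\partial B_i^+)^{-(d-2)}\leq C(RL)^{-(d-2)}$, since by construction $\mathrm{dist}(B_i,\partial B_i^+)\geq RL/2$. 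An application of the Borell--TIS inequality with $|B_i|\leq CL^{d\xi_P}$ then yields
\begin{equation*}
\bbP^u(B^c)\leq C\exp\!\big(-c\,R^{d-2-2\chi_P}\,L^{d-2}\big)\leq C\exp\!\big(-c\,L^{d-2}\big),
\end{equation*}
provided $L\geq L_P$ and $R\geq R_P$ are chosen large enough so that the expected-supremum contribution (of order $\sqrt{\log L}\,(RL)^{-(d-2)/2}$ by standard bounds for suprema of Gaussians) is absorbed into a fraction of $R^{-\chi_P}$; the hypothesis $\chi_P<(d-2)/2$ enters decisively here, both to enable this absorption and to guarantee $R^{d-2-2\chi_P}\geq 1$ for $R\geq R_P$. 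The main delicate point in the whole argument is arranging the coupling so that $\phi^{(1)}$ and $\phi^{(2)}$ remain \emph{mutually} independent while each shares its zero-boundary piece on $B_i^+$ with $\phi$, which hinges crucially on the splitting of the zero-boundary GFF across the two connected components of $B^+$.
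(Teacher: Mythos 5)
Your argument is correct in its essentials, but takes a genuinely different route from the paper's. You condition on the exterior $\phi|_{(B^+)^c}$ and exploit the fact that the zero-boundary GFF on the disconnected region $B^+ = B_1^+ \cup B_2^+$ factors into two \emph{independent} pieces $\psi^{(1)},\psi^{(2)}$, which lets you replace \emph{both} restrictions $\phi|_{B_1}$ and $\phi|_{B_2}$ simultaneously by independent fields $\phi^{(1)},\phi^{(2)}$ coupled through shared zero-boundary parts. The paper instead conditions asymmetrically on $\phi|_{Q(x_1,L^{\xi_P})}$ (invoking \cite[Lemma~1.2]{MR3325312}), writes $\phi = \tilde\phi^{(1)} + h^{(1)}$ with $h^{(1)}$ measurable w.r.t.\ $\phi|_{Q_1}$ and $\tilde\phi^{(1)}$ independent of it, keeps $\phi|_{Q_1}$ untouched, and replaces only the harmonic contribution $h^{(1)}$ on $Q_2$ by a fresh independent copy $\tilde h^{(1)}$. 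The paper's one-sided construction is a bit more economical: only one field gets substituted, and one does not need the splitting of the zero-boundary GFF across components; your symmetric version is equally valid but requires that extra observation. The tail estimate is also handled slightly differently: you invoke Borell--TIS and control the expected supremum separately, while the paper applies \cite[Proposition~1.4 and Remark~1.5]{MR3325312}, which is essentially a pointwise Gaussian tail bound plus a union bound over $Q(x_2, L^{\xi_P} + RL)$; both routes give $\exp(-c\,R^{d-2-2\chi_P}L^{d-2})$, as needed for $f_P(L)=L^{d-2}$.

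There is one small technical gap to close. With $B_i^+=\bar Q(x_i, L^{\xi_P}+RL/2)$, the hypothesis $d(\bar Q(x_1,L^{\xi_P}), \bar Q(x_2,L^{\xi_P}))\geq RL$ only guarantees $d(B_1^+,B_2^+)\geq 0$, so $B_1^+$ and $B_2^+$ can abut. Your factorization $\psi=\psi^{(1)}+\psi^{(2)}$ rests on the fact that the walk killed on exiting $B^+$ cannot travel between the two boxes, which needs a genuine gap of at least one lattice point; replacing $RL/2$ by, say, $RL/3$ fixes this while preserving $\mathrm{dist}(B_i,\partial B_i^+)\geq RL/3$ and hence the variance bound of order $(RL)^{2-d}$.
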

\begin{proof}
Condition {\bf P2} is clearly satisfied by definition. In order to prove {\bf P3}, we proceed as in \cite{MR3325312}. Let us fix some $R,L,x_1,x_2,u$ and $\hat{u}$ as in {\bf P3} for some $\xi_P>1$ and some  $\chi_P<\frac{d-2}{2}$.  It follows from the Markov property of the Gaussian free field, see \cite[Lemma~1.2]{MR3053773}, that there exist two independent Gaussian fields $\tilde{\phi}^{(1)}$ and $h^{(1)}$ such that $\phi=\tilde{\phi}^{(1)}+h^{(1)}$, where $\tilde{\phi}^{(1)}$ is independent of $\phi_{|Q(x_1,L^{\xi_P})}$ and $h^{(1)}$ is $\sigma(\phi_x,x\in{Q(x_1,L^{\xi_P})})$-measurable. Moreover, setting
\begin{equation}
\label{app:defBi}
    B^{(1)} \ldef \bigg\{\sup_{Q(x_{2},L^{\xi_P})}|h^{(1)}|\leq \frac{1}2R^{-\chi_P}\bigg\},
\end{equation}
by Proposition~1.4 and Remark~1.5 in \cite{MR3325312} (whose proof also works for the Gaussian free field with non-constant weights), there exist constants $R_P,L_P<\infty,$ only depending on $C_G,$  such that if $L\geq L_P$ and $R\geq R_P$,
\begin{equation}
\label{app:boundonB1}
\begin{split}
    \bbP^G\big((B^{(1)})^c\big)&\leq 2 \, |Q(x_2,L^{\xi_P}+RL)| \, \exp\bigg(-\frac{R^{-2\chi_P}}{8\sup_{\{x:\,\|x\|\geq RL\}} \bar g(0,x)}\bigg)
    \\&\leq 2 \, (L^{\xi_P}+RL)^d \, \exp\big(-(C_G/8)R^{d-2-2\chi_P}L^{d-2}\big)
    \\&\leq\frac12\exp(-(C_G/9)L^{d-2}).
\end{split}
\end{equation}
Let $\tilde{h}^{(1)}$ be a random variable independent of $\phi$ and with the same law  as $h^{(1)},$ $\tilde{B}^{(1)}$ as in \eqref{app:defBi} but with $\tilde{h}^{(1)}$ instead of $h^{(1)},$ and let $B=B^{(1)}\cap\tilde{B}^{(1)}$. Then, for all $x\in{Q(x_2,L^{\xi_P})}$,
\begin{equation*}
    (\tilde{\phi}^{(1)}_x+\tilde{h}^{(1)}_x+{u})\indicator_{B}\geq (\tilde{\phi}^{(1)}_x+h^{(1)}_x-R^{-\chi_P}+{u})\indicator_{B}\geq (\phi_x+\hat{u})\indicator_{B}.
\end{equation*}
Therefore for all increasing events $A\subset[0,\infty)^{Q(x_1,L^{\xi_P})}\times [0,\infty)^{Q(x_2,L^{\xi_P})},$
\begin{equation}
\label{app:proofdecoup}
\begin{split}
    &\bbP^G\big(B,\big((\phi_x+ \hat{u})_{x\in{Q(x_1,L^{\xi_P})}},(\phi_x+ \hat{u})_{x\in{Q(x_2,L^{\xi_P})}}\big)\in{A}\,\big|\,\phi_{|Q(x_1,L^{\xi_P})}\big)
    \\&\leq\bbP^G\big(\big((\phi_x+ u)_{x\in{Q(x_1,L^{\xi_P})}},(\tilde{\phi}^{(1)}_x+\tilde{h}^{(1)}_x+ u)_{x\in{Q(x_2,L^{\xi_P})}}\big)\in{A}\,\big|\,\phi_{|Q(x_1,L^{\xi_P})}\big),
\end{split}
\end{equation}
and similarly for decreasing events when exchanging $\hat{u}$ and $u.$ Since $\tilde{\phi}_x^{(1)}+\tilde{h}_x^{(1)}$ has the same law as $\phi$ and is independent of $\phi_{|Q(x_1,L^{\xi_P})},$ condition {\bf P3} follows easily from \eqref{app:boundonB1} and \eqref{app:proofdecoup}.
\end{proof}

For the next result, we extend the definition of $d^{\omega}$ and $\mu_u$ from \eqref{intro:defT} and \eqref{intro:deftimeconstant} to the case where, instead of a family of weights $(t_e^{\omega})_{e\in{E_d}}$ on the edges, we have a family of weights $(t_x^{\omega})_{x\in{\bbZ^d}}$ on the vertices, simply by considering the minimal length over paths of vertices instead of paths of edges. Recall that $h_*$ denotes the critical parameter for the level sets $\{x\in{\bbZ^d}:\,\phi_x\geq h\},$ $h\in\bbR,$ of the Gaussian free field on $(\bbZ^d, E_d,1,0)$ under $\bbP^G.$

\begin{coro}
\label{app:1stcorGFF}
Let $\bar{a}_{e}=1$ for all $e\in{E_d}$ and $\bar{\kappa}_x=0$ for all $x\in{\bbZ^d}$. Fix a decreasing function $f:\bbR\mapsto[0,\infty)$ such that $\bbE^G[f(\phi_0)]<\infty$. Set $h_f\ldef \inf\{t\in\bbR:\,f(t)=0\}$ with the convention $\inf\varnothing\ldef +\infty$, and take $(t_x^{\omega})_{x\in{\bbZ^d}}$ with the same law under $\bbP^0$ as $(f(\phi_x))_{x\in{\bbZ^d}}$ under $\bbP^G$. Then, for all $x\in\bbZ^d$,
\begin{equation}
\label{app:mu_f>0}
    \mu_0(x)>0\text{ if }h_f>h_* \text{ and }\mu_0(x)=0\text{ if }h_f<h_*.
\end{equation}
Moreover, if $h_f>h_*$, then for all $\delta>4$ there exist positive constants $C_{\text{FPP}}$ and $c_{35}$ such that for all $n\in\bbN$, 
\begin{equation}
\label{app:Tfsmallercn}
    \bbP^0\left(d^\omega(0,nx)\leq C_{\text{FPP}}n\right) \leq \exp\left(-\frac{c_{35}n}{\log(n)^{\delta\indicator_{d=3}}}\right).
\end{equation}
\end{coro}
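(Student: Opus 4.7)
The proof naturally splits into the two regimes $h_f<h_*$ and $h_f>h_*$, and in each case reduces to one of the two main tools developed in the paper. The basic setup is the parametrised family from Proposition~\ref{app:GFFverifiesall}: take $\Omega_E$ trivial, $\Omega_V=\bbR$, and under $\bbP^u$ let $(\om_x)_{x\in\bbZ^d}$ have the law of $(\phi_x+u)$ under $\bbP^G$. By Proposition~\ref{app:GFFverifiesall} this family satisfies {\bf P2} and {\bf P3} (with $\eps_P=1$, $a_P=d-2$, $\chi_P\in(0,(d-2)/2)$), and it is plainly invariant and ergodic under lattice shifts (hence {\bf P1} holds). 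Passage times are placed on the vertices via the monotone (decreasing) function $f$: one takes $\mathcal{N}=\{0\}$ and $\mathbf{t}(\om_V^1,\om_V^2,\om_E)=\tfrac12(f(\om_V^1)+f(\om_V^2))$ as in Remark~\ref{rk:fppvertices}-(i), so that results for FPP on edges transfer to vertex-weighted FPP. The integrability $\bbE^G[f(\phi_0)]<\infty$ gives \eqref{eq:boundonmement}, so the time constant $\mu_0(x)$ exists under $\bbP^0$.

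\textbf{Case $h_f<h_*$.} Pick any $h\in(h_f,h_*)$. Since $f$ is decreasing and vanishes on $(h_f,\infty)$, we have $\{y:\,t_y^\om=0\}\supset\{y:\,\phi_y\geq h\}$ under $\bbP^0$. Because $h<h_*$, the level set $\{\phi\geq h\}$ percolates, i.e.\ $\theta(h)\ldef\bbP^G(0\longleftrightarrow\infty\text{ in }\{\phi\geq h\})>0$. The FKG inequality for the Gaussian free field (Pitt's theorem, using that the Green function is non-negative) applied to the increasing events $\{0\longleftrightarrow\infty\}$ and $\{nx\longleftrightarrow\infty\}$ in $\{\phi\geq h\}$ yields $\bbP^G(0,nx\longleftrightarrow\infty)\geq\theta(h)^2$, and uniqueness of the infinite cluster for GFF level sets (Rodriguez--Sznitman, \cite{MR3325312}) then gives
\[
\liminf_{n\to\infty}\bbP^0\big(0\longleftrightarrow nx\text{ in }\{y:\,t_y^\om=0\}\big)\;\geq\;\theta(h)^2\;>\;0.
\]
Applying Proposition~\ref{Prop:condformu=0} (in its vertex-valued version, which is identical) concludes $\mu_0(x)=0$.

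\textbf{Case $h_f>h_*$.} Fix a small $\delta_0>0$ and choose the parameter interval $I=(-1,h_f-h_*-\delta_0)$, which contains $0$. For any $u\in I$ one has $h_f-u>h_*$, hence, since $f$ is decreasing,
\[
\{y:\,t_y^\om=0\}\;\subset\;\{y:\,\om_y\geq h_f\}\;=\;\{y:\,\phi_y\geq h_f-u\}
\]
under $\bbP^u$, and \eqref{eq:equalitycriticalparameter} together with translation invariance of $\bbP^G$ yields \eqref{intro:BLnotconnectedtoB2L} for each such $u$. All the hypotheses of Theorem~\ref{The:main} are thus satisfied, and its conclusion at $u=0$ gives \eqref{intro:boundonT0nx} with the function $g_P^{(\delta_{\mathrm{thm}})}$ from \eqref{eq:defgp}, for any $\delta_{\mathrm{thm}}>1$ of our choice. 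Combined with $\bbE^0[t_0^\om]<\infty$ this also yields $\mu_0(x)\geq C_{\mathrm{fpp}}>0$ via Borel--Cantelli, as in the proof of Theorem~\ref{The:main}.

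\textbf{Quantitative form for $d=3$.} It remains to match the exponent on $\log n$ in \eqref{app:Tfsmallercn}. Inserting $\eps_P=1$, $a_P=d-2$ into \eqref{eq:defgp} gives
\[
g_P^{(\delta_{\mathrm{thm}})}(L)\;=\;L\;\wedge\;\Bigl(\log(L)^{-\delta_{\mathrm{thm}}\,(1+(d-2)+(d-2)/\chi_P)}\,L^{d-2}\Bigr).
\]
For $d\geq4$, $L^{d-2}$ dominates the polynomial log correction, so $g_P^{(\delta_{\mathrm{thm}})}(L)\geq cL$ for large $L$ and \eqref{app:Tfsmallercn} follows with no log factor. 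For $d=3$ this reduces to $L/\log(L)^{\delta_{\mathrm{thm}}(2+1/\chi_P)}$. Given any $\delta>4$ in the corollary, choose $\chi_P<1/2$ sufficiently close to $1/2$ and $\delta_{\mathrm{thm}}>1$ sufficiently close to $1$ so that $\delta_{\mathrm{thm}}(2+1/\chi_P)\leq\delta$ (this is possible since the limiting value as $\chi_P\to1/2$ and $\delta_{\mathrm{thm}}\to1$ is $4<\delta$). Then \eqref{intro:boundonT0nx} delivers \eqref{app:Tfsmallercn}. The only mildly delicate step is this parameter-juggling in $d=3$ (and, logically prior to it, the FKG plus uniqueness argument in the subcritical case); everything else is a direct application of the black-box results proved earlier in the paper.
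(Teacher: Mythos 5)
Your proof is correct and follows essentially the same route as the paper: reduce to Proposition~\ref{Prop:condformu=0} via FKG and uniqueness of the infinite cluster for the subcritical regime, and reduce to Theorem~\ref{The:main} (via Remark~\ref{rk:fppvertices}-(i), Proposition~\ref{app:GFFverifiesall} and the sharpness result of \cite{DuGoRoSe}) for the supercritical regime, then unwind the definition of $g_P^{(\delta)}$ in dimension $3$. The only cosmetic differences are that the paper applies the Burton--Keane theorem directly to the set $\{x: f(\phi_x)=0\}$ rather than to a level set $\{\phi\geq h\}$, and it takes $I=(-1,h_f-h_*)$, making your extra margin $\delta_0$ superfluous (since $I$ is open, $h_f-u>h_*$ already holds strictly for every $u\in I$).
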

\begin{proof}
Let us first assume that $h_f<h_*.$ Since $f$ is decreasing, for all $h\in{(h_f,h_*)}$,
\begin{equation*}
    \big\{x\in{\bbZ^d}:\,\phi_x\geq h\big\}\subset\big\{x\in{\bbZ^d}:\,f(\phi_x)=0\big\},
\end{equation*}
and in particular $\big\{x\in{\bbZ^d}:\,f(\phi_x)=0\big\}$ contains at least one infinite connected component. It follows from the Burton-Keane theorem, see e.g.\  \cite[Theorem~12.2]{MR2280297}, that  $\big\{x\in{\bbZ^d}:\,f(\phi_x)=0\big\}$ contains a unique infinite connected component which we denote by $\cC_{\infty}$. Moreover, by the FKG inequality, see the remark above Lemma~1.4 in \cite{MR3053773}, we have for all $x\in\bbZ^d$ and $n\in\bbN$,
\begin{equation*}
    \bbP^G\big(0\longleftrightarrow nx\text{ in }\big\{x\in{\bbZ^d}:\,f(\phi_x)=0\big\}\big)\geq\bbP^G\big(0\in{\cC_{\infty}}\big)^2>0.
\end{equation*}
The claim $\mu_0(x)=0$ then follows directly from Proposition~\ref{Prop:condformu=0}. 

Let us now assume that $h_f>h_*,$ and take $I=(-1,h_f-h_*)$. Let  $\omega$ be as in Proposition~\ref{app:GFFverifiesall}, $\mathcal{N}=\{0\},$ and  $\mathbf{t}(\omega_V^1,\omega_V^2,\omega_E)=\frac12(f(\omega_V^1)+f(\omega_V^2))$ satisfying \eqref{eq:temonotone}. Noting that for all $u\in{I}$ ,
\begin{equation*}
    \big\{x\in{\bbZ^d}:\,f(\phi_x+u)=0\big\}\subset\big\{x\in{\bbZ^d}:\,\phi_x\geq h_f-u\big\},
\end{equation*} 
condition \eqref{intro:BLnotconnectedtoB2L} is fulfilled  (for $t_x^{\omega}=f(\omega_x)$ instead of $t_e^{\omega}$) by  \cite[Theorem~1.1]{DuGoRoSe} since $h_f-u>h_*$ for all $u\in{I}$. Moreover,  condition \eqref{eq:boundonmement} with $u=0$ is 
satisfied by our assumption on $f$, and  condition {\bf P1} is proved above Lemma~1.5 in \cite{MR3053773}. The result now follows from \eqref{eq:defgp}, Theorem \ref{The:main} and Remark~\ref{rk:fppvertices}-(i) together with Proposition~\ref{app:GFFverifiesall}.
\end{proof}

\begin{remark}
\label{rk:optidecay}
When $d\geq 4,$ the probability that $d^\om(0,nx)$ is smaller than $cn$ decays exponentially fast by \eqref{app:Tfsmallercn}, and one can easily use the FKG inequality to show that this decay is optimal. However in dimension $d=3$, there is an additional logarithmic correction in \eqref{app:Tfsmallercn}, contrary to the case of independent percolation, see \cite[Proposition~5.8]{MR876084}. This is not an artefact of our proof, since one can easily adapt the proof of \cite[Theorem~3.1]{GRS21} to show that when $f(x)=\indicator_{\{x< h\}}$ for $h>h_*$,
\begin{equation*}
    \bbP^u(d^\om(0,nx)\leq cn)\geq \bbP^u(0\longleftrightarrow nx\text{ in }E^{\geq h})\geq\exp\Big(-\frac{c'n}{\log(n)}\Big).
\end{equation*}
Similarly as in \cite{GRS21}, we believe that the optimal logarithmic correction is indeed $\log(n),$ and not $\log(n)^{\delta}$ for some $\delta>4$ as in \eqref{app:Tfsmallercn}.
\end{remark}

Note that Theorem~\ref{the:mainGFF} is a particular case of Corollary~\ref{app:1stcorGFF} for the choice $f(s)=\indicator_{\{s<h\}}$ since $h_f=h$ for this choice of $f$. Let us now explain how one can use Proposition~\ref{app:GFFverifiesall} to obtain the bound \eqref{intro:green_bound} for the choice of weights from below \eqref{intro:green_bound}. It is for instance enough to verify that the conditions of Theorem~\ref{thm:GKdecay_killing} are satisfied, when $\omega$ is as in Corollary~\ref{app:1stcorGFF}, $\mathcal{N}=\{0\},$ $\mathbf{a}(\omega_V^1,\omega_V^2,\omega_E)=e^{\gamma(\omega_V^1+\omega_V^2)}$,  and $\boldsymbol{\kappa}(\omega_V)=e^{\gamma\omega_V}$, respectively.  The monotonicity condition   \eqref{eq:amonotone} is clearly fulfilled, while Assumption~\ref{ass:moment_killing}-(i) holds since $(\boldsymbol{\kappa}(\omega_V^1)\wedge\boldsymbol{\kappa}(\omega_V^2))/\mathbf{a}(\omega_V^1,\omega_V^2,\omega_E)=e^{-\gamma\omega_V^1}\wedge e^{-\gamma\omega_V^2}$ is decreasing.   Condition {\bf P1} holds by \cite[Lemma 1.5]{MR3053773}, conditions {\bf P2} and {\bf P3} follow from Proposition~\ref{app:GFFverifiesall}, and conditions {\bf S1} and {\bf S2} trivially hold since $\mathcal{C}_{\infty}=\bbZ^d$. Finally, Assumption~\ref{ass:moment_killing}-(ii)  follows from the fact that $e^{\gamma\phi_0}\in{L^p}$ for any $p>0$ and the symmetry of the Gaussian free field, and we can conclude. Alternatively one could also prove that the time constant $\mu_\kappa^\om$ is positive using Theorem~\ref{the:mainGFF}, as well as the bounds $e^{-\gamma\phi_x}\geq e^{-\gamma h}$ if $\phi_x\leq h$ and $e^{-\gamma\phi_x}\geq 0$ otherwise for $h$ large enough,  and conclude by Corollary~\ref{cor:optidecay}.

\begin{remark}
\label{rk:finalGFF}
(i)  In Corollary~\ref{cor:optidecay}, the weights $a^{\omega}(x,y),$ $x\sim y,$ depend on $\phi_x,\phi_y$ via the function $f(t,s)=e^{\gamma (t+s)}.$ This choice is however quite arbitrary, and one could in fact prove the same result for any symmetric monotone and strictly positive function $f$ in view of Theorem~\ref{thm:GKdecay_killing} under the integrability condition Assumption~\ref{ass:moment_killing}-(ii). One can even allow $f$ to be equal to $0$ as long as conditions {\bf S1} and {\bf S2} are satisfied. For instance, one can take $f(t,s)=f'(t,s)\indicator_{\{t\geq h,s\geq h\}}$ for $h< h_*$ and some symmetric, monotone and strictly positive  $f'$, and conditions {\bf S1} and {\bf S2} are then proved in \cite{DRS14, DuGoRoSe}. %Moreover, other choices of the speed measure $\theta^{\omega}$ are possible as long as Assumption~\ref{ass:moment} still holds. %Similar remarks can be made about Theorem~\ref{intro:greenwithkilling}.
  
  (ii) The advantage of considering general symmetric weights $(\bar{a}_{e})_{e\in{E^d}}$ under condition \eqref{eq:condongreen} instead of unit weights in Proposition~\ref{app:GFFverifiesall} is that it allows us to treat examples  similar to Corollary~\ref{cor:optidecay} but for the Gaussian free field with random conductances, as studied for instance in \cite{MR4299133}. Indeed, assume that the conductances $(\bar{a}_{x,y})_{x,y\in{\bbZ^d}}$ are chosen at random under some probability $\bbQ$ under which they are stationary and ergodic with respect to shifts and almost surely satisfy \eqref{eq:condongreen} for some non-random constant $C_G$.  Then conditions {\bf P2} and {\bf P3} hold $\bbQ$-a.s.\ by Proposition~\ref{app:GFFverifiesall}. Moreover, the constants appearing in condition {\bf P3} only depend on $C_G,$ in particular not on $\bar{a},$ and thus conditions {\bf P2} and {\bf P3} still hold after integration with respect to $\bbQ$. Condition {\bf P1} also holds by assumption, so one can  use Theorems~\ref{thm:hkeGauss} and \ref{thm:GKdecay_killing} to prove results for examples similar to Corollary~\ref{cor:optidecay} when $\phi$ is the (annealed) Gaussian free field on the graph with the random conductances $\bar{a}$ under $\bbQ$. For instance, when the conductances $(\bar{a}_{e})_{e\in{E^d}}$ are uniformly elliptic,   \eqref{eq:condongreen} follows from the heat kernel bounds in \cite{De99}. Note that for an appropriate choice of the random conductances $(\bar{a}_e)_{e\in{E_d}}$ this corresponds to a Ginzburg-Landau $\nabla\phi$ interface model with non-convex potentials as explained in \cite{MR2322690}, which are not already covered by the setting of the following Section~\ref{sec:ginzburglandau}. 
  
  (iii) The Gaussian free field is actually only one example of a class $\mathcal{F}_{\alpha},$ $\alpha>0,$ of Gaussian fields all satisfying condition {\bf P3} recently studied in \cite[Section~2]{Mui22}. More precisely one can respectively replace the fields $h^{(1)}$ and $\tilde{\phi}^{(1)}$ in the proof of Proposition~\ref{app:GFFverifiesall} by the fields $g_{R}$ and $f_{R}$ from \cite[Proposition~3.10]{Mui22} with $R=L^{\xi_P},$ and find a bound similar to \eqref{app:boundonB1} by proceeding similarly as in the proof of \cite[(3.12)]{Mui22} in \cite[Section~3.6]{Mui22}. This shows that any discrete field in $\mathcal{F}_{\alpha}$ satisfies {\bf P3} with $\eps_P=1$ and any $a_P=\alpha.$ The Gaussian free field belongs to $\mathcal{F}_{(d-2)/2}$ by \cite[Lemma~2.9]{Mui22}, and in fact also to $\mathcal{F}_{d-2}$ by the recent article \cite{Sch22}, which gives another proof of Proposition~\ref{app:GFFverifiesall}. The class $\mathcal{F}_{\alpha}$ also contains other discrete Gaussian fields such as the membrane model in dimension $d\geq5$ for $\alpha=(d-4)/2,$ see \cite[Section~3]{Mui22} for details.
\end{remark}

\subsection{Ginzburg-Landau $\nabla\phi$ Interface Model}
\label{sec:ginzburglandau}
The Ginzburg-Landau $\nabla\phi$ interface model is a well established  model for an interface separating two pure thermodynamical phases. The model  is a direct generalization of the discrete Gaussian free field. We refer to the monograph \cite{MR2228384} for an introduction. The interface is  described by a random field of height variables  $\phi=\{\phi(x): x\in \bbZ^d \}$ sampled from a Gibbs measure formally given by  $Z^{-1} \exp(-H(\varphi)) \, \prod_{x\in \bbZ^d} d \varphi(x)$ with formal Hamiltonian $H(\varphi)= \sum_{e\in E_d} V(\nabla \varphi(e))$ and potential  $V\in  C^2(\bbR; \bbR_+)$, which we suppose to be even and strictly convex.  For this model, decoupling inequalities similar to {\bf P3'} have been obtained   \cite[Theorem~2.1]{2016arXiv161202385R}, but the Gibbs measure considered therein has the disadvantage of not being clearly shift-invariant and ergodic with respect to lattice shifts, see the assumption (4.23) in \cite{2016arXiv161202385R}. In particular, condition {\bf P1} might not hold for this Gibbs measure, and thus we cannot apply Corollary~ \ref{cor:shape}, or Theorems~\ref{thm:hkeGauss} and \ref{thm:GKdecay_killing}. To avoid this problem, we now adapt the arguments from \cite{MR2228384} to obtain decoupling inequalities for a Gibbs measure which is shift-invariant and ergodic with respect to lattice shifts, see Lemma~\ref{lemma:uniquenesslimitGibbsmeasure} and Proposition~\ref{prop:decoupGL} below. This is actually useful whenever one intends to use the conditions {\bf P1}--{\bf P3} from \cite{DRS14, Sa17} for interface models as explained in Remark~ \ref{rk:P1P3GL} below.

Throughout this section, we consider the graph $\bbZ^d$ for $d\geq3,$ and fix a symmetric potential $V\in{C^2(\bbR)}$ such that 
\begin{equation}
\label{Velliptic}
    C_-\leq V''(t)\leq C_+ \quad \text{for all }t\in{\bbR},
\end{equation}
for some constants $0<C_-\leq C_+<\infty$. If $\Lambda\subset\bbZ^d$ is a finite subset of $\bbZ^d,$ we denote by $\bar{\Lambda}=\{x\in{\bbZ^d}:x\text{ has a neighbor in }\Lambda\},$ and by $\Lambda_*$ the set of edges between vertices in $\bar{\Lambda}.$ Let us denote by $\Gamma_n^d=(\bbZ/n\bbZ)^d$ the $d$-dimensional torus, and if $\Lambda=\Gamma_n^d,$ we take $\bar{\Lambda}=\Lambda$ and $\Lambda_*$ the set of edges of $\Lambda$. If $\Lambda\subset\bbZ^d$ is finite, or $\Lambda=\Gamma_n^d$ for some $n\in\bbN$, for any fixed  $m\geq0$ and $\xi\in{\bbR^{\bbZ^d}}$  we define the Hamiltonian
\begin{equation*}
    H_{\Lambda,m}^{\xi}(\phi)=\frac12\sum_{\substack{x,y\in{\bar{\Lambda}}:\\(x,y)\in{\Lambda_*}}}V(\phi_x-\phi_y)+\frac{m^2}{2}\sum_{x\in{\Lambda}}\phi_x^2 \quad \text{ for all }\phi\in{\bbR^{\bar{\Lambda}}}\text{ with }\phi_{|\bar{\Lambda}\setminus\Lambda}=\xi_{|\bar{\Lambda}\setminus\Lambda}.
\end{equation*}
Note that $H_{\Lambda,m}^{\xi}(\phi)$ does not depend on the boundary condition $\xi$ when $\Lambda$ is the torus $\Gamma_n^d.$ The associated Gibbs probability measure on $(\bbR^{\Lambda},\mathcal{B}(\bbR^{\Lambda}))$ is given by
\begin{equation*}
    \mu_{\Lambda,m}^{\xi}(\mathrm{d}{\phi})=\frac1{Z_\Lambda^\xi}\exp\big(-H_{\Lambda}^{\xi}(\phi)\big)\prod_{x\in{\Lambda}}\mathrm{d}\phi_x,
\end{equation*}
where $Z_\Lambda^\xi$ is a suitable normalizing constant. This is well-defined (i.e.\ $Z_\Lambda^{\xi}<\infty$) as long as either $\Lambda$ is a finite subset of $\bbZ^d$ and $m\geq0,$ or $\Lambda=\Gamma_n^d$ and $m>0,$ which we will assume from now on. Let us denote by $\mu_{\Lambda,m}^{\xi,G}$ the associated Gaussian free field, that is the Gibbs measure associated to the choice $V(t)=C_-t^2,$ with $C_-$ as in \eqref{Velliptic}. It satisfies the following exponential Brascamp-Lieb inequality: for all $\nu\in{\bbR^{\Lambda}}$,
\begin{equation}
\label{eq:BrascampLieb}
    \bbE_{\mu_{\Lambda,m}^{\xi}}\left[\exp\left(\Big| \big\langle\nu,\phi-\bbE_{\mu_{\Lambda,m}^{\xi}}(\phi)\big\rangle_{l^2(\Lambda)}\Big|\right)\right]\leq 2\exp\Big(\frac12\mathrm{Var}_{\mu_{\Lambda,m}^{\xi,G}}\left(\langle\nu,\phi\rangle_{l^2(\Lambda)}\right)\Big).
\end{equation}
When $m=0,$ \eqref{eq:BrascampLieb} was proved in  \cite[Lemma~2.9]{MR1759509}. The case $m>0$ can be proven as follows. Since  $\Lambda$ can be identified  with $\Lambda\times\{0\},$ $H_{\Lambda,m}^{\xi}$ corresponds to the massless Gibbs probability measure for the graph $\Lambda\times\{0,1\},$ with potential $V$ between $(x,i)$ and $(y,i),$ $x\sim y\in{\Lambda}$ and $i\in{\{0,1\}},$ and potential $m^2t^2/2$ between $(x,0)$ and $(x,1),$ $x\in{\Lambda},$ and with $0$ boundary condition on $\Lambda\times\{1\},$ as defined in \cite[equation~(2.3)]{MR1759509}. Therefore  we can adapt the proof of \cite[Lemma 2.9]{MR1759509} to obtain \eqref{eq:BrascampLieb} by bounding the second derivative of the potential by $C_-$ for edges in $\Lambda\times\{0\}$ or $\Lambda\times\{1\},$ and by $m^2$ for edges between $\Lambda\times\{0\}$ and $\Lambda\times\{1\}$. This extension \eqref{eq:BrascampLieb} of the classical Brascamp-Lieb inequality for exponential moments to the massive model has already been noted in item b) of the theorem on page~56 of \cite{MR1461951} (with a typo $m$ instead of $m^2$ on the right-hand side).

As we now explain, the Brascamp-Lieb inequality \eqref{eq:BrascampLieb} classically yields the existence of a $\phi$-Gibbs measure on ${\bbZ^d}$, i.e.\ a probability measure on $\bbR^{\bbZ^d}$ satisfying the DLR equations, see for instance  \cite[Definition~2.1]{MR2228384}, which is unique under the assumption of invariance and ergodicity, introduced in condition {\bf P1}.

\begin{lemma}
\label{lemma:uniquenesslimitGibbsmeasure}
 The weak limit
\begin{equation}
\label{eq:infinitevolumeGibbsmeasure}
    \mu^0:=\lim_{m\rightarrow0}\lim_{n\rightarrow\infty}\mu_{\Gamma_{n}^d,m}^{0}
\end{equation}
exists, and $\mu^0$ is the unique $\phi$-Gibbs measure on $\bbZ^d$ which is invariant and ergodic with respect to lattice shifts, and under which $\phi_0$ is square integrable and has zero mean.
\end{lemma}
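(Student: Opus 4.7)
The plan is to construct $\mu^0$ in two stages using tightness from the Brascamp--Lieb inequality~\eqref{eq:BrascampLieb}, verify its properties, and then establish uniqueness via the Funaki--Spohn classification of $\nabla\phi$-Gibbs measures.

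\textbf{Existence of the limit.} Fix $m>0$ first. Applying~\eqref{eq:BrascampLieb} with $\nu=s\delta_x$ and optimizing in $s$ gives a uniform-in-$n$ exponential moment bound on $\phi_x$ under $\mu^{0}_{\Gamma_n^d,m}$, since the Gaussian variance under $\mu^{0,G}_{\Gamma_n^d,m}$ is bounded by $C_-^{-1}(-\Delta_{\Gamma_n^d}+m^2/C_-)^{-1}(x,x)\leq C_-^{-1}(-\Delta_{\bbZ^d}+m^2/C_-)^{-1}(0,0)<\infty$ uniformly in $n$. This yields tightness of the finite-dimensional marginals; the resulting limit point $\mu^0_m$ is shift-invariant (as the torus shifts approximate lattice shifts) and is a $\phi$-Gibbs measure (the DLR specifications pass to the limit by continuity arguments using strict convexity of $V$). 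For the second limit, since $d\geq 3$ we have $(-\Delta_{\bbZ^d})^{-1}(0,0)<\infty$, so the exponential moment bound is uniform in $m$ as well, yielding tightness and existence of $\mu^0\ldef\lim_{m\to 0}\mu^0_m$.

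\textbf{Properties of $\mu^0$.} Shift invariance and the Gibbs property pass to the limit as above. The symmetry $\phi\mapsto -\phi$ of the Hamiltonians $H^0_{\Gamma_n^d,m}$ combined with the uniform bounds gives $\bbE_{\mu^0}[\phi_0]=0$ and $\bbE_{\mu^0}[\phi_0^2]<\infty$. For ergodicity, the Helffer--Sjöstrand representation (valid under~\eqref{Velliptic}) expresses covariances of local functions as expectations of a random walk Green function in a random environment with conductances in $[C_-,C_+]$. By the standard Gaussian upper heat kernel bounds for such uniformly elliptic random walks, covariances of local functions decay polynomially at infinity, which implies triviality of the tail $\sigma$-algebra along lattice shifts and hence ergodicity.

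\textbf{Uniqueness.} Let $\mu$ be any shift-invariant, ergodic $\phi$-Gibbs measure with $\bbE_\mu[\phi_0]=0$ and $\bbE_\mu[\phi_0^2]<\infty$. Pushing forward under $\phi\mapsto\nabla\phi$ yields a shift-invariant ergodic $\nabla\phi$-Gibbs measure whose tilt vanishes, since $\bbE_\mu[\nabla\phi(e_i)]=\bbE_\mu[\phi_{e_i}-\phi_0]=0$ by shift invariance. By Funaki--Spohn's classification theorem for ergodic $\nabla\phi$-Gibbs measures with strictly convex potential (\cite{MR2228384}), the gradient laws of $\mu$ and $\mu^0$ coincide. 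Coupling $\mu$ and $\mu^0$ so that their gradient fields agree almost surely, the difference $Y\ldef \phi^{(1)}_0-\phi^{(2)}_0$ satisfies $\phi^{(1)}_x-\phi^{(2)}_x=Y$ for every $x$. Since both marginals are ergodic with zero mean and finite variance, ergodicity of the coupling (on its ergodic components) together with the identity $\bbE[Y]=0$ and the fact that $\phi^{(1)}_x$ and $\phi^{(2)}_x$ have the same marginal law forces $Y=0$ a.s., hence $\mu=\mu^0$.

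The main obstacle is the uniqueness step, and within it the final coupling argument: asserting that the $\bbZ$-valued (in fact $\bbR$-valued) invariant $Y$ must equal zero. The cleanest route uses a strengthening via the Helffer--Sjöstrand representation, which shows that any shift-invariant $\phi$-Gibbs measure with the stated integrability is determined by its gradient law together with the value of $\bbE[\phi_0]$; the coupling argument above gives a concrete realisation of this principle. Combined with the Funaki--Spohn classification, this completes the uniqueness.
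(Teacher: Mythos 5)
Your proof plan shares the same skeleton as the paper's (Brascamp--Lieb tightness for existence; Funaki--Spohn classification of gradient Gibbs measures for uniqueness), but diverges on two substantial points, and one of these divergences hides a genuine gap.

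The gap concerns the notion of ergodicity. The lemma's statement ``ergodic with respect to lattice shifts'' refers to the notion introduced in condition \textbf{P1}: for \emph{each individual} $x\in\bbZ^d$, every $\tau_x$-invariant event has $\mu^0$-probability $0$ or $1$. This is strictly stronger than the usual notion (triviality of the $\sigma$-algebra of events invariant under \emph{all} shifts simultaneously) that underlies the Funaki--Spohn classification in \cite{MR1463032} and also Georgii's ergodic-decomposition machinery. Your proposal invokes Funaki--Spohn's classification as a black box and then argues ergodicity of $\mu^0$ via decay of correlations, but at no point distinguishes the two notions. The paper devotes the bulk of its proof to exactly this: introducing ``$x$-directional ergodicity'', showing that \cite[Theorem 3.1]{MR1463032} and the extremal decomposition results of \cite{MR2807681} can be adapted direction by direction, transferring from $\nabla\phi$- to $\phi$-Gibbs measures via \cite[Theorems 4.13 and 9.10]{MR2228384}, and finally concluding $\mu_x=\mu^0$ for every $x$. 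Without addressing this, your uniqueness statement proves the wrong thing (uniqueness within the weaker ergodic class, plus ergodicity of $\mu^0$ in a sense you do not pin down), and the sub-subsequence argument for the existence of the double limit in \eqref{eq:infinitevolumeGibbsmeasure} is also left dangling.

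Two further, smaller issues. First, your ergodicity argument via the Helffer--Sj\"ostrand representation and polynomial decay of correlations is a genuinely different route from the paper's indirect approach (which obtains ergodicity as a byproduct of uniqueness plus ergodic decomposition); it is plausible, and would in fact deliver the required directional ergodicity, but it requires justifying the HS representation for the infinite-volume measure $\mu^0$ and then a rigorous mixing-implies-ergodicity argument in each coordinate direction, none of which is standard out of the box. Second, your closing claim that ``any shift-invariant $\phi$-Gibbs measure with the stated integrability is determined by its gradient law together with $\bbE[\phi_0]$'' is false without ergodicity: if $\mu$ is such a measure and $\rho$ is any centered law on $\bbR$, then $\int \big(\mu\circ T_c^{-1}\big)\,\rho(dc)$ (with $T_c\phi=\phi+c$) is again a shift-invariant $\phi$-Gibbs measure with the same gradient law and zero mean. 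Your coupling argument that $Y=0$ can be rescued via ergodic decomposition and extremality of $\mu,\mu^0$ (showing that on a.e.\ ergodic component the marginals are exactly $\mu$ and $\mu^0$, hence $\mu=T_{c}\mu^0$ for a constant $c$ which must then vanish by the mean-zero normalization), but as stated --- resting on ``ergodicity of the coupling (on its ergodic components)'' --- it does not close.
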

\begin{proof}
One can use a tightness argument based on the Brascamp-Lieb inequality \eqref{eq:BrascampLieb} to show that there exist a sequence $(n_k)_{k\geq 0}$ increasing to infinity, and a sequence $(m_l)_{l\geq 0}$ decreasing to zero, such that the weak limit
\begin{equation}
\label{eq:infinitevolumeGibbsmeasureseq}
    \mu^0:=\lim_{l\rightarrow\infty}\lim_{k\rightarrow\infty}\mu_{\Gamma_{n_k}^d,m_l}^{0}\text{ exists.}
\end{equation}
This was first indicated in \cite[Remark~4.6]{MR2228384}, and we refer to the proof of \cite[Theorem~5.6]{AT21} for a more detailed analysis. It is easy to see that $\mu^0$ is a $\phi$-Gibbs measure on $\bbZ^d$ and that it is invariant with respect to lattice shifts. Moreover, note that for each $m>0$, $\text{Var}_{\mu_{\Gamma_n^d,m}^{0,G}}(\phi_0)$ is the Green's function associated to the random walk on $\Gamma_n^d$ with weights $C_-$ and killing measure $m^2$, and is thus bounded uniformly in $n$ by the reciprocal of the probability that the random walk on $\bbZ^d$ with weights $C_-$ and killing measure $m^2$ is immediately killed, which is finite. Therefore, since $\phi_0$ is centered under $\mu^0_{\Gamma_n^d,m},$ one can use \eqref{eq:BrascampLieb} to prove that, for each $l\in{\bbN}$ and $\nu>0,$ $\exp(\nu\phi_0)$ is bounded in $L^2$ under $(\mu^0_{\Gamma_n^d,m_l})_{n\in{\bbN}}.$ In particular, under some other probability space, a sequence of random variables with the same law as $\exp(\nu\phi_0)$ under $(\mu^0_{\Gamma_{n_k}^d,m_l})_{k\in{\bbN}}$ converges in $L^1$ to a random variable with the same law as $\exp(\nu\phi_0)$ under $\mu^0_{m_l},$ where $\mu^0_{m_l}$ is the limit of $\mu_{\Gamma_{n_k}^d,m_l}^{0}$ as $k\rightarrow\infty.$ Taking the limit, we thus deduce that \eqref{eq:BrascampLieb} for $\nu\delta_0$ still holds for $\mu^0_{m_l}.$ Since $\text{Var}_{\mu_{m_l}^{0,G}}(\phi_0)$ is uniformly bounded in $l$ by the Green's function associated with the random walk on $\bbZ^d$ with weights $C_-$ and zero killing measure, we can similarly deduce that $\phi_0$ has finite exponential moments under $\mu^0.$ In particular, $\phi_0$ is square integrable and has mean zero under $\mu^0$ by symmetry. 

In order to prove uniqueness and ergodicity, let us first prove the uniqueness and existence of ergodic $\nabla\phi$-Gibbs measure on $\bbZ^d,$ see e.g.\ \cite[Definition~2.2]{MR2228384}.
We proceed similarly to \cite{MR1463032} where a weaker (and more common) notion of an  ergodic measure $\mu$ than the one in condition {\bf P1} is used.  More precisely,  that notion of ergodicity requires that $\mu(A)\in{\{0,1\}}$ for any $A\in{\cF}$ satisfying $\tau_x(A)=A$ for \emph{all} $x\in{\bbZ^d}$.  Nevertheless, as we now explain, the arguments in  \cite{MR1463032} can  be adapted to our setting. For each $x\in{\bbZ^d}$, we say that a probability measure $\mu$ is $x$-directionally ergodic if $\mu(A)\in{\{0,1\}}$ for any $A\in{\cF}$ such that $\tau_x(A)=A,$ and that $\mu$ is ergodic if $\mu$ is $x$-directionally ergodic for all $x\in{\bbZ^d}$ (which corresponds to the definition of ergodicity  in {\bf P1}). Then, by  \cite[Theorem~3.1]{MR1463032} there exists at most one square integrable $\nabla\phi$-Gibbs measure $\mu$ on $E_d$ with zero mean and such that $\mu(A)\in{\{0,1\}}$ for any $A\in{\cF}$ satisfying $\tau_x(A)=A$ for all $x\in{\bbZ^d}.$ Therefore, for each $x\in{\bbZ^d},$ there is also at most one square integrable $x$-directionally ergodic $\nabla\phi$-Gibbs measure on $E_d$ with zero mean.

We now turn to the existence of an ergodic $\nabla\phi$-Gibbs measure. For each $x\in{\bbZ^d}$,   \cite[Proposition~14.9]{MR2807681} remains valid (see the comment below its proof) when the $\sigma$-algebra of invariant sets is replaced by the larger set of events that are only $\tau_x$-invariant. In fact, note that $\{\tau_{kx},\,k\in{\bbN}\}$ is an infinite subgroup of $\{\tau_{y},\,y\in{\bbZ^d}\}$, and any $\tau_x$-invariant event is $\tau_{kx}$-invariant for all $k\in{\bbN}$. In particular,  \cite[Theorems~14.15 and 14.17]{MR2807681} remain valid when replacing ergodic by $x$-directionally ergodic, and thus an $x$-directionally ergodic $\nabla\phi$-Gibbs measure exists as an extremal element of the convex set of $\tau_x$-invariant $\nabla\phi$-Gibbs measures. Moreover, for each $x\in{\bbZ^d}$, one can use an $x$-directionally ergodic decomposition of $\tau_x$-invariant Gibbs measure similarly as below equation (3.5) in \cite{MR1463032} to prove that there exists a square integrable $x$-directionally ergodic  $\nabla\phi$-Gibbs measure with zero mean, and we proved that such measures are unique.

Following the proof of  \cite[Theorem~9.10]{MR2228384}, which still works since $x$-directional ergodicity is stronger than the notion of ergodicity used in \cite{MR2228384}, we deduce that there exists at most one square integrable $x$-directionally ergodic $\phi$-Gibbs measure on $\bbZ^d$ with zero mean, and we denote it by $\mu_x$ when it exists. Thus, in view of   \cite[Theorem~4.13]{MR2228384}, one can follow the same arguments as below equation (3.5) in \cite{MR1463032} but for $x$-directional ergodicity and $\phi$-Gibbs measures, to show that all square-integrable $\tau_x$-invariant $\phi$-Gibbs measures with zero mean can be uniquely decomposed via square integrable $x$-directionally ergodic  $\phi$-Gibbs measure with zero mean, and thus either there are no such $\tau_x$-invariant measures, or they are all equal to $\mu_x$. Since the limiting measure
 $\mu^0$ from \eqref{eq:infinitevolumeGibbsmeasure} is a square-integrable $\tau_x$-invariant $\phi$-Gibbs measure with zero mean, we conclude that $\mu_x=\mu^0$ for all $x\in{\bbZ^d}$. The limit in \eqref{eq:infinitevolumeGibbsmeasureseq} does not depend on the choice of the subsequences $(n_k)$ and $(m_l)$ by uniqueness, and we obtain \eqref{eq:infinitevolumeGibbsmeasure} by a sub-subsequence argument.
\end{proof}
%\notes{Please check this proof carefully. It seems to not be very robust, and any small wrong step  would probably be hard to replace by a correct one. I wonder why nobody considered our notion of ergodicity before since it seems to hold and is stronger, but I guess they just never needed it.

%The reason I say our result is stronger, is that we proved uniqueness and existence for $\tau_x$-invariance for each $x\in{\bbZ^d}$. In particular, if  for some $x\in{\bbZ^d},$ one finds a $\phi$-Gibbs measure with finite variance and zero-mean which is $\tau_x$-invariant, then it must be equal to $\mu^0,$ and in particular it is actually directionally ergodic in direction $y$ for all $y\in{\bbZ^d}.$ For instance if you consider the set $\Gamma'_n=(\bbZ/n\bbZ)\times\{0,\dots,n-1\}^{d-1},$ you can define a $\phi$-Gibbs measure on $\Gamma'_n$ with mass $m$ and $0$-boundary condition on the boundary of $\{0,\dots,n-1\}^{d-1}.$ Then the limit of this measure as $n\rightarrow\infty$ and $m\rightarrow0$ is invariant in the first direction, and thus just equal to $\mu^0$ by uniqueness. This is thus a much stronger uniqueness statement than the usual one. If you think there could be some applications of that, we can state it additionally in the theorem if we want, or as a corollary.}

We now turn to the proof of the decoupling inequality, which follows from a modification of the ideas from \cite{2016arXiv161202385R}. For $S_1,S_2\subset\bbZ^d$ disjoint we define %For all $M>0,$ let us define the event
%\begin{equation}
%\label{eq:defBMS2}
%    B_{M,S_2}=\left\{|\phi_x|\leq M\text{ for all }x\in{S_2}\right\}
%\end{equation}
%and
\begin{equation}
\label{eq:xiS1S2}
    \Xi(S_1,S_2)=\sup_{x\in{S_1}}\frac{\sup_{y\in{S_2}}g(x,y)}{\inf_{y\in{S_2}}g(x,y)}P_x(H_{S_2}<\infty).
\end{equation}
Here, $g(\cdot,\cdot)$ denotes the Green's function associated with the simple random walk $((X_t)_{t\geq 0}, (P_x)_{x\in \bbZ^d})$ on $\bbZ^d$, and  $H_{S}\ldef\inf\{t\geq0:\,X_t\in{S}\}$  the first hitting time of  any $S\subset\bbZ^d$ with the convention $\inf\varnothing\ldef \infty$. 

\begin{prop}
\label{prop:decoupGL}
There exist constants $c_{36},c_{37}>0$ such that for any disjoint $S_1,S_2\subset\bbZ^d$, any increasing $f_i:\bbZ^d\rightarrow[0,1]$ supported on $S_i$, $i\in{\{1,2\}}$, and all $\eps\in{(0,1]}$,
\begin{equation}
\label{eq:decoupineqGL}
    \bbE_{\mu^0}\left[f_1f_2\right]\leq \bbE_{\mu^0}\left[f_1(\phi+\eps)\right]\bbE_{\mu^0}\left[f_2(\phi+\eps)\right]+c_{36} |S_2| \exp\left(-c_{37}\eps^2\Xi(S_1,S_2)^{-2}\right).
\end{equation}
\end{prop}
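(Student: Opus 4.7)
The approach will adapt the decoupling strategy of \cite[Theorem~2.1]{2016arXiv161202385R} for the $\nabla\phi$ interface model to the shift-invariant infinite-volume measure $\mu^0$ from Lemma~\ref{lemma:uniquenesslimitGibbsmeasure}. Since the $f_i$ are bounded and depend on finitely many coordinates, the first step is to prove \eqref{eq:decoupineqGL} for the finite-volume torus measures $\mu:=\mu_{\Gamma_n^d,m}^0$ with constants independent of $n\in\bbN$ and $m\in(0,1]$, and then to pass to the limit via \eqref{eq:infinitevolumeGibbsmeasure}. Denote by $\cF_1$ the $\sigma$-algebra generated by $(\phi_x)_{x\in S_1}$.

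The central technical tool will be the Helffer--Sj\"ostrand (Witten Laplacian) representation of the $\nabla\phi$ interface model, which expresses covariances and conditional expectations under $\mu$ as averages over a random walk on $\bbZ^d$ in a random environment with conductances $V''(\nabla\phi(e))$ along each edge. By the uniform ellipticity \eqref{Velliptic}, these conductances lie in $[C_-,C_+]$, so this random walk is comparable, up to multiplicative constants depending only on $C_\pm$, to the simple random walk underlying $\Xi(S_1,S_2)$ in \eqref{eq:xiS1S2}. In particular, the conditional means $M_y:=\bbE_\mu[\phi_y\mid\cF_1]$ for $y\in S_2$ are Lipschitz functions of $\phi|_{S_1}$ whose derivatives are controlled by simple-random-walk Green-function ratios and hitting probabilities that together form $\Xi(S_1,S_2)$.

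Combining this Lipschitz control with the exponential Brascamp--Lieb inequality \eqref{eq:BrascampLieb} will yield Gaussian-type concentration: for each $y\in S_2$,
\begin{equation*}
\mu\big(|M_y|\geq \eps\big)\;\leq\; c\exp\!\big(\!-c'\eps^{2}\,\Xi(S_1,S_2)^{-2}\big),
\end{equation*}
and a union bound then produces an event $A\in\cF_1$ with $\mu(A^c)\leq c_{39}|S_2|\exp(-c_{40}\eps^2\Xi(S_1,S_2)^{-2})$ on which $\max_{y\in S_2}|M_y|\leq \eps$. This accounts for the error term in \eqref{eq:decoupineqGL}.

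The decoupling will be concluded by combining this bound with a comparison between the conditional and unconditional laws of $\phi|_{S_2}$. On $A$, the Helffer--Sj\"ostrand framework applied to the full conditional law (not merely its mean) allows one to bound $\bbE_\mu[f_2\mid\cF_1]$ by $\bbE_\mu[f_2(\phi+\eps)]$ for increasing $f_2$; integrating against $f_1$ and using monotonicity in the form $f_1\leq f_1(\phi+\eps)$ yields the right-hand side of \eqref{eq:decoupineqGL}. The \emph{main obstacle} lies precisely in this comparison between conditional and unconditional laws: unlike the Gaussian case, where the conditional is an exact translate of the unconditional, for general convex $V$ the two differ also in their covariance structure, so the interpolation required to translate the bound on $|M_y|$ into the desired shift of $f_2$ --- while producing precisely the exponent $\Xi(S_1,S_2)^{-2}$ --- is the technical heart of the argument, and is where the FKG property of $\mu$ (available for convex $V$) together with the uniform ellipticity \eqref{Velliptic} must be combined most delicately.
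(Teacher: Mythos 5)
Your overall scaffolding (reduce to finite-volume torus measures, Helffer--Sj\"ostrand representation, Brascamp--Lieb for the error probability, pass to the limit via \eqref{eq:infinitevolumeGibbsmeasure}) matches the paper's, but the central comparison step is set up in a way that does not work, and this is not a detail that can be made rigorous by ``combining FKG and ellipticity delicately''.

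The key problem is that you compare the \emph{conditional} law of $\phi|_{S_2}$ given $\cF_1$ with the \emph{unconditional} law of $\phi|_{S_2}$ shifted by $\eps$. Even in the Gaussian case this stochastic domination fails: the conditional law is $\cN(M,\hat\Sigma)$ with $\hat\Sigma$ strictly smaller than the unconditional covariance $\Sigma$, and for one-dimensional marginals $\cN(m,\si^2)\not\preceq\cN(m+\eps,\ta^2)$ whenever $\si<\ta$ (taking $f_2=\1_{[a,\infty)}$ with $a$ far in the left tail gives a counterexample). The paper circumvents this precisely by introducing an independent copy $\tilde\phi$ and proving, on the good event, the inequality \eqref{eq:proofdecoupginzurg2} between \emph{two conditional} expectations $\bbE[f_1(\phi)\mid\phi|_{S_2}]\leq\bbE[f_1(\tilde\phi+\eps)\mid\tilde\phi|_{S_2}]$. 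Both sides condition on the same set $S_2$, so the covariance structures match, and the monotone coupling underlying \cite[Theorem~2.1]{2016arXiv161202385R} (Langevin/HS dynamics with ordered boundary data) applies. The shift $\eps$ is then recovered by integrating over the independent $\tilde\phi|_{S_2}$. This independent-copy device is not cosmetic; without it, the domination you need is false.

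Two further points. First, your good event is $\{\max_{y\in S_2}|M_y|\leq\eps\}$ with $M_y=\bbE_\mu[\phi_y\mid\cF_1]$, a \emph{nonlinear} functional of $\phi|_{S_1}$; the Brascamp--Lieb inequality \eqref{eq:BrascampLieb} only controls exponential moments of linear functionals $\langle\nu,\phi\rangle$, so you cannot invoke it directly. The paper's event $G_{\Gamma_n^d,m,M}$ is engineered so that, by convexity of the HS walk's exit distribution on $S_2$, membership in $G^c$ forces $|\phi_z|>M/2$ for some $z\in S_2$ (or $|\tilde\phi_z|>M/2$), and these \emph{are} linear functionals, handled by \eqref{eq:BrascampLieb}; the dependence on $\Xi(S_1,S_2)$ enters only through the choice $M=\eps\big(1+(\Sigma_{\Gamma_n^d}^m)^{-1}\big)$, not through a refined variance estimate. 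Second, you condition on $\cF_1=\si(\phi|_{S_1})$ whereas the paper conditions on $\phi|_{S_2}$ (with $f_1$ supported on $S_1$); since $\Xi(S_1,S_2)$ is not symmetric in its arguments and is built from the walk started in $S_1$ and hitting $S_2$, your orientation would naturally produce a bound in terms of $\Xi(S_2,S_1)$, not the $\Xi(S_1,S_2)$ appearing in \eqref{eq:decoupineqGL}.
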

\begin{proof}
For each $\Lambda\subset\Gamma_n^d$, $x\in{\Lambda},$ $\psi\in{\bbR^{\Lambda}},$ $\xi\in{\bbR^{\Gamma_n^d\setminus\Lambda}}$ and $m>0$ we denote by $P_{x,\psi,m}^{\Lambda,\xi}$ the transition kernel associated with the time-inhomogenous Markov process on $\Gamma_n^d$ starting at $x$ at time $t=0$ with generator given by \eqref{eq:defL_intro} but with time-dependent jump rates $a_t^{\omega}(x,y)=V''(\phi_x(t)-\phi_y(t)),$ $h=1$ and $\kappa^{\omega}(x)=m^2$ for all $x\in\Gamma_n^d$, where 
$\phi_t=(\phi_t(x), x\in \Gamma_n^d)$ is the solution to the Langevin equation
\begin{align*}
\begin{cases}
\phi_t(x)=\psi(x)-\int_0^t  m^2 \phi_s(x)+\sum_{y:|x-y|=1}V'(\phi_s(x)-\phi_s(y)) \, ds + \sqrt{2} \, w_t(x),\!\!\!   & x\in \Lambda ,\\
\phi_t(x)= \xi(x),  & x\not\in \Lambda,
\end{cases}
\end{align*}
with $(w_t(x))_{t\geq 0}$, $x\in \Lambda$, being independent Brownian motions. 
This transition kernel then corresponds to the Markov process $X$ appearing in the Helffer-Sj\"ostrand representation with mass $m$ (see e.g.\  \cite[Theorem~4.2]{MR2228384}), which can easily be deduced from the discussion below \eqref{eq:BrascampLieb}. Let us also define a field $\tilde{\phi},$ independent and with the same law as $\phi$ under $\mu_{\Gamma_n^d,m}^0$, and
\begin{equation}
\label{eq:proofdecoupginzurg}
    \Sigma_{\Gamma_n^d}^m(S_1,S_2)\ldef \sup_{x\in{S_1}}\sup_{\xi\in{\bbR^{S_2}}}\sup_{\psi\in{\bbR^{\Gamma_n^d\setminus S_2}}}\frac{P_{x,\psi,m}^{\Gamma_n^d\setminus S_2,\xi}(H_{S_2}<\infty)}{1-P_{x,\psi,m}^{\Gamma_n^d\setminus S_2,\xi}(H_{S_2}<\infty)},
\end{equation}
where we identify $\Gamma_n^d$ with $\Lambda_n:=\{\lfloor(n-1)/2\rfloor,\dots,\lceil(n-1)/2\rceil\}^d\subset\bbZ^d$ and assume from now on that $n$ is large enough so that $S_1,S_2\subset\Gamma_n^d$. Then one can easily see that the statement of  \cite[Theorem~2.1]{2016arXiv161202385R} still holds, after some straightforward adjustments in the proof, when the function $\indicator_{A^h}$ with $A^h$ only depending on $\{ \indicator_{\{\phi_x\geq h\}}, x\in{S_1}\}$ is replaced by $f_1(\phi-h)$,  the set $\Lambda \subset \bbZ^d$ is replaced  by the torus $\Gamma_n^d$, and  a mass $m>0$ is added. Thus, conditionally on $(\phi_x,\tilde{\phi}_x)_{x\in{S_2}}$, on the event
\begin{equation*}
    G_{\Gamma_n^d,m,M}:=\bigcap_{\xi\in{\bbQ^{S_2}}}\bigcap_{(x,\psi)\in{S_1\times\bbQ^{\Gamma_n^d\setminus S_2}}}\!\bigg\{E^{\Gamma_n^d\setminus S_2,\xi}_{x,\psi,m}\left[\phi_{H_{S_2}}-\tilde{\phi}_{H_{S_2}}\,|\,H_{S_2}<\infty\right]\leq M \bigg\},
\end{equation*}
with $M\leq \eps\big(1+(\Sigma_{\Gamma_n^d}^m(S_1,S_2))^{-1}\big)$, we have
\begin{equation}
\label{eq:proofdecoupginzurg2}
    E_{\mu_{\Gamma_n^d,m}}^0\big[f_1(\phi)\,\big| (\phi_x)_{x\in{S_2}}\big]\leq E_{\mu_{\Gamma_n^d,m}}^0\big[f_1(\tilde{\phi}+\eps)\,\big|(\tilde{\phi}_x)_{x\in{S_2}}\big].
\end{equation}
 Using \eqref{eq:proofdecoupginzurg2}, the inequality
\begin{equation*}
\bbE_{ \mu^0_{\Gamma_n^d,m}}[f_1f_2]\leq \bbE_{ \mu^0_{\Gamma_n^d,m}}\Big[\bbE_{ \mu^0_{\Gamma_n^d,m}}\big[f_1(\phi)\,\big|\,(\phi_x)_{x\in{S_2}}\big]f_2(\phi)\indicator_{G_{\Gamma_n^d,m,M}}\Big]+\mu^0_{\Gamma_n^d,m}\left(G_{\Gamma_n^d,m,M}^c\right),
\end{equation*}
the independence of $\tilde{\phi}$ and $\phi,$ and \eqref{eq:infinitevolumeGibbsmeasure}, we deduce that in order to obtain \eqref{eq:decoupineqGL} it suffices to show that 
\begin{equation}
\label{eq:proofdecoupginzurg3}
    \liminf_{m\rightarrow0}\liminf_{n\rightarrow\infty}\mu^0_{\Gamma_n^d,m}\Big(G_{\Gamma_n^d,m,\eps(1+(\Sigma_{\Gamma_n^d}^m(S_1,S_2))^{-1})}^c\Big) \leq  c  \, |S_2| \, \exp\left(-c'\eps^2\Xi(S_1,S_2)^{-2}\right).
\end{equation}
First,  for all $\nu>0$ and $M>0$, by a union bound, Markov's inequality and \eqref{eq:BrascampLieb},
\begin{align*}
 &   \mu^0_{\Gamma_n^d,m}\big(G_{\Gamma_n^d,m,M}^c\big)
    \leq 2\mu^0_{\Gamma_n^d,m}\Big(\exists\, x\in{S_2}:\,|\phi_x|>\frac{M}2\Big) \\
  & \mspace{36mu}  \leq  2\sum_{x\in{S_2}}\bbE_{\mu^{0}_{\Gamma_n^d,m}}\Big[\exp\big(\nu |\phi_x| \big)\Big] \, \exp(-\nu M/2)
 \leq 4|S_2|\exp\left(\frac{\nu^2g(0,0)}{2C_-}-\frac{\nu M}{2}\right).
\end{align*}
Taking $\nu=cM$ for a small enough constant $c$ we thus obtain for all $M>0$,
\begin{equation}
    \label{eq:proofdecoupginzurg4}
    \mu^0_{\Gamma_n^d,m}\big(G_{\Gamma_n^d,m,M}^c\big)\leq 2|S_2|\exp(-c'M^2).
\end{equation}
Next we bound $\Sigma_{\Gamma_n^d}^m(S_1,S_2)$. Define $\partial\Gamma_n^d$ the internal boundary of $\Gamma_n^d$ seen as the subset $\Lambda_n$ of $\bbZ^d$. Then
\begin{equation}
\label{eq:proofdecoupginzurg5}
    P_{x,\psi,m}^{\Gamma_n^d\setminus S_2,\xi}(H_{S_2}<\infty)\leq P_{x,\psi,m}^{\Gamma_n^d\setminus S_2,\xi}(H_{S_2}<H_{\partial\Gamma_n^d})+P_{x,\psi,m}^{\Gamma_n^d\setminus S_2,\xi}(H_{\partial\Gamma_n^d}<\infty).
\end{equation}
Since before time $H_{\partial\Gamma_n^d}$ the Markov chain on $\Gamma_n^d$ has the same law as the Markov chain on $\Lambda_n$ before $H_{\partial\Lambda_n},$ it follows from in \cite[Corollary~3.2]{2016arXiv161202385R} that there exists a constant $c=c(d)$ such that, uniformly in $m>0,$ $x\in{S_1},$ $n\in{\bbN},$ $\psi\in{\bbR^{\Gamma_n^d\setminus S_2}}$  and $\xi\in{\bbR^{S_2}}$, 
\begin{equation}
\label{eq:proofdecoupginzurg6}
    P_{x,\psi,m}^{\Gamma_n^d\setminus S_2,\xi}(H_{S_2}<H_{\partial\Gamma_n^d})\leq c \, \Xi(S_1,S_2).
\end{equation}
Moreover, when started at any $x\in{S_1}$, if $H_{\partial\Gamma_n^d}<\infty$, the random walk $X$ must have survived for at least $n/4$ steps for $n$ large enough. Hence, by \eqref{Velliptic}, for each $m>0$ and uniformly in $x,$ $n,$ $\psi$ and $\xi,$
\begin{equation}
\label{eq:proofdecoupginzurg7}
    P_{x,\psi,m}^{\Gamma_n^d\setminus S_2,\xi}(H_{\partial\Gamma_n^d}<\infty)\leq \bigg(1-\frac{m}{m+dC_+}\bigg)^{\! n/4}\tend{n}{\infty}0.
\end{equation}
Taking $M= \eps\big(1+(\Sigma_{\Gamma_n^d}^m(S_1,S_2))^{-1}\big)$, we combine  \eqref{eq:proofdecoupginzurg5}, \eqref{eq:proofdecoupginzurg6} and \eqref{eq:proofdecoupginzurg7} to obtain that, uniformly in $m>0$,
\begin{equation*}
    \limsup_{n\rightarrow\infty}\Sigma_{\Gamma_n^d}^{m}(S_1,S_2)\leq c \, \Xi(S_1,S_2),
\end{equation*}
provided $\Xi(S_1,S_2)$ is small enough so that, $\lim\limits_{n\rightarrow\infty}P_{x,\psi,m}^{\Gamma_n^d\setminus S_2,\xi}(H_{S_2}<\infty)\leq 1/2$ uniformly in $x$, $\psi,$ $\xi$ and $m.$ Note that $\Xi(S_1,S_2)$ can be assumed to be small without loss of generality, up to changing the constants $c_{36}$ and $c_{37}$ in \eqref{eq:decoupineqGL}. Combining this with \eqref{eq:infinitevolumeGibbsmeasure} and \eqref{eq:proofdecoupginzurg4}, \eqref{eq:proofdecoupginzurg3} follows readily.
\end{proof}

Note that the strategy used in the proof of Proposition~\ref{prop:decoupGL} could also be applied if, instead of defining $\mu^0$ as in \eqref{eq:infinitevolumeGibbsmeasure}, we simply take the limit as $n\rightarrow\infty$ of $\mu^0_{\Lambda_n,0}$ for a sequence $(\Lambda_n)_{n\in{\bbN}}$ of finite sets increasing to $\bbZ^d$. In this context Proposition~\ref{prop:decoupGL} can be seen as a simpler and stronger version of  \cite[Theorem~2.1]{2016arXiv161202385R}. In order to deduce condition {\bf P3'} from Proposition \ref{prop:decoupGL}, we now only need to bound $\Xi(S_1,S_2)$ appropriately which requires $d\geq 4$. 

\begin{prop}
\label{app:GLverifiesall}
Suppose $d\geq4$,  \eqref{Velliptic} holds, and  $(\omega_x)_{x\in{\bbZ^d}}$ has the same law under $\bbP^u$ as $(\phi_x+u)_{x\in{\bbZ^d}}$ under $\mu^0$ for all $u\in\bbR$, and take $\omega_e=0$ for all $e\in{E_d}$. Then for all open intervals $I\subset\bbR$ and $\chi_P\in{(0,d-2]},$ there exist constants $\xi_P>1,$ $C_P,R_P,L_P<\infty,$ such that $(\bbP^u)_{u\in{I}}$ satisfies {\bf P1}, {\bf P2} and {\bf P3'}.
\end{prop}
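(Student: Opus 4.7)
The plan is to verify the three conditions {\bf P1}, {\bf P2}, {\bf P3'} in turn, relying on Lemma~\ref{lemma:uniquenesslimitGibbsmeasure} for {\bf P1}, elementary monotonicity in the shift parameter for {\bf P2}, and Proposition~\ref{prop:decoupGL} together with classical Green-function estimates for {\bf P3'}. Conditions {\bf P1} and {\bf P2} are essentially formal: since $(\phi_x+u)_{x\in\bbZ^d}$ is just a deterministic affine shift of $\phi$, it commutes with lattice translations, so invariance and ergodicity of $\mu^0$ established in Lemma~\ref{lemma:uniquenesslimitGibbsmeasure} transfer directly to each $\bbP^u$; and for $u\le u'$ and $f$ increasing, $f(\phi+u)\le f(\phi+u')$ pointwise yields $\bbE^u[f]\le\bbE^{u'}[f]$ (the hypothesis $\omega_e=0$ means the edge-variables contribute nothing).

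For the increasing half of {\bf P3'}, I would set $\eps=R^{-\chi_P}\in(0,1]$ (taking $R_P\ge 1$), apply Proposition~\ref{prop:decoupGL} to $g_i:=f_i(\,\cdot+\hat u)$, and use increasingness in the shift to dominate $\bbE_{\mu^0}[g_i(\phi+\eps)]=\bbE_{\mu^0}[f_i(\phi+\hat u+\eps)]$ by $\bbE_{\mu^0}[f_i(\phi+u)]=\bbE^u[f_i]$. For the decreasing half, I would exploit that $V$ is even, so $\mu^0$ is symmetric under $\phi\mapsto-\phi$; writing $g_i(\omega):=f_i(-\omega)$, which is increasing, gives $\bbE^u[f_1f_2]=\bbE_{\mu^0}[g_1(\phi-u)g_2(\phi-u)]$, and applying Proposition~\ref{prop:decoupGL} to $g_1,g_2$ with the same sprinkling $\eps$ produces the decreasing inequality after reversing the substitution. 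In both cases the residual task is to bound the error term $c_{39}|S_2|\exp(-c_{40}\eps^2\Xi(S_1,S_2)^{-2})$ by $\exp(-C_PL^{\xi_P})$.

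The heart of the proof is thus the estimation of $\Xi(S_1,S_2)$ defined in \eqref{eq:xiS1S2}. I would use $g(x,y)\asymp|x-y|^{2-d}$ on $\bbZ^d$ for $d\ge 3$, combined with the elementary last-exit bound $P_x(H_{S_2}<\infty)\le c|S_2|(RL)^{2-d}$ coming from $d(S_1,S_2)\ge RL$ and $|S_2|\le L^{\xi_P}$. To bound the ratio $\sup_y g/\inf_y g$ I would split on whether $|x_1-x_2|\le cL^{\xi_P}$ (so distances over $y\in S_2$ range in $[RL,\,CL^{\xi_P}]$, yielding ratio $\le C\,L^{(d-2)(\xi_P-1)}R^{-(d-2)}$) or $|x_1-x_2|\gg L^{\xi_P}$ (where all distances are comparable and the ratio is $O(1)$). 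The worst case is the first, giving
\[
\Xi(S_1,S_2)\;\le\; C\,R^{-2(d-2)}\,L^{\xi_P(d-1)-2(d-2)}.
\]

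The main obstacle is to choose $\xi_P>1$ so that the resulting condition $c_{39}|S_2|\exp(-c_{40}R^{-2\chi_P}\Xi^{-2})\le\exp(-C_PL^{\xi_P})$ reduces to a lower bound on $R^{2(d-2)-\chi_P}\,L^{2(d-2)-\xi_P(d-1/2)}$ whose exponents are both nonnegative and at least one is strictly positive. The $R$-exponent is $\ge d-2>0$ since $\chi_P\le d-2$, while the $L$-exponent forces the constraint $\xi_P<4(d-2)/(2d-1)$. This upper bound exceeds $1$ exactly when $2d>7$, i.e.\ $d\ge 4$, which is why the hypothesis on the dimension appears; picking any $\xi_P\in(1,\,4(d-2)/(2d-1))$ and then choosing $R_P,L_P$ large enough (depending on $\chi_P$ and on $I$ via the constraint $u-\eps\in I$) completes the verification of {\bf P3'}.
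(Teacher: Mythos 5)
Your proposal is correct and follows essentially the same route as the paper: \textbf{P1} from Lemma~\ref{lemma:uniquenesslimitGibbsmeasure}, \textbf{P2} by monotonicity in the shift, and \textbf{P3'} from Proposition~\ref{prop:decoupGL} with $\eps = R^{-\chi_P}$, the $\phi\mapsto-\phi$ symmetry for the decreasing half, the Green-function ratio and last-exit estimates on $\Xi(S_1,S_2)$, and the resulting constraint $\xi_P < 4(d-2)/(2d-1)$ which admits $\xi_P>1$ precisely when $d\geq 4$. The only cosmetic difference is that the paper sidesteps your case split on $|x_1-x_2|$ by bounding the ratio $\sup_y g/\inf_y g$ uniformly by $cL^{(\xi_P-1)(d-2)}$ (using $R\geq R_P=1$ to discard the $R$-dependence), yielding $R^{\chi_P}\Xi \leq cL^{-\xi_P/2}$ in one line; your displayed bound $\Xi\leq CR^{-2(d-2)}L^{\xi_P(d-1)-2(d-2)}$ holds only in the regime $R\lesssim L^{\xi_P-1}$, but since it is the worst case and the large-$R$ regime is strictly easier, the conclusion is unaffected.
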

\begin{proof}
Condition {\bf P1} follows from Lemma~\ref{lemma:uniquenesslimitGibbsmeasure} and condition {\bf P2}  clearly holds by definition. Fix an open interval $I\subset\bbR,$ $C_P=R_P=1,$ $\chi_P\in{(0,d-2]},$ and $S_1,$ $S_2$ as in {\bf P3'}, for constants $\xi_P>1$ and $L_P<\infty$ to be determined. Let us first bound the quantity $\Xi(S_1,S_2)$ from \eqref{eq:xiS1S2} . First note that since $c|x-y|^{2-d}\leq g(x,y)\leq c'|x-y|^{2-d}$ for all $x\neq y$ in $\bbZ^d,$
\begin{equation}
\label{eq:proofP3GL1}
    \sup_{x\in{S_1}}\frac{\sup_{y\in{S_2}}g(x,y)}{\inf_{y\in{S_2}}g(x,y)}\leq c\left(\frac{RL+4L^{\xi_P}}{RL}\right)^{d-2}\leq cL^{(\xi_P-1)(d-2)},
\end{equation}
Moreover by (1.57) in \cite{MR2932978} we have
\begin{equation}
\label{eq:proofP3GL3}
    \sup_{x\in{S_1}}P_x(H_{S_2}<\infty)\leq |S_2|\sup_{x\in{S_1}}\sup_{y\in{S_2}}g(x,y)\leq \frac{cL^{\xi_P}}{(RL)^{d-2}}.
\end{equation}
Therefore, combining \eqref{eq:xiS1S2}, \eqref{eq:proofP3GL1} and \eqref{eq:proofP3GL3}, we obtain
\begin{equation*}
     R^{\chi_P} \, \Xi(S_1,S_2)\leq cL^{(\xi_P-1)(d-1)-(d-3)}R^{\chi_P-(d-2)}\leq cL^{-\xi_P/2}
\end{equation*}
if $\xi_P\leq 2- 3/(d-1/2)$. We thus obtain \eqref{eq:P3''replacebyindependentinc} by \eqref{eq:decoupineqGL} with the choice $\varepsilon= R^{-\chi_P}$. The inequality \eqref{eq:P3''replacebyindependentdec} can easily be deduced by symmetry, that is considering $f_1(-\phi)$ and $f_2(-\phi),$ which are increasing when $f_1$ and $f_2$ are decreasing, and using the fact that $-\phi$ has the same law as $\phi$ under $\mu^0.$
\end{proof}

We can now deduce results similar to Corollary \ref{app:1stcorGFF} for the Ginzburg-Landau $\nabla\phi$ interface model. More precisely, let us define 
\begin{equation*}
    h^+(\mu^{0})\ldef \inf\big\{h\in{\bbR}:\,\mu^0(Q_L\leftrightarrow Q_{2L}^c\text{ in }\{x\in{\bbZ^d}:\,\phi_x\geq h\})=0\big\}.
\end{equation*}
Using a union bound and a similar reasoning as in the proof of  \cite[Theorem~4.4]{2016arXiv161202385R}, that is using Proposition~\ref{prop:decoupGL}  instead of \cite[Theorem~2.1]{2016arXiv161202385R},  we know that $h^+(\mu^{0})<\infty.$ Let $f:\bbR\mapsto[0,\infty)$ be a decreasing function and $h_f=\inf\{t\in\bbR:\,f(t)=0\},$ with the convention $\inf\varnothing=+\infty.$ Proceeding similarly as in Corollary~\ref{app:1stcorGFF} we obtain from Theorem~ \ref{The:main} and Proposition~\ref{app:GLverifiesall} that if $d\geq4,$ $h_f>h_*$, and $(t_x^{\omega})_{x\in{\bbZ^d}}$ has the same law under $\bbP^0$ as $(f(\phi_x))_{x\in{\bbZ^d}}$ under $\mu^0$, then
\begin{equation*}
    \bbP^0\left(d^\omega(0,nx)\leq C_{\text{FPP}}n\right) \leq \exp\left(-cn\right).
\end{equation*}
In particular, the time constant is positive if it exists. However, it is not clear if the time constant is equal to $0$ if $h_f<h^+(\mu^0)$ similarly as in \eqref{app:mu_f>0}, since the sharpness result from \cite{DuGoRoSe} is only proved for the Gaussian free field, and not the general Ginzburg-Landau $\nabla\phi$ model.

\begin{remark}
\label{rk:P1P3GL}
Following \cite[ Section~4]{2016arXiv161202385R}, in particular see the proofs of Theorems~4.8 and 4.9 therein, one can prove that condition {\bf P3} from \cite{DRS14, Sa17} also holds for the ergodic measure $\mu^0$ from \eqref{eq:infinitevolumeGibbsmeasure} for all $d\geq3.$ In particular, all the results that can be deduced from  conditions {\bf P1}--{\bf P3} from \cite{DRS14, Sa17}, see for instance  \cite[Theorems~4.8 and 4.9]{2016arXiv161202385R}, also hold for $\mu^0$. Note that in \cite{2016arXiv161202385R} the shift-invariance and ergodicity of the $\phi$-Gibbs measure needed to be additionally assumed, see (4.23) therein.
\end{remark}

Finally, in view of Proposition~\ref{app:GLverifiesall} and Remarks~\ref{rk:P1P3GL} and \ref{rem:P3cond}-(i), one can apply Theorems~\ref{thm:hkeGauss} and \ref{thm:GKdecay_killing} on RCMs similar to Corollary~\ref{cor:optidecay}  in dimension $d\geq 4$. This could be generalized to other functionals of the field similarly as in Remark~\ref{rk:finalGFF}-(i). Another possible generalization is to consider the family of non-convex potentials  in \cite[Section~5]{2016arXiv161202385R}.

\subsection{Random interlacements}
\label{sec:inter}
In this subsection, we consider random interlacements on the graph $\bbZ^d,$ $d\geq 3$, with unit weights and zero killing measure, as introduced in \cite{MR2680403}. It can be seen as the natural limit of the random walk on the torus of size $N$, run up to time $uN^d$, and is closely related to the Gaussian free field, see \cite{MR2892408}. We denote by $\cI^u$ the interlacements set at level $u$ under the probability $\bbP^I$ characterized by the identity
\begin{equation*}
	\bbP^I(\cI^u\cap K=\varnothing)=\exp(-u \, \mathrm{cap}(K)) \quad \text{for all finite }K\subset\bbZ^d,
\end{equation*}
where $\mathrm{cap}(K)$ is the discrete capacity of $K.$ The set $\cI^u$ can be seen as the trace on $\bbZ^d$ of a Poisson point process of simple random walks on $\bbZ^d.$ We refer to the monograph \cite{MR3308116} for an introduction. We denote by $(L_{x,u})_{x\in{\bbZ^d}}$ the associated field of occupation times, that is $L_{x,u}$ is the total time spent in $x$ by the random walks in the point process, when the walks wait an exponentially distributed time with parameter one before jumping at each step. We refer to equation (1.8) in \cite{MR2892408} for a precise definition. We are now going to use the soft local times technique from \cite{MR3420516} to prove that condition {\bf P3} is satisfied.

\begin{prop}
\label{prop:condforRIverified}
Assume that $(\omega_x)_{x\in{\bbZ^d}}$ has the same law under $\bbP^u$ as $(L_{x,u})_{x\in{\bbZ^d}}$ under $\bbP^I$  for all $u>0,$ and that $\omega_e=0$ for all $e\in{E_d}.$ Then for all bounded open intervals $I\subset(0,\infty),$ $\xi_P>1$ and $\chi_P\in{(0,\frac{d-2}2)},$ $\eps_P=1$ and $a_P=d-2,$ there exist constants $C_P,R_P,L_P<\infty$ such that $(\bbP^u)_{u\in{I}}$ satisfies conditions ${\bf P1}$-${\bf P3}$. 
\end{prop}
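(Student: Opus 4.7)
The proof splits naturally according to the three conditions. For \textbf{P1}, I would simply invoke known properties of the random interlacement process: the translation invariance of the Poisson point process of labelled trajectories underlying the construction in \cite{MR3308116} gives shift invariance of the joint law of the occupation times $(L_{x,u})_{x\in\bbZ^d}$ for each $u>0$, while the ergodicity with respect to lattice shifts is classical, see for instance the original paper \cite{MR2680403}. For \textbf{P2}, I would use the canonical monotone coupling of $(\cI^u)_{u>0}$: in the Poisson point process construction, augmenting $u$ only adds trajectories, and correspondingly $u \mapsto L_{x,u}$ is pointwise non-decreasing $\bbP^I$-a.s., so $\bbE^u[f]$ is non-decreasing in $u$ for every increasing functional $f$ of $\omega$.

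The main content is \textbf{P3}, which I would obtain by adapting the soft local times technique of \cite{MR3420516}. Given two boxes $\bar{Q}_i \ldef \bar{Q}(x_i, L^{\xi_P})$, $i=1,2$, with $d(\bar{Q}_1, \bar{Q}_2) \geq RL$, I would extend the probability space by auxiliary independent Poisson processes in order to construct, for increasing events, a coupling between the restriction of $(L_{x,\hat{u}})_{x \in \bar{Q}_1 \cup \bar{Q}_2}$ under $\bbP^{\hat u}$ and two independent copies $(L^{(i)}_{x,u})_{x \in \bar{Q}_i}$, $i=1,2$, of the occupation times under $\bbP^{u}$, such that on a good event $B$ the former is pointwise dominated by the latter on each $\bar{Q}_i$; a symmetric construction with the roles of $u$ and $\hat u$ exchanged would give the decreasing counterpart. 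The sprinkling parameter would be chosen as $\ve = R^{-\chi_P}$, and the error probability $\bbP^u(B^c)$ controlled through the Gaussian deviation estimates for soft local times at scale $\ve$, giving a bound of order $\exp(-c\, \ve^2 \, \mathrm{cap}(\bar{Q}_1 \cup \bar{Q}_2))$. Since $\mathrm{cap}(\bar{Q}_i)$ is of order $L^{\xi_P(d-2)} \geq L^{d-2}$ and $\chi_P < (d-2)/2$, this is bounded by $\exp(-C_P L^{d-2})$ for $R \geq R_P$ large, matching the required $f_P(L) \geq \exp(a_P \log(L)^{\eps_P})$ with $a_P = d-2$ and $\eps_P = 1$. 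Once the coupling is in hand, inequality \eqref{eq:P3'replacebyindependentinc} for an increasing event $A$ follows by monotonicity, $\bbP^{\hat u}(B, (\omega|_{\bar Q_1}, \omega|_{\bar Q_2}) \in A) \leq \hat{\bbP}^u((\omega^{(1)}|_{\bar Q_1}, \omega^{(2)}|_{\bar Q_2}) \in A)$, and \eqref{eq:P3'replacebyindependentdec} follows analogously.

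The main obstacle I anticipate is technical rather than conceptual: the results of \cite{MR3420516} are typically formulated as decoupling between the interlacement on a set and an independent excursion process, whereas here one must \emph{simultaneously} couple the configurations on \emph{two} distant regions with two independent copies. The route I would take is to first bound, using the Green function estimate $g(x,y) \leq c|x-y|^{2-d}$ and the fact that $|\bar{Q}_i| \leq L^{\xi_P d}$, the probability that a single trajectory in the interlacement visits both $\bar{Q}_1$ and $\bar{Q}_2$ (which is negligibly small when $RL$ is large compared to $L^{\xi_P}$-capacities), and then apply the soft local times coupling separately to the trajectories entering each of the two boxes. The bounded interval $I$ ensures uniform bounds on capacities of the relevant trajectory spaces, which is needed to keep the constants in the soft local times bound independent of the level. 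Carefully balancing the choice of $\ve = R^{-\chi_P}$, the box size $L^{\xi_P}$, and the distance $RL$ so as to simultaneously meet the exponential-error requirement and the sprinkling requirement is the key calculation.
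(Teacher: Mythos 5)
Your outline tracks the paper's actual proof closely: the paper also handles \textbf{P1} by citing (2.7) of \cite{MR2680403}, treats \textbf{P2} as immediate from the monotone Poisson coupling, and proves \textbf{P3} by adapting Proposition~5.3 of \cite{MR3420516} (together with equations~(6.10) and (6.18) therein) to get a simultaneous stochastic domination of $(L_{x,\hat u})_{x \in \bar Q_1 \cup \bar Q_2}$ by two independent copies $(L^{(i)}_{x,u})$, at the cost of a multiplicative sprinkling $u(1\pm\eps)$ that is converted to the additive sprinkling $R^{-\chi_P}$ using the boundedness of $I$. So conceptually you are on the same track, and the ``technical obstacle'' you anticipate (simultaneous coupling on two regions) is in fact already resolved in \cite{MR3420516}.

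There is, however, a concrete gap in your error estimate. You write the soft-local-times error as $\exp\!\big(-c\,\eps^2 \,\mathrm{cap}(\bar Q_1 \cup \bar Q_2)\big)$, with $\mathrm{cap}(\bar Q_i)\sim L^{\xi_P(d-2)}$ and $\eps=R^{-\chi_P}$. But the correct exponent, as used in the paper, scales with the \emph{separation} $s=d(\bar Q_1,\bar Q_2)\geq RL$, namely
\begin{align*}
1-\hat\bbP^I(B) \;\leq\; c\,(r+s)^d\,\exp\!\big(-c\,u\,\eps^2\, s^{d-2}\big),
\end{align*}
so the relevant exponent is $u\,\eps^2 (RL)^{d-2}=u\,R^{\,d-2-2\chi_P}\,L^{d-2}$. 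This distinction is not cosmetic: condition~\textbf{P3} must hold for \emph{all} $R\geq R_P$, with $R$ allowed to be arbitrarily large compared to $L$. With the paper's (correct) exponent, the condition $\chi_P<(d-2)/2$ makes $R^{\,d-2-2\chi_P}$ grow in $R$, so increasing $R$ only improves the bound and one obtains $\exp(-C_P L^{d-2})$ uniformly for $R\geq R_P$. With your exponent, the quantity $R^{-2\chi_P}L^{\xi_P(d-2)}$ \emph{decreases} in $R$, so the bound $\exp(-C_P L^{d-2})$ fails for $R$ large at fixed $L$; the cited role of $\chi_P<(d-2)/2$ --- which is exactly what makes the $R$-dependence go the right way --- is then unexplained. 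In short: the soft-local-times decoupling error at separation $s$ is governed by $s^{d-2}$ (the inverse Green function at distance $s$), not by the capacity of the boxes, and the whole point of requiring $\chi_P<(d-2)/2$ is to beat the factor $\eps^2 = R^{-2\chi_P}$ by the factor $R^{d-2}$ coming from $s^{d-2}=(RL)^{d-2}$. Replacing your capacity estimate by the distance-based exponent closes the gap and the rest of your argument goes through.
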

\begin{proof}
Condition {\bf P1} follows from (2.7) in \cite{MR2680403}, and condition {\bf P2} clearly holds by definition. Moreover, one can easily adapt the proof of  \cite[Proposition~5.3]{MR3420516}, combined with equations (6.10) and (6.18) therein, to obtain that for all $u>0$, $\eps\in{(0,1)}$ and any sets $A_1,$ $A_2$ with diameter at most $r$ and at distance $s=d(A_1,A_2)$, there is a coupling $\hat{\bbP}^I$ under which 
\begin{equation}
\label{eq:decouplingRI}
\hat{\bbP}^I\left(L_{x,u(1-\eps)}^{(i)}\leq L_{x,u}\leq L_{x,u(1+\eps)}^{(i)},\, x\in{A_i},\, i=1,2 \right)\geq 1-c \,(r+s)^d \,\exp(-cu\eps^2s^{d-2}),
\end{equation}
where $(L_{x,u}^{(1)})_{x\in{\bbZ^d},u>0}$ and $(L_{x,u}^{(2)})_{x\in{\bbZ^d},u>0}$ are independent copies of $(L_{x,u})_{x\in{\bbZ^d},u>0}.$ Let us fix some bounded open interval $I\subset(0,\infty)$, $\xi_P>1,$ $\chi_P\in{(0,\frac{d-2}2)},$ $R\geq 1,$ $u,\hat{u}\in{I}$ with $u\geq\hat{u}+R^{-\chi_P},$ and $x_1,x_2\in{\bbZ^d}$ such that $d\big(Q(x_1,L^{\xi_P}),Q(x_2,L^{\xi_P})\big)\geq RL$. Setting 
\begin{equation*}
B\ldef \left\{L_{x,\hat{u}}^{(i)}\leq L_{x,u}\text{ and }L_{x,\hat{u}}\leq L_{x,u}^{(i)}\text{ for all }x\in{Q(x_i,L^{\xi_P})}\text{ and }i\in{\{1,2\}}\right\}%\cap \left\{L_{x,\hat{u}}\leq L_{x,u}^{(i)}\text{ for all }x\in{Q(x_i,L^{\xi_P})}\text{ and }i\in{\{1,2\}}\right\},
\end{equation*}
one can easily check that \eqref{eq:boundonprobaB^c} with $f_P(L)=L^{d-2}$ follows directly from \eqref{eq:decouplingRI} for all $L\geq L_P,$ for some large enough constants $L_P,C_P.$ Since $(\omega_x^{(1)},\omega_x^{(2)})_{x\in{\bbZ^d}}$ has the same law under $\hat{\bbP}^{{u}}$ as $\big(L_{x,u}^{(1)},L_{x,u}^{(2)}\big)_{x\in{\bbZ^d}}$ under $\bbP^I,$ one can then easily prove that \eqref{eq:P3'replacebyindependentinc} and \eqref{eq:P3'replacebyindependentdec} also hold since $u\mapsto L_{x,u}$ is increasing. 
\end{proof}

Proposition~\ref{prop:condforRIverified} can be used to show that the conditions of Theorem~\ref{The:main} are satisfied, and so the time constant is positive, when $(t_x^{\omega})_{x\in{\bbZ^d}}$ has the same law under $\bbP^u$ as $(\indicator_{\{x\in{\cI^u}\}})_{x\in{\bbZ^d}}$  for all $u>u_{**}$, where $u_{**}$ denotes the first level above which \eqref{intro:BLnotconnectedtoB2L} hold. In other words, $u_{**}$ is the minimal level $u$ for which the probability that $Q_L$ is connected to $Q_{2L}^c$ in $\cV^u$ converges to $0$ as $L\rightarrow\infty,$ where $\cV^u:=(\cI^u)^c$ is the vacant set of interlacements. Similarly as in the proof of Corollary~\ref{app:1stcorGFF} above, the FKG inequality (see e.g.\  \cite[Theorem 3.1]{MR2525105}) and the uniqueness of the infinite component of $\cV^u$ (see \cite{MR2498684}), imply by Proposition~\ref{Prop:condformu=0} that the time constant is equal to $0$ for all $u<u_*$, where $u_*$ is the critical parameter associated with the percolation of $\cV^u$. It is known that $u_*\leq u_{**}<\infty$, which essentially follows from \cite[Section~3]{MR2680403} (see \cite[Lemma~1.4]{MR2561432} for details), and that $u_*>0$ (see \cite{MR2512613}). The validity of $u_*=u_{**}$ is however still an open question, and we thus cannot reach a similar result as \eqref{app:mu>0} for the GFF, for which an analogue equality has been proved in \cite{DuGoRoSe}.
%
%One can also obtain similar upper bounds as in Theorems~\ref{intro:heatkernel} and \ref{intro:greenwithkilling} or Remark~\ref{rk:finalGFF}-(i), but for the local times $(L_{x,u})_{x\in{\bbZ^d}}$ in place of $(\phi_x)_{x\in{\bbZ^d}}$ for each $u>0$.

\subsection{Upper ratio weak mixing}
\label{sec:ratioweakmixing}
Another example is any model satisfying some weak mixing property, even without the need for an additional sprinkling parameter. We say that a probability $\bbP$ on $(\Omega,\cF)$ satisfies the upper ratio weak mixing property if there exist $\delta_P,\xi_P>1,$ $C_P>0$ and $L_P<\infty$ such that for all $L\geq L_P,$ $x_1,x_2\in{\bbZ^d}$ with $d\big(\bar{Q}(x_1,L^{\xi_P}),\bar{Q}(x_2,L^{\xi_P})\big)\geq L,$ and events $A_i$  supported on $\prod_{e\in{\bar{Q}(x_i,L^{\xi_P}})}\Omega_e$, $i\in \{1,2\}$,  we have
\begin{equation}
\label{eq:ratioweakmixing}
    \bbP\big(A_1\cap A_2\big)\leq\bbP(A_1) \,\bbP(A_2) \, \Big(1+\exp\big(-C_Pf_P(L)\big)\Big),
\end{equation}
with $f_P:[0,\infty)\rightarrow[0,\infty)$ satisfying $f_P(L)\geq \log(L)^{\delta_P}.$  
When $f_P(L)=L,$ a similar condition has been studied in \cite{MR1626951}, with an additional lower bound on $\bbP(A_1\cap A_2)/(\bbP(A_1)\bbP(A_2)),$ and shown to be equivalent to a weaker condition called weak mixing property under some additional hypotheses. In particular, the  upper ratio weak mixing property holds for the two-dimensional Potts model below the critical temperature, the two-dimensional Ising model with an external field, the Ising model below the critical temperature in all dimensions $d\geq2$, see \cite[Section~3]{MR1626951}, or the massive Gaussian free field. When $f_P$ is polynomial, \eqref{eq:ratioweakmixing} essentially corresponds to the exponential quasi independence assumption from \cite[2.4.6]{DeGa}, and thus discrete versions of the models considered in \cite[Section~3]{DeGa} also satisfy \eqref{eq:ratioweakmixing}. We now show that upper ratio weak mixing property implies the condition {\bf P3''} from Remark~\ref{rk:weakerP3} without sprinkling.

\begin{prop}
\label{prop:upperratioweakmixing}
Assume that $\bbP$ satisfies the upper ratio weak mixing property, and that $\bbP^u=\bbP$ for all $u\in{\bbR}$. Then $(\bbP^u)_{u\in{\bbR}}$ satisfies conditions {\bf P2} and {\bf P3''}. 
\end{prop}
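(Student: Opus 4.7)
The strategy is as follows.  Condition \textbf{P2} is immediate, since $\bbP^u=\bbP$ does not depend on $u$, so both sides of the required monotonicity inequality coincide.

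For \textbf{P3''}, the plan is to construct a rejection-sampling coupling on an enlarged probability space using the upper ratio weak mixing property.  Fix $L\geq L_P$, $R\geq R_P:=1$ and $x_1,x_2\in\bbZ^d$ with $d\bigl(\bar Q(x_1,L^{\xi_P}),\bar Q(x_2,L^{\xi_P})\bigr)\geq RL\geq L$.  Write $S_i=\bar Q(x_i,L^{\xi_P})$, let $\mu$ denote the joint law of $(\omega_{|S_1},\omega_{|S_2})$ under $\bbP$, and $\nu$ the product of its marginals.  Setting $\ep:=\exp(-C_Pf_P(L))$, the upper ratio weak mixing property \eqref{eq:ratioweakmixing} yields $\mu(A_1\times A_2)\leq (1+\ep)\nu(A_1\times A_2)$ for all measurable rectangles $A_1\times A_2\subseteq\Omega_{S_1}\times\Omega_{S_2}$.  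Granting for now that this extends to $\mu(E)\leq(1+\ep)\nu(E)$ on every product-measurable event $E$, the set function $\varsigma:=\ep^{-1}[(1+\ep)\nu-\mu]$ is then a bona fide probability measure on $\Omega_{S_1}\times\Omega_{S_2}$ (positive by the extension, total mass one by direct computation).

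I will then enlarge the space by adjoining a uniform $U$ on $[0,1]$ and a $\varsigma$-distributed pair $(\eta_1,\eta_2)\in\Omega_{S_1}\times\Omega_{S_2}$, all independent of $\omega$.  Setting $B:=\{U\leq 1/(1+\ep)\}$ gives $\tilde\bbP(B^c)=\ep/(1+\ep)\leq\exp(-C_Pf_P(L))$, verifying \eqref{eq:boundonprobaB^c}.  Defining $(\omega^{(1)}_{|S_1},\omega^{(2)}_{|S_2}):=(\omega_{|S_1},\omega_{|S_2})$ on $B$ and $:=(\eta_1,\eta_2)$ on $B^c$, a direct mixture computation shows that this pair has marginal law $\nu$, which coincides with the joint law of the $S_i$-restrictions under $\hat\bbP^u=\bbP\otimes\bbP$.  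Since $f_i$ is supported on $S_i$, on $B$ we have $f_i(\omega)=f_i(\omega^{(i)})$, and consequently
\[
\tilde\bbP\bigl(B,\,f_1(\omega)+f_2(\omega)\leq s\bigr)\leq\tilde\bbP\bigl(f_1(\omega^{(1)})+f_2(\omega^{(2)})\leq s\bigr)=\hat\bbP\bigl(f_1(\omega^{(1)})+f_2(\omega^{(2)})\leq s\bigr),
\]
which is precisely \eqref{eq:P3'replacebyindependentinc} for $A=\{f_1+f_2\leq s\}$.  Monotonicity of $f_i$ plays no role in the argument, $C_P,\xi_P,\delta_P,L_P$ are inherited from \eqref{eq:ratioweakmixing}, $R_P=1$ works, and $\chi_P>0$ can be taken arbitrarily since the sprinkling condition $u\geq\hat u+R^{-\chi_P}$ imposes nothing when $\bbP^u$ is constant in $u$.

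The principal technical step will be the extension of $\mu\leq(1+\ep)\nu$ from rectangles to the full product $\sigma$-algebra; the standard $\pi$-$\lambda$ argument fails because $\{E:\mu(E)\leq(1+\ep)\nu(E)\}$ is not closed under complements.  The plan is to use Lévy's upward martingale convergence theorem.  Outer regularity of the Carathéodory-extended product measure $\nu$ first gives $\mu\ll\nu$: any $\nu$-null set lies in countable unions of rectangles of arbitrarily small total $\nu$-mass, and these also have arbitrarily small total $\mu$-mass by the rectangle inequality.  Then, along an increasing sequence $(\cG_n)_{n\geq 0}$ of finite sub-$\sigma$-algebras, each generated by a partition of $\Omega_{S_1}\times\Omega_{S_2}$ into rectangles and with $\bigvee_n\cG_n$ equal to the full product $\sigma$-algebra (which is countably generated in our setting), the density $\bbE_\nu[d\mu/d\nu\mid\cG_n]$ is constant equal to $\mu(R)/\nu(R)\leq 1+\ep$ on each rectangle $R$ of the $n$-th partition.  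Passing to the $\nu$-a.s.\ limit bounds $d\mu/d\nu\leq 1+\ep$ almost surely, completing the extension.
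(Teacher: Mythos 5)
Your strategy is correct but takes a genuinely different route from the paper's. Write $\ep=\exp(-C_Pf_P(L))$ and $S_i=\bar Q(x_i,L^{\xi_P})$. The paper argues directly that the class $\cE$ of product-measurable events $A$ satisfying $\bbP\big((\omega_{|S_1},\omega_{|S_2})\in A\big)\leq(1+\ep)\,\hat\bbP\big((\omega^{(1)}_{|S_1},\omega^{(2)}_{|S_2})\in A\big)$ contains all rectangles by \eqref{eq:ratioweakmixing}, is closed under countable disjoint unions and decreasing intersections, and therefore contains $\{f_1+f_2\leq s\}$ via the dyadic decomposition \eqref{eq:f_1+f_2inE}; the event $B$ is then just an independent Bernoulli with $\bbP(B)=(1+\ep)^{-1}$. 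You instead prove that \emph{every} product-measurable $A$ lies in $\cE$ and exhibit $B$ through a rejection-sampling coupling. Two remarks. First, the L\'evy-martingale step tacitly assumes $\cF_E,\cF_V$ are countably generated, which the paper's abstract setup does not impose; it is also superfluous, because the outer-regularity observation you open with already yields the extension in full: cover $A$ by rectangles $R_i$ with $\sum_i\hat\bbP(R_i)\leq\hat\bbP(A)+\de$, so $\bbP(A)\leq\sum_i\bbP(R_i)\leq(1+\ep)(\hat\bbP(A)+\de)$, and let $\de\downarrow 0$. Second, once your coupling is in place, the left-hand side of \eqref{eq:P3'replacebyindependentinc} equals $(1+\ep)^{-1}\bbP\big(\{f_1+f_2\leq s\}\big)$, so the rejection-sampling machinery collapses to the same arithmetic as the paper's dummy $B$. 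Your version buys validity of \eqref{eq:P3'replacebyindependentinc} for all measurable $A$; the paper's argument, being tailored to the target event, is leaner.
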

\begin{proof}
Condition {\bf P2}  clearly holds by definition. Fix $R_P=\chi_P=1,$ $\xi_P,\eps_P,$ $a_P,$ $C_P,$ $L_P,$ $f_P,$ $L\geq L_P$ and $x_1,x_2$ as in the upper ratio weak mixing property. Let $\hat{\bbP}=\bbP\otimes\bbP,$ and write $\mathcal{E}$ the set of events $A \subset\prod_{e\in{{\bar{Q}(x_1,L^{\xi_P})}}}\Omega_{e}\times \prod_{e\in{\bar{Q}(x_2,L^{\xi_P})}}\Omega_{e}$ such that
\begin{equation*}
\frac{\bbP\big(\big((\omega_e)_{e\in{\bar{Q}(x_1,L^{\xi_P})}},(\omega_e)_{e\in{\bar{Q}(x_2,L^{\xi_P})}}\big)\in{A}\big)}{\hat{\bbP}\big(\big((\omega_e^{(1)})_{e\in{\bar{Q}(x_1,L^{\xi_P})}},(\omega_e^{(2)})_{e\in{\bar{Q}(x_2,L^{\xi_P})}}\big)\in{A}\big)}
    \leq1+\exp\big(-C_Pf_P(L)\big).
\end{equation*}
In view of \eqref{eq:ratioweakmixing}, $\cE$ contains all product sets, and it is easy to see that $\cE$ is stable by countable disjoint unions and decreasing intersections. Therefore, for all measurable functions $f_i:\Omega\rightarrow[0,\infty]$ supported on $\bar{Q}(x_i,L^{\xi_P}),$ $i\in{\{1,2\}}$, and all $s>0$, we have that $\{f_1+f_2\leq s\}\in{\mathcal{E}}$ since
\begin{equation}
\label{eq:f_1+f_2inE}
    \big\{f_1+f_2\leq s \big\}=\bigcap_{n\in\bbN}\bigcup_{k\in{\bbN_0}}\big\{f_1\in{[k2^{-n},(k+1)2^{-n})}\big\}\cap \big\{f_2\leq s-k2^{-n}\big\}.
\end{equation}
Now fix some event $B$ independent of $\omega$ under $\bbP$ (or on some extended probability space), with $\bbP(B)=1/(1+\exp(-C_Pf_P(L)))$. Then \eqref{eq:boundonprobaB^c} holds, upon changing $C_P$. Moreover, \eqref{eq:P3'replacebyindependentinc} holds for all $A\in{\mathcal{E}}$ and thus for all events of the form \eqref{eq:f_1+f_2inE}.
\end{proof}

Thus, in view of Remark~\ref{rk:weakerP3}, Theorem~\ref{The:main}  holds for ergodic models satisfying the upper ratio weak mixing property. If additionally the function $f_P$ from the upper ratio weak mixing property satisfies $f_P(L)\geq \exp(\log(L)^{\eps_P})$ for some $\eps_P>0$, then condition {\bf P3} from \cite{DRS14, Sa17} also holds, and the conclusions of Theorems~\ref{thm:hkeGauss} and \ref{thm:GKdecay_killing} are valid, see Remark~\ref{rem:P3cond}-(i). Note that the upper ratio weak mixing property does not depend on the choice of the partial order on $\Omega_E$ and $\Omega_V.$ In particular, condition {\bf P3''} still holds without the monotonicity assumption on $f_1,f_2$. Arguing similarly as in Remark~\ref{rk:fppvertices}-(ii), Theorem~\ref{The:main} thus still holds without the monotonicity assumption  \eqref{eq:temonotone} on $\mathbf{t}$. Moreover, by Remark~\ref{rem:P3cond}-(iii), Theorems~\ref{thm:hkeGauss} and  \ref{thm:GKdecay_killing} still hold without condition \eqref{eq:amonotone} and without Assumptions \ref{ass:moment}-(i) and \ref{ass:moment_killing}-(i), respectively.

\subsubsection*{Acknowledgment}
We are very grateful to Roland Bauerschmidt for a number of valuable discussions, especially in relation to the supersymmetric spin models mentioned in Example~\ref{ex:supersymmetric}, which initiated this work. We also thank Stephen Muirhead, Pierre-Fran\c{c}ois Rodriguez, Art\"{e}m Sapozhnikov (who communicated to us the content of Remark~\ref{rk:fppvertices}-(iii)) and Martin Slowik for helpful discussions, and we thank the referees for the careful reading and the constructive feedback. A.P.\ was partially supported by the Isaac Newton Trust grant G101121  ``Interplay of random media and statistical mechanics'' and the Engineering and Physical Sciences Research Council  grant EP/R022615/1 ``Random walks on dynamic graphs''.

\bibliographystyle{abbrv}
\bibliography{literature}

\end{document}